\UseRawInputEncoding
\documentclass[11pt]{article}
\usepackage[letterpaper, left=1in, top=1in, right=1in, bottom=1in, verbose, ignoremp]{geometry}

\usepackage{url}\RequirePackage[colorlinks,citecolor=blue, linkcolor=blue,urlcolor = blue]{hyperref}
\usepackage{latexsym,amssymb,amsmath,amsfonts,graphicx,color,fancyvrb,amsthm,enumitem,subcaption,mathrsfs}
\usepackage[longnamesfirst,authoryear,round]{natbib}
\usepackage[dvipsnames]{xcolor}
\usepackage{xy}\xyoption{all} \xyoption{poly} \xyoption{knot}
\usepackage{float}
\thispagestyle{empty}
\usepackage{bm}
\usepackage{bbm}
\usepackage{multicol,multirow}
\usepackage{array}
\usepackage{relsize}
\usepackage{chngcntr}
\usepackage{etoolbox}
\usepackage{caption}
\usepackage{makecell}
\usepackage{tikz}
\usepackage{mdframed}
\usetikzlibrary{decorations.pathreplacing,calc}
\usetikzlibrary{shapes,backgrounds}
\usetikzlibrary{patterns}
\usetikzlibrary{cd}
\usepackage{amsopn}
\usepackage{calc}
\usepackage{algorithm}
\usepackage[noend]{algpseudocode}
\usepackage{hyperref}
\usepackage{mathtools}

\usepackage{tikz-cd}
\usepackage{amscd}
\usepackage{comment}
\usepackage[capitalize,nameinlink,compress]{cleveref}
\crefname{figure}{Figure}{Figures} 
\crefname{equation}{}{} 
\crefname{assumption}{Assumption}{Assumptions}
\crefname{subsection}{Subsection}{Subsections}

\usepackage{graphicx} 
\usetikzlibrary{decorations.pathreplacing,backgrounds}
\usepackage{wrapfig}
\usepackage{pgf}
\usepackage{pgfplots}
\usetikzlibrary{pgfplots.external}
\usetikzlibrary{arrows}
\usetikzlibrary{arrows.meta}
\tikzcdset{arrow style=tikz,
           diagrams={>={Stealth[length=2mm]}}}
\usepackage{nicematrix}
\pgfplotsset{compat=1.18}

\usepackage[title]{appendix}

\newcounter{cdrow}

\newtheorem{theorem}{Theorem}[section] 
\newtheorem*{theorem*}{Theorem}
\newtheorem{cor}[theorem]{Corollary}
\newtheorem{lemma}[theorem]{Lemma}
\newtheorem{prop}[theorem]{Proposition}
\newtheorem{claim}{Claim}[theorem]
\newtheorem*{claim*}{Claim}
\newtheorem*{claimproof*}{Proof of claim}

\theoremstyle{definition}
\newtheorem{definition}[theorem]{Definition}
\newtheorem*{definition*}{Definition}

\theoremstyle{remark}
\newtheorem{remark}[theorem]{Remark}

\newtheorem{example}[theorem]{Example}
\newtheorem*{example*}{Example}

\DeclareMathOperator\dom{dom}

\makeatletter
\newcommand*{\op}{%
  \DOTSB
  \mathop{\vphantom{\bigoplus}\mathpalette\matt@op\relax}%
  \slimits@
}
\newcommand\matt@op[2]{%
  \vcenter{\m@th\hbox{\resizebox{\widthof{$#1\bigoplus$}}{!}{$\boxplus$}}}%
}
\makeatother

\DeclareMathOperator{\im}{im}
\newcommand{\Hh}{\mathcal{H}}
\newcommand{\Kk}{\mathcal{K}}
\newcommand{\Ff}{\mathcal{F}}
\newcommand{\Gg}{\mathcal{G}}
\newcommand{\Pp}{\mathcal{P}}
\newcommand{\Qq}{\mathcal{Q}}
\newcommand{\R}{\mathbb{R}}
\newcommand{\IK}{\mathbb{K}}
\newcommand{\IC}{\mathbb{C}}
\newcommand{\IZ}{\mathbb{Z}}
\newcommand{\N}{\mathbb{N}}
\newcommand{\red}[1]{\textcolor{red}{#1}}
\definecolor{skyblue}{RGB}{0, 180, 250}

\makeatletter
\def\@biblabel#1{}
\makeatother

\makeatletter
\patchcmd{\NAT@citex}
  {\@citea\NAT@hyper@{%
     \NAT@nmfmt{\NAT@nm}%
     \hyper@natlinkbreak{\NAT@aysep\NAT@spacechar}{\@citeb\@extra@b@citeb}%
     \NAT@date}}
  {\@citea\NAT@nmfmt{\NAT@nm}%
   \NAT@aysep\NAT@spacechar\NAT@hyper@{\NAT@date}}{}{}

\patchcmd{\NAT@citex}
  {\@citea\NAT@hyper@{%
     \NAT@nmfmt{\NAT@nm}%
     \hyper@natlinkbreak{\NAT@spacechar\NAT@@open\if*#1*\else#1\NAT@spacechar\fi}%
       {\@citeb\@extra@b@citeb}%
     \NAT@date}}
  {\@citea\NAT@nmfmt{\NAT@nm}%
   \NAT@spacechar\NAT@@open\if*#1*\else#1\NAT@spacechar\fi\NAT@hyper@{\NAT@date}}
  {}{}

\makeatother


\begin{document}

\def\spacingset#1{\renewcommand{\baselinestretch}%
{#1}\small\normalsize} \spacingset{1}

\begin{flushleft}
{\Large{\textbf{Generalized Persistent Laplacians and their Spectral Properties}}}
\newline
\\
Arne Wolf$^{1,2,\dagger}$, Jiyu Fan$^{1}$, and  Anthea Monod$^{1}$
\\
\bigskip
\bf{1} Department of Mathematics, Imperial College London, UK
\\
\bf{2} London School of Geometry and Number Theory, UK
\\
\bigskip
$\dagger$ Corresponding e-mail: a.wolf22@imperial.ac.uk
\end{flushleft}


\section*{Abstract}

Laplacian operators are classical objects that are fundamental in both pure and applied mathematics and are becoming increasingly prominent in modern computational and data science fields such as applied and computational topology and application areas such as machine learning and network science.  In this paper, we introduce a unifying operator-theoretic framework of generalized Laplacians as invariants that encompasses and extends all existing constructions, from discrete combinatorial settings to de Rham complexes of smooth manifolds.  Within this framework, we introduce and study a generalized notion of persistent Laplacians. While the classical persistent Laplacian fails to satisfy the desirable properties of monotonicity and stability---both crucial for robustness and interpretability---our framework allows us to isolate and analyze these properties systematically.  We demonstrate that their component maps, the up- and down-persistent Laplacians, satisfy these properties individually. Moreover, we prove that the spectra of these separate components fully determine the spectra of the full Laplacians, making them not only preferable but sufficient for analysis.  We study these questions comprehensively, in both the finite and infinite dimensional settings. Our work expands and strengthens the theoretical foundation of generalized Laplacian-based methods in pure, applied, and computational mathematics.

\paragraph{Keywords:} Laplacians; Monotonicity; Operator Theory; Persistent Homology; Stability\\

\vfill\eject

\tableofcontents


\section{Introduction}
\label{sec:intro}

Laplacian operators are central objects in mathematics, arising across a wide range of areas from graph theory \citep{graph_1, graph_2, graph_3,josthorak}, to Hodge theory of manifolds \citep{hodge_1, hodge_2, hodge_3,schwarz}, and, more recently, to computational fields including applied and computational topology \citep{tda_1, tda_2, tda_3,dws}. In computational topology, algebraic tools are used to study the intrinsic properties of underlying spaces or structures (such as manifolds, graphs, simplicial complexes, and sheaves) through their topology. Homology is the most widely used tool for such a study. Laplacians, however, offer a stronger algebraic approach, as their kernels recover homology groups \citep[see, e.g.,][p.~7]{survey}, while their nonzero spectra encode additional geometric information \citep{graphlaplacian,beers_mulas,curvature}. This dual role positions the eigenvalues of Laplacians as a natural, more powerful, and meaningful invariant that is simultaneously algebraic and geometric.

The key concept that we generalize and then study in this work is the \emph{persistent Laplacian} map that was initially proposed by \citet{first_peristent_lap2} and \citet{first_persistent_lap}, and further studied by \citet{mww}. This operator generalizes the classical Laplacian by incorporating \emph{filtrations}, thereby capturing how topological and geometric features evolve across scales. For instance, given a point cloud, a common approach in computational topology is to consider a nested sequence of simplicial complexes and track the evolution of homology under the chain of inclusions \citep[e.g.,][\S4.1]{roadmap}. This ``dynamic'' version of homology yields \emph{persistent homology}; it is arguably the most prominent theory in the area of computational topology, as well as the most popular and widely-applied methodology implemented in topological data analysis (TDA). The persistent Laplacian extends this theory: its kernel recovers persistent homology, while its nonzero spectra contain additional geometric information, just as in the classical case. Thus, persistent Laplacians comprise strictly more information than persistent homology alone and are proper extensions of persistent homology.

In this work, we study two key geometric and faithfulness properties of our generalized persistent Laplacian: stability and monotonicity.  \emph{Stability} is a crucial property of topological invariants, especially in applications, as it ensures robustness: small perturbations of the input (manifold, graph, simplicial complex, etc.) result in only small perturbations of the eigenvalues, with respect to a suitable metric. Stability has been widely studied in computational topology and TDA \citep[e.g.,][]{cohen-steiner_Stability_2017,stability3,stability1,stability4} and has also been studied for persistent Laplacians constructed from simplicial complexes \citep{mww}. \emph{Monotonicity} is another related and desirable property of the persistent Laplacian as an invariant, which has been considered in the case of simplicial complexes \citep{mww}.  Monotonicity expresses a natural geometric relation: as the underlying object grows under a filtration, its Laplacian spectra should grow accordingly.

It turns out that persistent Laplacians in general do not have these properties of monotonicity and stability. However, each persistent Laplacian is a sum of two maps: the \emph{up}- and the \emph{down}-persistent Laplacian. Monotonicity and stability have previously been studied and established for the up-persistent Laplacian constructed from a simplicial complex in the finite-dimensional setting \citep{mww}. Moreover, the spectra of the up- and down-Laplacians in this setting completely determine the spectra of the full Laplacian \citep{mww}. In this paper, we extend all of these results to persistent Laplacians arising from general chain complexes, both in the finite- and infinite-dimensional settings. 

\paragraph{Contributions.} We introduce a generalized persistent Laplacian constructed from general chain complexes.  Existing occurrences of persistent Laplacians \citep[see, e.g.,][]{survey} are constructed from finite-dimensional spaces where operators are bounded.  We extend this setting to consider operators that are densely defined and closed, which provides a unifying framework to study persistent Laplacians arising from all underlying objects of interest arising in TDA.  Moreover, our framework additionally comprises de Rham complexes of smooth manifolds.

We establish montonicity and stability for our generalized persistent Laplacians, building on and extending previous results in a threefold manner: 
\begin{enumerate}
\item[(i)] Where previous studies only considered the up-persistent Laplacian map, we study both up and down components.  We prove that the spectra of these two maps fully determine the important spectra of the full generalized persistent Laplacians; therefore, not only is it preferable to work with these maps for simplicity, it is moreover sufficient to only consider these two maps.
\item[(ii)] We establish these properties for a general class of underlying objects comprising and extending beyond existing cases studied in TDA to include de Rham complexes of smooth manifolds.
\item[(iii)] We lift the theory from the finite-dimensional setting to the infinite-dimensional case by developing a novel operator-theoretic framework. This leads to a more general notion of spectrum and requires entirely new proofs, while also subsuming the finite-dimensional results as a special case.
\end{enumerate}

Finally, we provide a sufficient condition for when full Laplacians satisfy monotonicity and stability.

Our theoretical contributions position persistent Laplacians as natural topological invariants that encompass and extend beyond persistent homology. Our results lay the groundwork for a potential paradigm shift toward using persistent Laplacians as fundamental topological and geometric invariants.

\paragraph{Outline.} The remainder of this paper is organized as follows: We close this section with an overview of related work. In \Cref{sec:motivation} we present the Laplacians and persistent Laplacians in the case of finite simplicial complexes to motivate our study and context for the relationship between (persistent) Laplacians and (persistent) homology, providing intuition throughout the paper. In \Cref{sec:background}, we provide the main tools from operator theory needed for our generalized study of persistent Laplacians, including for infinite-dimensional spaces. We reprove some of these results for operators mapping between pairs of spaces, which yields new results in the area of operator theory supporting our study. We then define the Laplacians in full detail and generality in \Cref{sec:laplacians} as well as their persistent versions, and provide examples in a variety of general settings. \Cref{sec:stab} presents our main results: we define and establish monotonicity for persistent up- and down-Laplacians and then show how the spectra of up-, down-, and full-Laplacians are related. Finally, in \cref{subsec:fullmon}, we discuss the connection between stability and monotonicity as well as provide a sufficient condition for monotonicity of full Laplacians. We close with a summary and discussion of the impact of our work in \Cref{sec:discussion} and suggest directions for future research.

\Cref{appendix:operator_theory} collects supplementary material on functional analysis relevant to the work we develop on operator theory. These are included for completeness and to support the main exposition.  \Cref{appendix:category} provides additional background on the homology of Hilbert complexes from a categorical perspective, to highlight why reduced homology is a particularly natural notion for our setting.

\subsection{Related Work} \label{subsection:relwork}

The persistent Laplacian was first introduced by \citet{first_peristent_lap2} and \citet{first_persistent_lap}, extending the classical combinatorial Laplacian to study pairs of simplicial complexes $K \hookrightarrow L$ connected by an inclusion. \citet{mww} define stability and monotonicity of persistent Laplacians as we will do in this work and they prove stability of only the up-Laplacian. \citet{pl_simpmaps} revisit the findings of \citet{mww} in the more general setting that replaces inclusions in the definition of persistent Laplacians by so-called weight-preserving simplicial maps, where they consider inner products on the simplicial chain complex of simplicial complexes induced by weight functions and simplicial maps that preserve weights. We note that this approach is more general than the inclusions we will later define in our work (specifically, \cref{def:inclusion}). However, both of these results by \citet{mww} and \citet{pl_simpmaps} work with concrete basis representations and therefore with finite-dimensional vector spaces, which we extend to infinite-dimensional settings in our work. Moreover, they restrict to chain complexes arising from simplicial complexes, while in our work, we consider general chain complexes which encompass and expand beyond those obtained from simplicial settings. \citet{algstab} prove an algebraic version of stability for persistent Laplacians considered in the setting of \citet{mww}. \citet{hou_harmonic_chains} define harmonic chain barcodes, which provide a basis of the harmonic chain groups at every stage of a filtration, and prove their bottleneck stability. Very recently, \citet{su2024persistentrhamhodgelaplacianseulerian} define persistent Laplacians and their discrete versions for filtrations of manifolds. Their framework, however, is restricted to smooth differential forms, and thus differs from the definitions we present in \cref{subsub:derham}.

In terms of applications, persistent Laplacians have been implemented in numerous situations within the context of TDA: \citet{plapp_1,plapp_3} use them for data classification and \citet{pl_omicron} predicted dominating COVID-19 variants using persistent Laplacian-based deep learning on molecular data. \citet{wbx} defined persistent Dirac operators which combine all persistent Laplacians of a pair of simplicial complexes into one operator and use them for molecular structure classification. The success of all these methods, which mostly outperform persistent homology, suggests that the (nonzero) spectra of persistent Laplacians indeed contain relevant information about the underlying spaces which contribute to better performance in downstream data analysis and machine learning tasks.

Efficient algorithms to compute spectra of persistent Laplacians have been proposed by \citet{mww}.
\citet{htp_cont} suggest homotopy continuation to find the eigenvalues of up-Laplacians, while \citet{fastalg} improves the algorithm from \citet{mww} for a specific type of simplicial complex to compute a matrix representation of the up-Laplacians in quadratic time. The current fastest software to compute and use persistent Laplacians in practical settings was developed by \citet{petls}.


\section{Background and Preliminaries} 
\label{sec:motivation}

In this section, we present the objects and setting of our study: Laplacians and the theory of homology arising from algebraic topology.  In particular, we study \emph{persistent homology}, which can be thought of as a dynamic version of homology by studying its evolution with respect to a filtration. We also overview the well-known Hodge theorems, connecting the kernel of the Laplacians to homology.  

\subsection{Simplicial Homology and The Combinatorial Laplacian} 
\label{sec:tda}

A finite point cloud, $S = (x_1, \dots, x_m)$ with $x_i \in \mathbb{R}^n$, may be studied by constructing a topological space $X$ from $S$ and applying algebraic tools to $X$. Simplicial complexes are a natural choice of topological space to build from $S$, since they are both computationally amenable and general enough to approximate any topological space.  Recall that every topological space $X$ can be approximated by a simplicial complex $K$ that is weakly homotopy equivalent to $X$ \citep[Proposition 4.13, Theorem 2C.5]{hatcher}. 

\begin{definition}\textnormal{\citep[Combinatorial Simplicial Complex,][p.~107]{hatcher}}
\label{def:comb_simp_cplx}
    Let $V$ be a (possibly infinite) set of vertices or points. A \emph{simplicial complex} $K$ is a set of non-empty finite subsets of $V$ with the property that for all $\sigma \in K$ and $\tau \subset \sigma$, $\tau \in K$.
    
    If $\sigma \in K$ consists of $k+1$ points $p_0,\dots,p_k$, it is called a $k$\emph{-simplex} and written as $\sigma^k$ or as $[p_0,\dots,p_k]$; $\tau \subset \sigma$ is a \emph{face} of $\sigma$. The largest $k$ such that there is a $k$-simplex in $K$ is called the \emph{dimension} $\dim(K) \in \mathbb{N} \cup \{\infty\}$ of $K$.
\end{definition}

Discussions in this section will be restricted to \emph{finite} simplicial complexes, i.e., with $|V|< \infty$. 

In this paper, we use homology to extract an algebraic summary of a simplicial complex. The homology of a finite simplicial complex can be computed from an associated \emph{chain complex}:
\[
\begin{tikzcd}
\cdots \arrow[r] & C^{K}_{k+1}  \arrow[r, "d^{K}_{k+1}"] & C^{K}_{k} \arrow[r, "d^{K}_{k}"] & C^{K}_{k-1} \arrow[r] & \cdots
\end{tikzcd}
\]
which comprises of chain groups 
$$
C_k^K:=\bigoplus_{\sigma^k \in K} \R
$$ 
and boundary maps $d_k^K$. To define the latter, we need a function that keeps track of orientation between subfaces. Assuming that $K$ is finite, we can obtain such a function as follows, by enumerating its vertices. Let $\sigma^k=\{p_0,\dots,p_k\}$ be a face of $K$, where the $p_i$ are ordered according to their numbers. The $(k-1)$-dimensional subfaces of $\sigma$ are precisely those of the form 
    $$
    \tau^{k-1}=\{p_0,\dots,p_{\ell-1},p_{\ell+1},\dots,p_k\},
    $$
    i.e., where one point (in this case, the $\ell$th point) is removed from $\sigma$.
    For this $\tau$, we set 
    \begin{equation} \label{eq:sign}
    [\sigma:\tau]:=(-1)^\ell.
    \end{equation}
By imposing that the boundary maps (homomorphisms) are linear, they are then uniquely defined from their action on the basis elements and are given by 
$$
    d_k^K: e_{\sigma^k} \mapsto \sum_{\tau^{k-1}\subset \sigma} [\sigma:\tau] e_\tau.
$$
The boundary maps satisfy the defining property of chain complexes, namely that $d^K_{k-1} \circ d^K_k = 0$ \citep[Lemma 2.1]{hatcher}. Thus we can define the \emph{$k$th homology group} of $K$ by
\[
H_k(K) := \ker(d^K_k)/\im(d^K_{k+1}).
\]

Intuitively, the dimensions of these groups, the \emph{$k$th Betti numbers}, count the number of $k$-dimensional holes of $X$ discretized by $K$, which can be used as topological features themselves. However, often we are interested in not only the Betti numbers, but also in more concrete representatives of homology elements which are more informative than just the Betti numbers alone. There are various possible choices for such representatives. For example, \citet{tightestrep} chooses representatives that ``wrap around'' the holes in the tightest way possible. Another option to obtain more informative representatives is via the \emph{Hodge Laplacians}, which is the object of central interest in our work.

By equipping the chain groups with inner products, we can define adjoints of the boundary homomorphisms $d^K_k$, denoted by $\big(d^K_k \big)^*$. These inner products can carry geometric information, such as weights on vertices, edges, or even other faces \citep[p.~5]{mww}.

Given these adjoints, we can now define the Hodge Laplacian, from which representatives of homology elements can be recovered.

\begin{definition}\textnormal{\citep[Hodge Laplacian,][Definition 2.1]{josthorak}}
    The \emph{$k$th Hodge Laplacian} on $K$ is the map $\Delta^K_k: C^K_k \to C^K_k$ given by
    \[
    \Delta^K_k := d^K_{k+1} \circ \left(d^K_{k+1}\right)^* + \left(d^K_{k}\right)^* \circ d^K_{k}.
    \]
\end{definition}

The kernels of the Hodge Laplacians contain the homology information of a simplicial complex and, as such, they can be used to extract algebraic summaries of simplicial complexes. Moreover, they can provide homological representatives of simplicial complexes and their underlying point clouds. These representatives can be effectively used for data analysis, such as spectral clustering \citep{spec_clust}.

\begin{theorem}\textnormal{\citep[Combinatorial Hodge Theorem,][Theorem 5.3]{lim}} \label{thm:comb_ker}
    $\ker(\Delta^K_k) \cong H_k(K)$ and the isomorphism is given by $v \mapsto [v]$.
\end{theorem}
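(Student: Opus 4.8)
The plan is to establish the classical Hodge decomposition of the chain group $C^K_k$ and then simply read the isomorphism off from it. First I would compute, for an arbitrary $v \in C^K_k$, the identity
\[
\langle \Delta^K_k v, v\rangle = \big\langle d^K_{k+1}\big(d^K_{k+1}\big)^* v,\, v\big\rangle + \big\langle \big(d^K_k\big)^* d^K_k v,\, v\big\rangle = \big\| \big(d^K_{k+1}\big)^* v\big\|^2 + \big\| d^K_k v\big\|^2,
\]
which uses only that $\big(d^K_k\big)^*$ is the adjoint of $d^K_k$ with respect to the chosen inner products. Being a sum of squares, this vanishes if and only if $d^K_k v = 0$ and $\big(d^K_{k+1}\big)^* v = 0$; hence $\ker(\Delta^K_k) = \ker(d^K_k) \cap \ker\big(\big(d^K_{k+1}\big)^*\big)$, the space of \emph{harmonic} $k$-chains, which I will write as $\mathcal{H}_k$.

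Next I would prove the orthogonal splitting $\ker(d^K_k) = \im(d^K_{k+1}) \oplus \mathcal{H}_k$. The inclusion $\im(d^K_{k+1}) \subseteq \ker(d^K_k)$ is exactly the chain-complex identity $d^K_k \circ d^K_{k+1} = 0$. In the finite-dimensional inner product space $C^K_k$, the subspace $\im(d^K_{k+1})$ is closed, and the standard adjoint relation gives $\im(d^K_{k+1})^\perp = \ker\big(\big(d^K_{k+1}\big)^*\big)$. Intersecting the orthogonal decomposition $C^K_k = \im(d^K_{k+1}) \oplus \im(d^K_{k+1})^\perp$ with $\ker(d^K_k)$ (which contains $\im(d^K_{k+1})$) yields
\[
\ker(d^K_k) = \im(d^K_{k+1}) \oplus \big(\ker(d^K_k) \cap \ker\big(\big(d^K_{k+1}\big)^*\big)\big) = \im(d^K_{k+1}) \oplus \mathcal{H}_k.
\]

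From this decomposition, the quotient $H_k(K) = \ker(d^K_k)/\im(d^K_{k+1})$ is canonically isomorphic to the complement $\mathcal{H}_k = \ker(\Delta^K_k)$ via the restriction of the quotient projection to $\mathcal{H}_k$, i.e.\ the map $v \mapsto [v]$. I would then verify the three required properties directly: it is well-defined because every harmonic chain satisfies $d^K_k v = 0$ and so is a cycle; it is injective because $[v] = 0$ means $v \in \im(d^K_{k+1})$, while $v \in \mathcal{H}_k$ is orthogonal to $\im(d^K_{k+1})$, forcing $v = 0$; and it is surjective because any cycle $c \in \ker(d^K_k)$ decomposes as $c = d^K_{k+1} a + h$ with $h \in \mathcal{H}_k$, so $[c] = [h]$. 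Linearity is immediate.

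I do not anticipate a real obstacle in this finite-dimensional case; the only step demanding genuine care is the pair of facts that $\im(d^K_{k+1})$ is closed and that $\im(d^K_{k+1})^\perp = \ker\big(\big(d^K_{k+1}\big)^*\big)$, since these are what make the orthogonal splitting valid — and this is precisely the point that must later be replaced by the closed-range hypotheses when the framework is lifted to the infinite-dimensional operator-theoretic setting. Everything else reduces to bookkeeping with adjoints and elementary linear algebra.
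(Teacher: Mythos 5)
Your proof is correct, and it follows essentially the same route the paper takes for its generalization (\cref{lem:kerlap} together with the Hodge decomposition in \cref{thm:magical} and \cref{thm:ker_hom_1}): identify $\ker(\Delta^K_k)$ with the harmonic chains via the quadratic-form identity, split $\ker(d^K_k)$ orthogonally as $\im(d^K_{k+1})\oplus\mathcal{H}_k$, and read off the isomorphism $v\mapsto[v]$. The paper itself only cites this finite-dimensional statement, but your argument is the correct finite-dimensional specialization, and you rightly flag that closedness of $\im(d^K_{k+1})$ and the relation $\im(d^K_{k+1})^\perp=\ker\bigl((d^K_{k+1})^*\bigr)$ are exactly the points that require care in the infinite-dimensional setting.
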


\subsection{The Persistent Combinatorial Laplacian} \label{sec:persmot}

\Cref{def:comb_simp_cplx} gives the notion of a general simplicial complex, with specific instances arising from the choice of construction rule. Two examples of specific simplicial complexes widely studied, especially in computational topology, for their desirable combinatorial and computational properties are the \emph{\v{C}ech} and \emph{Vietoris--Rips} complexes.  Their construction approach is similar: Proceeding from a point cloud, a threshold $\varepsilon$ is fixed and $\varepsilon$-radius balls are formed around each point in the point cloud, $S$. Simplices between points are then formed and included in the complex if points are ``close enough'' according to a construction rule depending on the overlap of the $\varepsilon$-balls; see \cref{fig:complexes} for example illustrations of \v{C}ech and Vietoris--Rips complexes.  There are many other types of simplicial complexes in addition to the \v{C}ech and Vietoris--Rips complexes; see \citet[pp.~27-33]{sctypes} for a list of further simplicial complex types.

\begin{figure}[h]
    \centering
    \includegraphics{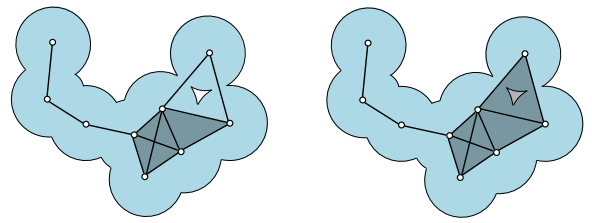}
    \caption{Examples of a \v{C}ech complex (left) and a Vietoris--Rips complex (right) for the same $\varepsilon$ \citep[p.~28]{sctypes}.}
    \label{fig:complexes}
\end{figure}

A key question in constructing simplicial complexes following the approach of \v{C}ech and Vietoris--Rips complexes is that of choosing the precise value of $\varepsilon$, which dictates the entire structure of the resulting complex and, as a result, its homology and its homology representatives as well.  Rather than fixing a particular value of $\varepsilon$, the idea of \emph{persistent homology} (or simply, \emph{persistence}) is to consider all values of $\varepsilon \in [0,\infty)$ and how homology elements are created, evolve, and are destroyed as $\varepsilon$ ranges from 0 to $\infty$ \citep{phbarcode}.  Varying $\varepsilon$ creates a sequence of nested simplicial complexes, called a \emph{filtration}, $\{K_t\}_{t \in \mathbb{R}}$. Moreover, it creates a filtration of associated chain complexes, i.e., we have inclusion maps $\iota_k$ between the $k$th chain groups for each $k$ that commute with the boundary maps.

By studying the evolution of homology elements with $\varepsilon$, persistent homology tracks and measures the relative importance of topological features by considering and collating their creation and destruction, termed as \emph{birth} and \emph{death}, respectively. It turns out that pairs $K_s \subset K_t$ of simplicial complexes are sufficient to encode all the persistent homology information by considering the pushforward of the inclusion maps on the homology level \citep{PHfrompairs}. 

\begin{definition} \citep[Persistent Homology,][]{PHfrompairs} \label{defn:pershommot}
    Let $K \subset L$ be two simplicial complexes. The \textit{persistent homology groups} of this pair are ${H_k}(K,L) := \im(\iota_{*} : {H_k}(K) \to {H_k}(L))$ for every $k$.
\end{definition}

\Cref{thm:comb_ker} for a fixed simplicial complex relates the kernel of the combinatorial Hodge Laplacian to its homology groups. It turns out that a persistent version of this relation also exists for evolving simplicial complexes and a corresponding Laplacian for persistence \citep{mww}. Such a suitable Laplacian and its corresponding harmonic representatives, which can be used as persistent homology representatives, can be obtained by equipping the chain groups $C_k^K$ and $C_k^L$ with inner products such that the inclusions $\iota_k:C_k^K \to C_k^L$ are isometries, which can be done by fixing inner products on $C_k^L$ and pulling them back to $C_k^K$ via $\iota_k$. Adjoints can thus be defined and we obtain the diagram in \cref{fig:perslapsc}.

\begin{figure}
\begin{center}
\begin{tikzpicture}[x=1cm,y=1cm]
        \clip(0,-0.8) rectangle (9.5,5);
        \draw [-{Stealth[length=2mm]}] (1.6,4) --  node[above=1pt] {$d_{k+1}^{K}$} (4.5,4);
        \draw [color=black,{Stealth[length=2mm]}-] (5.5,3.9) --  node[below=1pt] {$\left(d_{k}^{K}\right)^*$} (8.4,3.9);
        \draw [color=black,-{Stealth[length=2mm]}] (5.5,4.1) --  node[above=1pt] {$d_{k}^{K}$} (8.4,4.1);
        \draw [-{Stealth[length=2mm]}] (1.6,0) --  node[below=1pt] {$d_{k+1}^{L}$} (4.5,0);
        \draw [-{Stealth[length=2mm]}] (5.4,0) --  node[below=1pt] {$d_{k}^{L}$} (8.4,0);
        \draw [color=black,-{Stealth[length=2mm]}] (2.9,2) --  node[anchor=south east] {$d_{k+1}^{K,L}$} (4.7,3.8);
        \draw [color=black,{Stealth[length=2mm]}-] (3,1.8) -- (4.8,3.6);
        \draw [right hook-{Stealth[length=2mm]}] (1,3.6) -- node[left]{$\iota_{k+1}$} (1,0.4);
        \draw [right hook-{Stealth[length=2mm]}] (5,3.6) -- node[right]{$\iota_{k}$}(5,0.4);
        \draw [right hook-{Stealth[length=2mm]}] (9,3.6) -- node[left]{$\iota_{k-1}$}(9,0.4);
        \draw [right hook-{Stealth[length=2mm]}] (2.2,1.2) -- (1.2,0.3);
        \draw[color=black] (4.2,2) node {$\left(d_{k+1}^{K,L}\right)^*$};
        \draw[color=black] (1,4) node {$C_{k+1}^{K}$};
        \draw[color=black] (5,4) node {$C_k^{K}$};
        \draw[color=black] (9,4) node {$C_{k-1}^{K}$};
        \draw[color=black] (9,0) node {$C_{k-1}^{L}$};
        \draw[color=black] (5,0) node {$C_k^{L}$};
        \draw[color=black] (1,0) node {$C_{k+1}^{L}$};
        \draw[color=black] (2.5,1.5) node {$C_{k+1}^{K,L}$};
    \end{tikzpicture}
\caption{Setting in which the persistent Laplacian is defined.} \label{fig:perslapsc}
\end{center}
\end{figure}

Additionally, two auxiliary objects are further required: 
\begin{enumerate}[noitemsep]
    \item[(i)] the space $C^{K,L}_{k+1} := \{c \in C^{L}_{k+1} \lvert d^{L}_{k+1}(c) \in C^{K}_k\}$ equipped with the restriction of the inner product of $C^{L}_{k+1}$; and
    \item [(ii)] the map $d^{K,L}_{k+1}: C^{K,L}_{k+1} \rightarrow C_k^K$ defined by $d^{K,L}_{k+1} = d^{L}_{k+1} \lvert_{C^{K,L}_{k+1}}.$
\end{enumerate}

These components together give rise to the \emph{persistent Laplacian}.
\begin{definition} \citep[Persistent Laplacian,][\S 2.2]{mww} \label{def:comb_PL}
    The \textit{$k$th persistent Laplacian} of the pair $K,L$ is the linear map $\Delta^{K,L}_k : C^{K}_k \to C^{K}_k$ given by 
    \[
    \Delta^{K,L}_k := d^{K,L}_{k+1} \circ (d^{K,L}_{k+1})^{*} +(d^{K}_k)^{*} \circ d^{K}_k.
    \]
\end{definition}

\begin{example} \label{ex:algperslap}  
    Consider the simplicial complexes $K$ (in blue) and $L$ (everything) as shown in \cref{fig:algexperslap}. We will compute the first persistent Laplacian (i.e., $k=1$) by following the approach of \citet{mww}. We choose standard inner products everywhere and the purple bases of the chain groups.
    
    \begin{figure}[H]
        \centering
        \begin{tikzpicture}[x=0.7cm,y=0.7cm]
    \clip (-4.6,-3.8) rectangle (4.6,3.2);
    \node[circle, fill=blue, minimum size=5pt, inner sep=0pt, label=above right:$c$] (c) at (2,0) {};
    \node[circle, fill=blue, minimum size=5pt, inner sep=0pt, label=above left:$a$] (a) at (-2,0) {};
    \node[circle, fill=blue, minimum size=5pt, inner sep=0pt, label=right:$d$] (d) at (0,3) {};            
    \node[circle, fill=blue, minimum size=5pt, inner sep=0pt, label=below:$b$] (b) at (0,-3) {};            
    \draw[blue,line width=2pt] (a) -- (b) -- (c) -- (d) --(a);
    \draw[black,line width=2pt] (a) -- (c);
    \begin{scope}[on background layer]
    \fill[black!20] (a.center) -- (b.center) -- (c.center) -- cycle;
    \fill[black!20] (a.center) -- (c.center) -- (d.center) -- cycle;
    \end{scope}
\end{tikzpicture}
\hspace{1cm}
\begin{tikzpicture}[x=1cm,y=1cm]
	\clip(0,-0.8) rectangle (9.4,6);
    \draw [-{Stealth[length=2mm]}] (1.6,4) --  node[above] {$0$} (4.5,4);
    \draw [-{Stealth[length=2mm]}] (5.5,4) --  node[above] {\scriptsize \setlength{\arraycolsep}{1pt} $\begin{pNiceMatrix}
        -1&0&-1&0\\
        1&-1&0&0\\
        0&1&0&-1\\
        0&0&1&1
    \end{pNiceMatrix}$} (8.4,4);
    \draw [-{Stealth[length=2mm]}] (1.6,0) --  node[above] {\scriptsize $\left(\begin{matrix}
        1&0\\
        1&0\\
        0&-1\\
        0&1\\
        -1&1\\
    \end{matrix}\right)$} (4.5,0);
    \draw [right hook-{Stealth[length=2mm]}] (1,3.6) -- node[left]{$0$} (1,0.4);
    \draw [right hook-{Stealth[length=2mm]}] (5,3.6) -- node[right]{\scriptsize $\left(\begin{matrix}
        1&0&0&0\\ 0&1&0&0\\ 0&0&1&0 \\ 0&0&0&1 \\0&0&0&0
    \end{matrix}\right)$}(5,0.4);
    \draw[color=black] (1,4) node {$\{0\}$};
    \draw[color=black] (5,4) node {$\mathbb{R}^4$};
    \draw[color=black] (9,4) node {$\mathbb{R}^4$};
    \draw[color=black] (5,0) node {$\mathbb{R}^5$};
    \draw[color=black] (1,0) node {$\mathbb{R}^2$};
    \begin{scriptsize}
    \node[color=purple,text width=0.7cm,align=center] at (0.3,0) {abc acd};
    \node[color=purple,text width=1cm,align=center] at (6,0) {ab, bc, ad, cd, ac};
    \node[color=purple,align=center,text width=1.8cm] at (5,4.6) {ab, bc, ad, cd};
    \node[color=purple,align=center] at (9,4.5) {a, b, c, d};   
    \end{scriptsize}
\end{tikzpicture}
        \caption{Above: The simplicial complexes studied in \cref{ex:algperslap}, $\textcolor{blue}{K} \subset L$. Below: The $k\in \{0,1,2\}$ component of the diagram from \cref{fig:perslapsc} for the chosen purple bases.}
        \label{fig:algexperslap}
    \end{figure}
    We find 
    $$
        C_2^{K,L} = \text{span}\left\{ \! \begin{pNiceMatrix}
            1\\1
        \end{pNiceMatrix}  \! \right\} \subset \R^2
    $$
    The orthonormal basis for $C_2^{K,L}$ is given by $\frac 1{\sqrt{2}} \begin{pNiceMatrix}
            1\\1
        \end{pNiceMatrix}$ and with respect to this basis, $d_2^{K,L}$ is the matrix
    $$\frac 1 {\sqrt{2}}\begin{pNiceMatrix}
            1 \\1 \\ -1 \\ 1
        \end{pNiceMatrix} \raisebox{-5ex}{.}$$
    Consequently, the matrix representation of $\Delta_1^{K,L}$ for the given basis is
    $$
        \frac{1}{2} \begin{pNiceMatrix}
            1 \\1 \\ -1 \\ 1
        \end{pNiceMatrix}
        \begin{pNiceMatrix}
            1 &1 & -1 & 1
        \end{pNiceMatrix} + \begin{pNiceMatrix}
        -1&1&0&0\\
        0&-1&1&0\\
        -1&0&0&1\\
        0&0&-1&1
    \end{pNiceMatrix}  \begin{pNiceMatrix}
        -1&0&-1&0\\
        1&-1&0&0\\
        0&1&0&-1\\
        0&0&1&1
    \end{pNiceMatrix} = \frac{1}{2} \begin{pNiceMatrix}
        5&-1&1&1\\
        -1&5&-1&-1\\
        1&-1&5&1\\
        1&-1&1&5
    \end{pNiceMatrix}\raisebox{-5ex}{,}
    $$
    which has eigenvalues $\{4,4,4,8\}$.
\end{example}

Persistent Laplacians give rise to a persistent combinatorial Hodge theorem, a result that we aim to generalize as one of our main contributions in \cref{sec:laplacians}.

\begin{theorem}[Persistent Combinatorial Hodge Theorem, \citet{mww}]
     We have \\$\ker(\Delta_k^{K,L}) \cong H_k(K,L)$ with the isomorphism given by $v \mapsto [\iota_k (v)]$.
\end{theorem}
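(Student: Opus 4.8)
The plan is to run the classical finite-dimensional Hodge argument, exploiting that $\Delta_k^{K,L}$ is a sum of two positive semidefinite operators on the inner product space $C_k^K$. For any $v \in C_k^K$ one computes $\langle \Delta_k^{K,L} v, v\rangle = \| (d_{k+1}^{K,L})^* v\|^2 + \| d_k^K v\|^2$, so $v \in \ker \Delta_k^{K,L}$ iff $(d_{k+1}^{K,L})^* v = 0$ and $d_k^K v = 0$. Using the standard identity $\ker (d_{k+1}^{K,L})^* = (\im d_{k+1}^{K,L})^\perp$ (orthogonal complement taken inside $C_k^K$, where $d_{k+1}^{K,L}$ genuinely lands because of the definition of $C_{k+1}^{K,L}$), this gives
\[
\ker \Delta_k^{K,L} \;=\; \ker d_k^K \,\cap\, (\im d_{k+1}^{K,L})^\perp .
\]

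The key step is to identify the subspace $\im d_{k+1}^{K,L} \subseteq C_k^K$. Identifying $C_k^K$ with its isometric image $\iota_k(C_k^K) \subseteq C_k^L$, the definition $C_{k+1}^{K,L} = \{c \in C_{k+1}^L : d_{k+1}^L(c) \in C_k^K\}$ together with $d_{k+1}^{K,L} = d_{k+1}^L|_{C_{k+1}^{K,L}}$ shows essentially tautologically that $\im d_{k+1}^{K,L} = C_k^K \cap \im d_{k+1}^L$. Since $\im d_{k+1}^L \subseteq \ker d_k^L$ and, by compatibility of the inclusions with the boundary maps ($\iota_{k-1}\circ d_k^K = d_k^L\circ\iota_k$, with $\iota_{k-1}$ injective), $C_k^K \cap \ker d_k^L = \ker d_k^K$, we conclude $\im d_{k+1}^{K,L} \subseteq \ker d_k^K$. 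Hence $\ker d_k^K$ decomposes orthogonally as $\im d_{k+1}^{K,L} \oplus (\ker \Delta_k^{K,L})$, so the quotient map restricts to an isomorphism $\ker \Delta_k^{K,L} \xrightarrow{\sim} \ker d_k^K / \im d_{k+1}^{K,L}$, $v \mapsto [v]$.

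It then remains to identify $\ker d_k^K / \im d_{k+1}^{K,L}$ with $H_k(K,L) = \im(\iota_* : H_k(K) \to H_k(L))$ compatibly with the inclusion. Consider the composite surjection $\ker d_k^K \twoheadrightarrow H_k(K) \xrightarrow{\iota_*} H_k(K,L)$, $z \mapsto [z]_K \mapsto [\iota_k z]_L$. Its kernel is the set of cycles $z \in \ker d_k^K$ with $\iota_k z \in \im d_{k+1}^L$, which by the previous paragraph equals $C_k^K \cap \im d_{k+1}^L = \im d_{k+1}^{K,L}$. Thus the composite descends to an isomorphism $\ker d_k^K / \im d_{k+1}^{K,L} \xrightarrow{\sim} H_k(K,L)$, $[z] \mapsto [\iota_k z]$; composing with the isomorphism of the previous paragraph yields $\ker \Delta_k^{K,L} \xrightarrow{\sim} H_k(K,L)$, $v \mapsto [\iota_k v]$, as claimed.

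I expect the only genuinely delicate point to be the bookkeeping around the isometric identification $C_k^K \hookrightarrow C_k^L$ and the boundary-map compatibility of the inclusions: one must take the two orthogonal complements in the correct ambient spaces and check that the auxiliary operator $d_{k+1}^{K,L}$ indeed maps into $C_k^K$, so its adjoint is defined on $C_k^K$. Everything else is the classical Hodge decomposition in finite dimensions, and since the excerpt already fixes the inner products so that the inclusions are isometries, these points are routine to spell out. Alternatively, this statement is the finite-dimensional simplicial special case of the generalized persistent Hodge theorem proved later in \cref{sec:laplacians}, and could be cited from there instead of reproved.
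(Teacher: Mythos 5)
Your proof is correct and follows essentially the same route the paper takes for its generalization (\cref{thm:ker_hom_2}): the Hodge decomposition identifies $\ker(\Delta_k^{K,L})$ with $\ker(d_k^K)/\im(d_{k+1}^{K,L})$, and the tautological identity $\im(d_{k+1}^{K,L})=\iota_k(C_k^K)\cap\im(d_{k+1}^L)$ then matches this quotient with $\im(\iota_*)$. The paper itself states this combinatorial case without proof (citing \citet{mww}) and, as you note, it is exactly the finite-dimensional instance of \cref{thm:ker_hom_2}, where the closedness hypothesis on $\im(d_{k+1}^L)$ is automatic.
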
 

\section{Operator Theory: Main Tools and Supporting Results}
\label{sec:background}

To achieve a level of generality that comprises both chain complexes of Hilbert spaces as in \cref{sec:motivation}, as well as de Rham complexes on manifolds, we will formulate our concepts in terms of \emph{densely defined operators between Hilbert spaces}. In this section, we outline some useful notions from operator theory in order to work with such operators. Our discussion is intended for readers with knowledge of the main concepts of functional analysis, but does not require prior experience with operator theory.

Throughout this section, we consider Hilbert spaces $\Hh$ and $\Kk$ over a field $\IK \in \{\R,\IC\}$ and a linear operator $T: \Hh \supset \dom(T) \rightarrow \Kk$ such that the domain $\dom(T)$ is a dense subset of $\Hh$. We call such operators \textit{densely defined}. In operator theory, we usually work with densely defined operators on Hilbert spaces rather than everywhere defined operators on their domains. The main reason for this is that otherwise we could not define adjoint or closed operators. As we shall see, these two concepts are essential for our work.

\subsection{Adjoint and Closed Operators}

Standard results from operator theory are typically formulated for operators acting on one Hilbert space, that is, for the case that $\Hh=\Kk$ (see \citet{schroedingerop}). For our studies, we require operators acting between Hilbert spaces, thus, we will re-prove some results from operator theory for operators acting between two different Hilbert spaces. We restrict to densely defined operators which enable us to define an adjoint. The following of this section consists of the more detailed and generalized versions of the definitions and results given by \citet[p.~12]{schroedingerop}.

\begin{definition}[Adjoint Operator, \citet{schroedingerop}]
    The \emph{adjoint} $T^*$ of a densely defined operator $T$ is defined on
$$
    \dom T^*=\{ v\in \mathcal{K}:\text{there is } g \in \mathcal{H} \text{ such that for all } u\in \dom T \text{ we have }\langle g,u\rangle_\mathcal{H}=\langle v,Tu\rangle_\mathcal{K}\}.
$$
For $v\in \dom T^*$, we set $T^*v:=g$.
\end{definition}

Notice that since $T$ is densely defined, such a $g$ is unique and therefore $T^*$ is well-defined: Consider some $v\in \dom T^*$, and some $g_1,g_2\in \mathcal{H}$ with $\langle g_1,u\rangle_\mathcal{H}=\langle g_2,u\rangle_\mathcal{H}=\langle v,Tu\rangle_\mathcal{K}$ for all $u \in \dom(T)$. Then $T$ being densely defined implies that there exists $(u_n)\subset \dom T$ such that $u_n\to g_1-g_2$. Thus we have
\begin{align*}
    \langle g_1-g_2,g_1-g_2\rangle_\mathcal{H}&=\langle g_1,g_1-g_2\rangle_\mathcal{H}-\langle g_2,g_1-g_2\rangle_\mathcal{H}=\lim_{n\to\infty}(\langle g_1,u_n\rangle_\mathcal{H}-\langle g_2,u_n\rangle_\mathcal{H})\\
    &=\lim_{n\to\infty}(\langle v,Tu_n\rangle_\mathcal{K}-\langle v,Tu_n\rangle_\mathcal{K})=0.
\end{align*}
Hence $g_1=g_2$.

Next, we introduce the property of closedness of operators.

\begin{prop}[Closed Operators, \citet{schroedingerop}] \label{def:closedop}
    The following are equivalent:
    \begin{enumerate}
        \item[(a)] The graph of $T$, $\Gg(T):=\{(u,T(u)): u \in \dom(T)\} \subset \Hh \times \Kk$ is a closed subset of $\Hh \times \Kk$.
        \item[(b)] $\dom(T)$ is complete in $\Hh$ with respect to the norm $\displaystyle ||u||_g:= \sqrt{||T(u)||^2_\Kk + ||u||^2_\Hh}.$
        \item[(c)] $\dom(T)$ is complete in $\Hh$ with respect to the weighted norm $\displaystyle ||u||_{g,m}:= \sqrt{||T(u)||^2_\Kk + m||u||^2_\Hh}$ for all $m \in \R_{> 0}$.
    \end{enumerate}
    If these properties are satisfied, $T$ is a \emph{closed} operator.
\end{prop}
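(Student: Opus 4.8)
The plan is to run through the cycle of implications by identifying $\dom(T)$, equipped with the graph norm, with the graph $\Gg(T)$ sitting inside the Hilbert space $\Hh \times \Kk$, and by observing that the weighted graph norms are all mutually equivalent. Concretely, I would equip $\Hh \times \Kk$ with the inner product $\langle (u,k),(u',k')\rangle := \langle u,u'\rangle_\Hh + \langle k,k'\rangle_\Kk$; since $\Hh$ and $\Kk$ are complete, $\Hh \times \Kk$ is a Hilbert space, with induced norm $\|(u,k)\| = \sqrt{\|u\|_\Hh^2 + \|k\|_\Kk^2}$.

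For (a) $\Leftrightarrow$ (b): consider the linear map $\Phi \colon \dom(T) \to \Gg(T)$, $u \mapsto (u, T(u))$. It is a bijection by construction, and $\|\Phi(u)\| = \sqrt{\|u\|_\Hh^2 + \|T(u)\|_\Kk^2} = \|u\|_g$, so $\Phi$ is an isometric isomorphism from $(\dom(T), \|\cdot\|_g)$ onto $\Gg(T)$ with the subspace metric from $\Hh \times \Kk$. Hence $(\dom(T), \|\cdot\|_g)$ is complete if and only if $\Gg(T)$ is complete as a metric subspace of $\Hh \times \Kk$. Since $\Hh \times \Kk$ is complete, a subset of it is complete precisely when it is closed; this yields the equivalence of (a) and (b).

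For (b) $\Leftrightarrow$ (c): the implication (c) $\Rightarrow$ (b) is immediate by taking $m = 1$, since $\|\cdot\|_{g,1} = \|\cdot\|_g$. For (b) $\Rightarrow$ (c), I would note that for every $m \in \R_{>0}$ and every $u \in \dom(T)$ one has the two-sided bound $\min(1,m)\,\|u\|_g^2 \le \|u\|_{g,m}^2 \le \max(1,m)\,\|u\|_g^2$, obtained by comparing $\|T(u)\|_\Kk^2 + m\|u\|_\Hh^2$ with $\|T(u)\|_\Kk^2 + \|u\|_\Hh^2$ term by term. Thus $\|\cdot\|_{g,m}$ and $\|\cdot\|_g$ are equivalent norms on $\dom(T)$, and completeness is preserved under passage to an equivalent norm, so (b) forces $\dom(T)$ to be complete with respect to $\|\cdot\|_{g,m}$ for every $m > 0$. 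Assembling these gives (a) $\Leftrightarrow$ (b) $\Leftrightarrow$ (c).

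I do not expect a genuine obstacle here: the argument is routine functional analysis. The only points that merit care are keeping the isometry $\Phi$ explicit so that ``graph is closed'' and ``graph norm is complete'' are literally transported into one another, and invoking cleanly the standard fact that a subspace of a complete metric space is complete if and only if it is closed.
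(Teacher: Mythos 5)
Your proposal is correct. It takes a genuinely different, more structural route than the paper: the paper proves the cycle (a)$\Rightarrow$(c)$\Rightarrow$(b)$\Rightarrow$(a) by direct Cauchy-sequence chasing --- extracting from a Cauchy sequence in the (weighted) graph norm that $(u_n)$ and $(Tu_n)$ are separately Cauchy, passing to limits in $\Hh$ and $\Kk$, and invoking closedness of the graph (or, in the reverse direction, uniqueness of limits) to identify the limit --- whereas you transport the whole question through the isometric isomorphism $\Phi\colon (\dom(T),\|\cdot\|_g)\to \Gg(T)\subset \Hh\times\Kk$ and reduce (a)$\Leftrightarrow$(b) to the standard fact that a subset of a complete metric space is complete iff it is closed, then dispose of (c) entirely by the two-sided norm-equivalence bound $\min(1,m)\|u\|_g^2\le\|u\|_{g,m}^2\le\max(1,m)\|u\|_g^2$. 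Your version buys conceptual economy and makes it transparent \emph{why} the three conditions coincide (they are all the same completeness statement in disguise); the paper's version is more self-contained at the level of first principles and rehearses exactly the sequence manipulations that recur later in the paper (e.g.\ in the proofs that $T^*$ is closed and that $d_{k+1}^{P,Q}$ is closed), so it doubles as a warm-up for those arguments. Both are complete and correct.
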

Closedness of operators is essential for defining their spectrum; it may be viewed as a weaker analogue of continuity since the usual notion of continuity is not applicable for operators that are not defined everywhere. Closedness serves as a substitute, ensuring that closed operators ``behave well'' with respect to limits.

\begin{proof}
    (a)$\implies$(c): Assume that the graph of $T$ is closed and there are $u_n \in \dom T$ for $n \in \N$ such that $(u_n)$ is a Cauchy sequence with respect to the weighted graph norm. We aim to show that for some $u\in \dom T$, as $n \to 0$, 
    $$
    \|u_{n}-u\|_{g,m} = (\|Tu_{n}-Tu\|^2+m\|u_{n}-u\|^2)^{1/2} \to 0.
    $$
    Since $(u_n)$ is Cauchy, for any $\varepsilon>0$, there exists $N\in\mathbb{N}$ such that
    $$
    \|u_n-u_\ell\|_{g,m}=(\|Tu_n-Tu_\ell\|^2+m\|u_n-u_\ell\|^2)^{1/2}<\sqrt{m}\, \varepsilon
    $$
    for all $n,\ell>N$. Given $\|Tu_n-Tu_\ell\|^2\geq 0$, it implies that
    $$
    m\|u_n-u_\ell\|^2<m\varepsilon^2 \Longrightarrow \|u_n-u_\ell\|<\varepsilon,
    $$
    for all $n,\ell>N$. Thus, $(u_n)$ is Cauchy with respect to the norm on $\Hh$. Since $\mathcal{H}$ is complete, $(u_{n})$ converges to some $u\in\mathcal{H}$. Similarly, $(Tu_n)$ is Cauchy with respect to the usual norm, so $Tu_{n}\to y$ for some $y\in\mathcal{K}$. Since $T$ has a closed graph, $u\in\dom T$ and $y=Tu$, which completes our proof.\\

    \noindent (c)$\implies$(b): (b) is a special case of (c) by choosing $m=1$.\\

    \noindent (b)$\implies$(a): Now assume $\dom T$ is complete with respect to the graph norm. Consider a sequence
    $((u_n,Tu_n))_{n\in \mathbb{N}}$ in $\dom T \times \mathcal{K}$ that converges to some $(u,y)$ in $\mathcal{H}\times \mathcal{K}$. Since $u_n \to u$, for any $\varepsilon>0$, there exists $N_1\in\mathbb{N}$ such that $\|u_n-u_\ell\|<\frac{\varepsilon}{\sqrt{2}}$ for all $n,\ell>N_1$. Similarly, there exists $N_2\in\mathbb{N}$ such that $\|Tu_n-Tu_\ell\|<\frac{\varepsilon}{\sqrt{2}}$ for all $n,\ell>N_2$. Choose $N = \max\{N_1,N_2\}$, then for all $n,\ell>N$, we have
    $$
    (\|Tu_n-Tu_\ell\|^2+\|u_n-u_\ell\|^2)^{1/2}<\varepsilon.
    $$
    Thus, $(u_n)$ is Cauchy with respect to the graph norm, hence it has a limit $v$ such that 
    $$
    \|u_n-v\|_g\to 0 \Longrightarrow \|u_n-v\| \to 0 \ \text{ and }\ \|Tu_n-Tv\| \to 0.
    $$
    By uniqueness of the limit, we have $v=u\in \dom T$ and $Tv=Tu=y$.
\end{proof}

It turns out that the adjoint of a densely defined operator is always closed.
\begin{lemma}\label{T^* closed}
    $T^*$ is closed.
\end{lemma}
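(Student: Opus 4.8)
The plan is to verify the equivalent condition (a) of \Cref{def:closedop}: namely that the graph $\Gg(T^*) \subset \Kk \times \Hh$ is a closed subset of $\Kk \times \Hh$. Note that $T^*$ is well defined (as already checked, using that $T$ is densely defined), so $\Gg(T^*)$ genuinely is the graph of a linear operator; it remains only to see that it is closed. The cleanest route identifies $\Gg(T^*)$ as an orthogonal complement, which is automatically closed.

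Concretely, I would first introduce the ``flip'' map $V \colon \Hh \times \Kk \to \Kk \times \Hh$, $V(u,f) := (f,-u)$, and observe that it is a unitary isomorphism of Hilbert spaces: it is linear and bijective, with inverse $(g,v) \mapsto (-v,g)$, and it preserves the product inner product. Then I would unwind the definition of the adjoint: a pair $(v,g) \in \Kk \times \Hh$ lies in $\Gg(T^*)$ precisely when $\langle g, u\rangle_\Hh = \langle v, Tu\rangle_\Kk$ for all $u \in \dom(T)$, which rearranges to $\langle (v,g), (Tu,-u)\rangle_{\Kk \times \Hh} = 0$ for all $u \in \dom(T)$, i.e. $(v,g) \perp V(\Gg(T))$. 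Since $T$ is linear, $\Gg(T)$ is a linear subspace of $\Hh\times\Kk$ and hence so is $V(\Gg(T))$; the above shows $\Gg(T^*) = \bigl(V(\Gg(T))\bigr)^{\perp}$.

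Because the orthogonal complement of any subset of a Hilbert space is a closed subspace, $\Gg(T^*)$ is closed, and \Cref{def:closedop} then yields that $T^*$ is a closed operator, as claimed.

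I do not expect a genuine obstacle here; the only point requiring care is the bookkeeping of the two product spaces $\Hh \times \Kk$ and $\Kk \times \Hh$ and the sign in the definition of $V$ (a different sign convention would place the complement on the other side but change nothing essential). As an alternative that avoids the flip map entirely, one can argue directly with sequences: if $(v_n, T^*v_n) \to (v,g)$ in $\Kk \times \Hh$, then for every $u \in \dom(T)$ continuity of the inner products gives $\langle g,u\rangle_\Hh = \lim_{n} \langle T^*v_n, u\rangle_\Hh = \lim_{n} \langle v_n, Tu\rangle_\Kk = \langle v, Tu\rangle_\Kk$, so $v \in \dom(T^*)$ with $T^*v = g$; thus $\Gg(T^*)$ contains the limit of every convergent sequence of its points and is therefore closed.
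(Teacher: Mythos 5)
Your proposal is correct. Your primary argument — identifying $\Gg(T^*)$ as the orthogonal complement $\bigl(V(\Gg(T))\bigr)^{\perp}$ of the flipped graph and concluding closedness from the fact that orthogonal complements are always closed — is a genuinely different route from the paper's proof of this lemma, which is exactly the sequential argument you sketch as an alternative at the end: take $(v_n)\subset\dom T^*$ with $v_n\to v$ and $T^*v_n\to g$, and pass to the limit in $\langle T^*v_n,u\rangle_\Hh=\langle v_n,Tu\rangle_\Kk$ using continuity of the inner products. The two arguments have essentially the same mathematical content (both unwind the defining identity of the adjoint and invoke continuity of the inner product, once through limits of sequences and once through the closedness of a perp), but your graph-complement formulation has the advantage of producing the identity $\Gg(T^*)=\bigl(V(\Gg(T))\bigr)^{\perp}$ explicitly — which is precisely the relation the paper needs and re-derives in the proof of the subsequent result \cref{T^**=T} on $T^{**}=T$. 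So your main route front-loads machinery that pays off immediately afterwards, while the paper's choice keeps this lemma self-contained and elementary. One small bookkeeping note: your sign convention $V(u,f)=(f,-u)$ differs from the paper's $V(x,y)=(-y,x)$, but as you observe this changes nothing essential.
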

\begin{proof}
    Take a sequence $(v_n)\subset \dom T^*$ with $v_n\to v$ and $T^*v_n \to g$ for some $v \in \mathcal{K}$ and $g\in\mathcal{H}$. By \cref{def:closedop}(a), it suffices to show that $\langle g,u\rangle_\mathcal{H}=\langle v,Tu\rangle_\mathcal{K}$ for all $u\in \dom T$. The continuity of inner products yields
    $$
        \langle g,u\rangle_\mathcal{H} = \lim_{n\to\infty}\langle T^*v_n,u\rangle_\mathcal{H}=\lim_{n\to\infty}\langle v_n,Tu\rangle_\mathcal{K}=\langle v,Tu\rangle_\mathcal{K},
    $$
    as needed.
\end{proof}

Conversely, we will show that the adjoint of a closed (and densely defined) operator is densely defined. Hence we can define its adjoint, which coincides with the operator itself.
\begin{lemma}\label{T^**=T}
    If $T$ is closed, then $T^*$ is densely defined and $T^{**}:=(T^*)^*=T$, meaning that $\dom(T^{**})=\dom(T)$ and for all $u \in \dom(T): \ Tu=T^{**}u$.
\end{lemma}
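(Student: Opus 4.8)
The plan is to argue via graphs and the unitary ``rotation'' on the product Hilbert spaces, following the single-space argument but carefully tracking which space is which. First I would introduce the unitary map $V\colon\Hh\times\Kk\to\Kk\times\Hh$, $V(u,w)=(w,-u)$ (clearly $\IK$-linear and norm-preserving, hence unitary), and verify directly from the definition of the adjoint that
\[
\Gg(T^*) \;=\; \big(V\,\Gg(T)\big)^{\perp}
\]
as subspaces of $\Kk\times\Hh$: the condition $(v,g)\perp V\Gg(T)$ unwinds to $\langle g,u\rangle_\Hh=\langle v,Tu\rangle_\Kk$ for all $u\in\dom T$, which is exactly the statement that $v\in\dom T^*$ and $T^*v=g$. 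Since $T$ is closed, $\Gg(T)$ is a closed subspace by \cref{def:closedop}(a), and since $V$ is unitary, $V\Gg(T)$ is a closed subspace of $\Kk\times\Hh$; we therefore obtain the orthogonal decomposition $\Kk\times\Hh = V\Gg(T)\oplus\Gg(T^*)$.

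Next I would use this decomposition to prove that $\dom T^*$ is dense. Take any $v_0\in\Kk$ orthogonal to $\dom T^*$. Then $(v_0,0)$ is orthogonal to every $(v,T^*v)\in\Gg(T^*)$, so $(v_0,0)\in V\Gg(T)$, i.e. $(v_0,0)=(Tu,-u)$ for some $u\in\dom T$; reading off the second coordinate gives $u=0$, hence $v_0=T0=0$. Thus $\dom T^*$ is dense in $\Kk$, so $T^{**}=(T^*)^*$ is well-defined.

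Finally, for $T^{**}=T$ I would apply the same graph identity to the densely defined operator $T^*\colon\Kk\supset\dom T^*\to\Hh$: with the unitary $\widetilde V\colon\Kk\times\Hh\to\Hh\times\Kk$, $\widetilde V(v,g)=(g,-v)$, the identical computation gives $\Gg(T^{**})=\big(\widetilde V\,\Gg(T^*)\big)^{\perp}$. Since $\widetilde V\circ V=-\mathrm{id}$ on $\Hh\times\Kk$ and $\widetilde V$ (being unitary) satisfies $\widetilde V(M^{\perp})=(\widetilde V M)^{\perp}$, applying $\widetilde V$ to the displayed identity for $\Gg(T^*)$ yields $\widetilde V\,\Gg(T^*) = \big(\widetilde V V\,\Gg(T)\big)^{\perp} = \big({-}\Gg(T)\big)^{\perp} = \Gg(T)^{\perp}$. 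Hence $\Gg(T^{**}) = \Gg(T)^{\perp\perp} = \overline{\Gg(T)} = \Gg(T)$, the last equality again by closedness of $T$. Equality of graphs is precisely the assertion $\dom(T^{**})=\dom(T)$ together with $T^{**}u=Tu$ for all $u\in\dom(T)$.

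The routine parts — linearity and unitarity of $V,\widetilde V$, the bookkeeping $\widetilde V\circ V=-\mathrm{id}$, and $U(M^\perp)=(UM)^\perp$ for unitary $U$ — I would dispatch in a line each. The only step needing genuine care is the first: stating the orthogonal-complement identity correctly for operators between two \emph{distinct} Hilbert spaces, including the sign in $V$ and the ambient product space, since a careless choice produces $\overline{\Gg(T^*)}$ instead of $\Gg(T^*)$, or lands in the wrong space; once that is pinned down, everything else is formal. (Alternatively one could show the inclusion $T\subseteq T^{**}$ by hand — this does not use closedness — and invoke the graph argument only for the reverse inclusion $\dom T^{**}\subseteq\dom T$, but the symmetric graph argument above is cleaner and more self-contained.)
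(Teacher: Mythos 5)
Your proposal is correct and follows essentially the same route as the paper's proof: both hinge on the graph identity $\Gg(T^*)=(V\,\Gg(T))^\perp$ for a unitary rotation of the product space, use closedness of $\Gg(T)$ to remove closures, and conclude both density of $\dom T^*$ and $\Gg(T^{**})=\Gg(T)$ by taking double orthogonal complements. Your version is in fact slightly cleaner in distinguishing the two rotations $V$ and $\widetilde V$ between the two product spaces, where the paper reuses the symbol $V$ for both directions.
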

\begin{proof} \hspace{-0.2cm}\footnote{This proof is an adaptation of the one in \url{https://proofwiki.org/wiki/Densely-Defined_Linear_Operator_is_Closable_iff_Adjoint_is_Densely-Defined}}
    Define the operator $V:\mathcal{H}\times\mathcal{K}\to\mathcal{K}\times\mathcal{H}$ by $V(x,y)=(-y,x)$. Assume $h\in (\dom T^*)^\perp$, then $\langle h,u\rangle_\mathcal{K}=0$ for all $u\in\dom (T^*)$, so $\langle h,u\rangle_\mathcal{K}+\langle 0,T^*u\rangle_\mathcal{H}=0$, which implies that $(h,0)\in(\mathcal{G}(T^*))^\perp.$ Thus, $(0,h)\in V((\mathcal{G}(T^*))^\perp)=(V(\mathcal{G}(T^*)))^\perp$. Indeed, since $V$ is surjective, for any $x\in \mathcal{H}\times \mathcal{K}$, there exists some $y\in \mathcal{K}\times \mathcal{H}$ such that $x=Vy$. Then
    $$
    x\in(V(\mathcal{G}(T^*)))^\perp\Leftrightarrow \langle x,V\psi\rangle=0, \forall\ \psi\in \mathcal{G}(T^*)\Leftrightarrow \langle y,\psi\rangle=0,\forall\ \psi\in\mathcal{G}(T^*),
    $$
    where the last equivalence follows from $V$ preserving inner products. Hence
    $$
    x\in(V(\mathcal{G}(T^*)))^\perp\Leftrightarrow x\in V((\mathcal{G}(T^*))^\perp),
    $$
    as claimed.

    Note that $v\in \dom T^*$ if and only if there exists $f=T^*v \in \mathcal{H}$ such that for all $u\in \dom T$,
    $$\langle v,-Tu\rangle_\mathcal{K}+\langle f,u\rangle_\mathcal{H}=0,$$
    which implies that $\mathcal{G}(T^*) = (V(\mathcal{G}(T)))^\perp$. Therefore, given $T$ is closed
    $$
    \mathcal{G}(T)=((V(V(\mathcal{G}(T))))^\perp)^\perp=(V((V(\mathcal{G}(T)))^\perp))^\perp= (V(\mathcal{G}(T^*)))^\perp.
    $$
    Hence, $(0,h)\in \mathcal{G}(T)$, which yields that $h=0$. Thus, $T^*$ is densely.
    
    Finally, given above and $T$ is closed, we have
    $$
    \mathcal{G}((T^*)^*)=(V(\mathcal{G}(T^*)))^\perp=V((\mathcal{G}(T^*))^\perp)=V(V(\mathcal{G}(T)))=\mathcal{G}(T).
    $$
    This concludes our proof.
\end{proof}

We have seen that the property of being densely defined and closed is preserved under taking adjoints, which makes such operators natural objects to study when working with adjoints.

\subsection{Self-Adjoint Operators and Quadratic Forms}
In this section, we revert to the case of classical operator theory where $\Kk=\Hh$, so we consider densely defined closed operators $T:\Hh \rightarrow \Hh$. We are interested in the following class of such operators. 

\begin{definition}[Self-Adjoint Operators, \citet{schroedingerop}]
    A densely defined operator $T$ is called \emph{symmetric} if $\dom T \subset \dom T^*$ and for all $u\in \dom T$, $T^*u=Tu$. It is called \emph{self-adjoint} if moreover $\dom T=\dom T^*$.
    
    A symmetric operator $T$ is \emph{lower semi-bounded}, if $$m_T:=\inf_{0\neq u \in \dom(T)}\frac{\langle u, Tu\rangle}{\|u\|^2} > -\infty$$
    and \emph{nonnegative} if $m_T \ge 0$.
\end{definition}

In what follows, we will present a more implicit way to define self-adjoint operators, namely via quadratic forms. The remainder of this section contains more detailed and generalized versions of definitions and results given by \citet[pp.~29-31]{schroedingerop}.

\begin{definition}[Sesquilinear Form, \citet{schroedingerop}]
    Let $\Hh$ be a Hilbert space and $d[a]$ a subspace.
    A \emph{sesquilinear form} $a[\cdot,\cdot]$ in $\mathcal{H}$ is a map from its domain $d[a]\times d[a]$ to $\mathbb{K} \in \{\R,\IC\}$ that is linear in its first and anti-linear in its second argument.
    
    The \emph{quadratic form} associated with $a[\cdot,\cdot]$ is defined by
    $$a[u]:=a[u,u]\quad\forall\ u\in d[a].$$
    A quadratic form $a[\cdot]$ is \emph{densely defined} if $d[a]$ is dense in $\Hh$ and \emph{real-valued} if
    $a[u]\in \R$ for all $u \in d[a].$
\end{definition}

In general, a quadratic form $q[\cdot]$ is a map from $d[q]$ to $\IK$ such that for all $x,y\in d[q]$, we have $q[x+y]+q[x-y]=2q[x]+2q[y]$ \citep{kurepa}.  However, in our work, every quadratic form will be associated with a sesquilinear form.

Given a quadratic form, the \emph{polarization identities}
\begin{subequations}
    \begin{align}
    a[u,v]&=\frac{1}{4}(a[u+v]-a[u-v]+ia[u+iv]-ia[u-iv])\quad  \forall\ u,v\in d[a] \quad \text{if } \IK=\IC, \label{eq:polc}\\
    a[u,v]&=\frac{1}{4}(a[u+v]-a[u-v])\quad \forall\ u,v\in d[a] \quad \text{if } \IK=\R, \label{eq:polr}
\end{align}
\end{subequations}
allow us to recover the original sesquilinear form.

\begin{definition}[Lower Semi-Boundedness, Closedness of Quadratic Forms; \citet{schroedingerop}] \label{def:quadratic_form}
    A quadratic form $a=a[\cdot]$ is called \textit{lower semi-bounded} in $\mathcal{H}$ if it is real-valued and 
    $$m_a:=\inf_{0\neq u \in d[a]}\frac{a[u]}{\|u\|^2}>-\infty,$$
and it is called \textit{closed} in $\mathcal{H}$ if, for some (and hence any) $m>-m_a$, the set $d[a]$ is complete with respect to the norm
    $$||u||_{g,m}:=\sqrt{a[u]+m\|u\|^2}.$$
\end{definition}

Quadratic forms give rise to self-adjoint operators. We will make use of this fact later on to generalize persistent Laplacians, by looking at their respective quadratic forms.

\begin{theorem}\textnormal{\citep[Theorem 1.18]{schroedingerop}} \label{a-induced A}
    Let $a$ be a densely defined, lower semi-bounded, and closed induced quadratic form. Then there is a unique self-adjoint operator $A$ satisfying \begin{equation*}\label{domain_of_a}
        \dom A\subset d[a],
    \end{equation*}
    and $a[u,v]=\langle Au,v\rangle$ for all $u\in\dom A,v\in d[a].$
    Moreover, $A$ is lower semi-bounded with $m_a=m_A$ and the domain of $A$ is given by
    $$
    \dom A=\{u\in d[a]:\text{$\exists f\in\mathcal{H}$ such that for all $v\in d[a],\, a[u,v]=\langle f,v\rangle$}\}.
    $$
\end{theorem}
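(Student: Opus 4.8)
The statement is the First Representation Theorem relating closed semi‑bounded forms to self‑adjoint operators, and the plan is the classical one: turn $a$ (after a harmless shift) into the inner product of an auxiliary Hilbert space, invert the associated Riesz isomorphism, and recognise the inverse as the operator $A$. Fix once and for all some $m>-m_a$ and put $a_m[u,v]:=a[u,v]+m\langle u,v\rangle_{\Hh}$ on $d[a]\times d[a]$. Since $a$ is real‑valued, the polarization identities \eqref{eq:polc}--\eqref{eq:polr} force its underlying sesquilinear form, hence $a_m$, to be Hermitian; and $a_m[u]=a[u]+m\|u\|_{\Hh}^2\ge (m_a+m)\|u\|_{\Hh}^2>0$ for $u\neq 0$, so $a_m$ is an inner product on $d[a]$. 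By \cref{def:quadratic_form}, closedness of $a$ says precisely that $\Hh_a:=(d[a],a_m[\cdot,\cdot])$ is complete, i.e.\ a Hilbert space, and the estimate $\|u\|_{\Hh}^2\le (m_a+m)^{-1}a_m[u]$ shows that the inclusion $\Hh_a\hookrightarrow\Hh$ is bounded with dense range (as $d[a]$ is dense in $\Hh$).

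Next I would build the resolvent. For $f\in\Hh$ the map $v\mapsto\langle f,v\rangle_{\Hh}$ is a bounded conjugate‑linear functional on $\Hh_a$, so Riesz representation in $\Hh_a$ gives a unique $Bf\in d[a]$ with $a_m[Bf,v]=\langle f,v\rangle_{\Hh}$ for all $v\in d[a]$. Testing $v=Bf$ shows $B\colon\Hh\to\Hh$ is linear, bounded, and nonnegative, since $(m_a+m)\|Bf\|_{\Hh}^2\le a_m[Bf]=\langle f,Bf\rangle_{\Hh}\le\|f\|_{\Hh}\|Bf\|_{\Hh}$; the Hermitian symmetry of $a_m$ gives $\langle Bf,g\rangle_{\Hh}=\overline{a_m[Bg,Bf]}=a_m[Bf,Bg]=\langle f,Bg\rangle_{\Hh}$ for all $f,g$, so $B=B^{*}$; and $B$ is injective, because $Bf=0$ forces $\langle f,v\rangle_{\Hh}=0$ on the dense set $d[a]$. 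Consequently $\operatorname{ran}B=(\ker B)^{\perp}$ is dense in $\Hh$, and — the point that will make the domain a form core — $\operatorname{ran}B$ is also dense in $\Hh_a$, since $a_m[Bf,w]=0$ for all $f$ means $\langle f,w\rangle_{\Hh}=0$ for all $f$, hence $w=0$.

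Then I would set $\dom A:=\operatorname{ran}B$ and $A:=B^{-1}-m\,\mathrm{Id}$. Because $B$ is an injective bounded self‑adjoint operator with dense range, $B^{-1}$ is self‑adjoint (the identity $(S^{-1})^{*}=(S^{*})^{-1}$ for injective densely defined operators with dense range), so $A$ is self‑adjoint; clearly $\dom A\subset d[a]$, and for $u=Bf\in\dom A$, $v\in d[a]$ one has $a[u,v]=a_m[u,v]-m\langle u,v\rangle_{\Hh}=\langle f,v\rangle_{\Hh}-m\langle u,v\rangle_{\Hh}=\langle Au,v\rangle_{\Hh}$. This gives one inclusion of the domain formula; for the other, if $u\in d[a]$ and $a[u,v]=\langle f,v\rangle_{\Hh}$ for all $v\in d[a]$ with some $f\in\Hh$, then $a_m[u,v]=\langle f+mu,v\rangle_{\Hh}$, so $u=B(f+mu)\in\operatorname{ran}B$. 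For semi‑boundedness, $\langle Au,u\rangle_{\Hh}=a[u]\ge m_a\|u\|_{\Hh}^2$ on $\dom A$ gives $m_A\ge m_a$, while density of $\dom A$ in $\Hh_a$ lets one approximate any $u\in d[a]$ by $u_n\in\dom A$ in $\|\cdot\|_{g,m}$, so $a[u_n]\to a[u]$ and $\|u_n\|_{\Hh}\to\|u\|_{\Hh}$, whence the infimum over $\dom A$ equals that over $d[a]$ and $m_A=m_a$. Uniqueness is then a maximality argument: if $A'$ is self‑adjoint with $\dom A'\subset d[a]$ and $a[u,v]=\langle A'u,v\rangle_{\Hh}$, each $u\in\dom A'$ meets the domain condition with $f=A'u$, so $A'\subseteq A$; but a self‑adjoint operator has no proper symmetric extension ($A'\subsetneq A=A^{*}\subseteq (A')^{*}=A'$ is absurd), so $A'=A$.

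The crux is the middle step: constructing $B$ by Riesz representation \emph{on the form space} $\Hh_a$ and then transferring self‑adjointness from the bounded $B$ to its unbounded inverse $A$, all while carefully bookkeeping the two norms (the ambient $\Hh$‑norm and the form norm $\|\cdot\|_{g,m}$). In particular, establishing that $\operatorname{ran}B$ is dense not just in $\Hh$ but in $\Hh_a$ is the delicate ingredient, since it is exactly what guarantees $\dom A$ is a form core and yields $m_a=m_A$. The only inputs beyond the excerpt are the Riesz representation theorem and the adjoint–inverse identity $(S^{-1})^{*}=(S^{*})^{-1}$, both standard.
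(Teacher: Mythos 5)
The paper does not prove this statement; it is quoted verbatim as Theorem 1.18 of the cited reference on Schr\"odinger operators, so there is no internal argument to compare against. Your proof is the standard proof of Kato's first representation theorem --- pass to the form space $\Hh_a=(d[a],a_m)$, build the bounded self-adjoint injection $B$ via Riesz representation, set $A=B^{-1}-m$, and deduce the domain formula, $m_a=m_A$ (via density of $\operatorname{ran}B$ in $\Hh_a$), and uniqueness from maximality of self-adjoint operators --- and it is correct and complete, with the only external inputs (Riesz representation and $(S^{-1})^*=(S^*)^{-1}$) properly flagged.
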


The advantage of introducing operators through quadratic forms is that it guarantees a self-adjoint, and in particular, densely defined, operator without explicitly having to define its domain. We will see an example of this further on in \cref{sec:laplacians}. 

\subsection{Spectra of Self-Adjoint Operators} \label{subsec:spectra}

Spectral theory is a powerful and arguably the most frequently-used tool to analyze operators. In finite-dimensional linear algebra, the spectrum of an operator consists precisely of its eigenvalues.

However, when working with infinite-dimensional spaces, a more general definition of spectrum has proven to be more useful; see \citet[p.~12]{schroedingerop}. We will now summarize some definitions and facts about the spectra of self-adjoint operators. Spectra can be defined in a similar fashion for all closed operators, but the way we define the essential and discrete spectrum will only work for self-adjoint operators, which will be our main focus.

In this section, we assume that $T:\Hh \to \Hh$ is self-adjoint. 

\begin{definition}[Spectrum, \citet{schroedingerop}]
\label{def:spectrum}
    The \emph{spectrum} of $T$ is defined by
    $$
        \sigma(T)=\{z \in \IC: (T-z) \text{ does not have a bounded inverse}  \}.
    $$
    This happens in particular if $T-z$ is not injective, i.e., has non-vanishing kernel. The \emph{point spectrum} is defined as
    $$
        \sigma_p(T):=\{z \in \IC: \ker(T-z)\not=0 \}.
    $$
    Values $\lambda$ in the point spectrum are called \emph{eigenvalues} and elements $u \in \ker(T-\lambda)$ are \emph{eigenvectors}. The \emph{multiplicity} of $\lambda$ is the dimension of $\ker(T-\lambda)$.
\end{definition}
    
    In finite-dimensional spaces, all eigenvalues are of finite multiplicity and the point spectrum forms a discrete subset of $\R$. However, in infinite dimensions, this is not true in general. The part of the spectrum of $T$ that behaves as in the finite-dimensional setting is referred to as the \emph{discrete spectrum} $\sigma_\text{disc}(T)$.  In particular, $\sigma_\text{disc}(T)$ contains precisely the isolated points of $\sigma(T)$ that are eigenvalues of finite multiplicity.
    
    The reminder of the spectrum is referred to as the \emph{essential spectrum}:
    $$
        \sigma_e(T)=\sigma(T) \setminus \sigma_\text{disc}(T).
    $$

We now overview the results on spectra that will be important for our work.

\begin{lemma} \label{cor:saspecclosed}
    If $T$ is self-adjoint, then $\sigma(T)$ is closed.
\end{lemma}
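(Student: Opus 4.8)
The plan is to show the complement $\mathbb{C}\setminus\sigma(T)$ is open by producing, for each $z_0$ in the resolvent set, a small disc around $z_0$ on which $T-z$ still has a bounded inverse. This is the standard Neumann-series argument, but I would want to be a little careful because $T$ is unbounded: the resolvent $(T-z_0)^{-1}$ is a bounded operator defined on all of $\Hh$ (this is part of what $z_0\notin\sigma(T)$ means, using that $T$ is closed so that the inverse of a closed injective operator with dense range is closed, hence bounded by the closed graph theorem when its domain is all of $\Hh$), and we can manipulate bounded operators freely.

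The key steps, in order, would be: (1) Fix $z_0\notin\sigma(T)$ and set $R_0:=(T-z_0)^{-1}\in\mathcal{B}(\Hh)$; note $R_0$ maps $\Hh$ bijectively onto $\dom(T)$. (2) For $z\in\mathbb{C}$ write $T-z=(T-z_0)-(z-z_0)=(T-z_0)\bigl(I-(z-z_0)R_0\bigr)$ as operators on $\dom(T)$; here I would verify the factorization holds on the right domain, i.e.\ that $(I-(z-z_0)R_0)$ maps $\dom(T)$ into $\dom(T)$ and the composition agrees with $T-z$ on $\dom(T)$. (3) Whenever $|z-z_0|<\|R_0\|^{-1}$, the operator $I-(z-z_0)R_0$ is invertible in $\mathcal{B}(\Hh)$ via the Neumann series $\sum_{n\ge 0}(z-z_0)^nR_0^n$, which converges in operator norm. (4) Conclude that on this disc $(T-z)^{-1}=\bigl(I-(z-z_0)R_0\bigr)^{-1}R_0$ exists and is bounded, so the disc lies in the resolvent set. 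Hence the resolvent set is open and $\sigma(T)$ is closed. I would remark that self-adjointness is not actually needed for this particular statement — closedness of $T$ suffices — but since the ambient section assumes $T$ self-adjoint there is no harm.

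The main obstacle, such as it is, is bookkeeping about domains rather than any real difficulty: one must be sure that $I-(z-z_0)R_0$ genuinely restricts to a bijection of $\dom(T)$ (which follows since $R_0$ has range exactly $\dom(T)$ and $I$ preserves it, so the image of $\dom(T)$ is $\dom(T)$ plus a bounded perturbation landing back in $\dom(T)$), and that the formula $(T-z)^{-1}=(I-(z-z_0)R_0)^{-1}R_0$ indeed lands in $\dom(T)$ and inverts $T-z$ there. Once the factorization in step (2) is justified, steps (3) and (4) are the routine Neumann-series estimate and everything goes through; I would not expect to need any spectral-theoretic machinery beyond the definition of $\sigma(T)$ in \cref{def:spectrum} and the closed graph theorem.
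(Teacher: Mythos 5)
Your proof is correct, but it follows a genuinely different route from the paper's. The paper invokes the approximate-eigenvalue characterization of the spectrum of a self-adjoint operator (cited as Lemma~1.6 of the operator-theory reference): $z\in\sigma(T)$ iff for every $\varepsilon>0$ there is $u\in\dom T$ with $\|(T-z)u\|<\varepsilon\|u\|$; closedness of $\sigma(T)$ then falls out of a one-line triangle inequality. That characterization is where self-adjointness is genuinely used — it fails for general closed operators because of residual spectrum. Your Neumann-series argument instead works directly from \cref{def:spectrum}: the factorization $T-z=(T-z_0)\bigl(I-(z-z_0)R_0\bigr)$ is valid on $\dom(T)$ since $\im(R_0)=\dom(T)$, the Neumann series inverts the second factor for $|z-z_0|<\|R_0\|^{-1}$, and $(I-(z-z_0)R_0)^{-1}R_0=R_0(I-(z-z_0)R_0)^{-1}$ has range in $\dom(T)$, so the candidate inverse is well-defined and a two-sided inverse of $T-z$. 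All the domain bookkeeping you flag does check out. What your approach buys is generality — openness of the resolvent set for every closed operator, with self-adjointness playing no role, exactly as you remark — at the cost of being slightly longer; what the paper's approach buys is brevity, by leaning on a spectral characterization that is already needed elsewhere in the paper (e.g., in the proof of \cref{lem:properties}) but that is specific to the self-adjoint case.
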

\begin{proof}
    It follows from \citet[Lemma 1.6]{schroedingerop} that $z\in \sigma(T)$ if and only if for any $\varepsilon>0$, there exists some $u\in \dom T$ with $\|(T-z)u\|<\varepsilon\|u\|$.
    
    Let $z\in\overline{\sigma(T)}$. For any $\varepsilon>0$, we have some $\zeta\in (z-\frac{\varepsilon}{2},z+\frac{\varepsilon}{2}) \cap \sigma(T)$. Thus, $\|(T-\zeta)u\|<\frac{\varepsilon}{2}\|u\|$ for some $u\in \dom T$. Then
    $$
    \|(T-z)u\|\leq \|(T-\zeta)u\|+\|(\zeta-z)u\|<  \varepsilon\|u\|.
    $$
    This implies that $z\in\sigma(T)$, so the proof is completed.
\end{proof}

\begin{lemma} \label{rem:spectrum} The following hold:
    \begin{enumerate}
        \item[(a)] If the Hilbert space $\Hh$ is finite dimensional, then $\sigma(T)=\sigma_p(T)=\sigma_{\text{disc}}(T)$.
        \item[(b)] The essential spectrum $\sigma_e(T)$ is closed.
        \item[(c)] $\lambda \in \sigma_e(T)$ if and only if one or more of the following statements hold: 
        \begin{enumerate}
        \item[(i)] $\im(T - \lambda)$ is not closed; or 
        \item[(ii)] $\lambda$ is an accumulation point of eigenvalues; or 
        \item[(iii)] $\lambda$ is an eigenvalue of infinite multiplicity.
        \end{enumerate}
    \end{enumerate}
\end{lemma}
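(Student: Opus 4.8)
The plan is to treat the three parts separately; (a) and (b) are short, while (c)---a Weyl-type description of the essential spectrum---carries the real content. For (a), I would note that on a finite-dimensional $\Hh$ every linear operator is bounded and, by rank--nullity, $T-z$ admits a bounded inverse precisely when it is injective; hence $\sigma(T)=\sigma_p(T)$. Since $\sigma_p(T)$ is the finite zero set of the characteristic polynomial of $T$, every point of $\sigma(T)$ is isolated in $\sigma(T)$, and every eigenspace $\ker(T-\lambda)\subset\Hh$ is finite-dimensional; by the definition of $\sigma_{\text{disc}}$ this gives $\sigma(T)=\sigma_{\text{disc}}(T)$. For (b), I would combine \cref{cor:saspecclosed} (so $\sigma(T)$ is closed) with the fact that $\sigma_{\text{disc}}(T)$ is a subset of the isolated points of $\sigma(T)$: given $(\lambda_n)\subset\sigma_e(T)$ with $\lambda_n\to\lambda$, closedness of $\sigma(T)$ gives $\lambda\in\sigma(T)$; if $\lambda_n=\lambda$ for infinitely many $n$ then $\lambda\in\sigma_e(T)$, and otherwise $\lambda$ is a limit of points of $\sigma(T)\setminus\{\lambda\}$, hence not isolated in $\sigma(T)$, so $\lambda\notin\sigma_{\text{disc}}(T)$. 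In either case $\lambda\in\sigma_e(T)$.

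For (c), I would argue via the spectral (projection-valued) measure $E$ of $T$ together with the standard reformulation $\lambda\in\sigma_e(T)\iff\dim\im E(I)=\infty$ for every open interval $I\ni\lambda$, although (ii) and (iii) can be handled without it. The implication ``(i) or (ii) or (iii) $\Rightarrow\lambda\in\sigma_e(T)$'' splits into three cases. If (iii) holds, then $\lambda\in\sigma_p(T)\subset\sigma(T)$ but $\lambda\notin\sigma_{\text{disc}}(T)$ (which requires finite multiplicity), so $\lambda\in\sigma_e(T)$. If (ii) holds, the accumulating eigenvalues lie in the closed set $\sigma(T)$, so $\lambda\in\sigma(T)$ and $\lambda$ is not isolated there, hence $\lambda\in\sigma_e(T)$. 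If (i) holds, then $T-\lambda$ has no bounded inverse (a bounded inverse would force $\im(T-\lambda)=\Hh$, which is closed), so $\lambda\in\sigma(T)$; and $\lambda\in\sigma_{\text{disc}}(T)$ is impossible, since then $\lambda$ would be isolated in $\sigma(T)$ with $E(\{\lambda\})$ of finite rank, and the spectral calculus would make $T-\lambda$ restricted to $\ker(T-\lambda)^\perp$ boundedly invertible onto $\ker(T-\lambda)^\perp=\overline{\im(T-\lambda)}$, forcing $\im(T-\lambda)$ to be closed; hence $\lambda\in\sigma_e(T)$.

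For the converse, suppose $\lambda\in\sigma_e(T)$. If $\dim\ker(T-\lambda)=\infty$, condition (iii) holds. Otherwise $\dim\im E(\{\lambda\})<\infty$, so since $\dim\im E\big((\lambda-\tfrac1n,\lambda+\tfrac1n)\big)=\infty$ the subspace $\im E\big((\lambda-\tfrac1n,\lambda+\tfrac1n)\setminus\{\lambda\}\big)$ is infinite-dimensional; choosing a unit vector $v_n$ in it, the functional calculus gives $\|(T-\lambda)v_n\|\le\tfrac1n\to0$ while $v_n\perp\im E(\{\lambda\})=\ker(T-\lambda)$. Because a closed operator with closed range is bounded below on the orthogonal complement of its kernel (apply the closed graph theorem to the restricted inverse), this forces $\im(T-\lambda)$ to be non-closed, i.e.\ condition (i). Condition (ii) is thus a sufficient but not necessary alternative; when eigenvalues do accumulate at $\lambda$ one may instead take an orthonormal sequence of eigenvectors for distinct eigenvalues tending to $\lambda$ as the Weyl sequence.

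The step I expect to be the main obstacle is the final part of (c): after ruling out (ii) and (iii), showing that $\im(T-\lambda)$ fails to be closed. This is exactly where the spectral theorem is doing the work; without it one must construct a singular Weyl sequence by hand---extract from $(v_n)$ a subsequence with no convergent sub-subsequence using closedness of $T-\lambda$, then orthonormalize it into an approximate-eigenvector sequence---which is feasible but fiddly. A secondary point to keep track of is that conditions (i)--(iii) are not mutually exclusive, so in the converse it suffices to produce one of them rather than a clean trichotomy.
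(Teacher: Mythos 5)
Your proposal is correct. Note, however, that the paper does not actually prove most of this lemma: part (a) is delegated to Remark~1.5 of the cited spectral theory text, part (c) is delegated to p.~26 of \citet{schroedingerop}, and only part (b) receives an (extremely terse) argument --- namely that a limit $\lambda$ of points of $\sigma_e(T)$ lies in the closed set $\sigma(T)$ but cannot be isolated there, hence is not discrete. Your proof of (b) is exactly this argument, written out with the (worthwhile) case distinction between an eventually constant sequence and a genuine accumulation. For (a) and (c) you have supplied self-contained proofs of facts the paper merely cites; your route for (c) --- the reformulation $\lambda\in\sigma_e(T)\iff\dim\im E(I)=\infty$ for every open interval $I\ni\lambda$, plus a Weyl sequence orthogonal to $\ker(T-\lambda)$ to kill closedness of the range --- is the standard textbook argument and is what the cited reference contains, so there is no substantive divergence in method, only in the amount written down. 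Two minor remarks: the equivalence with the spectral projections that your converse leans on is itself a nontrivial theorem that you invoke rather than prove (acceptable here, since the paper outsources the whole of (c) to the same source), and your observation that the converse only ever produces alternatives (i) or (iii) --- so that (ii) is sufficient but never needed --- is accurate and consistent with the ``one or more'' phrasing of the statement.
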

\begin{proof}
    The proof of (a) is given as Remark 1.5 in \citet{spectraltheory}.
    
    To show (b), assume there is a sequence $(\lambda_n)_n \subset \sigma_e(T)$ that converges to $\lambda \in \sigma(T)$. Then $\lambda \not\in \sigma_\text{disc}(T)$.
    
    Part (c) follows from \citet[p.~26]{schroedingerop}.
\end{proof}

In order to generalize the notion of the $q$th smallest eigenvalue of a positive semidefinite self-adjoint operator to the infinite-dimensional setting, we define the following function, which serves as a generalized notion of eigenvalues. Let $T$ be self-adjoint, nonnegative and $m(T):= \inf\{ z \in \sigma_e(T) \}$.
    Then we define 
    \begin{equation}
    \label{eq:specctng}
    \lambda_{\bullet}(T): \N \rightarrow \R_{\ge 0}
    \end{equation}
    such that $\lambda_q(T)$ is the $q$th smallest eigenvalue (counted with multiplicity) of $T$ that is smaller than $m(T)$. If there are $n(T)$ such eigenvalues, we set
    $$
    \lambda_q(T):= m(T) \qquad \forall\ q > n(T).
    $$

Note that $n(T)$ can be $\infty$ (in which case $q>n(T)$ never applies) and $m(T)= \infty$ if $\sigma_e(T)=\varnothing$.\\
If $\Hh$ is finite dimensional, $\lambda_\bullet (T)$ ranges through all the eigenvalues from the smallest to the largest and then takes the value $\infty$.

Theorems 1.27 and 1.28 of \citet{schroedingerop} provide formulas to compute $\lambda_\bullet(T)$ (where $n(T)$ is called $N_T$). We will only exploit the following result:

\begin{theorem}\textnormal{\citep[Theorem 1.28]{schroedingerop}}
    Let $A$ be self-adjoint and lower semi-bounded with corresponding quadratic form $a$. Then for all $n \in \N$ \label{thm:ari1.28}
    $$
        \lambda_n(A) = \inf_{\substack{u_1,\dots,u_n \subset d[a] \\ \text{lin.~indep.}}} \ \sup_{0 \not= u \in \text{span}\{u_1,\dots,u_n\} } \ \frac{a[u]}{||u||^2}.
    $$
\end{theorem}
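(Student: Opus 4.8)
The plan is to reduce to the case that $A$ is nonnegative and then prove the two inequalities ``$\ge$'' and ``$\le$'' separately via the spectral theorem, the crux being a dimension count against the spectral projections of $A$. First I would note that for any $c \ge -m_A$ the operator $A + c$ is self-adjoint and nonnegative, with associated quadratic form $a[\cdot] + c\|\cdot\|^2$; moreover $\sigma(A+c) = \sigma(A) + c$, so $\lambda_n(A+c) = \lambda_n(A) + c$, while the right-hand side of the asserted identity also increases by exactly $c$. Hence we may assume $A \ge 0$. Let $E$ denote the spectral measure of $A$, write $\mu_n := \lambda_n(A)$ and $m := m(A) = \inf \sigma_e(A)$, and recall that $a[u] = \int_{[0,\infty)} \lambda \, d\langle E_\lambda u, u\rangle$ for all $u \in d[a]$ and $a[u] = \langle Au, u\rangle$ for $u \in \dom A$ (cf.\ \cref{a-induced A}). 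The structural fact I would establish at the outset is that the projection $P_n := E\big((-\infty,\mu_n)\big)$ has $\dim \operatorname{ran} P_n \le n-1$: since $\mu_n \le m = \inf \sigma_e(A)$, any point of $\sigma(A)$ strictly below $\mu_n$ lies in $\sigma_{\text{disc}}(A)$ and is thus an isolated eigenvalue of finite multiplicity, and by the definition of $\lambda_\bullet$ there are at most $n-1$ such, counted with multiplicity.

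For the inequality ``$\ge$'': given linearly independent $u_1,\dots,u_n \in d[a]$, the linear map $u \mapsto P_n u$ from $\operatorname{span}\{u_1,\dots,u_n\}$ (dimension $n$) to $\operatorname{ran} P_n$ (dimension $\le n-1$) has nontrivial kernel, so there is $0 \ne u \in \operatorname{span}\{u_1,\dots,u_n\}$ with $P_n u = 0$. Then $d\langle E_\lambda u, u\rangle$ is supported on $[\mu_n,\infty)$, whence $a[u] = \int_{[\mu_n,\infty)} \lambda\, d\langle E_\lambda u, u\rangle \ge \mu_n \|u\|^2$. Consequently $\sup_{0 \ne v \in \operatorname{span}\{u_1,\dots,u_n\}} a[v]/\|v\|^2 \ge \mu_n$, and taking the infimum over all admissible families yields ``$\ge$''.

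For the inequality ``$\le$'', I would distinguish two cases. If there are at least $n$ eigenvalues of $A$ below $m$, take $u_1,\dots,u_n \in \dom A$ to be orthonormal eigenvectors for $\lambda_1(A) \le \cdots \le \lambda_n(A) = \mu_n$; then $a\big[\sum_i c_i u_i\big] = \sum_i |c_i|^2 \lambda_i(A) \le \mu_n \big\|\sum_i c_i u_i\big\|^2$, so this family witnesses $\inf \le \mu_n$. Otherwise $\mu_n = m$; if in addition $m = \infty$, then $A$ has purely discrete spectrum with fewer than $n$ eigenvalues in total, forcing $\dim \Hh < n$, so the infimum is over the empty family and equals $\infty = \mu_n$ by convention. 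In the remaining subcase $m \in \sigma_e(A)$ is finite: fix $\varepsilon > 0$, take orthonormal eigenvectors $e_1,\dots,e_k$ (with $k \le n-1$, possibly $k = 0$) for the eigenvalues below $m$, choose $\delta > 0$ small enough that $\lambda_k(A) < m - \delta$ (vacuous if $k = 0$), and use the characterization that $\lambda \in \sigma_e(A)$ iff $\operatorname{ran} E\big((\lambda-\eta,\lambda+\eta)\big)$ is infinite-dimensional for every $\eta > 0$ to pick $e_{k+1},\dots,e_n$ orthonormal inside $\operatorname{ran} E\big((m-\delta, m+\varepsilon)\big)$, which is orthogonal to $\operatorname{span}\{e_1,\dots,e_k\}$. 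The vectors $e_1,\dots,e_n$ are then orthonormal (hence linearly independent) and lie in $\dom A$ (here nonnegativity of $A$ is used to ensure the relevant spectral subspaces sit in $\dom A$). Splitting any $u$ in their span along the orthogonal spectral pieces annihilates the cross terms of $a[\cdot]$ and gives $a[u] \le (m+\varepsilon)\|u\|^2$; letting $\varepsilon \to 0$ yields $\inf \le m = \mu_n$, and combined with ``$\ge$'' this finishes the proof.

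I expect the main obstacle to be this last subcase, $\mu_n = m(A)$, in which fewer than $n$ discrete eigenvalues lie below the essential spectrum: one cannot simply exhibit $n$ eigenvectors, and the argument instead rests on the infinite-dimensionality of spectral projections near points of $\sigma_e(A)$, together with the bookkeeping required to keep the chosen vectors simultaneously orthonormal (so that linear independence in the infimum holds), inside the form domain $d[a]$, and with Rayleigh quotients uniformly bounded up to $\varepsilon$.
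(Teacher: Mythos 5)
The paper does not prove this statement; it is quoted verbatim from \citet{schroedingerop} (Theorem 1.28), so there is no internal proof to compare against. Your argument is the standard Courant--Fischer/min--max proof via the spectral theorem, and it is correct and complete: the shift to the nonnegative case is legitimate (both sides of the identity translate by $c$, and the paper's definition of $\lambda_\bullet$ in \cref{eq:specctng} is anyway stated only for nonnegative operators, so the reduction is in fact needed to make the statement well-posed); the dimension count $\dim\operatorname{ran}E\bigl((-\infty,\mu_n)\bigr)\le n-1$ correctly uses that everything below $m(A)$ is discrete spectrum and that accumulation of eigenvalues at $m$ from below would force $n(A)=\infty$; and the case analysis for ``$\le$'' (at least $n$ eigenvalues below $m$; $m=\infty$ with $\dim\Hh<n$ and the $\inf\varnothing=\infty$ convention; $m\in\sigma_e(A)$ finite) is exhaustive and each branch is handled properly. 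One small inaccuracy worth fixing: the reason $e_{k+1},\dots,e_n\in\operatorname{ran}E\bigl((m-\delta,m+\varepsilon)\bigr)$ lie in $\dom A$ is not nonnegativity of $A$ but boundedness of the spectral interval, which makes $\int|\lambda|^2\,d\langle E_\lambda u,u\rangle$ finite for vectors in that range; this does not affect the validity of the proof.
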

Again, we follow the convention that $\inf\varnothing=\infty$.

\subsection{The Polar Decomposition of the Operators $T^*T$ and $TT^*$}
Let $T:\mathcal{H}\to \mathcal{K}$ be a closed, densely defined operator between two Hilbert spaces. Consider the sesquilinear form and its associated quadratic form
$$a[u,v]=\langle Tu,Tv\rangle,\quad a[u]=\|Tu\|^2$$
on $\mathcal{H}$ with $d[a]=\dom T$. According to \cref{a-induced A}, there exists a unique self-adjoint operator induced by $a[u]$, which we will denote by $T^*T$.

\begin{remark}\label{remark:T*T}
    Note that $a[u]$ is indeed densely defined, lower semi-bounded, and closed. The closedness of $a[u]$ is inherited from $\dom T$. It is lower semi-bounded because $\|Tu\|^2\geq 0$ for any $u\in\dom T$. Finally, since $m_a\geq0$, we can choose $m=1$ in \cref{def:quadratic_form} to show $d[a]$ is complete with respect to $\|\cdot\|_g:=\sqrt{a[u]+\|u\|^2}=\sqrt{\|Tu\|^2+\|u\|^2}$. However, given that $T$ is closed and by \cref{def:closedop}, $\dom T$ is complete with respect to $\|\cdot\|_g$, which implies that $a[u]$ is also closed.
\end{remark}

\begin{lemma}\label{lm:equivalence of T^*T}
    The operator $T^*T$ defined via quadratic form $a[u]$ is equivalent to the operator $T^*\circ T$ defined on
    $$\dom T^*\circ T=\{u\in\dom T:Tu\in\dom T^*\}.$$
\end{lemma}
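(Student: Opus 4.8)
The plan is to unwind the explicit description of the domain of a form-induced operator given in \cref{a-induced A} and match it, slot for slot, against the definition of the adjoint. Write $A := T^*T$ for the self-adjoint operator associated with the sesquilinear form $a[u,v]=\langle Tu,Tv\rangle$ on $d[a]=\dom T$; this operator exists and is well-defined because \cref{remark:T*T} verifies that $a$ is densely defined, lower semi-bounded, and closed, so \cref{a-induced A} applies. The last display of \cref{a-induced A} then gives
$$\dom A=\{u\in\dom T:\exists f\in\mathcal{H}\text{ s.t. }\langle Tu,Tv\rangle=\langle f,v\rangle\text{ for all }v\in\dom T\},\qquad Au=f.$$
So it remains to identify this set with $\{u\in\dom T:Tu\in\dom T^*\}$ and to check that on it $Au=T^*(Tu)$.

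First I would prove $\dom A\subseteq\dom(T^*\circ T)$ together with the identity. Take $u\in\dom A$ and set $f:=Au$, so $\langle Tu,Tv\rangle=\langle f,v\rangle$ for every $v\in\dom T$. Putting $w:=Tu\in\mathcal{K}$, this says precisely that there is $g\in\mathcal{H}$ (namely $g=f$) with $\langle g,v\rangle_{\mathcal{H}}=\langle w,Tv\rangle_{\mathcal{K}}$ for all $v\in\dom T$, which by the definition of the adjoint means $w=Tu\in\dom T^*$ and $T^*w=g=f$. Hence $u\in\dom(T^*\circ T)$ and $(T^*\circ T)u=T^*(Tu)=f=Au$.

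Conversely, I would show $\dom(T^*\circ T)\subseteq\dom A$ with the same action. If $u\in\dom T$ and $Tu\in\dom T^*$, then the definition of the adjoint gives $\langle T^*(Tu),v\rangle_{\mathcal{H}}=\langle Tu,Tv\rangle_{\mathcal{K}}$ for all $v\in\dom T$; with $f:=T^*(Tu)\in\mathcal{H}$ this is exactly the membership condition for $\dom A$ displayed above, so $u\in\dom A$ and $Au=f=T^*(Tu)$. Combining the two inclusions yields $\dom A=\dom(T^*\circ T)$ and $Au=T^*(Tu)$ throughout, which is the claim.

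The argument is essentially bookkeeping, so there is no substantial analytic obstacle; the one point requiring care is that the inner-product slots in the adjoint's definition are consistent with the convention for the sesquilinear form $a$ (linear in the first, antilinear in the second argument), so that the vector $f$ appearing in the form condition and the vector $g=T^*(Tu)$ appearing in the adjoint condition are literally equal rather than complex conjugates of one another. Beyond that, closedness of $T$ is used only indirectly, via \cref{remark:T*T}, to ensure that the form $a$ is closed and hence that $A=T^*T$ exists in the first place; no further limiting arguments are needed.
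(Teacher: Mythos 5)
Your proposal is correct and follows essentially the same route as the paper: both unwind the domain description from \cref{a-induced A} for the form $a[u,v]=\langle Tu,Tv\rangle$ and match it against the definition of the adjoint, concluding via uniqueness of $f$. The paper phrases this as a single chain of set equalities, while you split it into two inclusions, but the content is identical.
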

\begin{proof}
    Note that from \cref{a-induced A}, we have
    \begin{align*}
        \dom T^*T&=\{u\in \dom T:\text{$\exists f\in\mathcal{H}$ such that for all $v\in \dom T,\, \langle Tu, Tv\rangle=\langle f,v\rangle$}\}\\
        &=\{u\in \dom T:Tu\in \dom T^*\}\\
        &=\dom T^*\circ T.
    \end{align*}
    Then by the uniqueness of $f$, we have $T^*T\equiv T^*\circ T$.
\end{proof}

\begin{remark}\label{remark:not always equivalent}
    \cref{lm:equivalence of T^*T} shows that it is not necessary to distinguish between $T^*T$ and $T^*\circ T$. However, in general the assumption that operators defined via the quadratic forms coincide with the ones defined via operator composition may fail, and we will explicitly point this out when it occurs.
\end{remark}

Similarly, we define $TT^*$ via the quadratic form $b[u]:=\|T^*u\|^2$, and it is again equivalent to $T\circ T^*$. Our goal is to relate the spectra of $T^*T$ and $TT^*$.  In our construction towards this goal, we will leverage some well-known results in functional analysis, which we provide in \Cref{appendix:operator_theory} for completeness.

The results we derive in this section toward that end are similar to those presented in \S1.2.2 of \citet{schroedingerop}. However, since they consider the case where $\Hh=\Kk$, we are required to re-derive the results, which we achieve using \emph{spectral measures} of self-adjoint operators. Intuitively, these can be seen as an infinite-dimensional version of the eigendecomposition of self-adjoint matrices; for further details, see \S1.1.6 of \citet{schroedingerop}. In particular, Theorem 1.9 of \cite{schroedingerop} guarantees the existence of spectral measures. Spectral measures provide a natural way to define what it means to apply a function to an operator (for example, the square-root of an operator). 

    Let $A$ be a self-adjoint operator on a Hilbert space $\mathcal{H}$ and $P$ its spectral measure \citep[see][pp.~17-22]{schroedingerop}. For a measurable, $P$-almost everywhere finite function $\varphi$ on $\mathbb{R}$, define
    $$
    \varphi(A):=\int_\mathbb{R} \varphi(\lambda) dP_\lambda,
    $$
    with
    $$
    \dom \varphi(A)=\left\{f\in \mathcal{H}:\int_{\mathbb{R}}|\varphi(A)|^2d\langle P_\lambda f,f\rangle<\infty\right\}.
    $$

By Theorem 1.8(b) of \citet{schroedingerop}, $\varphi(A)$ is self-adjoint as long as $\varphi(r) \in \R$ for all $r \in \R$.  If we consider $P$ as the spectral measure of $T^*T$, we can define
$$
|T|:=(T^*T)^{1/2}:=\int_\mathbb{R}\lambda^{1/2}dP_\lambda,
$$
which is well-defined since $T^*T$ is nonnegative. $|T|$ is the unique self-adjoint, nonnegative operator on $\mathcal{H}$ with
\begin{equation*}
    \||T|f\|=\|Tf\|\qquad \text{for any}\qquad f\in\dom T=\dom|T|.
\end{equation*}
Indeed, we have
$$\dom |T|=\left\{f\in \mathcal{H}:\int_{\mathbb{R}}\lambda d\langle P_\lambda f,f\rangle<\infty\right\},$$
which is equivalent to 
$\dom |T|=\left\{f\in \mathcal{H}:\|Tf\|^2<\infty\right\}=\dom T.$
Moreover,
$$\||T|f\|^2=\langle|T|^2f,f\rangle=\int_{\mathbb{R}}\lambda d\langle P_\lambda f,f\rangle=\|Tf\|^2.$$

Next we discuss how we can relate two different self-adjoint operators.

\begin{definition}\citep[Unitary Equivalence,][p.~105]{teschl}
    Recall that a linear map $U:\mathcal{H}\to \mathcal{K}$ is an isomorphism if it is bijective and preserves the norm. An isomorphism is said to be \emph{unitary} if $U^*U=\text{id}_\mathcal{H}$ and $UU^*=\text{id}_\mathcal{K}$. Two operators $T:\mathcal{H}\to\mathcal{H}$ and $S:\mathcal{K}\to\mathcal{K}$ are said to be \emph{unitarily equivalent} if there exists a unitary isomorphism $U$ such that  
    $$U(\dom T)=\dom S\quad \text{and}\quad T=U^*SU.$$
\end{definition}
Notice that unitary equivalence is an equivalence relation.
The following result substantiates the important properties shared by unitarily equivalent operators.

\begin{lemma}\textnormal{\citep[see e.g.,][\S73]{akglazman}}\label{lem:allthesame}
    If two self-adjoint operators are unitarily equivalent, then they have the same eigenvalues with the same multiplicity, the same essential spectrum, and the same spectrum.
\end{lemma}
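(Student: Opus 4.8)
The plan is to exploit the relation $T = U^* S U$ (equivalently $S = U T U^*$, since $U$ is unitary) to transport all spectral data back and forth along the unitary isomorphism $U:\mathcal{H}\to\mathcal{K}$, using only that $U$ and $U^*$ are bounded linear bijections carrying $\dom T$ onto $\dom S$. First I would treat the full spectrum. For $z\in\mathbb{C}$ we have $T-z = U^*(S-z)U$ on $\dom T$, so if $(S-z)^{-1}$ is bounded then $U^*(S-z)^{-1}U$ is a bounded two-sided inverse of $T-z$, and symmetrically; hence $(T-z)$ has a bounded inverse if and only if $(S-z)$ does. By \cref{def:spectrum} this yields $\sigma(T)=\sigma(S)$.

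Next I would handle eigenvalues and multiplicities. For $\lambda\in\mathbb{C}$, since $U^*$ is injective, $(T-\lambda)u = U^*(S-\lambda)Uu = 0$ holds if and only if $(S-\lambda)(Uu)=0$; thus $U$ restricts to a linear bijection $\ker(T-\lambda)\to\ker(S-\lambda)$. A linear bijection preserves dimension as a cardinal, so $\lambda$ is an eigenvalue of $T$ precisely when it is one of $S$, with the same multiplicity. In particular $\sigma_p(T)=\sigma_p(S)$, and the finite-multiplicity eigenvalues of $T$ and $S$ coincide.

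Finally, for the essential spectrum I would use $\sigma_e=\sigma\setminus\sigma_\text{disc}$ together with the description of $\sigma_\text{disc}$ as the set of isolated points of the spectrum that are eigenvalues of finite multiplicity. Since $\sigma(T)=\sigma(S)$ as subsets of $\mathbb{C}$, their isolated points coincide, and by the previous paragraph so do those isolated points which are finite-multiplicity eigenvalues; hence $\sigma_\text{disc}(T)=\sigma_\text{disc}(S)$ and therefore $\sigma_e(T)=\sigma_e(S)$. (Alternatively one can invoke the characterization in \cref{rem:spectrum}(c): $\im(T-\lambda)=U^*\,\im(S-\lambda)$ because $U$ is surjective, and $U^*$ is a homeomorphism so one image is closed iff the other is; while accumulation points of eigenvalues and infinite-multiplicity eigenvalues are preserved by the kernel bijection above.)

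I do not expect a real obstacle here, as the content is essentially bookkeeping; the step requiring the most care is the equivalence ``$(T-z)$ has a bounded inverse iff $(S-z)$ does,'' where one must check domains: the candidate inverse $U^*(S-z)^{-1}U$ sends $\mathcal{H}$ into $U^*(\dom S)=\dom T$, and composing it with $T-z=U^*(S-z)U$ gives the identity on $\dom T$ on one side and on $\mathcal{H}$ on the other. Once that is pinned down, all three assertions follow formally.
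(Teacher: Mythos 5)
Your proposal is correct and follows essentially the same route as the paper: conjugating the resolvent by $U$ to get $\sigma(T)=\sigma(S)$, noting that $U$ restricts to a bijection $\ker(T-\lambda)\to\ker(S-\lambda)$ to match eigenvalues and multiplicities, and then deducing equality of essential spectra from the characterization of the discrete/essential spectrum (the paper cites \cref{rem:spectrum}(c), which is your alternative argument). Your extra care about domains in the resolvent step is a welcome refinement of the paper's terser computation, but not a different method.
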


\begin{proof}
   Assume $T:\mathcal{H}\to\Hh$ and $S:\Kk\to\Kk$ are unitarily equivalent, with $T=U^*SU$ for some unitary isomorphism $U:\Hh\to\Kk$. Then
    $$(T-\lambda I)^{-1}=\left(U^*(S-\lambda I)U\right)^{-1}=U^*(S-\lambda I)^{-1}U,$$
    if it exists. Thus, $\sigma(T)=\sigma(S)$. Moreover, note that
    $$x\in\ker (T-\lambda I)\Leftrightarrow Ux\in \ker(S-\lambda I),$$ so $U:\ker (T-\lambda I)\to\ker (S-\lambda I)$ is a unitary bijection. Hence, the eigenvalues and their multiplicities agree. Together with \cref{rem:spectrum}(c) this yields $\sigma_{\text{ess}}(T)=\sigma_{\text{ess}}(S)$.
\end{proof}

We will now show that $T^*T$ and $TT^*$ are indeed unitarily equivalent when restricted to the appropriate subspaces.

\begin{lemma}
    \label{lem:u}There exists a unique operator $U:\Hh \to \Kk$ such that $T=U \, |T|$ and 
    \begin{equation}\label{relation_of_U}
        \|Uf\|=\|f\|\quad \text{for all}\quad f\in\overline{\im |T|}=\ker(T)^\perp \text{ and } U|_{\overline{\im |T|}} \text{ is unitary.}
    \end{equation}
    \begin{equation}\label{kernel_of_U}
        \ker U = \ker T,
    \end{equation}
    \begin{equation}\label{range_of_U}
        \im U =\overline{\im T}.
    \end{equation}   
\end{lemma}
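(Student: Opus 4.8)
The plan is to build $U$ directly on a dense subspace and then extend by continuity. First I would define $U$ on the range of $|T|$: for $g = |T|f$ with $f \in \dom|T| = \dom T$, set $Ug := Tf$. This is well-defined precisely because $\||T|f\| = \|Tf\|$ (established just before the lemma): if $|T|f_1 = |T|f_2$ then $|T|(f_1-f_2)=0$, hence $\|T(f_1-f_2)\| = \||T|(f_1-f_2)\| = 0$, so $Tf_1 = Tf_2$. The same identity shows $U$ is an isometry from $\im|T|$ onto $\im T$, so it extends uniquely to an isometry $\overline{\im|T|} \to \overline{\im T}$. I would then extend $U$ to all of $\Hh$ by declaring it to be zero on $\ker|T|$. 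Since $|T|$ is self-adjoint, $\Hh = \overline{\im|T|} \oplus \ker|T|$ orthogonally, so this defines a bounded operator $U$ on $\Hh$ with $\|U\| \le 1$. By construction $U|T|f = Tf$ for all $f \in \dom T$, giving $T = U|T|$.

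Next I would verify the four displayed properties. For $\ker(T) = \ker|T|$: this follows immediately from $\|Tf\| = \||T|f\|$, and combined with self-adjointness of $|T|$ it gives $\overline{\im|T|} = (\ker|T|)^\perp = \ker(T)^\perp$, which is \eqref{relation_of_U}'s identification of domains. The isometry property on $\overline{\im|T|}$ is built in; that $U$ restricted there is unitary onto its image $\overline{\im T}$ follows because a surjective isometry between Hilbert spaces is unitary. Equation \eqref{kernel_of_U}: $\ker U = \ker|T| = \ker T$ since $U$ is isometric on $\overline{\im|T|}$ and zero on the complement. Equation \eqref{range_of_U}: $\im U = U(\overline{\im|T|}) = \overline{\im T}$ — here one should note that an isometry on a Hilbert space has closed range, so $\im U$ is exactly $\overline{\im T}$ rather than merely dense in it. Uniqueness: if $U'$ also satisfies $T = U'|T|$ with $U'$ vanishing on $\ker|T|$, then $U$ and $U'$ agree on $\im|T|$, hence on $\overline{\im|T|}$ by continuity, hence everywhere.

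I expect the main subtlety to be the domain bookkeeping in the infinite-dimensional setting: one must be careful that $|T|f$ genuinely ranges over a dense subset of $\ker(T)^\perp$ as $f$ ranges over $\dom T$ (not all of $\ker(T)^\perp$), so that the continuous extension step is legitimate, and that the orthogonal decomposition $\Hh = \overline{\im|T|}\oplus\ker|T|$ is valid — this uses that $|T|$ is self-adjoint, hence $\overline{\im|T|} = (\ker|T|)^\perp$. A secondary point requiring care is that "$U|_{\overline{\im|T|}}$ is unitary" is a statement about $U$ as a map onto $\overline{\im T}$, and asserting $\im U = \overline{\im T}$ exactly (closed range) relies on the standard fact that isometries have closed range; I would invoke the functional-analytic background collected in \Cref{appendix:operator_theory} for these routine facts.
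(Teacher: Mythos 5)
Your proposal is correct and follows essentially the same route as the paper's proof: define $U$ on $\im|T|$ via $U|T|f := Tf$, check well-definedness and the isometry property from $\||T|f\|=\|Tf\|$, extend continuously to $\overline{\im|T|}$ and by zero on its orthogonal complement, and deduce \eqref{kernel_of_U} and \eqref{range_of_U} from $\ker T=\ker|T|$, the self-adjointness of $|T|$, and the fact that isometries have closed range. The domain subtleties you flag (density of $\im|T|$ in $\ker(T)^\perp$ rather than equality, and the orthogonal decomposition $\Hh=\overline{\im|T|}\oplus\ker|T|$) are exactly the points the paper handles via \Cref{lem:imperp} and \Cref{cor:decompV}.
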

\begin{proof}
    We define $U:\im |T|\to\im T$ by $U\,|T|f:=Tf$. To see that it is well-defined, take $f_1,f_2 \in \im |T|$ with $|T|f_1=|T|f_2$. We then have $|T|(f_1-f_2)=0$, hence $||T(f_1-f_2)||=0$ and consequently $Tf_1=Tf_2$.
    
    Moreover, $U$ maps onto $\im T$ and is norm-preserving and therefore an isometry. Since $U$ is bounded, it extends to a unique map from $\overline{\im |T|}$ to $\overline{\im T}$ that by continuity of inner products is again an isometry. Hence, $U(\overline{\im |T|})$ is closed and therefore must be all of $\overline{\im T}$.
    
    To see that $U$ is unitary, take any $g,h \in \Hh$. We have $\langle U^*Ug, h \rangle=\langle Ug, Uh \rangle = \langle g, h \rangle$. Thus, $U^*U = I_H.$
    
    Since $U$ is surjective, for every $k \in K$, there exists $h \in H$ such that $Uh = k$. Then
    $$
    UU^* k = UU^* Uh = U (U^* U)h = U h = k,
    $$
    so $UU^* = I_K$.
    
    Next, we can extend $U$ by zero on $(\im |T|)^\perp$, so that it becomes a bounded operator satisfying (\ref{range_of_U}). Also, note that $\ker T=\ker |T|$. Thus, $\ker U=(\im|T|)^\perp=\ker |T|=\ker T$, by self-adjointness of $|T|$ and \cref{lem:imperp}. $U$ is unique, because the extension of $U$ from $\overline{\im |T|}$ to $\overline{\im T}$ is unique. Hence, \eqref{kernel_of_U} and \eqref{range_of_U} follow immediately. 
\end{proof}

\begin{theorem}
    \label{TT^*=T^*T} Let $T:\mathcal{H}\to\mathcal{K}$ be a densely defined, closed operator. Then $T^*T$ restricted to $(\ker T^*T)^\perp$ is unitarily equivalent to $TT^*$ restricted to $(\ker TT^*)^\perp$.
\end{theorem}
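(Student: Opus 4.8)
The plan is to conjugate $T^{*}T$ onto $TT^{*}$ by the unitary part of the polar decomposition $T=U\,|T|$ provided by \cref{lem:u}, after first identifying the relevant kernels and subspaces. Since $\dom|T|=\dom T$ and $\||T|f\|=\|Tf\|$ we have $\ker|T|=\ker T$, and as $|T|$ is nonnegative self-adjoint with $|T|^{2}=T^{*}T$ this gives $\ker T^{*}T=\ker|T|=\ker T$. The operator $T^{*}$ is closed and densely defined with $(T^{*})^{*}=T$ by \cref{T^* closed} and \cref{T^**=T}, so running the same construction on $T^{*}$ produces $|T^{*}|=(TT^{*})^{1/2}$ with $\ker TT^{*}=\ker|T^{*}|=\ker T^{*}$. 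Set $\Hh_{0}:=(\ker T^{*}T)^{\perp}=(\ker T)^{\perp}=\overline{\im|T|}$ and $\Kk_{0}:=(\ker TT^{*})^{\perp}=(\ker T^{*})^{\perp}=\overline{\im T}$. Then \eqref{relation_of_U}, \eqref{kernel_of_U} and \eqref{range_of_U} say precisely that $V:=U|_{\Hh_{0}}\colon\Hh_{0}\to\Kk_{0}$ is a unitary isomorphism. I would also record that $\Hh_{0}$ reduces the self-adjoint operator $T^{*}T$ and $\Kk_{0}$ reduces $TT^{*}$, since the orthocomplement of the $0$-eigenspace of a self-adjoint operator reduces it via the associated spectral projection; hence the restrictions $A:=(T^{*}T)|_{\Hh_{0}}$ and $B:=(TT^{*})|_{\Kk_{0}}$ are well-defined self-adjoint operators on $\Hh_{0}$ and $\Kk_{0}$, and these are the objects the theorem compares.

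The heart of the proof is the identity $TT^{*}=U\,(T^{*}T)\,U^{*}$, together with a matching of domains. Since $U$ is bounded and everywhere defined (after extension by zero on $(\im|T|)^{\perp}$) and $|T|$ is self-adjoint, one has $T^{*}=(U\,|T|)^{*}=|T|\,U^{*}$, with $\dom T^{*}=\{g:U^{*}g\in\dom|T|\}$. By \cref{lm:equivalence of T^*T} applied to $T^{*}$ (legitimate since $(T^{*})^{*}=T$), $\dom TT^{*}=\{g\in\dom T^{*}:T^{*}g\in\dom T\}$ and $TT^{*}g=T(T^{*}g)$ there. Substituting $T^{*}=|T|U^{*}$, $T=U|T|$, and using $|T|\circ|T|=T^{*}T$ as operators — which is \cref{lm:equivalence of T^*T} applied to the self-adjoint operator $|T|$, whose induced quadratic form $u\mapsto\||T|u\|^{2}=\|Tu\|^{2}$ is exactly the one defining $T^{*}T$, so that the pathology warned about in \cref{remark:not always equivalent} does not arise — this simplifies to $\dom TT^{*}=\{g:U^{*}g\in\dom T^{*}T\}$ and $TT^{*}g=U\,(T^{*}T)\,U^{*}g$. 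Restricting to $g\in\Kk_{0}$, where $U^{*}g=V^{*}g\in\Hh_{0}$ and $\Hh_{0}$ reduces $T^{*}T$, yields $g\in\dom B\iff V^{*}g\in\dom A$ and $Bg=VAV^{*}g$; since $V$ is unitary this is the same as $A=V^{*}BV$ and $V(\dom A)=\dom B$, i.e.\ $A$ and $B$ are unitarily equivalent via $V$, which is the assertion (and which, via \cref{lem:allthesame}, is what will later transfer spectral data between $T^{*}T$ and $TT^{*}$).

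I expect the main obstacle to be the domain bookkeeping in the unbounded setting: one has to verify that $T=U|T|$, $T^{*}=|T|U^{*}$ and $TT^{*}=U(T^{*}T)U^{*}$ hold as genuine equalities of operators, domains included, and that restricting a self-adjoint operator to the orthocomplement of its kernel produces a self-adjoint operator on that subspace. The conceptual content — that $|T|$ and $|T^{*}|$ are intertwined by the unitary part of the polar decomposition — is classical in the case $\Hh=\Kk$ (cf.\ \S1.2.2 of \citet{schroedingerop}), but because we work between two different Hilbert spaces and with possibly unbounded $T$, each of these steps must be checked directly rather than quoted.
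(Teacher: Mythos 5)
Your proposal is correct and follows essentially the same route as the paper: both rest on the polar decomposition $T=U|T|$ from \cref{lem:u}, the identity $T^*=|T|U^*$, the resulting conjugation $TT^*=U(T^*T)U^*$, and the observation that $U$ restricts to a unitary between $(\ker T)^\perp=\overline{\im|T|}$ and $(\ker T^*)^\perp=\overline{\im T}$. Your derivation of the conjugation identity via $|T|\circ|T|=T^*T$ and \cref{lm:equivalence of T^*T} applied to $T^*$ is a slightly more direct bookkeeping of the domains than the paper's detour through the quadratic forms of $TU^*$ and $(UT^*)^*$, but it is the same argument in substance.
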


\begin{proof}
    Write $T=U|T|$ and consider $T^*$ with
    \begin{align*}
    \dom T^*&=\{ f\in \mathcal{K}:\ \exists h \in \mathcal{H} \text{ such that for all } g\in \dom T : \ \langle h,g\rangle=\langle f,U|T|g\rangle\}\\
    &=\{ f\in \mathcal{K}:\ \exists h \in \mathcal{H} \text{ such that for all } g\in \dom T: \ \langle h,g\rangle=\langle U^*f,|T|g\rangle\}\\
    &=\{f\in \mathcal{K}|U^*f\in\dom |T|^*\}.
\end{align*}
    Given $|T|=|T|^*$, we have just shown that $\dom |T|U^*=\dom T^*$ and that $T^*f=|T|U^*f$ for all $f \in \dom(T^*)$ (by the uniqueness of $h$). Hence, $T^*=|T|U^*$.
    
    Since $\dom T=\dom |T|$, the previous argument also implies $\dom T^*=\dom TU^*$.
    Thus, for all $u\in \dom T^*=\dom TU^*$,
    $\|T^*u\|=\||T|U^*u\|=\|TU^*u\|.$
    Therefore, $TT^*=(TU^*)^*TU^*$, since their defining quadratic forms agree.
    
    The same argument gives $(UT^*)^*=TU^*$. Since $TU^*$ is closed, we have $UT^*=(TU^*)^*$, which now yields
    \begin{align*}
    \dom (UT^*)^*&=\{ f\in \Hh:\ \exists k \in \mathcal{K} \text{ such that for all } g\in \dom UT^* : \ \langle k,g\rangle=\langle f,UT^* g\rangle\}\\
    &=\{f\in \mathcal{H}|U^*f\in\dom T\}=\dom(TU^*)
\end{align*}
    and by uniqueness, the operators agree. Hence, we have
    $TT^*=UT^*TU^*.$
    
    Let $V:\overline{\im(|T|)} \to \overline{\im(T)}$ be the unitary restriction of $U$ with $V^*=U^*|_{\overline{\im(T)}}$. Note that $\ker(T^*T)=\ker T$ and $\ker(TT^*)^\perp=\ker (T^*)^\perp=\overline{\im (T)}$. From \cref{lem:imperp,lem:u}, we know that $\overline{\im(U^*)}=\ker(U)^\perp=\ker(T)^\perp$ and $\im(T^*)\subset \ker(T)^\perp =\dom(V)$.
    Therefore, if we restrict the above to $\overline{\im(T)}$, we obtain
    $$
    TT^*|_{(\ker TT^*)^\perp}=V(T^*T|_{(\ker T^*T)^\perp})V^*.
    $$
    For $V(\dom T^*T\cap \overline{\im(|T|})=\dom TT^*\cap\overline{\im(T)}$, note that
    \begin{align*}
        \dom TT^*\cap(\ker TT^*)^\perp&=\dom UT^*TU^*\cap(\ker TT^*)^\perp\\
        &=\{x\in(\ker TT^*)^\perp:U^*x\in\dom T^*T\}\\
        &=\{x\in\overline{\im(T)}:V^*x\in\dom T^*T\}\\
        &=\{Vy:y\in\overline{\im(|T|)}\cap \dom T^*T\}\tag{$\dagger$}\label{eq:dagger}\\
        &=V(\dom(T^*T)\cap(\ker T^*T)^\perp).
    \end{align*}
    Here, \eqref{eq:dagger} uses the fact that $V$ is unitary. Thus for any $x\in\overline{\im(T)}$, there exists a unique $y\in\overline{\im(|T|)}$ such that $Vy=x$. Then $V^*x\in \dom T^*T$ implies $V^*Vy=y\in \dom T^*T$. This completes our proof.
\end{proof}

Combining this result with \cref{lem:allthesame}, concretely, for spectra, we have the following result.

\begin{cor} \label{lem:changeorder}
    Let $T:\mathcal{H}\to\mathcal{K}$ be a densely defined, closed operator. Then the nonzero spectra of $TT^*$ and $T^*T$ coincide. In particular, they have the same nonzero eigenvalues with the same multiplicities and for all real $\lambda \not= 0$
    \begin{align*}
        \lambda \in \sigma_e(T^*T) &\Leftrightarrow \lambda \in \sigma_e(TT^*)\\
        \lambda \in \sigma(T^*T) &\Leftrightarrow \lambda \in \sigma(TT^*).
    \end{align*}
\end{cor}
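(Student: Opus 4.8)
The plan is to derive the statement by combining \cref{TT^*=T^*T} with \cref{lem:allthesame} and then stripping off the contribution of the kernels. By \cref{TT^*=T^*T}, the self-adjoint operators $A:=T^*T|_{(\ker T^*T)^\perp}$ and $B:=TT^*|_{(\ker TT^*)^\perp}$ are unitarily equivalent, so \cref{lem:allthesame} gives at once that $\sigma(A)=\sigma(B)$, that $\sigma_e(A)=\sigma_e(B)$, and that $A$ and $B$ have the same eigenvalues with the same multiplicities. It therefore remains to identify the nonzero parts of $\sigma(A)$ with those of $\sigma(T^*T)$, and of $\sigma(B)$ with those of $\sigma(TT^*)$; by symmetry it suffices to treat the first.

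Since $T^*T$ is self-adjoint, its kernel is a reducing subspace (it is the range of the spectral projection onto $\{0\}$), so with respect to the orthogonal decomposition $\mathcal{H}=\ker(T^*T)\oplus(\ker T^*T)^\perp$ we have $\dom(T^*T)=\ker(T^*T)\oplus(\dom(T^*T)\cap(\ker T^*T)^\perp)$ and $T^*T$ is the orthogonal direct sum of the zero operator on $\ker(T^*T)$ and of $A$ on $(\ker T^*T)^\perp$. From this block-diagonal form, for every $z\in\mathbb{C}$ the operator $T^*T-z$ has a bounded inverse if and only if both $-z\,\mathrm{id}_{\ker(T^*T)}$ and $A-z$ do; hence for $z\neq 0$ we get $z\in\sigma(T^*T)\Leftrightarrow z\in\sigma(A)$. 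For a nonzero eigenvalue $\lambda$, any eigenvector lies in $(\ker T^*T)^\perp$ because eigenspaces of a self-adjoint operator for distinct eigenvalues are orthogonal, so $\ker(T^*T-\lambda)=\ker(A-\lambda)$ and the multiplicities agree. Finally, for $\lambda\neq 0$, $\im(T^*T-\lambda)$ is closed if and only if $\im(A-\lambda)$ is, since the kernel block contributes the closed summand $\ker(T^*T)$ to $\im(T^*T-\lambda)$, and $\lambda$ is an accumulation point of eigenvalues, resp.\ an eigenvalue of infinite multiplicity, of $T^*T$ if and only if the same holds for $A$; by \cref{rem:spectrum}(c) this gives $\lambda\in\sigma_e(T^*T)\Leftrightarrow\lambda\in\sigma_e(A)$. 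Running the identical argument for $TT^*$ and $B$ and then chaining through the unitary equivalence $A\sim B$ yields all three claims.

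The only genuinely non-formal point is the reduction in the second paragraph: one must justify that the (possibly unbounded) operator $T^*T$ really splits as a domain-respecting orthogonal direct sum along $\ker(T^*T)\oplus(\ker T^*T)^\perp$, so that the block-diagonal resolvent argument is legitimate. This is exactly where self-adjointness enters — through the spectral projection onto $\{0\}$, which commutes with $T^*T$ and reduces it — and it is worth spelling out carefully; everything downstream is then routine bookkeeping.
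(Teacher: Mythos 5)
Your proposal is correct and follows exactly the paper's route: combine \cref{TT^*=T^*T} with \cref{lem:allthesame} and then observe that passing to the orthogonal complement of the kernel does not alter the nonzero spectrum, nonzero eigenvalues, or nonzero essential spectrum. The paper dispatches that last reduction in a single sentence, whereas you spell out the block-diagonal splitting along $\ker(T^*T)\oplus(\ker T^*T)^\perp$ and the case analysis via \cref{rem:spectrum}(c); your elaboration is accurate and fills in precisely the details the paper leaves implicit.
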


This is because nonzero spectra do not change when restricting operators to the orthogonal complement of their kernels.

\section{The Chain Laplacians} \label{sec:laplacians}

Equipped with these notions from operator theory, we will now establish the chain Laplacians and their persistent versions. The background presented in \Cref{sec:motivation} serves as our starting point, which we will generalize here.  We will abstractify from the underlying simplicial complexes and frame our construction from the more general setting of chain complexes. 

In the following table, we overview and summarize the relationship between objects in \cref{sec:motivation} and their generalized counterparts that we will introduce in this section.

\begin{center}
    \begin{tabular}{c||c}
  For finite dimensional spaces  & Operator theoretic version \\ \hline
    chain complex with inner products on chain groups & Hilbert complex\\
    (everywhere defined, bounded) boundary maps & (closed, densely defined) boundary maps\\
    Hodge Laplacian & Chain Laplacian\\
    Persistent Laplacian & Persistent Chain Laplacian
\end{tabular}
\end{center}

We will close this section with examples of all the generalized objects that we will introduce here in two classes of settings: for persistent cosheaves and for de Rham complexes.  We illustrate the possible forms of spectra arising from persistent Laplacians.

\subsection{Hilbert Complexes}

We begin by presenting the setting in which we will achieve our generalization, namely that of \emph{Hilbert complexes}. These were first systematically studied by \citet{Hilbert} and can be seen as the version of chain complexes of Hilbert spaces that is suitable for operator theoretic studies.

\begin{definition}[Hilbert Complex, \citet{Hilbert}] \label{def:cc}
A \textit{Hilbert complex} $P=(C^P_\bullet,d^{P}_{\bullet})$ is a sequence of Hilbert spaces $C^{P}_{k}$ for $k \in \IZ$ over the same field $\mathbb{K} \in \{\mathbb{R},\mathbb{C}\}$
and connected by densely defined closed linear \textit{boundary homomorphisms} $d^{P}_k : C^{P}_{k} \to C^{P}_{k-1}$ which for all $k$ satisfy 
\begin{enumerate}
    \item[(i)] $\im \! \left(d_{k+1}^P\right) \subset \ker\! \left(d_k^P \right) \subset \dom\! \left(d_k^P \right)$,
    \item[(ii)] $\dom\! \left(d_k^P \right) \cap \, \dom\! \left(d_{k+1}^P \right) ^*$ is dense in $C_k^P$.
\end{enumerate}

\end{definition}

Schematically, a Hilbert complex can be represented in the following way:
\[
\begin{tikzcd}
\cdots \arrow[r] & C^{P}_{k+1}  \arrow[r, "d^{P}_{k+1}"] & C^{P}_{k} \arrow[r, "d^{P}_{k}"] & C^{P}_{k-1} \arrow[r] & \cdots
\end{tikzcd}.
\]

We will usually denote the inner product on $C^P_k$ by $\langle \cdot ,\cdot \rangle^P_k$, but will omit the indices if there is no ambiguity. Cochain complexes (for which the boundary maps increase indices) also fall under this setting after re-indexing.

We need the operators to be densely defined and closed in order to have well-defined adjoints and to be able to use \cref{T^**=T}. \citet{Hilbert} assume that only finitely many Hilbert spaces are nonzero---a condition that we will not need, even though all examples we will consider satisfy it---and works without condition (ii). We will need this restriction for the Laplacians to be densely defined.

An important special case of Hilbert complexes is those for which the operators are defined everywhere and bounded. Such operators are closed, as we will show in \cref{lem:bdopclosed}, and include Hilbert complexes with finite-dimensional Hilbert spaces, as most complexes studied TDA are.

\begin{lemma} \label{lem:bdopclosed}
The following properties hold.
\begin{enumerate}
    \item[(a)] Every bounded operator $T:\Hh \to \Kk$ defined on a closed subspace of $\mathcal{H}$ is closed.
    \item[(b)] Let $T:\Hh \rightarrow \Kk$ be a linear operator and $\dim(\Hh)<\infty$ or $\dim(\Kk)<\infty$. Then $T$ is everywhere defined and bounded if and only if it is densely defined and closed.
\end{enumerate}
\end{lemma}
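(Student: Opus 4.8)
The plan is to treat the two parts in order, with part (a) feeding directly into part (b). For part (a), I would use the characterization of closedness from \cref{def:closedop}(b): an operator is closed if and only if its domain is complete with respect to the graph norm $\|u\|_g = \sqrt{\|T(u)\|_\Kk^2 + \|u\|_\Hh^2}$. Since $T$ is bounded with, say, operator norm $M$, one has $\|u\|_\Hh \le \|u\|_g \le \sqrt{M^2+1}\,\|u\|_\Hh$ for all $u$ in the domain, so the graph norm and the ambient norm of $\Hh$ are equivalent on $\dom(T)$. A sequence that is Cauchy in the graph norm is therefore Cauchy in $\Hh$, hence converges in $\Hh$; because $\dom(T)$ is a closed subspace of $\Hh$, the limit lies in $\dom(T)$, and equivalence of norms upgrades this to convergence in the graph norm. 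Thus $\dom(T)$ is complete in the graph norm and $T$ is closed. (Alternatively one could argue directly with the graph: boundedness gives $Tu_n \to Tu$ whenever $u_n \to u$ in $\dom(T)$, so $\Gg(T)$ is closed.)

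For part (b), the forward direction — everywhere defined and bounded implies densely defined and closed — is immediate from part (a), since an everywhere defined operator is trivially densely defined and has $\dom(T)=\Hh$, which is closed. The substantive direction is the converse: assuming $T$ is densely defined and closed, and that at least one of $\Hh,\Kk$ is finite dimensional, show $T$ is everywhere defined and bounded. I would split into the two cases. If $\dim(\Hh)<\infty$, then the only dense subspace of $\Hh$ is $\Hh$ itself, so $\dom(T)=\Hh$; and any linear map out of a finite-dimensional normed space is automatically bounded, so we are done. If instead $\dim(\Kk)<\infty$, the domain need not be all of $\Hh$ a priori, so one must work harder: the idea is to show $\dom(T)$ is closed in $\Hh$ (then density forces $\dom(T)=\Hh$, and boundedness on all of $\Hh$ — actually boundedness is what we need to establish too). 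Here I would invoke the closed graph reasoning together with finite-dimensionality of $\Kk$. One clean route: since $\dim(\Kk)<\infty$, the graph $\Gg(T)\subset \Hh\times\Kk$ being closed, together with the fact that projection onto the $\Hh$-coordinate is injective on $\Gg(T)$, lets us identify $\dom(T)$ with a closed subspace; more concretely, write $T$ in terms of finitely many coordinate functionals $\ell_i = \langle T(\cdot), e_i\rangle$ for an orthonormal basis $(e_i)$ of (the relevant subspace of) $\Kk$, and observe that closedness of $T$ forces each $\ell_i$ to be a closed, hence (being scalar-valued and densely defined with closed graph) bounded, linear functional; a bounded functional defined on a dense subspace extends uniquely, and since $T$ is closed this forces $\dom(\ell_i)=\Hh$, whence $\dom(T)=\Hh$ and $T = \sum_i \ell_i(\cdot)\, e_i$ is bounded.

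I expect the main obstacle to be the case $\dim(\Kk)<\infty$ in part (b): one has to rule out the possibility of a closed, densely-but-not-everywhere-defined operator into a finite-dimensional space, and the cleanest argument needs a small lemma to the effect that a closed scalar-valued (equivalently, finite-rank) densely defined operator is automatically bounded and everywhere defined. The key input is that a \emph{closed} linear functional on a dense subspace of a Hilbert space must be bounded — if it were unbounded, there would be a sequence $u_n \to 0$ with $\ell(u_n) \to 1$, contradicting closedness of the graph — and a bounded functional on a dense domain whose graph is already closed cannot have a proper domain. Everything else is routine norm-equivalence bookkeeping, and part (a) is genuinely short.
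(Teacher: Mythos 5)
Part (a), the forward direction of (b), and the case $\dim(\Hh)<\infty$ are all correct and essentially match the paper (the paper checks closedness in (a) directly via the graph rather than via graph-norm completeness, but these are equivalent by \cref{def:closedop}). The gap is in your treatment of the case $\dim(\Kk)<\infty$. You decompose $T$ into coordinate functionals $\ell_i(u)=\langle Tu,e_i\rangle$ and assert that ``closedness of $T$ forces each $\ell_i$ to be closed.'' This does not follow from the definition: if $u_n\to u$ and $\ell_i(u_n)$ converges, you cannot invoke closedness of $T$, because the remaining coordinates $\ell_j(u_n)$, $j\neq i$, need not converge, so $(u_n,Tu_n)$ need not converge in $\Hh\times\Kk$. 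The assertion is in fact true, but proving it takes an argument of essentially the same weight as the lemma itself: if some $|\ell_j(u_{n_k})|\to\infty$, normalize $w_k:=u_{n_k}/\ell_j(u_{n_k})\to 0$ and use compactness of the unit sphere of the finite-dimensional $\Kk$ to extract $Tw_{k_m}\to w\neq 0$, contradicting closedness of $T$ at $0$. Without some such argument the key step is asserted rather than proved. (Your parenthetical alternative---that the closed graph, projecting injectively onto the $\Hh$-coordinate, identifies $\dom(T)$ with a closed subspace---is false as stated: the graph of any closed unbounded operator is closed and projects bijectively onto its dense, non-closed domain.)

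A cleaner repair along your lines is to prove boundedness of $T$ itself first: if $T$ were unbounded, take $\|u_n\|=1$ with $\|Tu_n\|\to\infty$, set $v_n:=u_n/\|Tu_n\|\to 0$, and use compactness of the unit sphere of $\Kk$ to get $Tv_{n_k}\to w$ with $\|w\|=1$; closedness forces $w=T0=0$, a contradiction. Boundedness plus a closed graph on a dense domain then gives $\dom(T)=\Hh$, exactly as in your ``small lemma.'' The paper avoids this entirely by dualizing: since $T$ is closed, $T^*:\Kk\to\Hh$ is densely defined and closed (\cref{T^**=T}); as $\dim(\Kk)<\infty$, the already-settled case applies to $T^*$, and Riesz representation then shows $T=T^{**}$ is everywhere defined with $\|Tv\|\le\|T^*\|\,\|v\|$. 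Your route is salvageable, but as written the $\dim(\Kk)<\infty$ case is incomplete.
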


\begin{proof}
    To show (a), let $(u_n)\subset \dom T$ be a sequence converging to some $u\in \mathcal{H}$ and $(Tu_n)\subset \mathcal{K}$ converging to some $y\in \mathcal{K}$. Since $\dom T$ is closed, $u \in \dom T$. Note that
$$\|Tu_n-Tu\| = \|T(u_n-u)\| \leq \|T\|\|u_n-u\| \to 0, \text{ as }n\to \infty,$$
since $\|T\|<\infty.$ Therefore $y=Tu$. Hence $T$ is a closed operator.

For (b), we have just seen that boundedness implies closedness. For the converse, consider first the case that $\dim(\Hh)=n<\infty$. Since $T$ is densely defined, we find $n$ linearly independent vectors $v_1,\dots,v_n \in \dom(T)$; otherwise, $\dom(T)$ would be contained in an $(n-1)$ dimensional subspace of $\Hh$ and would therefore not be dense. By linearity, $\dom(T)=\Hh$. Now we can pick an orthonormal basis $\{u_1,\dots,u_n\}$ of $\Hh$. Then for all normal vectors $u \in \Hh$, we have $u=\sum_{k=1}^n \alpha_k u_k$ for some coefficients $\alpha_k \in \IK$ with $\sum_{k=1}^n |\alpha_k|^2 = 1$. Hence 
$$
\|T(u)\|\le \sum_{k=1}^n |\alpha_k|\|T(u_k)\| \le \sum_{k=1}^n \|T(u_k)\|.
$$
Thus, $T$ is bounded.

In the case that $\dim(\Kk)<\infty$, we have that $T^*:\Kk \rightarrow \Hh$ is densely defined and closed, and consequently, everywhere defined and bounded. Now take any $v \in \Kk$ and define the functional 
$$
f_v:\Hh \rightarrow \IC \qquad f_v(u):=\langle T^*u,v\rangle_\Kk.
$$
Notice that $f_v$ is linear. Moreover, it is bounded, since 
$$
||f_v(u)|| \le ||v|| \, ||T^* u|| \le ||v||\, ||T^*|| \, ||u||.
$$
Hence, by Riesz' Representation Theorem \citep[p.~90]{yosida}, $f_v(u)=\langle g,u\rangle_{\Hh}$ for some $g \in \Hh$ with $||g||=||f_v||\le ||v||\,||T^*||$. By the definition of adjoints, this implies $g=(T^*)^* v = Tv$. Hence, $T$ is everywhere defined and bounded, since $||Tv||=||g||\le ||v||\,||T^*||$.
\end{proof}
 
Just as for chain complexes, the homology of Hilbert complexes is an object of central interest \citep{Hilbert}. The following version turns out to align well with the properties of Hilbert complexes.

\begin{definition}\textnormal{\citep[Reduced Homology,][p. 466]{homologyclosure}} \label{def:redhom}
Given a Hilbert complex $P$, its \textit{reduced homology groups} are $\overline{H_k}(P) := \ker(d_k)/\overline{\im(d_{k+1})}$ for every $k \in \IZ$.
\end{definition}

In the literature of chain complexes of Banach spaces \citep{loeh}, both reduced homology and homology (i.e., $H_k(P)=\ker(d_k)/\im(d_{k+1})$) are studied and have similar properties. However, from a category theoretic perspective, it turns out that reduced homology is the more natural concept to study; see \cref{appendix:category} for further details. Note that whenever the images of the boundary maps are closed, these two versions of homology coincide, which is, in particular, the case when working with finite-dimensional Hilbert spaces.

\subsection{The Chain Laplacian} \label{sub:chainlap}

We have seen in \cref{sec:motivation} that for chain complexes of Hilbert spaces, we can define combinatorial Hodge Laplacians whose kernel is canonically isomorphic to the homology of the chain complex. We will now establish these notions and results in our more general setting.  To this end, we fix a Hilbert complex $P$ for this section (and omit the index $P$ when referring to its boundary maps).
We want to define concatenations and sums of operators in terms of their quadratic forms using \cref{a-induced A}. Note that for every $k$, the adjoint $d^*_k$ of $d_k$ is well-defined.

\begin{definition}[Up-Chain Laplacian]
\label{def:kth up chain laplacian}
For every $k$, consider the sesquilinear form and its associated quadratic form $$\delta^P_{+,k}[u,v]=\langle d_{k+1}^*u,d_{k+1}^*v\rangle,\qquad \delta_{+,k}^P[u]=\|d^*_{k+1}u\|^2$$ on $C_k^P$ with $d[\delta^P_{+,k}]=\dom d^*_{k+1}$. We call the unique self-adjoint operator $\Delta_{+,k}^P$ induced by $\delta^P_{+,k}$ the \textit{$k$th up chain Laplacian}. 
\end{definition}

Note that $\Delta_{+,k}^P$ is well defined as argued in \cref{remark:T*T}. Moreover, \cref{lm:equivalence of T^*T} also applies to this case. Hence $\Delta^P_{+,k}\equiv d_{k+1}\circ d^*_{k+1}$. The advantage of having defined $\Delta^P_{+,k}$ via a quadratic form is that \cref{a-induced A} ensures that $\dom \Delta^P_{+,k}$ is dense in $C_k^P$.

\begin{definition}[Down-Chain Laplacian]\label{def:kth down chain laplacian}
    For every $k$, consider the sesquilinear form and its associated quadratic form $$\delta^P_{-,k}[u,v]=\langle d_{k}u,d_{k}v\rangle,\qquad \delta_{-,k}^P[u]=\|d_{k}u\|^2$$ on $C_k^P$ with $d[\delta^P_{-,k}]=\dom d_{k}$. We call the unique self-adjoint operator $\Delta_{-,k}^P$ induced by $\delta^P_{-,k}$ the \textit{$k$th down chain Laplacian}. 
\end{definition}
Following similar arguments, $\Delta_{-,k}^P$ is well-defined and we will use $d_k^*\circ d_k$ to represent it. Finally, we can define our full chain Laplacian.

\begin{definition}[(Full) Chain Laplacian]
\label{def:kth chain Laplacian}
    Consider the sesquilinear form and its associated quadratic form
$$
\delta^P_k[u,v]=\delta_{-,k}^P[u,v]+\delta_{+,k}^P[u,v], \qquad \delta^P_k[u]=\delta_{-,k}^P[u]+\delta_{+,k}^P[u]
$$
on $C^P_k$ with $d[\delta^P_k]=\dom d_k\cap\dom d_{k+1}^*$. We call the unique self-adjoint operator $\Delta^P_k$ induced by $\delta^P_k$ the \textit{$k$th chain Laplacian}.
\end{definition}

Note $\delta^P_k$ is again closed and hence $\Delta^P_k$ is well-defined: Let $(u_n)\subset d[\delta^P_k]$ be Cauchy with respect to $\|\cdot\|_{g,2}=\sqrt{\delta^P_k[\cdot]+2\|\cdot\|^2}$. Then $(u_n)$ is also Cauchy with respect to $\|\cdot\|_-:=\sqrt{\delta^P_{-,k}[\cdot]+\|\cdot\|^2}$ and $\|\cdot\|_+:=\sqrt{\delta^P_{+,k}[\cdot]+\|\cdot\|^2}$. Since both $\delta_{-,k}^P$ and $\delta_{+,k}^P$ are closed, $u_n$ converges to the same $u$ with respect to both $\|\cdot\|_-$ and $\|\cdot\|_+$. Indeed, if $u_n\to v_1$ with respect to $\|\cdot\|_-$, and $u_n\to v_2$ with respect to $\|\cdot\|_+$, we must have that $\|u_n-v_1\|^2\to 0$ and $\|u_n-v_2\|^2\to 0$ since both $\delta_{-,k}^P$ and $\delta_{+,k}^P$ are nonnegative. Thus $v_1=v_2$. Hence $u\in\dom d_k\cap\dom d_{k+1}^*=d[\delta^P_k]$, and $u_n\to u$ with respect to $\|\cdot\|_{g,2}$, which shows it is closed. Note that such $\Delta^P_k$ is also non-negative. 

\begin{remark}\label{remark:not the sum}
    $\Delta ^P_k$ is one of the exception we mentioned in \cref{remark:not always equivalent}.Even though $\Delta^P_{+,k}$ and $\Delta^P_{-,k}$ are equivalent to $d_{k+1}\circ d^*_{k+1}$ and $d^*_{k}\circ d_{k}$ respectively, $\Delta^P_{k}$ is not always equivalent to $d_{k+1}\circ d^*_{k+1}+d_k^*\circ d_k$, as their domains may differ. In general, $\dom \Delta_{+,k}^P\cap\dom\Delta_{-,k}^P$ is a proper subset of $\dom\Delta_k^P$, and hence may not be dense. Defining $\Delta_k^P$ via the quadratic form ensures that it is densely defined, and hence self-adjoint. Additionally, note that $\Delta^P_{k}$ does coincide with $d_{k+1}\circ d^*_{k+1}+d_k^*\circ d_k$ on $\dom \Delta_{+,k}^P\cap\dom\Delta_{-,k}^P$. This implies that our construction is a generalization of finite dimensional Hilbert spaces or bounded operators cases, since either case implies both $\dom \Delta_{+,k}^P\cap\dom\Delta_{-,k}^P$ and $\dom\Delta_k^P$ are the whole space and so $\Delta^P_{k}\equiv d_{k+1}\circ d^*_{k+1}+d_k^*\circ d_k$.
\end{remark}

Having made clear that $\Delta_k^P$ is in general not the sum of $\Delta_{+,k}^P$ and $\Delta_{-,k}^P$, we will nevertheless use the misleading notation, 
    \[
    \Delta^{P}_k=\underbrace{d_{k+1} \circ (d_{k+1})^{*}}_{\textcolor{red}{\Delta^{P}_{+, k}}} + \underbrace{(d_k)^{*} \circ d_k}_{\textcolor{blue}{\Delta^{P}_{-, k}}},
    \]
which we choose to maintain because on $\dom \Delta_{+,k}^P\cap\dom\Delta_{-,k}^P$ and especially in the important special case of bounded, everywhere defined operators, $\Delta_k^P$ is indeed the sum of these two components.
\begin{remark}\label{rek:all quadratic form induced}
    Despite the potential confusion arising from a slight abuse of notation, in all following discussions, all self-adjoint operators must be understood as those induced by quadratic forms, as in \cref{def:kth up chain laplacian}, \ref{def:kth down chain laplacian}, and \ref{def:kth chain Laplacian}.
\end{remark}

The terms \emph{up}- and \emph{down}-Laplacian refer to the fact that we go ``up'' (respectively ``down'') in the indices of the Hilbert complex to compute them. This is visualized in \cref{fig:lap}.

\begin{figure}[H]
\begin{center} 
\begin{tikzcd}[column sep = +4em] 
\cdots \arrow[r] & C^{P}_{k+1} \arrow[r, "d_{k+1}", shift left, red] & C^{P}_{k} \arrow[r, "d_k", shift left, blue] \arrow[l, "(d_{k+1})^{*}", shift left, red] & C^{P}_{k-1} \arrow[l, "(d_k)^{*}", shift left, blue] \arrow[r] & \cdots
\end{tikzcd}
\caption{The chain Laplacian. The colored parts represent the respective up and down chain Laplacian: $\red{\Delta^{P}_{+, k}}$ and $\textcolor{blue}{\Delta^{P}_{-, k}}$.} \label{fig:lap}
\end{center}
\end{figure}

We now study some properties of the chain Laplacian, in particular its relation to homology. To start we observe the following equalities:
\begin{lemma} \label{lem:kerlap}
    The kernels of up-, down-, and full-persistent Laplacian satisfy
    $$
    \ker(\Delta_k^P) = \ker(d_k^P) \cap \ker\left((d_{k+1}^P)^*\right)=\ker(\Delta_{-,k}^P) \cap \ker\left((\Delta_{+,k}^P)^*\right).
$$
\end{lemma}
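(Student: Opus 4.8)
The plan is to establish the two claimed equalities by a pair of inclusions, exploiting that every operator in sight is the quadratic-form-induced self-adjoint operator of \cref{def:kth up chain laplacian,def:kth down chain laplacian,def:kth chain Laplacian}, so that membership in a kernel can be read off from the associated quadratic form vanishing. Concretely, for a nonnegative self-adjoint operator $A$ induced by a closed, densely defined, lower semi-bounded quadratic form $a$, one has $u \in \ker A$ if and only if $u \in d[a]$ and $a[u] = 0$; the forward direction is immediate from $a[u] = \langle Au, u\rangle = 0$, and the reverse follows since $a[u,v] = 0$ for all $v \in d[a]$ by Cauchy–Schwarz for the nonnegative form (or the polarization identities \eqref{eq:polc}, \eqref{eq:polr} applied to $a[u + tv]$), whence $f = 0$ witnesses $u \in \dom A$ with $Au = 0$ via the domain description in \cref{a-induced A}.

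First I would prove the left-hand equality $\ker(\Delta_k^P) = \ker(d_k^P) \cap \ker\bigl((d_{k+1}^P)^*\bigr)$. Using the characterization above with $a = \delta_k^P$ and $d[\delta_k^P] = \dom d_k \cap \dom d_{k+1}^*$, we have $u \in \ker(\Delta_k^P)$ iff $u \in \dom d_k \cap \dom d_{k+1}^*$ and $\delta_k^P[u] = \|d_k u\|^2 + \|d_{k+1}^* u\|^2 = 0$. Since both summands are nonnegative, this holds iff $\|d_k u\| = 0$ and $\|d_{k+1}^* u\| = 0$, i.e. iff $d_k u = 0$ and $d_{k+1}^* u = 0$, which is precisely $u \in \ker(d_k^P) \cap \ker\bigl((d_{k+1}^P)^*\bigr)$ (the domain conditions being automatically encoded, as $\ker d_k \subset \dom d_k$ and $\ker d_{k+1}^* \subset \dom d_{k+1}^*$).

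Next I would prove the right-hand equality. By the same form characterization applied to $\delta_{-,k}^P[u] = \|d_k u\|^2$ we get $\ker(\Delta_{-,k}^P) = \ker(d_k^P)$, and applied to $\delta_{+,k}^P[u] = \|d_{k+1}^* u\|^2$ we get $\ker(\Delta_{+,k}^P) = \ker\bigl((d_{k+1}^P)^*\bigr)$; note also that $\Delta_{+,k}^P$ is self-adjoint, so $(\Delta_{+,k}^P)^* = \Delta_{+,k}^P$ and the asterisk on it in the statement is harmless. Intersecting the two identities gives $\ker(\Delta_{-,k}^P) \cap \ker\bigl((\Delta_{+,k}^P)^*\bigr) = \ker(d_k^P) \cap \ker\bigl((d_{k+1}^P)^*\bigr)$, which together with the first part completes the proof.

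The only mild subtlety — and what I expect to be the one point worth writing carefully rather than a genuine obstacle — is justifying the reverse direction of the form characterization, i.e. that $a[u] = 0$ with $u \in d[a]$ forces $u \in \dom A$ with $Au = 0$: one must invoke that a nonnegative sesquilinear form satisfies $|a[u,v]|^2 \le a[u]\,a[v]$, so $a[u] = 0$ makes $a[u, \cdot]$ identically zero on $d[a]$, and then the explicit domain formula in \cref{a-induced A} (taking $f = 0$) yields the conclusion. Everything else is bookkeeping with nonnegativity of norms.
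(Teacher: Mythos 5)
Your proof is correct and follows essentially the same route as the paper's: both directions rest on the quadratic-form characterization of $\ker(\Delta_k^P)$, with the choice $v=u$ (equivalently $\delta_k^P[u]=0$ and nonnegativity of the two summands) giving the forward inclusion, and $f=0$ in the domain description of \cref{a-induced A} giving the reverse. The only cosmetic difference is that you package this as a general kernel criterion for form-induced nonnegative operators (via Cauchy--Schwarz for the form), whereas the paper argues directly with $\delta_k^P$, $\delta_{\pm,k}^P$.
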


\begin{proof}
    For the first equality, consider $u \in d[\delta_k^P]$ such that $\Delta_k^P u =0$. Consequently, we have 
    $$
    0=\langle d_ku,d_kv\rangle+\langle d_{k+1}^*u,d_{k+1}^*v\rangle \quad \forall\ v \in d[\delta_k^P].
    $$
    Inserting $v=u$ yields $u \in \ker(d_k^P) \cap \ker\left((d_{k+1}^P)^*\right)$.
    
    Conversely, for $u \in \ker(d_k^P) \cap \ker\left((d_{k+1}^P)^*\right)$, $f=0$ satisfies the definition of $\dom(\Delta_k^P)$. Thus, $u \in \dom(\Delta_k^P)$ and $\Delta_k^P u=0$.
    
    The same reasoning shows that $\ker(\Delta_{-,k}^P)=\ker(d_k^P)$ and $\ker(\Delta_{+,k}^P)=\ker((d_{k+1}^P)^*)$ and therefore the second equality.
\end{proof}

We also require a version of the classical Hodge decomposition result.
\begin{theorem}\textnormal{\citep[Hodge Decomposition,][Lemma~2.1]{Hilbert}}
    \label{thm:magical}
    Let $U,V,W$ be Hilbert spaces. Consider densely defined, closed linear maps $A:V \to W,\, B:U \to V$ such that $\im B\subset \ker A \subset \dom(A)$ and $\dom(A) \cap \dom(B^*)$ is dense in $V$. Then an \emph{orthogonal Hodge decomposition} is given by
    \begin{align}
        V= \lefteqn{\overbrace{\phantom{\overline{\im(A^*)}\oplus\ker(BB^*+A^*A)}}^{\ker(B^*)}}\overline{\im(A^*)}\oplus\underbrace{\ker(BB^*+A^*A)\oplus\overline{\im(B)}}_{\ker(A)}. \label{eqn:hodgedecomp}
    \end{align}
\end{theorem}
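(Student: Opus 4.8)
The plan is to derive the refined three‑way splitting by intersecting the two ``coarse'' orthogonal decompositions attached to $A$ and to $B$ separately. Recall that for any densely defined closed operator $T:\mathcal{H}\to\mathcal{K}$ one has $\ker(T)=(\im T^{*})^{\perp}$ (this is \cref{lem:imperp}, which rests on $T^{**}=T$ from \cref{T^**=T}), hence the orthogonal splitting $\mathcal{H}=\ker(T)\oplus\overline{\im(T^{*})}$; since $T^{*}$ is itself densely defined and closed (\cref{T^* closed,T^**=T}), applying this to $T^{*}$ gives $\mathcal{K}=\ker(T^{*})\oplus\overline{\im(T)}$. Applied to $A:V\to W$ and to $B:U\to V$, this yields
\[
V=\ker(A)\oplus\overline{\im(A^{*})},\qquad V=\ker(B^{*})\oplus\overline{\im(B)}.
\]
What remains is to reconcile these two decompositions of $V$.

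First I would record the compatibility between them. Since $\ker(A)$ is closed (if $u_{n}\to u$ with $Au_{n}=0$, closedness of $A$ forces $u\in\dom(A)$ and $Au=0$) and $\im(B)\subset\ker(A)$, we get $\overline{\im(B)}\subset\ker(A)$. Dually, for $w\in\dom(A^{*})$ and $u\in\dom(B)$ we have $Bu\in\im(B)\subset\ker(A)\subset\dom(A)$ with $A(Bu)=0$, so $\langle A^{*}w,Bu\rangle=\langle w,A(Bu)\rangle=0$; thus $\im(A^{*})\perp\im(B)$, equivalently $\overline{\im(A^{*})}\subset\overline{\im(B)}^{\perp}=\ker(B^{*})$. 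In particular $\overline{\im(A^{*})}$ and $\overline{\im(B)}$ are mutually orthogonal closed subspaces of $V$.

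Next I would split $\ker(A)$ further: as $\overline{\im(B)}$ is a closed subspace of the closed subspace $\ker(A)$, one has $\ker(A)=\overline{\im(B)}\oplus\bigl(\ker(A)\cap\overline{\im(B)}^{\perp}\bigr)=\overline{\im(B)}\oplus\bigl(\ker(A)\cap\ker(B^{*})\bigr)$. Substituting into $V=\ker(A)\oplus\overline{\im(A^{*})}$ produces the orthogonal decomposition
\[
V=\overline{\im(A^{*})}\oplus\bigl(\ker(A)\cap\ker(B^{*})\bigr)\oplus\overline{\im(B)}.
\]
It then suffices to identify the middle summand with $\ker(BB^{*}+A^{*}A)$. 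Since $BB^{*}+A^{*}A$ is, by definition, the self‑adjoint operator induced (via \cref{a-induced A}) by the quadratic form $u\mapsto\|Au\|^{2}+\|B^{*}u\|^{2}$ on $\dom(A)\cap\dom(B^{*})$, the argument of \cref{lem:kerlap}---applied to the short Hilbert complex $U\xrightarrow{\,B\,}V\xrightarrow{\,A\,}W$ (padded by zero spaces), whose axioms are exactly the hypotheses $\im(B)\subset\ker(A)\subset\dom(A)$ and density of $\dom(A)\cap\dom(B^{*})$)---gives $\ker(BB^{*}+A^{*}A)=\ker(A)\cap\ker(B^{*})$. The two brace identities follow by comparing with the coarse splittings: $\overline{\im(A^{*})}\oplus\ker(BB^{*}+A^{*}A)$ and $\ker(B^{*})$ are both the orthogonal complement of $\overline{\im(B)}$ in $V$, hence equal, and symmetrically $\ker(BB^{*}+A^{*}A)\oplus\overline{\im(B)}=\ker(A)$.

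The routine parts are the coarse decompositions and the kernel identification, both quoted from earlier results. The only genuinely delicate point is the domain bookkeeping behind $\im(A^{*})\perp\im(B)$: one must invoke the inclusion $\ker(A)\subset\dom(A)$ from the hypotheses so that $A(Bu)$ is defined (and equal to $0$) before moving $A^{*}$ across the inner product; relatedly, one must ensure that $\ker(BB^{*}+A^{*}A)$ actually lies inside $\dom(A)\cap\dom(B^{*})$, so that the conditions ``$Au=0$ and $B^{*}u=0$'' are meaningful. Both issues are dispatched by the fact that $BB^{*}+A^{*}A$ is the quadratic‑form operator, whose form domain is $\dom(A)\cap\dom(B^{*})$, exactly as in \cref{lem:kerlap}.
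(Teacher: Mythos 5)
Your proposal is correct and follows essentially the same route as the paper: establish $\im(B)\perp\im(A^{*})$ by moving $A^{*}$ across the inner product (using $\im B\subset\ker A\subset\dom A$), invoke the two coarse decompositions $V=\ker(B^{*})\oplus\overline{\im(B)}=\overline{\im(A^{*})}\oplus\ker(A)$ from \cref{cor:decompV}, combine them, and identify the middle summand with $\ker(BB^{*}+A^{*}A)$ via the argument of \cref{lem:kerlap}. The only cosmetic difference is that you split $\ker(A)$ explicitly before substituting, whereas the paper merges the two decompositions directly; the substance is identical.
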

\begin{proof}
    First we want to show that $\overline{\im(B)} \perp \overline{\im(A^*)}$. By continuity of inner products, it suffices to show $\im(B) \perp \im(A^*)$. This holds, since for all $u \in \dom(B)$ and $w \in \dom(A^*)$,
    $$
        \langle Bu, A^* w \rangle = \langle AB \,u,w \rangle =0
    $$
    by the definition of adjoints, where we use that $Bu \in \im(B)\subset \dom(A)$.
    
    By \cref{cor:decompV}, we have the decompositions
   $$
        V=\ker(B^*)\oplus \overline{\im(B)} = \overline{\im(A^*)} \oplus \ker(A).
    $$
    This verifies the components of \cref{eqn:hodgedecomp} that are indicated by the braces. Combining them using the orthogonality of $\overline{\im(B)}$ and $\overline{\im(A^*)}$ yields
    $$
        V= \overline{\im(A^*)} \oplus \ker(A) \cap \ker(B^*)\oplus \overline{\im(B)}
    $$
    The same argument as in the proof of \cref{lem:kerlap} allows us to conclude the desired result.
\end{proof}

The Hodge decomposition immediately implies the following relation between Laplacians and reduced homology.

\begin{cor}[Generalized Hodge Theorem] \label{thm:ker_hom_1}
    Let $P$ be a Hilbert complex. The kernels of the Laplacians are isomorphic to the corresponding reduced homology groups:
    $$
        \ker(\Delta_k^{P}) \cong \overline{H_k}(P) = \ker(d^P_k) / \overline{\im (d^P_{k+1})}
    $$
    for every $k$. The isomorphism sends $v \mapsto [v]$.
\end{cor}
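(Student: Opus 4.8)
The plan is to read off the isomorphism directly from the Hodge decomposition in \cref{thm:magical}. I would apply that theorem with the substitution $A = d_k^P$, $B = d_{k+1}^P$, $V = C_k^P$, $U = C_{k+1}^P$, $W = C_{k-1}^P$; the hypotheses of \cref{thm:magical} are precisely conditions (i) and (ii) in the definition of a Hilbert complex (\cref{def:cc}), so they are satisfied. This yields the orthogonal decomposition
\[
C_k^P = \overline{\im(d_{k+1}^P)} \oplus \ker\!\left(d_{k+1}^P (d_{k+1}^P)^* + (d_k^P)^* d_k^P\right) \oplus \overline{\im\!\left((d_k^P)^*\right)},
\]
where the first two summands together comprise $\ker(d_k^P)$.

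Next I would identify the middle summand with $\ker(\Delta_k^P)$. By \cref{lem:kerlap}, $\ker(\Delta_k^P) = \ker(d_k^P) \cap \ker\!\left((d_{k+1}^P)^*\right)$, and the same reasoning used in the proof of \cref{lem:kerlap} (inserting $v = u$ into the defining form) shows that $\ker\!\left(d_{k+1}^P (d_{k+1}^P)^* + (d_k^P)^* d_k^P\right) = \ker(d_k^P) \cap \ker\!\left((d_{k+1}^P)^*\right)$ as well — indeed this is exactly the ``same argument'' the proof of \cref{thm:magical} invokes at its end. Hence the middle summand equals $\ker(\Delta_k^P)$, and from the decomposition we get the orthogonal direct sum $\ker(d_k^P) = \ker(\Delta_k^P) \oplus \overline{\im(d_{k+1}^P)}$ inside $C_k^P$.

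Finally I would conclude. The quotient map $\ker(d_k^P) \to \ker(d_k^P)/\overline{\im(d_{k+1}^P)} = \overline{H_k}(P)$, restricted to the complementary summand $\ker(\Delta_k^P)$, is a linear bijection: it is injective because $\ker(\Delta_k^P) \cap \overline{\im(d_{k+1}^P)} = 0$ by orthogonality, and surjective because every class $[v]$ with $v \in \ker(d_k^P)$ has a representative in $\ker(\Delta_k^P)$ (project $v$ orthogonally onto $\ker(\Delta_k^P)$, discarding its $\overline{\im(d_{k+1}^P)}$-component). This map is exactly $v \mapsto [v]$, giving the claimed isomorphism.

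I do not anticipate a serious obstacle here, as the result is essentially a corollary of \cref{thm:magical} and \cref{lem:kerlap}; the only point requiring a little care is making precise that $\ker(\Delta_k^P)$ — an operator defined abstractly via a quadratic form — coincides with the middle summand of the decomposition, which is where the kernel computation from \cref{lem:kerlap} is needed. One should also note that no topological/Banach-space subtlety about the quotient arises because we quotient by the \emph{closed} subspace $\overline{\im(d_{k+1}^P)}$, so $\overline{H_k}(P)$ is again a Hilbert space and the bijection is automatically a topological isomorphism; this is precisely the reason reduced homology is the natural notion in this setting.
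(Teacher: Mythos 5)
Your proposal is correct and follows essentially the same route as the paper: both apply \cref{thm:magical} with $A=d_k^P$, $B=d_{k+1}^P$ to obtain $\ker(d_k^P)=\ker(\Delta_k^P)\oplus\overline{\im(d_{k+1}^P)}$ and then read off the isomorphism $v\mapsto[v]$ from the quotient by the closed summand $\overline{\im(d_{k+1}^P)}$. The only difference is that you spell out the identification of the middle summand with $\ker(\Delta_k^P)$ via \cref{lem:kerlap}, a step the paper absorbs into the statement of \cref{thm:magical} itself.
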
 
\begin{proof}
    The Hodge decomposition applied to $U=C_{k+1}^P,\, V=C_k^P,\, W=C_{k+1}^P$, $A=d_k^P$ and $B=d_{k+1}^P$ yields
    $$
        C_k^P = \overline{\im\left(\left(d_k^P\right)^*\right)} \oplus \underbrace{\ker\left(\Delta_k^{P} \right)\oplus \overline{\im\left(d_{k+1}^{P} \right)}}_{\ker\left(d_k^P \right)}.
    $$
    Hence,
   $$
        \ker\left(\Delta_k^{P} \right)\cong\left(\ker\left(\Delta_k^{P} \right)\oplus \overline{\im\left(d_{k+1}^{P} \right)}\right) \big/ \overline{\im\left(d_{k+1}^{P} \right)} = \ker\left(d_k^P\right) \big/ \, \overline{\im\left(d_{k+1}^{P} \right)}
    $$
    with the canonical isomorphism sending $v \mapsto [v]$. 
\end{proof}

Thus, we can think about the reduced homological information as being ``contained in'' the zero eigenspace of the chain Laplacian. One motivation for studying Laplacians comes from the expectation that the spectrum outside the zero eigenspace encodes further geometric information; while we do not pursue this direction here, it forms an important context for our results. 
The following lemma summarizes some facts about spectra of chain Laplacians.

\begin{lemma} \label{lem:properties}
$\sigma(\Delta_k^P), \sigma(\Delta_{+,k}^P)$, and $\sigma(\Delta_{-,k}^P)$ are closed subsets of $\R_{\ge 0}$.
\end{lemma}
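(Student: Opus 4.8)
The plan is to prove each of the three claims in two steps: first, that the operator in question is self-adjoint and nonnegative, and second, invoke the already-established closedness of spectra of self-adjoint operators. For self-adjointness, note that all three operators $\Delta_k^P$, $\Delta_{+,k}^P$, and $\Delta_{-,k}^P$ are defined as the unique self-adjoint operators induced by densely defined, lower semi-bounded, closed quadratic forms via \cref{a-induced A} (this is exactly the content of \cref{def:kth up chain laplacian}, \cref{def:kth down chain laplacian}, and \cref{def:kth chain Laplacian}, with the requisite closedness of the forms verified in \cref{remark:T*T} and in the discussion immediately following \cref{def:kth chain Laplacian}). Hence \cref{cor:saspecclosed} applies directly to each, giving that $\sigma(\Delta_k^P)$, $\sigma(\Delta_{+,k}^P)$, and $\sigma(\Delta_{-,k}^P)$ are closed subsets of $\mathbb{C}$ — in fact closed subsets of $\mathbb{R}$, since the spectrum of a self-adjoint operator is real.

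Next I would argue the spectra are contained in $\mathbb{R}_{\ge 0}$. Each of the three operators is nonnegative: $\delta_{+,k}^P[u] = \|d_{k+1}^* u\|^2 \ge 0$, $\delta_{-,k}^P[u] = \|d_k u\|^2 \ge 0$, and $\delta_k^P[u] = \delta_{-,k}^P[u] + \delta_{+,k}^P[u] \ge 0$, so each form has $m_a \ge 0$; by the ``moreover'' clause of \cref{a-induced A}, the induced operator $A$ is lower semi-bounded with $m_A = m_a \ge 0$. It is a standard fact (and follows from \citet[Lemma 1.6]{schroedingerop} as quoted in the proof of \cref{cor:saspecclosed}, together with the lower-semiboundedness) that a self-adjoint operator with $m_A \ge 0$ has spectrum contained in $[0,\infty)$: for $z < 0$ one has $\|(A-z)u\|\,\|u\| \ge \langle (A-z)u, u\rangle = \langle Au,u\rangle - z\|u\|^2 \ge -z\|u\|^2$, so $\|(A-z)u\| \ge |z|\,\|u\|$, which precludes $z \in \sigma(A)$ by the characterization used in \cref{cor:saspecclosed}. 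Combining with the previous paragraph, each of $\sigma(\Delta_k^P)$, $\sigma(\Delta_{+,k}^P)$, $\sigma(\Delta_{-,k}^P)$ is a closed subset of $\mathbb{R}_{\ge 0}$.

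I do not anticipate a serious obstacle here, as all the heavy lifting has been front-loaded into the operator-theoretic preliminaries: \cref{cor:saspecclosed} gives closedness for any self-adjoint operator, and \cref{a-induced A} guarantees that our quadratic-form construction produces genuinely self-adjoint, lower-semibounded operators. The only point requiring a little care is the nonnegativity of the spectrum, which is not literally stated as a prior lemma; but it follows in one line from the lower-semiboundedness $m_A \ge 0$ via the approximate-eigenvalue characterization of the spectrum of a self-adjoint operator already cited in the proof of \cref{cor:saspecclosed}. If one wishes to keep the write-up maximally short, one can simply say: all three operators are self-adjoint and nonnegative by construction (\cref{a-induced A}, \cref{remark:T*T}, and the discussion following \cref{def:kth chain Laplacian}), hence their spectra are closed by \cref{cor:saspecclosed} and contained in $\mathbb{R}_{\ge 0}$ by nonnegativity.
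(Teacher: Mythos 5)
Your proposal is correct and follows essentially the same route as the paper: self-adjointness of the three Laplacians (guaranteed by their construction via quadratic forms and \cref{a-induced A}) gives real, closed spectra via \cref{cor:saspecclosed}, and nonnegativity of the defining forms forces the spectra into $\R_{\ge 0}$. The only cosmetic difference is that the paper cites \cref{thm:ari1.28} for the nonnegativity of the spectrum, whereas you derive it directly from $m_A \ge 0$ with a one-line Cauchy--Schwarz estimate and the approximate-eigenvalue characterization; both are valid.
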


This is because self-adjointness implies that all these spectra are real by \citet[Lemma 1.6]{schroedingerop}, and nonnegativity implies that they do not contain negative numbers by \cref{thm:ari1.28}. 

\subsection{The Persistent Chain Laplacian} \label{sec:perslap}

We now return to the persistent setting to generalize the persistent combinatorial Laplacians presented previously in \cref{sec:persmot}. Recall that these were defined for a pair of nested simplicial complexes $K \subset L$ such that $\iota_k:C_k^K\to C_k^L$ are isometries for each $k$. We will now generalize this for Hilbert complexes.

\begin{definition} \label{def:inclusion}
    A Hilbert complex $P=(C^P_\bullet,d^{P}_{\bullet})$ is \textit{contained} in a Hilbert complex $Q=(C^Q_\bullet,d^{Q}_{\bullet})$ and written $P \subset Q$ or $\iota: P \hookrightarrow Q$ if for all $k$, there exist linear $\iota_k: C^{P}_{k} \rightarrow C^{Q}_{k}$, such that
\begin{enumerate}
    \item[(a)] $\iota_{k-1} \circ d^{P}_k = d^{Q}_k \circ \iota_k$, in particular their domains agree: $\dom(d_k^Q) \cap \im(\iota_k)=\iota_k(\dom(d_k^P))$
    \item[(b)] $\langle v,w \rangle^{P}_k = \langle \iota_k v,\iota_k w\rangle^{Q}_k$ for all $v,w \in C_k^P$. 
\end{enumerate}
\end{definition}
Note that (b), the preservation of inner products implies that $\iota_k$ is injective. We will usually identify $C_k^P$ with $\iota_k(C_k^P)$, which is the copy of $C_k^P$ that lives inside $C_k^Q$. Then (a) can be rephrased as $d^Q_k \lvert_{C^P_k} = d^P_k$.

If we are given a pair of Hilbert complexes and want to check if they satisfy \cref{def:inclusion}(b), the polarization identities \cref{eq:polc,eq:polr}
ensure that it is enough to check that the $\iota_k$ preserve the metric. So we only need to check that $\|v\|_k^P=\|\iota_k \, v\|_k^Q$ for all $v \in C_k^P$.\\

The setting described in \cref{def:inclusion} occurs in many situations: In \cref{sec:motivation}, we saw how in finite-dimensional cases, we can define inner products on a pair of chain complexes with a chain map in between them (i.e., a collection of maps that commutes with the boundary maps). Hence, considering the homology of a pair of finite simplicial complexes $S_1 \subset S_2$ places us in the setting of \cref{def:inclusion}. The same holds when considering their \emph{cohomology}.
\begin{example} \label{ex:cohomology}
    In the case of cohomology, we have surjective maps $\pi_k:C^k(S_2) \rightarrow C^k(S_1)$ that commute with the coboundary maps $\partial^k_{S_i}:C^{k-1}(S_i) \to C^k(S_i)$. After choosing compatible inner products on the chain groups, taking adjoints $\iota_k:=(\pi_k)^*$ of the $\pi_k$ and the coboundary maps yields a situation as in \cref{def:inclusion}. One way to choose such compatible inner products is as follows: For every $k$, and any inner product $\langle\cdot,\cdot\rangle_{S_2}$ defined on $C^k(S_2)$, consider $A:=\ker \pi_k$ and $B:=A^{\perp_{\langle\cdot,\cdot\rangle_{S_2}}}$. Then $\pi_k|_{B}$ is a linear isomorphism. Define $T:=(\pi_k|_{B})^{-1}:C^k(S_1)\to B\subset C^k(S_2)$ and an inner product $\langle\cdot,\cdot\rangle_{S_1}$ on $C^k(S_1)$ by a pullback along $T$:
    $\langle u,v\rangle_{S_1}:=\langle Tu,Tv\rangle_{S_2}.$
    It suffices to show that $\pi_k^*=T$ with such setting. Indeed, for any $x=a+b\in C^k(S_2)=A\oplus B$ and $y\in C^k(S_1)$, note that
    $$\langle\pi_kx,y\rangle_{S_1}=\langle \pi_k b,y\rangle_{S_1}=\langle T^{-1}b,y\rangle_{S_1}=\langle TT^{-1}b,Ty \rangle_{S_2}=\langle b,Ty\rangle_{S_2}=\langle x,Ty\rangle_{S_2}.$$
    Then by uniqueness of the adjoint, $\pi_k^*=T$. Hence the maps $\iota_k$ satisfy \cref{def:inclusion}(b). 
    
Moreover, they commute with the boundary maps, because $$\iota_{k-1} \circ d_k^{S_1}= (\pi_{k-1})^* \circ (\partial^k_{S_1})^* = ( \partial^k_{S_1} \circ \pi_{k-1})^* = ( \pi_{k} \circ\partial^k_{S_2} )^*=(\partial^k_{S_2})^* \circ (\pi_k)^* = d_{k-1}^{S_2} \circ \iota_k,$$
satisfying \cref{def:inclusion}(a).
\end{example} 

Once we have a Hilbert complex that is contained in another one, we can define an operator for this pair which generalizes the combinatorial persistent Laplacians defined previously in \cref{def:comb_PL}.

\begin{definition}[Generalized Persistent Chain Laplacian]
\label{def:pers_chain_lap}
    Consider a pair of Hilbert complexes, $P \subset Q$. \Cref{fig:perslap} shows the relevant spaces. We define the auxiliary space 
    $$
    C^{P,Q}_{k+1} := \overline{\{c \in \dom(d^{Q}_{k+1}) | d^{Q}_{k+1}(c) \in \iota_k(C^{P}_k)\}}
    $$
    and equip it with the restriction of the inner product of $C^{Q}_{k+1}$.\\
    Moreover, we define the map $d^{P,Q}_{k+1}:C^{P,Q}_{k+1} \rightarrow C_k^P$ 
    with
    $$
    \dom(d^{P,Q}_{k+1})=\dom(d^Q_{k+1})\cap C^{P,Q}_{k+1}=\{c \in \dom(d^{Q}_{k+1}) | d^{Q}_{k+1}(c) \in \iota_k(C^{P}_k)\}
    $$
    by
    $$
    d^{P,Q}_{k+1} := d^{Q}_{k+1}|_{\dom(d^{P,Q}_{k+1})}.
    $$
    If for the pair $P,Q$, $\dom(d_k^P) \cap \dom(d_{k+1}^{P,Q})^*$ is dense in $C_k^P$, then we define the \emph{(full) $k$th persistent Laplacian} to be $\Delta^{P,Q}_k : C^{P}_k \to C^{P}_k$ given by 
\[
\Delta^{P,Q}_k := \underbrace{d^{P,Q}_{k+1} \circ (d^{P,Q}_{k+1})^{*}}_{\red{\Delta^{P,Q}_{+, k}}} + \underbrace{(d^{P}_k)^{*} \circ d^{P}_k}_{\textcolor{blue}{\Delta^{P,Q}_{-, k}}}.
\]
The terms $\Delta^{P,Q}_{+, k}$ and $\Delta^{P,Q}_{-, k}$ are called the \emph{$k$th up persistent chain Laplacian} and \emph{$k$th down persistent chain Laplacian}, respectively.
\end{definition}

\begin{figure}
\begin{center}
\begin{tikzpicture}[x=1cm,y=1cm]
        \clip(0,-0.8) rectangle (9.5,5);
        \draw [-{Stealth[length=2mm]}] (1.6,4) --  node[above=1pt] {$d_{k+1}^{P}$} (4.5,4);
        \draw [color=blue,{Stealth[length=2mm]}-] (5.5,3.9) --  node[below=1pt] {$\left(d_{k}^{P}\right)^*$} (8.4,3.9);
        \draw [color=blue,-{Stealth[length=2mm]}] (5.5,4.1) --  node[above=1pt] {$d_{k}^{P}$} (8.4,4.1);
        \draw [-{Stealth[length=2mm]}] (1.6,0) --  node[below=1pt] {$d_{k+1}^{Q}$} (4.5,0);
        \draw [-{Stealth[length=2mm]}] (5.4,0) --  node[below=1pt] {$d_{k}^{Q}$} (8.4,0);
        \draw [color=red,-{Stealth[length=2mm]}] (2.9,2) --  node[anchor=south east] {$d_{k+1}^{P,Q}$} (4.7,3.8);
        \draw [color=red,{Stealth[length=2mm]}-] (3,1.8) -- (4.8,3.6);
        \draw [right hook-{Stealth[length=2mm]}] (1,3.6) -- node[left]{$\iota_{k+1}$} (1,0.4);
        \draw [right hook-{Stealth[length=2mm]}] (5,3.6) -- node[right]{$\iota_{k}$}(5,0.4);
        \draw [right hook-{Stealth[length=2mm]}] (9,3.6) -- node[left]{$\iota_{k-1}$}(9,0.4);
        \draw [right hook-{Stealth[length=2mm]}] (2.2,1.2) -- (1.2,0.3);
        \draw[color=red] (4.2,2) node {$\left(d_{k+1}^{P,Q}\right)^*$};
        \draw[color=black] (1,4) node {$C_{k+1}^{P}$};
        \draw[color=black] (5,4) node {$C_k^{P}$};
        \draw[color=black] (9,4) node {$C_{k-1}^{P}$};
        \draw[color=black] (9,0) node {$C_{k-1}^{Q}$};
        \draw[color=black] (5,0) node {$C_k^{Q}$};
        \draw[color=black] (1,0) node {$C_{k+1}^{Q}$};
        \draw[color=black] (2.5,1.5) node {$C_{k+1}^{P,Q}$};
    \end{tikzpicture}
\caption{Defining the generalized persistent chain Laplacian.} \label{fig:perslap}
\end{center}
\end{figure}

Recall from \cref{rek:all quadratic form induced} that the above is defined using quadratic forms. Part (a) of \cref{lem:pershodge} ensures that we can indeed define $\Delta_k^{P,Q}$ via a quadratic form.

Notice that the persistent chain Laplacian is a generalization of the chain Laplacian: if $P=Q$, we have $C^{P,Q}_{k+1} = C^{Q}_{k+1}$ and $d^{P,Q}_{k+1} = d^{Q}_{k+1}$.

We now state some observations about the newly defined objects.

\begin{lemma} \label{lem:pershodge}
For $C^{P,Q}_{k+1}$, $d^{P,Q}_{k+1}$, and $\Delta^{P,Q}_k$ as in \Cref{def:pers_chain_lap}, the following hold:

\begin{enumerate} 
    \item[(a)] $d^{P,Q}_{k+1}$ is a densely defined, closed operator.
    \item[(b)] $\im(d^{P,Q}_{k+1}) \subset \ker(d_k^P)$.
    \item[(c)] $\iota_{k+1}(C_{k+1}^P) \subset C_{k+1}^{P,Q}$.
    \item[(d)]  All $\Delta_k^{P,Q}$ are self-adjoint and non-negative and thus have non-negative spectra.
\end{enumerate}
\end{lemma}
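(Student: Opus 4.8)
The plan is to treat the four items in the order (a), (b), (c), (d), because the very definition of $\Delta^{P,Q}_k$ in \cref{def:pers_chain_lap} (and hence (d)) presupposes that $d^{P,Q}_{k+1}$ is densely defined and closed, which is exactly (a). Throughout I identify $C^P_k$ with $\iota_k(C^P_k)$, and I will repeatedly use that, since $\iota_k$ preserves inner products, it embeds the Hilbert space $C^P_k$ isometrically, so $\iota_k(C^P_k)$ is a \emph{closed} subspace of $C^Q_k$.

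For (a), density is immediate: $C^{P,Q}_{k+1}$ is \emph{defined} to be the closure of $\dom(d^{P,Q}_{k+1})$. For closedness I would verify \cref{def:closedop}(a): if $c_n\to c$ in $C^{P,Q}_{k+1}$ (equivalently in $C^Q_{k+1}$, since the inner product is the restricted one) and $d^{P,Q}_{k+1}c_n\to y$, then closedness of the ambient boundary map $d^Q_{k+1}$ forces $c\in\dom(d^Q_{k+1})$ with $d^Q_{k+1}c=y$, and $y\in\iota_k(C^P_k)$ because that set is closed and contains every $d^{P,Q}_{k+1}c_n=d^Q_{k+1}c_n$; hence $c\in\dom(d^{P,Q}_{k+1})$ with value $y$. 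For (b), given $c\in\dom(d^{P,Q}_{k+1})$ put $x:=d^{P,Q}_{k+1}c=d^Q_{k+1}c\in\iota_k(C^P_k)$; the Hilbert-complex axiom \cref{def:cc}(i) for $Q$ gives $x\in\ker(d^Q_k)\subset\dom(d^Q_k)$, so by \cref{def:inclusion}(a) we may write $x=\iota_k(x')$ with $x'\in\dom(d^P_k)$ and $\iota_{k-1}(d^P_k x')=d^Q_k x=0$, and injectivity of $\iota_{k-1}$ yields $d^P_k x'=0$. For (c), $\dom(d^P_{k+1})$ is dense in $C^P_{k+1}$ since $d^P_{k+1}$ is densely defined, and for $c\in\dom(d^P_{k+1})$ the chain-map relation \cref{def:inclusion}(a) puts $\iota_{k+1}c\in\dom(d^Q_{k+1})$ with $d^Q_{k+1}\iota_{k+1}c=\iota_k d^P_{k+1}c\in\iota_k(C^P_k)$, so $\iota_{k+1}c\in\dom(d^{P,Q}_{k+1})\subset C^{P,Q}_{k+1}$; continuity of $\iota_{k+1}$ together with closedness of $C^{P,Q}_{k+1}$ then upgrades this to $\iota_{k+1}(C^P_{k+1})\subset C^{P,Q}_{k+1}$.

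For (d), recall from \cref{rek:all quadratic form induced} that $\Delta^{P,Q}_k$ is the operator induced via \cref{a-induced A} by the form $\delta^{P,Q}_k[u]=\|(d^{P,Q}_{k+1})^*u\|^2+\|d^P_k u\|^2$ on $\dom((d^{P,Q}_{k+1})^*)\cap\dom(d^P_k)$; this is well posed because $d^{P,Q}_{k+1}$ is densely defined by (a), so its adjoint exists and is closed by \cref{T^* closed}. To apply \cref{a-induced A} I check three things: the form is densely defined — this is precisely the standing hypothesis under which \cref{def:pers_chain_lap} declares $\Delta^{P,Q}_k$; it is lower semi-bounded, indeed nonnegative, being a sum of squared norms; and it is closed, which I would establish by repeating verbatim the argument given just after \cref{def:kth chain Laplacian} — the two summand forms are closed because $d^P_k$ and $(d^{P,Q}_{k+1})^*$ are closed operators, so a sequence Cauchy in $\|\cdot\|_{g,2}$ is Cauchy for both graph norms, converges to a common limit in the intersection domain by nonnegativity, and converges there in $\|\cdot\|_{g,2}$ as well. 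Then \cref{a-induced A} gives $\Delta^{P,Q}_k$ self-adjoint with $m_{\Delta^{P,Q}_k}=m_{\delta^{P,Q}_k}\ge 0$, hence non-negative, and its spectrum is real by self-adjointness and contained in $\R_{\ge 0}$ by non-negativity, exactly as in \cref{lem:properties} via \cref{thm:ari1.28}.

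The only genuinely delicate point is the closedness half of (a): one must ensure the defining constraint of $\dom(d^{P,Q}_{k+1})$ is stable under limits, and this needs \emph{both} closedness of $d^Q_{k+1}$ \emph{and} closedness of the subspace $\iota_k(C^P_k)$ inside $C^Q_k$. Everything else is bookkeeping — chiefly keeping straight that (d) is built on (a), and recycling the closed-quadratic-form argument already established for the ordinary chain Laplacian.
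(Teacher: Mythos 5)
Your proof is correct and follows essentially the same route as the paper: closedness of $d^{P,Q}_{k+1}$ via the closed ambient operator $d^Q_{k+1}$ together with closedness of the isometric image $\iota_k(C^P_k)$, part (b) via the chain-map relation and injectivity of $\iota_{k-1}$, and (d) via the closed nonnegative quadratic form and \cref{a-induced A}. The only difference is one of presentation: the paper dismisses (c) and (d) as immediate, whereas you spell out the details (correctly), and for (a) you verify the graph-closedness criterion directly by checking the defining constraint $d^Q_{k+1}c\in\iota_k(C^P_k)$ on the limit, where the paper instead invokes completeness in the graph norm and closedness of $C^{P,Q}_{k+1}$.
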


\begin{proof}
To see that $d_{k+1}^{P,Q}$ is closed, let $(u_n)\subset \dom d^{P,Q}_{k+1}$ be Cauchy with respect to the graph norm of $d^{P,Q}_{k+1}$. Then $(u_n)\subset \dom d^{Q}_{k+1}$ and it is also Cauchy with respect to the graph norm of $d^{Q}_{k+1}$ by the definition of $d^{P,Q}_{k+1}$. Since $d^{Q}_{k+1}$ is closed, $u_n$ has a limit $u\in \dom(d^Q_{k+1})$. Moreover, given $\|d^{Q}_{k+1}u_n-d^{Q}_{k+1}u\|+\|u_n-u\|\to 0$, we must have $\|u_n-u\|\to 0$. Thus $u_n\to u$ with respect to the usual norm. Since $C^{P,Q}_{k+1}$ is closed, $u\in C^{P,Q}_{k+1}$. Hence, $u\in \dom(d^Q_{k+1})\cap C^{P,Q}_{k+1}$, so $d_{k+1}^{P,Q}$ is closed.

For (b), take any $v \in \dom(d_{k+1}^{P,Q})$. Then 
\begin{align}
    \iota_{k} \circ d_{k+1}^{P,Q} \, v&=d_{k+1}^Q  \,v \in \dom(d_k^Q) \qquad \text {and} \nonumber\\
    \iota_{k-1} \circ d_k^P \circ d_{k+1}^{P,Q} \, v&=d_k^Q \circ \iota_{k} \circ d_{k+1}^{P,Q} \, v = d_k^Q \circ d_{k+1}^Q  \,v =0. \label{eq:from_def_pcl}
\end{align}
The first equality of \cref{eq:from_def_pcl} is due to \cref{def:pers_chain_lap} (a). The injectivity of $\iota_{k-1}$ allows us to conclude.

Parts (c) and (d) are immediate.
\end{proof}

These results allow us to obtain one of our main contributions, namely, a persistent version of \cref{thm:ker_hom_1}, which yields the generalized Hodge theorem that establishes isomorphism between the kernels of the Laplacians and the corresponding reduced homology groups.  As in the case of classical homology, an inclusion $\iota: P \rightarrow Q$ induces maps $\iota_{*}: \overline{H_{k}}(P) \to \overline{H_{k}}(Q)$ for every $k$ in reduced homology.

\begin{theorem} \label{thm:ker_hom_2}
Let $P \subset Q$ be a pair of Hilbert complexes. For every $k$, if $\im(d_{k+1}^Q)$ is closed, then the kernel of the $k$th persistent chain Laplacian is isomorphic to the pushforward of $\iota$ to homology level:
    $$
        \ker(\Delta_k^{P,Q}) \cong \im\big(\iota_*: \overline{H_k}(P) \rightarrow \overline {H_k}(Q)\big)=\iota_k(\ker(d_k^P)) \big/ \, \big( \overline{\im(d_{k+1}^Q)} \cap \iota_k(\ker\left(d_k^P\right))\big).
    $$
    The isomorphism sends $v \mapsto [\iota_k(v)]$.
\end{theorem}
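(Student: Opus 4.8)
The plan is to imitate the proof of \cref{thm:ker_hom_1}: use the Hodge decomposition to realize $\ker(\Delta_k^{P,Q})$ as an orthogonal complement inside $\ker(d_k^P)$, transport the resulting quotient into $C_k^Q$ along the isometry $\iota_k$, and identify it with $\im(\iota_*)$; the closedness hypothesis will be needed only at the very last step.

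First I would apply \cref{thm:magical} with $A = d_k^P$, $B = d_{k+1}^{P,Q}$ and middle space $C_k^P$. Its hypotheses hold because $d_{k+1}^{P,Q}$ is densely defined and closed by \cref{lem:pershodge}(a), $\im(d_{k+1}^{P,Q}) \subset \ker(d_k^P) \subset \dom(d_k^P)$ by \cref{lem:pershodge}(b), and $\dom(d_k^P) \cap \dom\big((d_{k+1}^{P,Q})^*\big)$ is dense in $C_k^P$ by the standing assumption of \cref{def:pers_chain_lap}. This yields the orthogonal decomposition
\[
C_k^P \;=\; \overline{\im\big((d_k^P)^*\big)} \;\oplus\; \ker(\Delta_k^{P,Q}) \;\oplus\; \overline{\im(d_{k+1}^{P,Q})},
\]
where the middle summand is identified with $\ker(\Delta_k^{P,Q})$ via the analogue of \cref{lem:kerlap} for the persistent Laplacian (proved by the same quadratic-form argument, since $\Delta_k^{P,Q}$ is the operator induced by $u \mapsto \|d_k^P u\|^2 + \|(d_{k+1}^{P,Q})^* u\|^2$), and the last two summands together constitute $\ker(d_k^P)$. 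Hence $\ker(d_k^P) = \ker(\Delta_k^{P,Q}) \oplus \overline{\im(d_{k+1}^{P,Q})}$, so the canonical quotient map restricts to an isomorphism $\ker(\Delta_k^{P,Q}) \cong \ker(d_k^P)\big/\overline{\im(d_{k+1}^{P,Q})}$ sending $v \mapsto [v]$, exactly as in \cref{thm:ker_hom_1}.

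Next I would transport this through $\iota_k$. Since $\iota_k$ is an isometry onto the closed subspace $\iota_k(C_k^P)$ of $C_k^Q$, it commutes with closures taken in $C_k^Q$, so it carries the isomorphism above to one onto $\iota_k\big(\ker(d_k^P)\big)\big/\overline{\iota_k\big(\im(d_{k+1}^{P,Q})\big)}$, with $v \mapsto [\iota_k(v)]$. Unwinding \cref{def:pers_chain_lap} shows $\iota_k\big(\im(d_{k+1}^{P,Q})\big) = \im(d_{k+1}^Q) \cap \iota_k(C_k^P)$, and this set lies in $\iota_k\big(\ker(d_k^P)\big)$ again by \cref{lem:pershodge}(b). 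Writing $V := \iota_k(C_k^P)$, $W := \iota_k\big(\ker(d_k^P)\big)$, and $R := \im(d_{k+1}^Q)$, the theorem now reduces to the identity $\overline{R \cap V} = \overline{R} \cap W$ of subspaces of $C_k^Q$, since the target of the composite isomorphism is then $W\big/(\overline{R} \cap W)$, which is the asserted quotient $\im\big(\iota_*\colon \overline{H_k}(P) \to \overline{H_k}(Q)\big)$.

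This last identity is exactly where the hypothesis $\overline{\im(d_{k+1}^Q)} = \im(d_{k+1}^Q)$ is used, and I expect it to be the only genuine obstacle. Given closedness, $R \cap V$ is an intersection of two closed subspaces of $C_k^Q$, hence closed, so $\overline{R \cap V} = R \cap V$; and since $R \cap V \subset W$ while $\overline{R} = R$, we obtain $R \cap V = R \cap W = \overline{R} \cap W$, as needed. Without the closedness assumption, $\overline{R \cap V}$ may be strictly contained in $\overline{R} \cap W$, so that $\ker(\Delta_k^{P,Q})$ would strictly contain $\im(\iota_*)$; making precise that closedness of $\im(d_{k+1}^Q)$ is the exact condition that pins these two closures together is the crux. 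Composing the two isomorphisms then delivers $\ker(\Delta_k^{P,Q}) \cong \im(\iota_*)$ with $v \mapsto [\iota_k(v)]$, completing the proof.
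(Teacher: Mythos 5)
Your proposal is correct and follows essentially the same route as the paper's proof: Hodge decomposition to get $\ker(\Delta_k^{P,Q}) \cong \ker(d_k^P)/\overline{\im(d_{k+1}^{P,Q})}$, transport along $\iota_k$ (using that isometric images of closed sets are closed), and then the identification $\iota_k(\im(d_{k+1}^{P,Q})) = \im(d_{k+1}^Q)\cap\iota_k(C_k^P)$ together with the closedness hypothesis to pin the denominator down to $\overline{\im(d_{k+1}^Q)}\cap\iota_k(\ker(d_k^P))$. Your reduction to the identity $\overline{R\cap V}=\overline{R}\cap W$ is just a cleaner packaging of the paper's two-sided inclusion argument.
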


\begin{proof}
    By \cref{lem:pershodge}(b), we can apply the Hodge decomposition to obtain 
    $$
        \ker(\Delta_k^{P,Q}) \cong \ker(d_k^P) \big/ \, \overline{\im(d_{k+1}^{P,Q} )}
    $$
    with the canonical isomorphism sending $v \mapsto [v]$. Since $\iota_k$ is an injection, we may apply it and embed the right-hand side into $C_k^Q$:
    \begin{align}
        \ker(\Delta_k^{P,Q}) \cong \iota_k\big(\ker(d_k^P)\big) \big/ \, \iota_k\big(\overline{\im(d_{k+1}^{P,Q} )}\big). \label{eq:pfkerishom}
    \end{align}
    Now the isomorphism sends $v \mapsto [\iota_k(v)]$. By definition, 
    $$\im(d_{k+1}^Q) \cap \iota_k (C_k^P)=\iota_k (\im(d_{k+1}^{P,Q})).$$
    Thus, also $\iota_k (\im(d_{k+1}^{P,Q}))$ is closed and since $\iota_k$ preserves closedness (by \cref{closedstaysclosed}) we have
    $$
        \iota_k\big(\overline{\im(d_{k+1}^{P,Q} )}\big)=\iota_k (\im(d_{k+1}^{P,Q} ))= \im(d_{k+1}^Q) \cap \iota_k (C_k^P)=\overline{\im(d_{k+1}^Q)} \cap \iota_k (C_k^P) \supset \overline{\im(d_{k+1}^Q)} \cap \iota_k (\ker(d_k^P)).
    $$
    
    On the other hand, by \cref{lem:pershodge}(b) and the definition of $d_{k+1}^{P,Q}$, we have $$\iota_k (\im(d_{k+1}^{P,Q} )) \subset \overline{\im(d_{k+1}^Q)} \cap \iota_k (\ker(d_k^P)).$$
    Since the right-hand side is closed, the inclusion remains true when taking the closure of the left-hand side, which yields $\iota_k (\overline{\im(d_{k+1}^{P,Q}} ) = \overline{\im(d_{k+1}^Q)} \cap \iota_k (\ker(d_k^P)).$
    
    Combining these findings with \cref{eq:pfkerishom} concludes the proof.
\end{proof}

Without $\im(d_{k+1}^Q)$ being closed, the statement of \cref{thm:ker_hom_2} need not hold true, which we demonstrate in the following example.

\begin{example} \label{ex:kernotPH}
    Consider the following pair of Hilbert complexes:
    \begin{figure}[H]
    \centering
    \begin{tikzcd}
        C_2^P=\{0\}  \arrow[r,"0"] \arrow[d,hook] & C_1^P=\{f(x)=c\, | \, c \in \R \} \arrow[r,"0"] \arrow[d,hook] & C_0^P=\{0\} \arrow [d,hook]\\
        C_2^Q=L^2([0,1]) \arrow[r,"f \mapsto x f"] & C_1^Q=L^2([0,1]) \arrow[r,"0"] & C_0^Q=\{0\}
    \end{tikzcd}
\end{figure}
Then $C^{P,Q}_2=\{0\}$. Moreover, a simple limit construction shows that
\begin{align*}
    \overline{\im(d_2^Q)}&=C_1^Q\\
    \iota_1(\im(d_2^{P,Q}))&=\im(d_2^Q)\cap \iota_1 (C_1^P)=\{0\}.
\end{align*}
Consequently,
\begin{align*}
    \ker(\Delta_1^{P,Q})&\cong \ker(d_1^P) \big/ \overline{\im(d_2^{P,Q})} = C_1^P \\
    &\not \cong \{0\}=\iota_1(\ker(d_1^P)) \big/ \, \big( \overline{\im(d_{2}^Q)} \cap \iota_1(\ker\left(d_1^P\right))\big)=\im\big(\iota_*: \overline{H_1}(P) \rightarrow \overline {H_1}(Q)\big).
\end{align*}
\end{example}

In the finite-dimensional setting, \cref{thm:ker_hom_2} tells us that the zero eigenspace of persistent Laplacians corresponds to the image of the inclusion map on reduced homology level. However, \cref{ex:kernotPH} shows that this is in general not true. This yields two different ways of defining reduced persistent homology for Hilbert complexes, which both generalize the finite-dimensional version, which we will now present. 

The first way we now consider defines reduced persistent homology as kernel of persistent chain Laplacians:

\begin{definition}[Reduced Kernel Persistent Homology]
    Let $P,Q$ be two Hilbert complexes with $P \subset Q$. The \emph{reduced kernel persistent homology groups} for every $k$ are
    $$
    \overline{H_{\ker,k}}(P,Q) := \ker(\Delta_k^{P,Q}) = \iota_k\big(\ker(d_k^P)\big) \big/ \overline{\im(d_{k+1}^Q) \cap \iota_k (C_k^P)}.
    $$
\end{definition}

The second way is via the image of the pushforward of the inclusion map to the level of reduced homology.

\begin{definition}[Reduced Image Persistent Homology]
    Let $P,Q$ be two Hilbert complexes with $P \subset Q$. The \emph{reduced image persistent homology groups} for every $k$ are 
    $$
    \overline{H_{\im,k}}(P,Q) := \im(\iota_{*} : \overline{H_k}(P) \to \overline{H_k}(Q)).
    $$
\end{definition}

Both of these approaches to defining reduced persistent homology for Hilbert complexes yield the reduced homology of $P$ if $P=Q$.

\subsection{Examples of Persistent Chain Laplacians and their Spectra} \label{sub:examples}

We conclude this section by providing two concrete examples of persistent chain Laplacians. In particular, we present and study \emph{persistent cosheaf Laplacians}, which are a direct generalization of the Laplacians from \cref{sec:motivation}; and Laplacians derived from de Rham complexes of smooth manifolds, which were the main motivation to lift the generality of our setting from bounded operators to densely defined operators on Hilbert spaces. We note that additional types of Laplacians considered in the literature also fall within our framework, such as the persistent hyperdigraph Laplacians introduced in \citet{hyperdigraph}.

\subsubsection{Persistent Cosheaf Laplacians} \label{subsub:cosheaf}
Persistent cosheaf Laplacians generalize the persistent Laplacians derived from filtrations of simplicial complexes as described in \cref{sec:motivation}. Intuitively, cellular cosheaves can be constructed on top of simplicial complexes by assigning vector spaces to each face of the simplicial complex and maps to subface relations in a compatible way (see \S4 of \citet{curry} for full details). They can therefore be seen as a generalization of simplicial complexes. We will work with the following version.

\begin{definition}\textnormal{\citep[Cellular Cosheaf,][\S9.10]{ghrist}}
    Let $K$ be a finite simplicial complex. A \textit{cellular cosheaf} $\Ff$ is an assignment of a finite-dimensional $\IK$ vector space $\Ff(\sigma)$, called a \emph{stalk}, to each face $\sigma \in K$ and a linear map $\Ff_{\tau \subset \sigma}: \Ff(\sigma) \rightarrow \Ff(\tau)$ to each subface relation in a functorial manner, i.e., such that
    $$
        \Ff_{\rho \subset \tau} \circ \Ff_{\tau \subset \sigma}=\Ff_{\rho \subset \sigma} \qquad \forall\ \rho \subset \tau \subset \sigma \in K.
    $$
    The cellular cosheaf is a \emph{cellular Hilbert cosheaf} if all stalks are equipped with inner products.
\end{definition}

\Cref{fig:cellcosheaf} shows an example of a cellular cosheaf. If we equip each stalk with standard inner products, it becomes a Hilbert cosheaf.

\begin{figure}
    \centering
   \begin{tikzpicture}[x=1cm,y=1cm]
	\clip(0.5,0.5) rectangle (9.5,3);
    \draw [line width=.5pt] (1,1)-- (5,2);
    \draw [line width=.5pt] (5,2)-- (9,1);
    \draw [line width=.5pt] (1,1)-- (9,1);
    \draw [{Stealth[length=2mm]}-] (1.3,1.4) -- (2.7,1.75);
    \draw [-{Stealth[length=2mm]}] (3.3,1.9) -- (4.7,2.25);
    \draw [{Stealth[length=2mm]}-] (5.3,2.25) -- (6.7,1.9);
    \draw [-{Stealth[length=2mm]}] (7.3,1.75) -- (8.7,1.4);
    \draw [{Stealth[length=2mm]}-] (1.3,1.3) -- (4.7,1.3);
    \draw [-{Stealth[length=2mm]}] (5.3,1.3) -- (8.7,1.3);
    \draw[color=black, anchor=south] (1,1.1) node {$\R$};
    \draw[color=black, anchor=south] (5,2.1) node {$\R$};
    \draw[color=black, anchor=south] (9,1.1) node {$\R$};
    \draw[color=black, anchor=south] (3,1.6) node {$\R$};
    \draw[color=black, anchor=south] (7,1.6) node {$\R^2$};
    \draw[color=black, anchor=south] (5,1.07) node {$\R$};
    \begin{scriptsize}
    \draw[color=black, anchor=south] (2,1.6) node {$-1$};
    \draw[color=black, anchor=south] (4,2.1) node {$3$};
    \draw[color=black, anchor=south] (6,2.1) node {$(1 \ \ 1)$};
    \draw[color=black, anchor=south] (8,1.6) node {$(2 \ \ 0)$};
    \draw[color=black, anchor=south] (4,1.25) node {$1$};
    \draw[color=black, anchor=south] (6,1.25) node {$2$};
	\draw [fill=black] (1,1) circle (1.5pt);
 	\draw [fill=black] (9,1) circle (1.5pt);
	\draw [fill=black] (5,2) circle (1.5pt);
    \draw[color=black, anchor=east] (1,1) node {};
    \draw[color=black, anchor=north] (5,2) node {};
    \draw[color=black, anchor=west] (9,1) node {};
    \end{scriptsize}
\end{tikzpicture}
    \caption{A cellular cosheaf over a graph that is a cycle with three vertices.}
    \label{fig:cellcosheaf}
\end{figure}

For every cellular cosheaf $\Ff$ over $K$, we can define cosheaf homology by considering the following chain complex: Let
$$
    C_k^\Ff:=\bigoplus_{\sigma^k \in K} \Ff(\sigma).
$$

Using the sign function \cref{eq:sign}, we can define the linear \emph{boundary maps} $d_k^\Ff:C_k^\Ff \rightarrow C_{k-1}^\Ff$ acting on $\Ff(\sigma^k)$ via
$$
    v \mapsto \sum_{\tau^{k-1}\subset \sigma} [\sigma:\tau] \ \Ff_{\tau \subset \sigma} \, v.
$$
This gives indeed a chain complex, i.e., that $d^\Ff_{k-1} \circ d_k^\Ff=0$ (see \S6 of \citet{curry}). We can define homology and, after fixing inner products on the chain groups, Laplacians for this chain complex as we have done for Hilbert complexes. 

For cellular Hilbert cosheaves, an immediate choice of inner product on $C_k^\Ff$ is to interpret the direct sum $\bigoplus_{\sigma^k \in K} \Ff(\sigma)$ as a direct sum of Hilbert spaces.

\begin{example}\label{ex:constantcosheaf}
    One of the simplest cellular Hilbert cosheaves over a given simplicial complex $K$ is the one that assigns $\IK$ (with standard inner product) to every face and the identity map to every subface relation. Call this the constant cosheaf $\underline \IK$. The cosheaf homology and Laplacians of the constant cosheaf are then precisely the simplicial homology and the Hodge Laplacians of $K$ as described in \cref{sec:motivation}.
\end{example}

Next, we move on to defining persistent homology and persistent Laplacians for cellular cosheaves. To do this, we consider pairs of cosheaves. In \citet{pers_sheaf}, different ways of studying \emph{persistent (co)sheaf homology} are presented. One way is starting with a fixed simplicial complex, and then considering evolving cosheaves over it with arbitrary maps between them (referred to as ``type A'' by \citet{pers_sheaf}).  Another is to fix a cosheaf over a largest simplicial complex and then pull it back to the subspaces that form a filtration (``type T''), or can combine them \citep[\S4.3]{pers_sheaf}.  This last approach is the one that we adopt here, however, we restrict to the special case where maps between the sheaves are inclusions.

\begin{definition}\textnormal{\citep[Inclusion of Cosheaves, adaptation of][\S3.1]{sheaves}}
    Consider two finite simplicial complexes $K \subset L$ and Hilbert cosheaves $\Ff$ over $K$ and $\Gg$ over $L$. We say $\Ff \subset \Gg$, if there are linear isometries $i_\sigma:\Ff(\sigma) \rightarrow \Gg(\sigma)$ for all $\sigma \in K$ such that for all $\tau \subset \sigma \in K$, we have $i_\tau \circ \Ff_{\tau \subset \sigma}=\Gg_{\tau \subset \sigma} \circ i_\tau$.
\end{definition}

It is readily verifiable that any pair of Hilbert cosheaves $\Ff \subset \Gg$ gives rise to a pair of Hilbert complexes $C_\bullet^\Ff \subset C_\bullet^\Gg$.  From this pair of Hilbert complexes, we can then define persistent homology and persistent Laplacians as in previous discussions.

In fact, the \emph{persistent cosheaf Laplacians} arising in this way can be any Hermitian positive semi-definite operator.

\begin{example}[Arbitrary Hermitian Positive Semi-Definite Matrix as a Persistent Sheaf Laplacian.]
    Let $A \in \IK^{n \times n}$ be an arbitrary Hermitian positive semi-definite matrix. We find a unitary matrix $U \in \IC^{n \times n}$ and a diagonal $D \in \R^{n \times n}$ such that $A=UDU^*$. Let
    $$
        B:=U\sqrt D U^*
    $$
    where $\sqrt D$ has the square roots of the diagonal entries of $D$ on its diagonal. Then $B^* B=A$.
    
    Now let us define the following sheaves: Let $K=L=$ \begin{tikzpicture}
        \node[circle, fill=black, minimum size=5pt, inner sep=0pt, label=above:$a$] (a) at (0,0) {};
        \node[circle, fill=black, minimum size=5pt, inner sep=0pt, label=above:$b$] (b) at (1,0) {};
        \draw[black,line width=2pt] (a) -- (b);
    \end{tikzpicture}
    and let $\Ff=\Gg$ be given by $\Ff(b)=\Ff(ab)=\IK^n$, $\Ff(a)=0$ and $\Ff_{b \subset ab}=B$, expressed in the standard bases. Let all the inner products on $C_k^\Ff$ be the standard ones given by the identity matrix. Then $d_1^\Ff$ is given by $B$ and consequently 
    $$
    \Delta_{1}^{\Ff,\Gg}=\Delta_{1,\text{down}}^{\Ff,\Gg}
    $$
    has $A$ as matrix representation.
\end{example}

\subsubsection{De Rham Complexes} \label{subsub:derham}

The de Rham complex of smooth manifolds is a cochain complex \citep[see][\S17]{lee} which can be turned into a chain complex by relabeling indices. If the manifold (which may have boundary) is compact and orientable, an inner product can be defined on the chain groups of this complex \citep[see][\S1.2 and 2.6]{schwarz}. The presentation in \citet[\S3]{Hilbert} deals with the more general setting of defining Hilbert complexes from elliptic complexes on arbitrary Riemannian manifolds. However, in this example, we restrict our attention to a compact orientable Riemannian $n$-manifold $M$ with boundary $\partial M$ (which may be empty). For such manifolds, the de Rham cohomology is known to coincide with cohomology defined via other cohomology theories.

\begin{definition}[\textnormal{De Rham Cohomology, \citet[\S17]{lee}}]
    Let $M$ be a compact, smooth Riemannian manifold with or without boundary and $d_p:\Omega^p(M) \to \Omega^{p+1}(M)$ denote the exterior derivative for smooth $p$-forms. Then the \emph{de Rham Cohomology} groups of $M$ are given by
    $$
        H^p(M) \coloneqq \ker(d_p)\big/ \im(d_{p-1}). 
    $$
\end{definition}

\begin{prop}[\textnormal{De Rham Theorem, \cite{dRSamelson}}]
    Let $M$ be a compact, smooth Riemannian manifold with or without boundary. Then the de Rham cohomology groups are isomorphic to the singular cohomology groups of $M$.
\end{prop}
This result has first been shown for manifolds without boundary \citep{deRham}, however, we note that the homotopy equivalence of manifolds with boundary to their interior \citep[Theorem 9.26]{lee} combined with the homotopy invariance of de Rham cohomology \citep[Theorem 17.11]{lee} also establish it for general $M$ (with or without boundary). Some proofs, such as \citep{dRSamelson} work directly with manifolds with boundary. 

The spaces $\Omega^k(M)$ of smooth $k$-forms are not Hilbert spaces, because they are not complete with respect to their inner product
$$
    \langle \alpha ,\beta \rangle = \int_M \alpha \wedge \star \beta.
$$
Here $\star:\Omega^k(M) \rightarrow \Omega^{n-k}(M)$ is the \emph{Hodge star operator} \citep[see][\S16]{lee}.  Hence, we consider the completions of $\Omega^k(M)$, $L^2\Omega^k(M)$. The exterior derivatives can be extended to closed densely defined operators on these spaces in the following way \citep[see][\S4.2]{finderham}: First we should remark that \citet{finderham} work with bounded domains in $\R^n$, which differs from the setting of our compact manifold $M$. However, we may also restrict to this case of bounded domains, since we can find an atlas for $M$ with finitely many charts and then, by looking at a subordinate partition of unity, integrating over $M$ amounts to a finite sum of integrals over bounded domains in $\R^n$. Hence, showing that a differential form has a property such as being in $L^2$ or being weakly differentiable is equivalent to showing the corresponding properties locally.

We define the coboundary operator $\delta: \Omega^k(M) \rightarrow \Omega^{k-1}(M)$ by \citep[see][problems 16-22]{lee}
\begin{align}
    \delta \omega =(-1)^{nk+n+1} \star d \star \omega. \label{defeq:delta}
\end{align}
In order to extend $d$, let $\Omega_c^{k+1}(M)$ denote the space of smooth $k+1$-forms with compact support in the interior of $M$ and define the following. 
\begin{definition}\textnormal{\citep[Weak Exterior Derivative,][\S4.2]{finderham}}
     $\alpha \in L^2\, \Omega^k(M)$ has the \emph{weak exterior derivative} $\beta \in L^2 \, \Omega^{k+1}(M)$ if
$$
    \langle \alpha , \delta \eta \rangle_{L^2}=\langle \beta , \eta \rangle_{L^2} \qquad \forall\ \eta \in \Omega_c^{k+1}(M)
$$
and it has no weak derivative if there is no such $\beta$.
\end{definition}

Let $H\, \Omega^k(M)$ be the space of forms in $L^2 \, \Omega^k(M)$ with weak exterior derivative in $L^2 \, \Omega^{k+1} (M)$. Then it turns out that 

\begin{center}
    \begin{tikzcd}
    \cdots \arrow[r,"d^M_{k-2}"] & L^2 \, \Omega^{k-1}(M) \arrow[r,"d^M_{k-1}"] &L^2 \, \Omega^k(M) \arrow[r,"d^M_k"] &L^2 \, \Omega^{k+1}(M) \arrow[r,"d^M_{k+1}"] &\cdots
\end{tikzcd}
\end{center}
becomes a Hilbert complex if we set $\dom(d_k):=H\, \Omega^k(M)$ \citep[\S4.2]{finderham}. Note that inserting $N=M$ into \cref{lem:derhamispair}(d) yields that $\dom(d_k^M) \cap \dom(d_{k-1}^M)^*$ is dense in $L^2 \, \Omega^k(M)$.  Denoting the adjoint of $d_k^M$ by $(d^*)_{k+1}^M$ gives us the correct indexing in the Hilbert complexes that arise with $d^*$.

In order to define persistent objects, we consider a closed submanifold $N \subset M$ with boundary $\partial N$ such that $\dim(M)=\dim(N)$ and let $i:N\rightarrow M$ be the embedding. Such manifolds are always properly embedded \citep[Proposition 4.22 and Proposition 5.5]{lee} and are called \emph{regular domains in M} \citep[p.~120]{lee}. We have the following two de Rham cochain complexes with pull-backs $i^*$ between them \citep[see][Proposition~14.26]{lee}:
\begin{figure}[H]
    \centering
    \begin{tikzcd}
    \Omega^{k-1}(M) \arrow[r,"d^M"]\arrow[d,"i^*"] &\Omega^k(M) \arrow[r,"d^M"] \arrow[d,"i^*"] &\Omega^{k+1}(M) \arrow[d,"i^*"]\\
    \Omega^{k-1}(N) \arrow[r,"d^N"] &\Omega^k(N) \arrow[r,"d^N"] &\Omega^{k+1}(N)
    \end{tikzcd}
    \caption{De Rham complexes of two manifolds $N \subset M$}
    \label{fig:pairofhc}
\end{figure}
This diagram commutes. We are aiming for a pair of Hilbert complexes $P \subset Q$ (in the sense of \cref{def:inclusion}); the inclusion maps for such a pair should go from the ``smaller'' space to the ``larger'' one. Thus, the maps $i^*$ in \cref{fig:pairofhc} are oriented in the wrong direction for what we require. Since the spaces are not finite-dimensional, we cannot do the same construction as in \cref{ex:cohomology}. Nevertheless, it offers a useful intuition, which we illustrate in \cref{fig:pairofhc}. Since $\dim(N)=\dim(M)$, we have that for all $k$, the restriction $i^*$ extends to a well-defined bounded map 
\begin{align*}
    i^*:L^2 \, \Omega^k(M) &\rightarrow L^2 \, \Omega^k(N)\\
    [\alpha] &\mapsto [i^* \alpha].
\end{align*}
This map commutes with the exterior derivative in the sense that for all $\eta \in \dom(d^M)$, $i^* \, \eta \in \dom(d^N)$ and $d^N i^* \eta = i^* d^M \eta$. To see this, consider $\xi \in \Omega_c^k(N)$ and its extension by zero $\xi' \in \Omega_c^k(M)$: Since $\delta \xi'$ is the extension by zero of $\delta \xi \in \Omega_c^{k-1}(M)$, we have
$$
    \langle i^* d^M \eta, \xi \rangle_N = \langle d^M \eta, \xi' \rangle_M = \langle \eta, \delta \xi'\rangle_M = \langle i^* \eta , \delta \xi \rangle_N.
$$

Moreover, $i^*$ has an everywhere defined adjoint which we claim is the map
\begin{align*}
    e_k: L^2 \, \Omega^k(N) &\rightarrow L^2 \, \Omega^k(M)\\
    [\omega] &\mapsto [e_k (\omega)] \qquad \text{with}\\
    e_k(\omega) (x)&=\begin{cases}
        \omega (x) & x \in N\\
        0 & x \in M \setminus N.
    \end{cases}
\end{align*}
Notice that the extension maps $e_k$ are well-defined. Moreover, the following holds:
\begin{lemma}
    The extension maps $e_k$ are adjoint to $i^*$ for all $k$.
\end{lemma}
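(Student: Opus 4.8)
The plan is to verify the defining property of the adjoint directly: for every $\alpha \in L^2\,\Omega^k(M)$ and every $\omega \in L^2\,\Omega^k(N)$, I would show that
$$
\langle i^*\alpha,\,\omega\rangle_N = \langle \alpha,\, e_k\omega\rangle_M .
$$
Since $i^*$ is everywhere defined and bounded (as established above) and $e_k$ is everywhere defined, this identity together with the uniqueness of adjoints yields $e_k = (i^*)^*$. If one prefers not to manipulate $L^2$-representatives directly, one can first check the identity for smooth forms and then extend by continuity of the inner product using boundedness of $i^*$ and $e_k$ and density of smooth forms; but the computation below works verbatim at the level of $L^2$-forms.

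First I would unpack the two inner products as integrals of the pointwise inner product on forms against the Riemannian volume, writing $\langle\alpha,\beta\rangle_M = \int_M \langle\alpha,\beta\rangle_{g_M}\, dV_{g_M}$ and likewise on $N$ (equivalently, $\langle\alpha,\beta\rangle = \int \alpha\wedge\star\beta$). The key geometric observation is that, because $N\subset M$ is a regular domain, we have $\dim N=\dim M$, so $N$ carries the metric induced from $M$; hence at every point $x\in N$ the cotangent spaces $T_x^*N$ and $T_x^*M$ coincide together with their inner products on $k$-forms, the volume forms satisfy $dV_{g_N}=dV_{g_M}|_N$ (equivalently $\star_M$ and $\star_N$ agree pointwise on $N$), and $i^*\alpha$ is simply the pointwise restriction $\alpha|_N$. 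Moreover $\partial N$ is a Lebesgue-null subset of $M$, so $M$ decomposes as $N\sqcup(M\setminus N)$ up to a null set.

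With these identifications, the computation is short. Since $(e_k\omega)(x)=0$ for $x\in M\setminus N$,
\begin{align*}
    \langle \alpha,\, e_k\omega\rangle_M
    &= \int_M \langle \alpha,\, e_k\omega\rangle_{g_M}\, dV_{g_M}
     = \int_N \langle \alpha,\, \omega\rangle_{g_M}\, dV_{g_M} \\
    &= \int_N \langle i^*\alpha,\, \omega\rangle_{g_N}\, dV_{g_N}
     = \langle i^*\alpha,\, \omega\rangle_N ,
\end{align*}
where the second equality uses that $\partial N$ is null and that $e_k\omega$ agrees with $\omega$ on the interior of $N$, and the third uses that on $N$ the pointwise inner product and the volume form are inherited from $M$ and that $i^*\alpha=\alpha|_N$. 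Equivalently, in wedge-product form, $i^*\big(\alpha\wedge\star_M(e_k\omega)\big)=i^*\alpha\wedge\star_N\omega$ and the integral over $M$ of the $n$-form $\alpha\wedge\star_M(e_k\omega)$, which is supported in $N$, equals the integral over $N$ of its pullback.

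I do not expect a genuine obstacle here: the statement is essentially the assertion that extension-by-zero is adjoint to restriction, and the only points requiring care are bookkeeping ones—that $i^*$ on $L^2$ really is restriction of representatives, that $\partial N$ is null so that the integral over $M$ splits cleanly into its parts over $N$ and $M\setminus N$, and that all the pointwise metric data on $N$ (inner product on forms, Hodge star, volume form) is inherited from $M$ because $\dim N=\dim M$. I would state these explicitly but not belabor them.
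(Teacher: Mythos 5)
Your proposal is correct and is essentially the paper's own proof: both verify the adjoint identity $\langle \alpha, e_k\beta\rangle_M = \langle i^*\alpha,\beta\rangle_N$ by splitting the integral over $M$ into the part over $N$ (where $e_k\beta=\beta$ and the metric data is inherited since $\dim N=\dim M$) and the part over $M\setminus N$ (where $e_k\beta=0$). Your extra bookkeeping about the null boundary and the agreement of Hodge stars is a more explicit version of what the paper leaves implicit in its one-line computation.
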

\begin{proof}
    Consider any $\alpha \in L^2 \Omega^k(M)$ and $\beta \in L^2 \Omega^k(N)$. Then
    $$
        \langle \alpha, e_k \beta \rangle_M = \int_M \alpha \wedge \star (e_k \beta) = \int_N i^*\alpha \wedge \beta + 0 = \langle i^* \alpha, \beta \rangle_N.
    $$
\end{proof}

Since $i^*$ commutes with the exterior derivative, our strategy is to show that their adjoints, $e$ and $d^*$, also commute. More precisely, we show that the conditions of \cref{def:inclusion} hold.

\begin{lemma} \label{lem:derhamispair}
The following are true:
    \begin{enumerate}
        \item[(a)] $\dom((d^*)_k^M)\cap \im(e_k)=e_k(\dom((d^*)_k^N))$;
        \item[(b)] $e_{k-1} \circ (d^*)_k^N=(d^*)_k^M \circ e_k$;
        \item[(c)] $\langle \alpha, \beta \rangle^N_{L^2}=\langle e_k \alpha ,e_k \beta \rangle^M_{L^2} \qquad \forall\  \alpha, \beta \in L^2 \, \Omega^k(N)$;
        \item[(d)] $\dom((d^*)_k^N) \cap \dom((d^*)_{k+1}^{N,M})^*$ is dense in $L^2 \, \Omega^k(N)$.
    \end{enumerate}
\end{lemma}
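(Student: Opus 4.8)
The plan is to observe that parts (a)--(c) are exactly the three conditions of \cref{def:inclusion} for the pair of Hilbert complexes $P \subset Q$ with $C_k^P = L^2\,\Omega^k(N)$, $C_k^Q = L^2\,\Omega^k(M)$, boundary maps $(d^*)_k^N$ and $(d^*)_k^M$, and inclusions $\iota_k := e_k$ (this is the only sensible choice of direction, since $e_k$ embeds the forms on the smaller manifold $N$ into those on $M$), while (d) is the additional density hypothesis required by \cref{def:pers_chain_lap}. Condition (c) I would dispatch at once: extension by zero changes neither the pointwise norm of a form nor the integral defining the $L^2$ inner product, since the metric of $M$ restricts to that of $N$; equivalently $e_k = (i^*_k)^*$ and $i^*_k e_k = \mathrm{id}$, so $\langle e_k\alpha, e_k\beta\rangle_M = \langle\alpha, i^*_k e_k\beta\rangle_N = \langle\alpha,\beta\rangle_N$.

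For (a) and (b) the two available inputs are the adjointness $e_j = (i^*_j)^*$ and the commutation $i^*_k\circ d^M_{k-1} = d^N_{k-1}\circ i^*_{k-1}$ on $\dom(d^M_{k-1})$ established just before the lemma. The easy half is a direct chase: for $\omega\in\dom((d^*)_k^N)$ and any $\eta\in\dom(d^M_{k-1})$, one rewrites $\langle e_{k-1}\big((d^*)_k^N\omega\big),\eta\rangle_M$ successively by adjointness of $e_{k-1}$, the definition of $(d^*)_k^N = (d^N_{k-1})^*$, the commutation identity, and adjointness of $e_k$, arriving at $\langle e_k\omega, d^M_{k-1}\eta\rangle_M$; by the definition of the adjoint this says $e_k\omega\in\dom((d^*)_k^M)$ with $(d^*)_k^M(e_k\omega) = e_{k-1}\big((d^*)_k^N\omega\big)$. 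Hence $e_k\big(\dom((d^*)_k^N)\big)\subseteq\dom((d^*)_k^M)\cap\im(e_k)$ and the two operators agree there.

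The main obstacle is the reverse inclusion in (a): that $e_k\omega\in\dom((d^*)_k^M)$ forces $\omega\in\dom((d^*)_k^N)$. One cannot merely take adjoints of the commutation identity, because $d^N_{k-1}\circ i^*_{k-1}$ strictly extends $i^*_k\circ d^M_{k-1}$ (extension by zero destroys weak differentiability across $\partial N$), so a naive argument yields only one inclusion. I would close the gap using two analytic facts about the compact regular domain $N\subset M$ (properly embedded, codimension zero). First, the weak codifferential is a local differential operator and $e_k\omega$ vanishes on the open set $M\setminus N$; hence $h := (d^*)_k^M(e_k\omega)$ vanishes almost everywhere on $M\setminus N$, so $h = e_{k-1}g$ with $g := i^*_{k-1}h\in L^2\,\Omega^{k-1}(N)$, and the relation defining $h$ reads $\langle g, i^*_{k-1}\eta\rangle_N = \langle\omega, d^N_{k-1} i^*_{k-1}\eta\rangle_N$ for all $\eta\in\dom(d^M_{k-1})$. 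Second, smooth forms on $N$ extend to smooth forms on $M$ (Seeley--Whitney extension, valid because $N$ is properly embedded of codimension zero), and smooth forms up to the boundary are a core for the maximal exterior derivative $d^N_{k-1}$; hence the restrictions $i^*_{k-1}\eta$ are graph-norm dense in $\dom(d^N_{k-1})$, and a routine limiting argument promotes the relation above to $\langle g,\xi\rangle_N = \langle\omega, d^N_{k-1}\xi\rangle_N$ for all $\xi\in\dom(d^N_{k-1})$, which is precisely the statement that $\omega\in\dom((d^*)_k^N)$ with $(d^*)_k^N\omega = g$. Together with the easy half this establishes (a), and the value identity in (b) then follows from both halves and density of $\dom(d^M_{k-1})$.

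For (d) I would exhibit the dense subspace $\Omega_c^k(\mathrm{int}\,N)$ of smooth $k$-forms with compact support in the interior of $N$ inside $\dom((d^*)_k^N)\cap\dom\big(((d^*)_{k+1}^{N,M})^*\big)$. Membership in $\dom((d^*)_k^N)$ is immediate: testing the weak-exterior-derivative identity of any $\eta\in\dom(d^N_{k-1})$ against a compactly supported $\xi$ gives $\langle\delta\xi,\eta\rangle_N = \langle\xi, d^N_{k-1}\eta\rangle_N$, so $\xi\in\dom((d^*)_k^N)$ with $(d^*)_k^N\xi = \delta\xi$. For membership in $\dom\big(((d^*)_{k+1}^{N,M})^*\big)$ the candidate adjoint of $\xi$ is $g := e_{k+1}(d\xi)$, which lies in $C_{k+1}^{N,M}$ because $(d^*)_{k+1}^M\big(e_{k+1}(d\xi)\big) = e_k(\delta\,d\xi)$ is again supported in $N$; the required identity $\langle g, c\rangle_M = \langle\xi, (d^*)_{k+1}^{N,M}c\rangle_N$ for $c\in\dom((d^*)_{k+1}^{N,M})$ then reduces, by adjointness of $e$ and $i^*$ and the same interior-locality computation (which shows $i^*_{k+1}c$ has weak codifferential $i^*_k\big((d^*)_{k+1}^M c\big)$ on $N$), to the classical integration by parts $\langle d^N_k\xi, i^*_{k+1}c\rangle_N = \langle\xi, i^*_k\big((d^*)_{k+1}^M c\big)\rangle_N$, valid since $\xi$ is compactly supported in the interior. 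Density of $\Omega_c^k(\mathrm{int}\,N)$ in $L^2\,\Omega^k(N)$ finishes (d); specializing $N = M$ here recovers condition (ii) of \cref{def:cc} for the $M$-complex, i.e.\ the density claim stated just after the lemma.
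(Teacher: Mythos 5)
Your proof follows the same skeleton as the paper's: the easy inclusion of (a) and the identity (b) by the adjoint chase through $e=(i^*)^*$ and the commutation $i^*d^M=d^Ni^*$; the hard inclusion of (a) by first showing $(d^*)_k^M(e_k\omega)$ is supported in $N$ and then transferring the adjoint relation from test forms on $M$ to test forms on $N$; (c) from the definition of $e_k$; and (d) by exhibiting $\Omega_c^k(N)$ inside the intersection with candidate adjoint $d\tilde\xi=e_{k+1}(d\xi)$ (your explicit check that this candidate lies in $C_{k+1}^{N,M}$ is a point of care the paper glosses over). The two technical sub-steps of the hard direction are, however, handled by genuinely different tools. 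For the support claim (the paper's Claim~\ref{claim:psi}), you test only against forms compactly supported in the open set $M\setminus N$ and invoke locality of the weak codifferential; this is cleaner and shorter than the paper's contradiction argument via charts, dominated convergence, a cutoff $f$, and an $L^2$-smooth approximation of $\psi$. For the transfer step, the paper lifts each $\eta\in\dom(d^N)$ to $\tilde\eta\in\dom(d^M)$ via the Sobolev extension theorem, whereas you instead argue that restrictions $i^*\eta$ are graph-norm dense in $\dom(d^N)$, via Seeley--Whitney extension plus the fact that smooth forms up to the boundary are a core for the maximal exterior derivative, and then pass to the limit. Both routes rest on a cited-but-unproved standard analytic input of comparable depth (extension theorem for the maximal domain vs.\ the core property), so neither is strictly more elementary; yours avoids having to produce an extension of a merely weakly differentiable form, at the cost of needing the core/density result. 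I see no gap.
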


\begin{proof}
    For (a) we will show two directions. First, consider $e_k(\gamma) \in e_k(\dom((d^*)_k^N))$. We need to show that $e_k(\gamma) \in \dom((d^*)_k^M)$. Let $(d^*)_k^N \, \gamma = \psi$ and consider any $\eta \in \dom(d^M)$. We have just shown that $i^* \, \eta \in \dom(d^N)$ and $d^N i^* \eta = i^* d^M \eta$. Consequently,
    \begin{align}
        \langle e_{k-1} \psi , \eta \rangle_{L^2}^M = \langle \psi, i^* \eta \rangle_{L^2}^N = \langle \gamma, d^N i^* \eta \rangle_{L^2}^N = \langle e_k \gamma , d^M \eta\rangle_{L^2}^M. \label{eq:showsb}
    \end{align}
    Hence, by definition, $e_k(\gamma) \in \dom((d^*)_k^M)$ and $(d^*)_k^M e_k \gamma = e_{k-1} \psi$.
    
    Conversely, consider some $e_k (\gamma) \in \dom((d^*)_k^M)$ and let $\psi := (d^*)_k^M \, e_k (\gamma)$. We need to show that $\gamma \in \dom((d^*)_k^N)$. More precisely, we will show that $i^* \psi$ satisfies the defining property for $(d^*)_k^N \gamma$.
    
    We first show that $\psi$ is entirely supported in $N$. 
\begin{claim}
    $\psi = e_k i^* \psi$. \label{claim:psi}
\end{claim}
    \textit{Proof of claim:} For every smooth $\eta \in \Omega^k(M)$, we have
    $$
        \langle \psi,\eta \rangle_{L^2}^M = \langle (d^*)_k^M \, e_k (\gamma),\eta \rangle_{L^2}^M = \langle e_k \gamma, d^M\eta \rangle_{L^2}^M=\langle \gamma, i^* d^M\eta \rangle_{L^2}^N = \langle \gamma, d^N i^* \eta \rangle_{L^2}^N.
    $$
    In particular, for any $\eta \in \Omega^k(M)$ with $i^* \eta=0$, we have $\langle \psi, \eta \rangle_{L^2}^M=0$. If the claim were false, we would need $||\psi-e_k i^* \psi||>0$. By picking a chart which gives a positive contribution to $||\psi-e_k i^* \psi||$, we may consider the situation in Euclidean space, i.e., we work with a bounded subset $S \subset \R^n$. Using the slice criterion \citep[Theorem 5.8]{lee}, the boundary of $N$ corresponds to $\{x_1=0\}\cap S$ and $N$ corresponds to $\{x_1>0\}\cap S$. Then, by the dominated convergence theorem, 
    $$
    \lim_{t \rightarrow 0} \int_{\{x_1<-t \}\cap S} |\psi(x)|^2 d^nx = \int_{\{x_1<0\}\cap S} |\psi(x)|^2 d^nx>0.
    $$
    Hence, for some $s >0$, $\int_{\{x_1<-s \}\cap S} |\psi(x)|^2 d^nx=\varepsilon >0$. Since we can find a smooth function $f:S \rightarrow [0,1] $ such that $$
    f(x)=\begin{cases}
        0 & x_1>0\\
        1 & x_1<-s,
    \end{cases}
    $$
    we can construct an $\eta \in \Omega^k(M)$ with $i^* \eta=0$ such that $\langle \psi, \eta \rangle_{L_2}^M>0$, which proves the claim by contradiction: To do this, we smoothly approximate $\psi$ on $S$ by some $\varphi \in \Omega^k(S)$ such that $||\psi-\varphi||_S < \frac{\varepsilon}{3 \, ||\psi||_S}$. (Here, $||\cdot||_S$ means the norm of a function restricted to $S$.) \citet[\S5.3, Theorem 3]{evans} guarantees the existence of such $\varphi$. Picking $\eta:=f \cdot \varphi$ ensures that $i^* \eta=0$ and yields
    \begin{align}
        \langle \psi, \eta \rangle_{L_2}^M = \int_{\{ x_1< -s \} \cap S} \langle \eta(x),\psi(x) \rangle d^n x + \int_{\{0> x_1> -s \} \cap S} \langle \eta(x),\psi(x) \rangle d^n x \label{eq:pfderham}.
    \end{align}
    We can estimate the following using Cauchy--Schwarz:
    \begin{align*}
        \int_{\{ x_1< -s \} \cap S} \langle \eta(x),\psi(x) \rangle d^n x &= \int_{\{ x_1< -s \} \cap S} \langle \psi(x),\psi(x) \rangle d^n x + \int_{\{ x_1< -s \} \cap S} \langle \varphi (x) - \psi(x),\psi(x) \rangle d^n x\\
        &\ge \varepsilon - ||\psi||_S \, ||\varphi - \psi||_S = \frac{2 \varepsilon}{3} . \\
        \int_{\{0> x_1> -s \} \cap S} \langle \eta(x),\psi(x) \rangle d^n x&=\int_{\{0> x_1> -s \} \cap S} f(x) \langle \psi(x),\psi(x) \rangle d^n x \\
        &\qquad \qquad + \int_{\{0> x_1> -s \} \cap S} \langle f(x) (\varphi(x) - \psi(x)),\psi(x) \rangle d^n x\\
        &\ge 0 -  ||\psi||_S \, ||\varphi - \psi||_S = \frac{ - \varepsilon}{3}
    \end{align*}
    Substituting this approximation back into \cref{eq:pfderham} gives $\langle \psi, \eta \rangle_{L_2}^M \ge \frac{\varepsilon}{3}$, and consequently \cref{claim:psi} holds.

    Now, equipped with this result, we can show that $\gamma \in \dom((d^*)_k^N)$: Consider any $\eta \in \dom(d^N)$. The extension theorem \citep[p.~254]{evans} allows us to lift $\eta$ to $\dom(d^M)$. Concretely, it provides an element $\tilde \eta \in \dom (d^M)$ such that $i^* \tilde \eta = \eta$ almost everywhere on $N$. Consequently,
    \begin{align*}
        \langle i^* \psi, \eta \rangle_{L^2}^N&=  \langle i^* \psi, i^*\tilde\eta \rangle_{L^2}^N = \langle e_k i^* \psi, \tilde\eta \rangle_{L^2}^M = \langle \psi, \tilde\eta \rangle_{L^2}^M = \langle (d^*)^M_k e_k \gamma, \tilde\eta \rangle_{L^2}^M = \langle e_k \gamma, d^M\tilde\eta \rangle_{L^2}^M \\
        &= \langle \gamma, i^* d^M\tilde\eta \rangle_{L^2}^N = \langle \gamma, d^N i^*\tilde\eta \rangle_{L^2}^N = \langle \gamma, d^N \eta \rangle_{L^2}^N.
    \end{align*}
    Here $\langle \gamma, d^N i^*\tilde\eta \rangle_{L^2}^N = \langle \gamma, d^N \eta \rangle_{L^2}^N$, because $i^*\tilde \eta = \eta$ almost everywhere on $N$ and $d^N$ is defined via inner products and hence insensitive to changes on sets of measure zero. This proves (a). 
    
    Part (b) follows directly from \cref{eq:showsb}.
    
    Part (c) follows from the definition of $e_k$.
    
    For part (d), it suffices to show that $\Omega_c^k(N) \subset \dom((d^*)_k^N) \cap \dom((d^*)_{k+1}^{N,M})^*$. 
    The inclusion $\Omega_c^k(N) \subset \dom((d^*)_k^N)$ is apparent from the definition of $d_k^N$. Concretely, for smooth $\phi \in \Omega_c^k(N)$, we have $d^* \phi = \delta \phi$. For showing $\Omega_c^k(N) \subset \dom((d^*)_{k+1}^{N,M})^*$, consider $\phi \in \Omega_c^k(N)$. We can extend $\phi$ by zero to obtain $\tilde \phi \in \Omega_c^k(M)$.
    Then for any $\eta \in \dom((d^*)_{k+1}^M)$ satisfying $(d^*)_{k+1}^M\eta= e_k i^* (d^*)_{k+1}^M\eta$, the definition of $(d^*)_{k+1}^{N,M}$ implies
    \begin{align*}
        \langle d \tilde \phi, \eta \rangle_{L^2}^M &= \langle  \tilde \phi, (d^*)_{k+1}^M \eta \rangle_{L^2}^M = \langle  \tilde \phi, e_k i^* (d^*)_{k+1}^M\eta \rangle_{L^2}^M = \langle  i^* \tilde \phi, i^* (d^*)_{k+1}^M\eta \rangle_{L^2}^N = \langle \phi, i^* (d^*)_{k+1}^M \eta \rangle_{L^2}^N \\
        &= \langle \phi, (d^*)_{k+1}^{N,M} \eta \rangle_{L^2}^N.
    \end{align*}
    This shows $\phi \in \dom((d^*)_{k+1}^{N,M})^*$ with $((d^*)_{k+1}^{N,M})^* \phi = d \tilde \phi$ and allows us to conclude (d).
\end{proof}

We have established that \cref{fig:pairofmf} below indeed shows a pair of Hilbert complexes $C^N \subset C^M$ consistent with \cref{def:cc,def:inclusion}.

\begin{figure}[H]
    \centering
    \begin{tikzcd}
    L^2 \, \Omega^{k+1}(N) \arrow[r,"(d^*)_{k+1}^N"] \arrow[d,"e_{k+1}"] &L^2 \, \Omega^k(N) \arrow[r,"(d^*)_k^N"] \arrow[d,"e_k"] &L^2 \, \Omega^{k-1}(N) \arrow[d,"e_{k-1}"]\\
    L^2 \, \Omega^{k+1}(M) \arrow[r,"(d^*)_{k+1}^M"] &L^2 \, \Omega^k(M) \arrow[r,"(d^*)_k^M"] &L^2 \, \Omega^{k-1}(M)
    \end{tikzcd}
    \caption{Pair of De Rham--Hilbert complexes $C^N \subset C^M$  coming from two manifolds $N \subset M$.}
    \label{fig:pairofmf}
\end{figure}
Recall that for our construction we assumed $n:=\dim(N)=\dim(M):=m$. Indeed, without this assumption, there is no straightforward way to define the map $e$: Even for smooth functions in Euclidean space, $e$ would need to be a map $L^2(\R^n) \rightarrow L^2(\R^m)$ such that 
$$
\int_{\R^n} |f(x)|^2 \, d^n \,x= \int_{\R^m} |ef(x)|^2 \, d^m \,x.
$$
For this to be possible, $ef$ must be supported on a set of positive measure, thus, outside of $\R^n$. But then $e$ would not be the adjoint of $i^*$ and \cref{lem:derhamispair} would fail to hold.

The workaround to still be able to work with submanifolds of smaller dimension is to slightly ``inflate'' them to a manifold with boundary that has the same dimension as the surrounding manifold. For example, instead of looking at a circle in some bounded subset of $\R^2$, we can consider a thin ring. The tubular neighborhood theorem \citep[see][p.~346]{spivak} ensures that such an inflation is possible for compact submanifolds $N$ as long as $N$ lies within the interior of $M$, i.e., $\partial M \cap N = \varnothing$.

Hilbert complexes arising from manifolds in the manner described above have a \emph{compactness property} \citep{finderham}: $\dom(d^*)^M_k \cap \dom d_k^M$ not only lies densely in $L^2 \Omega^k(M)$, but the inclusion is a compact operator \citep[see][\S4.2]{finderham}. Therefore, the following result applies to such Hilbert complexes.

\begin{prop}\label{prop:compact embedding}
    Consider three Hilbert spaces $U,V,W$ and closed, densely defined operators $A:U \rightarrow V$ and $B:V \rightarrow W$. If the embedding from $\dom A^*\cap \dom B$ to $V$ is compact, then
    \begin{enumerate}
        \item[(a)] $\sigma_e(AA^*+B^*B)=\varnothing$;
        \item[(b)] The only potential essential eigenvalue of $AA^*$ and of $B^*B$ is zero: $\sigma_e(AA^*), \sigma_e(B^*B) \subset \{0\}$.
    \end{enumerate}
\end{prop}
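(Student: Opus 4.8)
The plan is to realize $L:=AA^{*}+B^{*}B$ as the nonnegative self-adjoint operator induced by the quadratic form $q[u]=\|A^{*}u\|^{2}+\|Bu\|^{2}$ on $d[q]=\dom(A^{*})\cap\dom(B)$, exactly as the chain Laplacian is built in \cref{def:kth chain Laplacian}. One first records that $q$ is densely defined (this is the standing density requirement---condition (ii) of \cref{def:cc} in the Hilbert-complex case, and \cref{lem:derhamispair}(d) in the de Rham case), nonnegative, and closed; closedness holds because the form norm $(q[u]+\|u\|^{2})^{1/2}$ is precisely the graph norm of the pair $(A^{*},B)$, with respect to which $\dom A^{*}\cap\dom B$ is complete since both $A^{*}$ (\cref{T^* closed}) and $B$ are closed. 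Likewise $AA^{*}$ and $B^{*}B$ are nonnegative self-adjoint via the forms $\|A^{*}\cdot\|^{2}$ on $\dom A^{*}$ and $\|B\cdot\|^{2}$ on $\dom B$ (\cref{def:kth up chain laplacian,def:kth down chain laplacian}).

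For part (a), the point is that the hypothesised compact embedding is exactly compactness of the form domain of $L$ into $V$. Since $L\ge 0$, the resolvent $(L+1)^{-1}\colon V\to V$ is everywhere defined and bounded with norm $\le 1$, and it maps into $\dom L\subseteq d[q]$. A one-line computation gives $\|(L+1)^{-1}f\|_{g,1}^{2}=\langle (L+1)(L+1)^{-1}f,(L+1)^{-1}f\rangle=\langle f,(L+1)^{-1}f\rangle\le\|f\|^{2}$, so $(L+1)^{-1}$ is bounded as a map from $V$ into $(d[q],\|\cdot\|_{g,1})$; composing with the compact embedding $(d[q],\|\cdot\|_{g,1})\hookrightarrow V$ shows $(L+1)^{-1}$ is compact, i.e.\ $L$ has compact resolvent. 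A self-adjoint operator with compact resolvent has purely discrete spectrum, whence $\sigma_{e}(L)=\varnothing$; I would cite this as a standard fact, or deduce it from \cref{rem:spectrum}(c), since a compact resolvent rules out all three obstructions listed there (non-closed range, accumulation of eigenvalues, infinite-multiplicity eigenvalue).

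For part (b) I would pass through the Hodge decomposition. In our setting $A$ and $B$ are consecutive (co)boundary maps of a Hilbert complex, so $\im A\subseteq\ker B$, and \cref{thm:magical} (applied with $A'=B$, $B'=A$) yields the orthogonal, $L$-reducing decomposition $V=\overline{\im B^{*}}\oplus\ker L\oplus\overline{\im A}$, with $\overline{\im B^{*}}\oplus\ker L=\ker A^{*}=\ker AA^{*}$ and $\ker L\oplus\overline{\im A}=\ker B=\ker B^{*}B$. On the reducing subspace $\overline{\im A}\subseteq\ker B$ the form of $L$ has no $\|B\cdot\|^{2}$ contribution, so its form domain is $\dom A^{*}\cap\overline{\im A}$ and it agrees there with the form of $AA^{*}$; hence $L|_{\overline{\im A}}=AA^{*}|_{\overline{\im A}}$, while $AA^{*}$ vanishes on the complementary reducing subspace $(\overline{\im A})^{\perp}=\ker AA^{*}$. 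Since the essential spectrum splits over an orthogonal reducing decomposition, $\sigma_{e}(AA^{*})\subseteq\sigma_{e}(L|_{\overline{\im A}})\cup\sigma_{e}(0|_{\ker AA^{*}})\subseteq\sigma_{e}(L)\cup\{0\}=\{0\}$. The symmetric argument on $\overline{\im B^{*}}\subseteq\ker A^{*}$ gives $\sigma_{e}(B^{*}B)\subseteq\{0\}$.

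I expect the main obstacle to be the bookkeeping in part (b): checking that $L$ restricted to each Hodge summand coincides---as a self-adjoint operator, i.e.\ on the level of form domains and not merely form values---with the corresponding restriction of $AA^{*}$ or $B^{*}B$, and that $\sigma_{e}$ genuinely decomposes over orthogonal reducing subspaces. The surviving possibility $0\in\sigma_{e}$ (an infinite-dimensional harmonic space, which does occur for infinite-dimensional de Rham complexes) is precisely why the conclusion of (b) is ``$\subseteq\{0\}$'' rather than ``$=\varnothing$''. Part (a), by contrast, is the standard ``compact form embedding $\Rightarrow$ compact resolvent $\Rightarrow$ empty essential spectrum'' chain of implications and should go through directly.
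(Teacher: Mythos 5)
Your proof is correct and follows essentially the same route as the paper: part (a) is the standard ``compact form embedding $\Rightarrow$ compact resolvent $\Rightarrow$ discrete spectrum'' argument, which the paper obtains by citing Corollary~1.21 of its operator-theory reference, and part (b) splits the essential spectrum of $AA^*+B^*B$ over the Hodge decomposition and identifies the summand restrictions with $AA^*$ and $B^*B$, which is precisely what the paper's \cref{cor:split} packages (the paper just invokes that corollary together with the observation that restricting to $\ker(T)^\perp$ does not change the nonzero essential spectrum). Note that both your argument for (b) and the paper's rely on $\im A\subset\ker B$ and the density of $\dom A^*\cap\dom B$, hypotheses omitted from the proposition's statement but satisfied in the intended Hilbert-complex application; you correctly flag that you are assuming them.
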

\begin{proof}
    Part (a) follows from Corollary 1.21 of \citet{schroedingerop}. We return to the proof of (b) after establishing a key result further on in \cref{subsec:spectraldecomp}.
\end{proof}

\subsubsection{Examples of Spectra of Persistent Chain Laplacians} 

We now give examples of spectra of persistent chain Laplacians, focusing on the essential spectrum, as the discrete part is well understood from the perspective of linear algebra.

First, they can consist of isolated points: for example, multiplication by some constant $\lambda \in \R$ is self-adjoint and hence, if all Hilbert spaces are the same infinite-dimensional Hilbert space $\Hh$ and maps are multiplication by $\lambda$, the Laplacians will have infinite-dimensional $\lambda^2$ eigenspaces and therefore $\lambda^2$ will be an essential eigenvalue.

The following more interesting examples show that the essential spectrum can contain points that are not eigenvalues. In these examples, our operators are bounded and defined everywhere. Recall from previous discussions in \cref{sub:chainlap} that in this case, the self-adjoint operators induced from quadratic forms coincide with the sum and composition of operators. In each example, we are given the spaces $C_k^1$ (short for $C_k^{P_1}$), $C_k^2$ and $C_{k+1}^{2,3}$ and the map $d_{k+1}^{2,3}$. \Cref{fig:exspectra} shows one possibility of how to construct three Hilbert complexes $P_1 \subset P_2 \subset P_3$ from our given spaces and map: Specifically, we take $C_{k+1}^1=C_{k+1}^2=0$, $C_{k+1}^3=C_{k+1}^{2,3}$, $C_k^3=C_k^2$, and $d_{k+1}^3=d_{k+1}^{2,3}$. Then by definition, $C_{k+1}^{1,3}=\{v \in C_{k+1}^{2,3}: d_{k+1}^{2,3} v \in C_k^1 \}$ and $d_{k+1}^{1,3}=d_{k+1}^{2,3}|_{C_{k+1}^{1,3}}$. 
\begin{figure}[H]
    \centering
    \begin{tikzcd}[column sep = +5em]
        C_{k+1}^1=\{0\}  \arrow[r,"0"] \arrow[d,hook] & C_k^1 \arrow[d,hook] \\
        C_{k+1}^2=\{0\}  \arrow[r,"0"] \arrow[d,hook] & C_k^2 \arrow[d,hook]\\
        C_{k+1}^3=C_{k+1}^{2,3} \arrow[r,"d_{k+1}^3=d_{k+1}^{2,3}"] & C_k^3=C_k^2
    \end{tikzcd}
    \caption{Hilbert complexes $P_1\subset P_2 \subset P_3$ that realize given spaces $C_k^1,\, C_k^2$ and $C_{k+1}^{2,3}$ and the map $d_{k+1}^{2,3}$.}
    \label{fig:exspectra}
\end{figure}

\begin{example}\label{ex:espec}
    Consider the Hilbert spaces 
    $$
    C_k^2=C_{k+1}^{2,3}=\ell^2, \qquad C_k^1=\left\{\left(r,\frac r 2,\frac r 3,\dots \right): r \in \R \setminus \{0\}\right\} \subset \ell^2,
    $$
    and let 
    $$
    d_{k+1}^{2,3}: (a_n)_{n \in \N} \mapsto \left(\frac {a_n} n \right)_{n \in \N}.
    $$
    Since $(1)_{n \in \N} \not \in \ell^2$, we have $C_{k+1}^{1,3}=\{0\}$ and therefore, $\Delta_{+,k}^{1,3}=0$.
    
    On the other hand, since $d_{k+1}^{2,3}$ is self-adjoint,
    $$
        \Delta_{+,k}^{2,3}: (a_n)_{n \in \N} \mapsto \left(\frac {a_n} {n^2} \right)_{n \in \N}
    $$
    has precisely the eigenvalues $\frac{1}{n^2}$ for $n\in \N$. Thus, $\Delta_{+,k}^{1,3}$ has an eigenvalue 0 that is smaller than all the eigenvalues of $\Delta_{+,k}^{2,3}$.
    
    However,
    $0 \in \sigma_e(\Delta_{+,k}^{2,3})$ by \cref{rem:spectrum} since 0 is an accumulation point of eigenvalues. In fact, we have $\sigma_e(\Delta_{+,k}^{2,3})=\{0\}$, which follows from $\sigma(\Delta_{+,k}^{2,3})=\{0,\frac 1 {n^2}: n \in \N\},
    $
    and the fact that for any $\lambda \in \R \setminus \{0,\frac 1 {n^2}: n \in \N\}$, the inverse of $\Delta_{+,k}^{2,3}-\lambda $ is the operator that maps
    $$
        (b_n)_n \mapsto \left(\frac{b_n}{\frac{1}{n^2}-\lambda}\right)_n,
    $$
    which is bounded.
\end{example}

The next example shows that the essential spectrum can also be a bounded continuum.

\begin{example} \label{ex:espec2}
    Consider the Hilbert spaces 
$$
        C_{k+1}^{2,3} =C_{k}^2=L^2([0,1]), \qquad
        C_{k}^1=\{c:c \in \R \} \subset C_{k}^2
$$
and let $d_{k+1}^{2,3}: f \mapsto xf.$ Then $C_{k+1}^{1,3}=\{0\}$, since $\frac{1}{x} \not \in L^2([0,1]),$ so $\Delta_{+,k}^{1,3}=0$. On the other hand, $d_{k+1}^{2,3}$ is self-adjoint, so $\Delta_{+,k}^{2,3}$ is multiplication by $x^2$, which does not have any eigenfunctions, thus the point spectrum of $\Delta_{+,k}^{2,3}$ is empty.
    
    Hence, here, the spectrum agrees with the essential spectrum. To find the (essential) spectrum, observe that for $\lambda \in \R$, the inverse of $\Delta_{+,k}^{2,3}-\lambda$ (if it exists) is the operator mapping 
    $$
    g \mapsto \frac g {x^2- \lambda}.
    $$
    This map exists and is bounded precisely for $\lambda >1$ and for $\lambda <0$.\\
    Hence, $\sigma(\Delta_{+,k}^{2,3})=\sigma_e(\Delta_{+,k}^{2,3})=[0,1]$.
\end{example}

\section{Spectral Properties: Monotonicity and Stability} \label{sec:stab}

We now present the main contributions of this paper, which establish crucial properties of the spectra of the persistent chain Laplacians which we defined in \Cref{sec:laplacians}, namely \emph{monotonicity} and \emph{stability}. These properties are essential guarantees for meaningfulness and faithfulness of the spectra of persistent chain Laplacians to serve as general, unifying geometric invariants for underlying objects from which they are derived, such as manifolds, graphs, simplicial complexes or cellular sheaves.  

We begin in \cref{subsec:distances}, by first formally defining the concepts of stability and monotonicity, before we prove monotonicity of the spectral functions of up and down persistent chain Laplacians further on in \cref{subsec:udmon}. We then exploit the infinite-dimensional version of the fact that the positive eigenvalues of $MM^\top$ and $M^\top M$ agree for any matrix $M$ to be able to relate the spectra of up and down (persistent) chain Laplacians with those of full ones in \cref{subsec:spectraldecomp}. Finally, in \cref{subsec:fullmon}, we show that stability is a direct consequence of monotonicity and we moot monotonicity of full Laplacians.

\subsection{Filtrations of Hilbert Complexes and Interleaving Distances} \label{subsec:distances}

In \cref{sec:perslap}, we studied pairs of Hilbert complexes, while in persistent homology, we study homology with respect to \emph{filtrations}, that is, chains of nested objects; for instance, in \cref{sec:persmot}, we considered nested simplicial complexes. We thus now consider filtrations of Hilbert complexes to bring our previous discussions to the level of persistence to then study monotonicity and stability. The setting of filtrations lends a natural distance for our main results.

\begin{definition}[Filtration of Hilbert Complexes]
A \textit{filtration of Hilbert complexes} is a collection of Hilbert complexes $\Pp =\{P_t\}_{t \in \mathbb{R}}$ such that $P_s \subset P_t$ for $s \leq t$, in the sense of \cref{def:cc}.
\end{definition}

The persistent Laplacians are defined for any pair $P_s \subset P_t$ (as long as $\dom(d_k^{P_s})\cap \dom\left((d_{k+1}^{P_s,P_t})^*\right)$ is dense in $C_k^{P_s}$). To simplify notation, we write $\Delta^{s,t}_k$ for $\Delta^{P_s, P_t}_k$ which refers to the $k$th persistent Laplacian of this pair.
 
In order for the spectral information to be geometrically meaningful, we require \emph{stability}. As in the case of classical persistence discussed in \cref{sec:motivation}, here, stability means that a bounded perturbation of the filtration results in a bounded perturbation of the spectra. To make this precise, we require notions of distances for both objects involved, that is, we require a distance for filtrations and a distance for spectra.

For filtrations, the standard distance \citep{chazal_proximity_2009,mww} which we state for Hilbert complexes below.

\begin{definition}[Filtration Interleaving Distance] \label{defn:intfil}
    Let $\Pp = \{P_{t}\}_{t \in \mathbb{R}}, \Qq = \{Q_{t}\}_{t \in \mathbb{R}}$ be two filtrations of Hilbert complexes.
    The \emph{interleaving distance} between $\Pp$ and $\Qq$ is
    \[
    d_{I}(\Pp, \Qq) := \text{inf}\{\varepsilon \geq 0 \rvert P_{t} \subset Q_{t+\varepsilon} \text{ and } Q_{t} \subset P_{t+\varepsilon} \text{ for all } t\} \in \R_{\ge 0} \cup \{\infty\}.
    \]
\end{definition}

This means that two filtrations are considered close if they can be interleaved as shown in \cref{fig:intfil} for small $\varepsilon$.

\begin{figure}[h]
    \centering
    \begin{tikzcd}[row sep = +2em, column sep = +3em]
        \dots  \arrow[r,hook] & P_t \arrow[r,hook] \arrow[rd,hook] & P_{t+\varepsilon} \arrow[r,hook] \arrow[rd,hook] & P_{t+2\varepsilon} \arrow 
        [r,hook] &\dots \qquad \Pp \\
        \dots  \arrow[r,hook] & Q_t \arrow[r,hook] \arrow[ru,hook] & Q_{t+\varepsilon} \arrow[ru,hook] \arrow[r,hook] & Q_{t+2\varepsilon} \arrow 
        [r,hook] &\dots \qquad \Qq                       
    \end{tikzcd}
    \caption{Interleaved filtrations $\Pp,\Qq$.} \label{fig:intfil}
\end{figure}

In order to define a distance for the spectra, we will first extract functions that summarize the persistent spectra from filtrations of Hilbert complexes.

\begin{definition}[Persistent Spectral Counting Functions] \label{def:persspecctng}
Given a filtration $\Pp$ of Hilbert complexes, the persistent spectral counting functions are:
\begin{align*}
    \lambda^{s,t}_{k,q}&:=\lambda_{k,q} (s,t):=\lambda_q(\Delta_{k}^{P_s,P_t}),\\
    \lambda^{s,t}_{+,k,q}&:=\lambda_{+,k,q} (s,t):=\lambda_q(\Delta_{+,k}^{P_s,P_t}),\\
    \lambda^{s,t}_{-,k,q}&:=\lambda_{-,k,q} (s,t):=\lambda_q(\Delta_{-,k}^{P_s,P_t}),
\end{align*}
with $\lambda_q(\cdot)$ as in \cref{eq:specctng}.
\end{definition}

In finite dimensions, these functions order and number the eigenvalues of the persistent Laplacians increasingly. In particular, the spectra can be reconstructed from their counting functions. In infinite dimensions, the counting functions, in general, only contain partial information about the spectra.

We can view the $\lambda_{\bullet,k,q}$ as functions $f : \mathbf{Int} \rightarrow \R_{\ge 0}$. Here, $\mathbf{Int}$ denotes the set of closed real intervals $[s,t]$ with $s \le t$. \citet{mww} have defined an interleaving distance for such functions as follows.

\begin{definition}\textnormal{\citep[Function Interleaving Distance,][]{mww}} \label{defn:inteval}
     Let $f,g : \mathbf{Int} \to \mathbb{R}_{\geq 0}$ be two functions. For $I=[t,s] \in \text{\textbf{Int}}$, let $I^{\varepsilon} = [t-\varepsilon,s+\varepsilon]$ be the $\varepsilon$-thickening of $I$. Then the interleaving distance between $f$ and $g$ is 
     \[
     d_{I}(f,g) := \text{inf}\{\varepsilon \geq 0 \rvert f(I^{\varepsilon}) \geq g(I) \text{ and } g(I^{\varepsilon}) \geq f(I) \text{ for all } I \in \text{\textbf{Int}}\}.
     \]
\end{definition}

This distance is especially meaningful for functions that increase with the size of the interval; we call such functions \emph{monotonous}.

\begin{definition}[Monotonicity]
    We say that $f:\mathbf{Int} \rightarrow \R_{\ge 0}$ is \emph{monotonous} (or \emph{monotonously increasing}) if
    $$
    f(I) \le f(J) \qquad\ \forall\ I \subset J.
    $$
\end{definition}

As our first main theoretical contribution, we will establish monotonicity for some persistent spectral counting functions (\cref{thm:downmonotonicity,thm:upmonotonicity}) and bound the distance between some spectral functions that arise from different filtrations by the interleaving distance of the filtrations.

\subsection{Up and Down Monotonicity} \label{subsec:udmon}
In order to show monotonicity of persistent spectral counting functions, it is enough to restrict our attention to three Hilbert complexes $P_1 \subset P_2 \subset P_3$. To simplify notation, we replace $P_i$ by $i$ in the indices of Hilbert spaces and boundary maps. 

We first establish the monotonicity of the spectral functions of the down-Laplacians.
\begin{theorem}[Down Monotonicity]\label{thm:downmonotonicity}
    We have 
    \begin{align}
        \lambda^{1,2}_{-,k,q}&=\lambda^{1,3}_{-,k,q}, \label{eq:downmon1}\\
        \lambda^{2,3}_{-,k,q}&\le \lambda^{1,3}_{-,k,q}.\label{eq:downmon2}
    \end{align}
\end{theorem}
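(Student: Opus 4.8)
The key observation for \eqref{eq:downmon1} is that the down-persistent Laplacian $\Delta_{-,k}^{P_s,P_t} = (d_k^{P_s})^* \circ d_k^{P_s}$ does \emph{not actually depend on $P_t$} at all: the boundary map $d_k^{P_s}$ and its adjoint are intrinsic to the Hilbert complex $P_s$ (the adjoint is taken with respect to the inner product on $C_k^{P_s}$, which by \cref{def:inclusion}(b) is just the restriction of the one on $C_k^{P_t}$). Therefore the defining quadratic form $\delta_{-,k}^{P_s}[u] = \|d_k^{P_s} u\|^2$ with domain $\dom(d_k^{P_s})$ is literally the same object whether we view $P_s$ inside $P_2$ or inside $P_3$. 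Since $\lambda_q(\cdot)$ depends only on the operator (equivalently, by \cref{thm:ari1.28}, only on the quadratic form), we get $\lambda^{1,2}_{-,k,q} = \lambda_q(\Delta_{-,k}^{P_1}) = \lambda^{1,3}_{-,k,q}$ immediately. I would state this carefully, perhaps isolating it as a remark that $\Delta_{-,k}^{P_s,P_t}$ is independent of $t$, since the same fact will be reused.

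For \eqref{eq:downmon2} I would use the variational characterization from \cref{thm:ari1.28}. Writing $a_2 := \delta_{-,k}^{P_2}$ with domain $d[a_2] = \dom(d_k^{P_2})$ and $a_3 := \delta_{-,k}^{P_3}$ with domain $d[a_3] = \dom(d_k^{P_3})$, the containment $P_2 \subset P_3$ gives (under the usual identification $C_k^{P_2} \hookrightarrow C_k^{P_3}$) that $d[a_2] \subset d[a_3]$, and on this common domain $a_3[u] = \|d_k^{P_3} u\|^2 = \|d_k^{P_2} u\|^2 = a_2[u]$ by \cref{def:inclusion}(a) together with the isometry property (b). So $a_2$ is exactly the restriction of $a_3$ to the smaller subspace $d[a_2] \subset d[a_3]$ of $C_k^{P_2} \subset C_k^{P_3}$. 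Now apply \cref{thm:ari1.28}: for each $n$,
\begin{align*}
\lambda_n(\Delta_{-,k}^{P_2,P_3}) &= \inf_{\substack{u_1,\dots,u_n \subset d[a_2]\\ \text{lin.\ indep.}}} \ \sup_{0 \neq u \in \mathrm{span}\{u_1,\dots,u_n\}} \frac{a_2[u]}{\|u\|^2}\\
&= \inf_{\substack{u_1,\dots,u_n \subset d[a_2]\\ \text{lin.\ indep.}}} \ \sup_{0 \neq u \in \mathrm{span}\{u_1,\dots,u_n\}} \frac{a_3[u]}{\|u\|^2}\\
&\ge \inf_{\substack{u_1,\dots,u_n \subset d[a_3]\\ \text{lin.\ indep.}}} \ \sup_{0 \neq u \in \mathrm{span}\{u_1,\dots,u_n\}} \frac{a_3[u]}{\|u\|^2} = \lambda_n(\Delta_{-,k}^{P_1,P_3}),
\end{align*}
wait — that last equality should be $\lambda_n(\Delta_{-,k}^{P_3})$; but $\Delta_{-,k}^{P_3} = \Delta_{-,k}^{P_3,P_3}$ has counting function $\lambda^{3,3}_{-,k,q}$, and we want to compare with $\lambda^{1,3}_{-,k,q}$. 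Here I use \eqref{eq:downmon1} again in the form $\lambda^{2,3}_{-,k,q} = \lambda^{2,2}_{-,k,q}$ and note $\lambda^{1,3}_{-,k,q} = \lambda^{1,1}_{-,k,q}$, reducing \eqref{eq:downmon2} to $\lambda_q(\Delta_{-,k}^{P_2}) \le \lambda_q(\Delta_{-,k}^{P_1})$, which is precisely the min--max inequality above with $P_1$ in place of $P_3$ (since $P_1 \subset P_2$, so $d[a_1] \subset d[a_2]$ and $a_1 = a_2|_{d[a_1]}$). The inequality direction is correct: enlarging the set over which we take the outer infimum (from trial systems in $d[a_1]$ to trial systems in $d[a_2]$) can only decrease the value, so $\lambda_n$ of the bigger-domain form is $\le \lambda_n$ of the smaller-domain form.

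The main subtlety — and the step I would be most careful about — is the bookkeeping around $\lambda_\bullet$ when essential spectrum is present: \cref{thm:ari1.28} is stated for all $n \in \N$ and returns $\lambda_n(A)$ which by the convention in \eqref{eq:specctng} equals $m(A) = \inf \sigma_e(A)$ once $n$ exceeds the number of eigenvalues below the essential spectrum. One must check that the min--max inequality between two such quantities still yields the claimed pointwise inequality of the counting functions; this is exactly the content of \cref{thm:ari1.28} applied verbatim (the theorem already encodes the convention), so no extra work is needed, but I would remark on it explicitly so the reader does not worry that the counting functions might "plateau" at different heights. A second minor point is to make sure the identification $C_k^{P_1} \subset C_k^{P_2} \subset C_k^{P_3}$ via the $\iota_k$'s is compatible across the filtration (i.e.\ the inclusion maps compose correctly), which follows from uniqueness in \cref{def:inclusion}; I would mention this once at the start of \cref{subsec:udmon} rather than belabor it here. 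Everything else is a direct substitution into the variational formula.
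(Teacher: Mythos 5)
Your proposal is correct and follows essentially the same route as the paper: equality \eqref{eq:downmon1} holds because $\Delta_{-,k}^{P_s,P_t}$ depends only on $d_k^{P_s}$ and not on the ambient complex, and \eqref{eq:downmon2} follows from \cref{thm:ari1.28} after observing that, via the isometric inclusion $\iota_k$, the quadratic form of the Laplacian on the smaller complex is the restriction of the one on the larger complex, so enlarging the domain of the outer infimum can only decrease the value. The mid-proof false start (comparing $a_2$ with $a_3$ before correcting to $a_1$ versus $a_2$) should simply be cleaned up, but the final argument is the paper's.
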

\begin{proof}
Since the two persistent down-Laplacians agree, \eqref{eq:downmon1} holds. The inequality 
    \eqref{eq:downmon2} follows from \cref{thm:ari1.28}: Let $a$ and $b$ be the quadratic forms of $\Delta_{-,k}^{1,3}$ and $\Delta_{-,k}^{2,3}$ respectively, so
    \begin{align*}
        a[u]&=\langle u,\, \Delta_{-,k}^{1,3} u \rangle \qquad \forall\ u \in \dom(d_k^1)=d[a]\\
        b[v]&=\langle v,\, \Delta_{-,k}^{v,3} v \rangle \qquad \forall\ v \in \dom(d_k^2)=d[b].
    \end{align*}
    Note that they agree on $\dom(d_k^1)$: Let $\iota_k:C_k^1 \rightarrow C_k^2$ be the inclusion from \cref{def:inclusion}. Then for $u \in d[a]$ we have $\iota_k(u) \in d[b]$ and
    $$
        \langle \Delta_{-,k}^{1,3} u,\, u \rangle_1 = ||d_k^1 u||_1^2= ||d_k^2 \iota_k u||_2^2 = \langle \Delta_{-,k}^{2,3} \iota_k u,\, \iota_k u \rangle_2.
    $$
    Since $\iota_k$ is injective, it preserves linear independence.
    Consequently, the infimum in \cref{thm:ari1.28} for $\Delta_{-,k}^{2,3}$ is at most as big as the one for $\Delta_{-,k}^{1,3}$, which concludes the proof.
\end{proof}

Next, we turn our attention to the up-Laplacians:
\begin{figure}[H]
    \centering
\begin{tikzpicture}
    [x=1cm,y=1cm]
	\clip(0,-2.8) rectangle (6.5,5);
    \draw [-{Stealth[length=2mm]}] (1.6,4.1) --  node[above=1pt] {$d_{k+1}^{1}$} (5.5,4.1);
    \draw [-{Stealth[length=2mm]}] (1.6,-2) --  node[below=1pt] {$d_{k+1}^{3}$} (5.5,-2);
    \draw [-{Stealth[length=2mm]}] (3.1,2.6) -- node[left=10pt] {\scriptsize $d_{k+1}^{1,2}$}(5.5,4);
    \draw [-{Stealth[length=2mm]}] (4.05,0.5) -- (5.7,3.7);
    \draw [-{Stealth[length=2mm]}] (5.85,3.6) -- (4.2,0.4);
    \draw [{Stealth[length=2mm]}-] (3.2,2.4) -- node[below=5pt] {\scriptsize $\left(d_{k+1}^{1,2}\right)^*$}(5.6,3.8);
    \draw [right hook-{Stealth[length=2mm]}] (1,3.6) -- (1,1.4);
    \draw [right hook-{Stealth[length=2mm]}] (1,0.6) -- node[left=1pt]{$\iota_{k+1}^{2,3}$} (1,-1.6);
    \draw [right hook-{Stealth[length=2mm]}] (6,3.6) -- (6,1.4);
    \draw [right hook-{Stealth[length=2mm]}] (6,0.6) -- (6,-1.6);
    \draw [right hook-{Stealth[length=2mm]}] (2.1,2.2) -- (1.2,1.3);
    \draw [right hook-{Stealth[length=2mm]}] (2.9,1.9) -- node[left=1pt]{$i$} (3.7,0.4);
    \draw [right hook-{Stealth[length=2mm]}] (3.5,-0.2) -- (1.3,-1.8);
    \begin{scriptsize}
    \draw[color=black] (5.2,1.1) node {$\left(d_{k+1}^{1,3}\right)^*$};
    \draw[color=black] (4.3,1.8) node {$d_{k+1}^{1,3}$};
    \end{scriptsize}
    \draw[color=black] (1,4) node {$C_{k+1}^{1}$};
    \draw[color=black] (6,4) node {$C_k^{1}$};
    \draw[color=black] (6,1) node {$C_k^{2}$};
    \draw[color=black] (1,1) node {$C_{k+1}^{2}$};
    \draw[color=black] (6,-2) node {$C_k^{3}$};
    \draw[color=black] (1,-2) node {$C_{k+1}^{3}$};
    \draw[color=black] (2.7,2.3) node {$C_{k+1}^{1,2}$};
    \draw[color=black] (4,0) node {$C_{k+1}^{1,3}$};
\end{tikzpicture}
    \caption{Setting of the proof of \cref{eq:upmon1} of \cref{thm:upmonotonicity}.} 
    \label{fig:proof_up_1}
\end{figure}

\begin{theorem}[Up Monotonicity]\label{thm:upmonotonicity}
    We have 
    \begin{align}
        \lambda^{1,2}_{+,k,q}&\le \lambda^{1,3}_{+,k,q},\label{eq:upmon1}\\
        \lambda^{2,3}_{+,k,q}&\le \lambda^{1,3}_{+,k,q}.\label{eq:upmon2}
    \end{align}
\end{theorem}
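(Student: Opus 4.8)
The plan is to prove both inequalities of \cref{thm:upmonotonicity} by exhibiting, for each pair of up-persistent Laplacians, an inner-product-preserving linear map between the relevant domains under which the associated quadratic forms are compatible, and then invoking the min-max characterization of eigenvalues from \cref{thm:ari1.28}, exactly as in the proof of \cref{thm:downmonotonicity}. Recall that $\Delta_{+,k}^{P,Q} = d_{k+1}^{P,Q}\circ (d_{k+1}^{P,Q})^*$ acts on $C_k^P$, so its quadratic form is $u \mapsto \|(d_{k+1}^{P,Q})^* u\|^2$ with domain $\dom((d_{k+1}^{P,Q})^*)$. For \eqref{eq:upmon1}, both $\Delta_{+,k}^{1,2}$ and $\Delta_{+,k}^{1,3}$ act on the same space $C_k^1$, so the natural attempt is to show that the quadratic form of $\Delta_{+,k}^{1,2}$ dominates (pointwise, on a subspace containing the relevant test vectors) that of $\Delta_{+,k}^{1,3}$; then the min-max infimum–supremum for $\Delta_{+,k}^{1,3}$ is taken over smaller values and we get $\lambda^{1,3}_{+,k,q} \ge \lambda^{1,2}_{+,k,q}$. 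The geometry behind this is that $C_{k+1}^{1,2}$ sits inside $C_{k+1}^{1,3}$ (since $P_1 \subset P_2 \subset P_3$ forces $d_{k+1}^{1,2}$ to be a restriction of $d_{k+1}^{1,3}$ along the inclusion $i$ of \cref{fig:proof_up_1}), so $(d_{k+1}^{1,2})^*$ is a compression of $(d_{k+1}^{1,3})^*$ and thus has smaller norm on each vector — this is the standard fact that compressing a map to a subspace of its source can only decrease the norm of its adjoint.

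Concretely, for \eqref{eq:upmon1} I would argue: $C_{k+1}^{1,2} = \{c \in \dom(d_{k+1}^2) : d_{k+1}^2 c \in \iota_k^{1,2}(C_k^1)\}$ and, using $C_{k+1}^2 \subset C_{k+1}^3$ and $d_{k+1}^3|_{C_{k+1}^2} = d_{k+1}^2$, one checks $C_{k+1}^{1,2} \subset C_{k+1}^{1,3}$ with $d_{k+1}^{1,3}|_{C_{k+1}^{1,2}} = d_{k+1}^{1,2}$. Let $\pi$ be the orthogonal projection of $C_{k+1}^{1,3}$ onto the closed subspace $C_{k+1}^{1,2}$. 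For $u \in \dom((d_{k+1}^{1,3})^*)$ one shows $u \in \dom((d_{k+1}^{1,2})^*)$ and $(d_{k+1}^{1,2})^* u = \pi\,(d_{k+1}^{1,3})^* u$: indeed for all $c \in C_{k+1}^{1,2}$, $\langle (d_{k+1}^{1,3})^* u, c\rangle = \langle u, d_{k+1}^{1,3} c\rangle = \langle u, d_{k+1}^{1,2} c\rangle$, and since $c \in C_{k+1}^{1,2}$ we may replace $(d_{k+1}^{1,3})^* u$ by its projection without changing the pairing. Hence $d[\delta_{+,k}^{1,3}] \subset d[\delta_{+,k}^{1,2}]$ and $\delta_{+,k}^{1,2}[u] = \|\pi (d_{k+1}^{1,3})^* u\|^2 \le \|(d_{k+1}^{1,3})^* u\|^2 = \delta_{+,k}^{1,3}[u]$ for all such $u$. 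Feeding linearly independent tuples $u_1,\dots,u_q \in d[\delta_{+,k}^{1,3}]$ into \cref{thm:ari1.28} and using that they also lie in $d[\delta_{+,k}^{1,2}]$ with a smaller form value gives $\lambda^{1,2}_{+,k,q} \le \lambda^{1,3}_{+,k,q}$.

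For \eqref{eq:upmon2}, the target spaces differ: $\Delta_{+,k}^{2,3}$ acts on $C_k^2$ and $\Delta_{+,k}^{1,3}$ on $C_k^1 \subset C_k^2$. Here I would use the inclusion $\iota_k = \iota_k^{1,2}: C_k^1 \hookrightarrow C_k^2$ and show that for $u \in \dom((d_{k+1}^{1,3})^*)$ the image $\iota_k u$ lies in $\dom((d_{k+1}^{2,3})^*)$ with $\|(d_{k+1}^{2,3})^* \iota_k u\| \ge \|(d_{k+1}^{1,3})^* u\|$ — the reverse inequality direction compared with \eqref{eq:upmon1}, reflecting that now it is the \emph{source} complex that grows while the auxiliary target $C_{k+1}^{1,3}$ shrinks to $C_{k+1}^{2,3}$. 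The cleanest route is again via adjoints: $C_{k+1}^{1,3} = \{c \in \dom(d_{k+1}^3): d_{k+1}^3 c \in \iota_k^{1,3}(C_k^1)\} \subset \{c \in \dom(d_{k+1}^3): d_{k+1}^3 c \in \iota_k^{2,3}(C_k^2)\} = C_{k+1}^{2,3}$, and $d_{k+1}^{2,3}|_{C_{k+1}^{1,3}}$ composed with the projection $C_k^2 \to C_k^1$ recovers $d_{k+1}^{1,3}$; so $(d_{k+1}^{1,3})^*$ is the compression of $(d_{k+1}^{2,3})^*$ to the subspace $C_k^1$ of its target, composed with restriction to $C_{k+1}^{1,3}$, and compressing the target of a map can only \emph{decrease} the norm of its adjoint, giving $\delta_{+,k}^{1,3}[u] \le \delta_{+,k}^{2,3}[\iota_k u]$. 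Then min-max over linearly independent tuples, with $\iota_k$ preserving linear independence and inner products, yields $\lambda^{2,3}_{+,k,q} \le \lambda^{1,3}_{+,k,q}$.

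The main obstacle I anticipate is the careful bookkeeping of domains — verifying that membership in $\dom((d_{k+1}^{1,3})^*)$ really does transfer to the appropriate other adjoint domain, since adjoint domains are defined implicitly and need not be preserved by obvious maps. In particular one must confirm that the auxiliary spaces $C_{k+1}^{1,2}$, $C_{k+1}^{1,3}$, $C_{k+1}^{2,3}$ are nested as claimed and that the boundary maps restrict compatibly (which uses \cref{def:inclusion}(a) repeatedly, and the identifications $d_{k+1}^Q|_{C_{k+1}^P}=d_{k+1}^P$), and that the relevant orthogonal projections genuinely send adjoint domains into adjoint domains. Once the compression-of-adjoint picture is set up correctly, the inequalities between the quadratic forms are immediate and the appeal to \cref{thm:ari1.28} is routine; everything should go through unchanged in the infinite-dimensional setting because \cref{thm:ari1.28} is stated there.
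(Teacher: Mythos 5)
Your argument for \eqref{eq:upmon1} is correct and is essentially the paper's proof: you identify $(d_{k+1}^{1,2})^*u$ as the orthogonal projection of $(d_{k+1}^{1,3})^*u$ onto $i(C_{k+1}^{1,2})$, deduce $\dom((d_{k+1}^{1,3})^*)\subset\dom((d_{k+1}^{1,2})^*)$ together with $\|(d_{k+1}^{1,2})^*u\|\le\|(d_{k+1}^{1,3})^*u\|$, and conclude by \cref{thm:ari1.28}. This matches \cref{claim:doms} and \cref{claim:norms} exactly.

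For \eqref{eq:upmon2}, however, there is a genuine gap: the form inequality you derive points the wrong way for the min-max argument. You correctly observe that $(d_{k+1}^{1,3})^*u$ is the compression of $(d_{k+1}^{2,3})^*\iota_k u$ (since $C_{k+1}^{1,3}\subset C_{k+1}^{2,3}$), which gives $\delta_{+,k}^{1,3}[u]\le\delta_{+,k}^{2,3}[\iota_k u]$. But to conclude $\lambda_q(\Delta_{+,k}^{2,3})\le\lambda_q(\Delta_{+,k}^{1,3})$ from \cref{thm:ari1.28} you would need to feed tuples from $d[\delta_{+,k}^{1,3}]$ into the infimum for $\Delta_{+,k}^{2,3}$ with a \emph{smaller} Rayleigh quotient, i.e.\ you need $\delta_{+,k}^{2,3}[\iota_k u]\le\delta_{+,k}^{1,3}[u]$. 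With your inequality you only obtain $\lambda_q(\Delta_{+,k}^{2,3})\le(\text{something}\ge\lambda_q(\Delta_{+,k}^{1,3}))$, which is vacuous. (There is also a secondary issue: $\iota_k u\in\dom((d_{k+1}^{2,3})^*)$ does not follow from $u\in\dom((d_{k+1}^{1,3})^*)$, since the adjoint condition for $(d_{k+1}^{2,3})^*$ must be tested against the strictly larger domain $\dom(d_{k+1}^{2,3})$.)

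The paper's route to \eqref{eq:upmon2} is necessarily quite different: it transposes the problem to the operators $(d_{k+1}^{1,3})^*d_{k+1}^{1,3}$ and $(d_{k+1}^{2,3})^*d_{k+1}^{2,3}$, whose domains \emph{are} nested in the favorable direction so that the down-monotonicity argument (\cref{thm:downmonotonicity}) applies, and then transfers the conclusion back using the equality of nonzero spectra of $TT^*$ and $T^*T$ (\cref{lem:changeorder}). This transfer loses control of the zero eigenvalue and the essential spectrum, which is why the paper needs the separate kernel-dimension comparison $\dim\ker((d_{k+1}^{2,3})^*)\ge\dim\ker((d_{k+1}^{1,3})^*)$ (\cref{prop:imclosed}(c), \cref{lem:imdensedone}) and the case analysis on $\sigma_e$ in \cref{claim:ess0} and \cref{claim:m_bigeq}. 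None of this machinery is dispensable: a direct min-max comparison on the up-Laplacians themselves, as you propose, cannot work because the relevant inequality between the quadratic forms genuinely reverses.
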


\begin{proof}[Proof of \eqref{eq:upmon1}.] We first prove \cref{eq:upmon1}; 
    \cref{fig:proof_up_1} shows a diagram of the spaces and maps involved in the proof.
    In order to generalize the proof of Theorem 5.2 in \citet{mww} using \cref{thm:ari1.28}, we need to show that Claims \ref{claim:doms} and \ref{claim:norms} below hold. Denote by $\delta_{+,k}^{1,2}$ and $\delta_{+,k}^{1,3}$ the quadratic forms associated with $\Delta_{+,k}^{1,2}$ and $\Delta_{+,k}^{1,3}$, respectively.
    \begin{claim}
        Their domains satisfy $d[\delta^{1,2}_{+,k}]=\dom (d^{1,2}_{k+1})^*\supset d[\delta^{1,3}_{+,k}]=\dom (d^{1,3}_{k+1})^*$. \label{claim:doms}
    \end{claim}
    \begin{claim}
        For all $u \in \dom(d_{k+1}^{1,3})^*$, we have $\|(d^{1,2}_{k+1})^*u\|^2 \le \|(d^{1,3}_{k+1})^*u\|^2$. \label{claim:norms}
    \end{claim}
    Once we show these claims, we are done, because then by \cref{thm:ari1.28}, we have that
    \begin{align*}
        \lambda^{1,2}_{+,k,q}&= \inf_{\substack{u_1,\dots,u_q \subset \dom(d_{k+1}^{1,2})^* \\ \text{lin.~indep.}}} \ \sup_{0 \not= u \in \text{span}\{u_1,\dots,u_q\} } \ \frac{||(d_{k+1}^{1,2})^* \, u||^2}{||u||^2} \\ &\le \inf_{\substack{u_1,\dots,u_q \subset \dom(d_{k+1}^{1,3})^* \\ \text{lin.~indep.}}} \ \sup_{0 \not= u \in \text{span}\{u_1,\dots,u_q\} } \ \frac{||(d_{k+1}^{1,2})^* \, u||^2}{||u||^2}\\
        &\le \inf_{\substack{u_1,\dots,u_q \subset \dom(d_{k+1}^{1,3})^* \\ \text{lin.~indep.}}} \ \sup_{0 \not= u \in \text{span}\{u_1,\dots,u_q\} } \ \frac{||(d_{k+1}^{1,3})^* \, u||^2}{||u||^2} = \lambda^{1,3}_{+,k,q}.
    \end{align*}

    \textit{Proof of claims:}  We can restrict $\iota_{k+1}^{2,3}$ to $C_{k+1}^{1,2}$ which gives us the isometry $i$ that embeds $C_{k+1}^{1,2}$ into $C_{k+1}^{1,3}$. Since $i(C^{1,2}_{k+1}) \cong C_{k+1}^{1,2}$ is closed, \cref{lem:orthogonal_decomposition} yields an orthogonal decomposition,
    $$
    C^{1,3}_{k+1} = i(C^{1,2}_{k+1}) \oplus \big(i(C_{k+1}^{1,2})\big)^\perp.
    $$
    Let $\tilde p: C^{1,3}_{k+1} \to i(C^{1,2}_{k+1})$ be the orthogonal projection induced by this decomposition and $p=i^{-1}\circ \tilde p:C^{1,3}_{k+1} \to C^{1,2}_{k+1}$. Then $p=i^*$. Moreover, because $d^{1,2}_{k+1}$ and $d^{1,3}_{k+1}$ are both restrictions of $d^3_{k+1}$, for every $v \in \dom(d_{k+1}^{1,2})$ we have $i(v) \in \dom(d_{k+1}^{1,3})$ and $d^{1,2}_{k+1} (v)= d^{1,3}_{k+1} \circ i(v)$.

    Now consider any
    $u\in\dom (d^{1,3}_{k+1})^*$. We can decompose
    $$(d^{1,3}_{k+1})^*u=i(f)+g \in i(C^{1,2}_{k+1}) \oplus \big(i(C_{k+1}^{1,2})\big)^\perp$$ with $f=p \circ (d_{k+1}^{1,3})^* \, u  \in C^{1,2}_{k+1}$.
    
    Hence, for all $v\in \dom d^{1,2}_{k+1}$,
    $$
        \langle u, d_{k+1}^{1,2} v \rangle_k^1 = \langle u, d_{k+1}^{1,3} \circ i \, v \rangle_k^1 = \langle (d_{k+1}^{1,3})^* u, iv \rangle _{k+1}^{1,3} = \langle p \circ (d_{k+1}^{1,3})^* u, v \rangle _{k+1}^{1,2} = \langle f, v \rangle _{k+1}^{1,2}.
    $$
    Thus, $u \in \dom(d_{k+1}^{1,2})^*$ which shows \cref{claim:doms}.
    
    Moreover, $(d_{k+1}^{1,2})^* u = f= p \circ (d_{k+1}^{1,3})^* \, u $. Consequently, \cref{claim:norms} follows:
    $$
    ||(d_{k+1}^{1,3})^* \, u||^2 = ||i(f)||^2 + ||g||^2 = ||f||^2 + ||g||^2 \ge ||f||^2= ||(d_{k+1}^{1,2})^* \, u||^2.
    $$
\end{proof}

\begin{remark} \label{rem:fullmon1213}
    Our proof of \cref{thm:upmonotonicity} also applies to the spectral functions of the full persistent Laplacians---meaning $\lambda_{k,q}^{1,2} \le \lambda_{k,q}^{1,3}$---because both claims also hold for the full Laplacians: Let $\delta_k^{1,2}$ and $\delta_k^{1,3}$ be the quadratic forms of $\Delta_k^{1,2}$ and $\Delta_k^{1,3}$, respectively. Then from \cref{claim:doms} it follows that $$ d[\delta_k^{1,2}]=\dom (d_{k+1}^{1,2})^*\cap \dom(d_k^1) \supset \dom (d_{k+1}^{1,3})^*\cap \dom(d_k^1) = d[\delta_k^{1,3}].$$
    Moreover, \cref{claim:norms} implies that for all $u \in d[\delta_k^{1,3}]$,
    $$
    \delta_k^{1,2}[u]=||(d_{k+1}^{1,2})^*\, u||^2 + ||d_k^1 \, u||^2 \le ||(d_{k+1}^{1,3})^*\, u||^2 + ||d_k^1 \, u||^2 = \delta_k^{1,3}[u].
    $$
\end{remark}

Next we will show \cref{eq:upmon2}. \Cref{fig:proof_up_2} displays the maps and spaces involved. The following lays the foundations for our proof of \cref{eq:upmon2}.
\begin{figure}
    \centering
\begin{tikzpicture}[x=1cm,y=1cm]
	\clip(0,-2.8) rectangle (6.7,5);
    \draw [-{Stealth[length=2mm]}] (1.6,4) --  node[above=1pt] {$d_{k+1}^{1}$} (5.5,4);
    \draw [-{Stealth[length=2mm]}] (1.6,-2) --  node[below=1pt] {$d_{k+1}^{3}$} (5.5,-2);
    \draw [-{Stealth[length=2mm]}] (3.4,2.5) --  node[anchor=south east] {$d_{k+1}^{1,3}$} (5.7,3.8);
    \draw [-{Stealth[length=2mm]}] (3.5,0.3) --  node[anchor=south east] {$d_{k+1}^{2,3}$} (5.6,1);
    \draw [-{Stealth[length=2mm]}] (5.6,0.8) -- (3.5,0.1);
    \draw [{Stealth[length=2mm]}-] (3.5,2.3) -- (5.8,3.6);
    \draw [right hook-{Stealth[length=2mm]}] (1,3.6) -- (1,1.4);
    \draw [right hook-{Stealth[length=2mm]}] (1,0.6) -- (1,-1.6);
    \draw [right hook-{Stealth[length=2mm]}] (6,3.6) -- node[anchor=west] {$\iota_k^{1,2}$} (6,1.4);
    \draw [right hook-{Stealth[length=2mm]}] (6,0.6) -- (6,-1.6);
    \draw [right hook-{Stealth[length=2mm]}] (2.5,1.7) -- (1.2,-1.7);
    \draw [right hook-{Stealth[length=2mm]}] (3,1.6) -- (3,0.4);
    \draw [right hook-{Stealth[length=2mm]}] (2.8,-0.3) -- (1.3,-1.8);
    \draw[color=black] (5.2,2.5) node {$\big(d_{k+1}^{1,3}\big)^*$};
    \draw[color=black] (5,0) node {$\big(d_{k+1}^{2,3}\big)^*$};
    \draw[color=black] (1,4) node {$C_{k+1}^{1}$};
    \draw[color=black] (6,4) node {$C_k^{1}$};
    \draw[color=black] (6,1) node {$C_k^{2}$};
    \draw[color=black] (1,1) node {$C_{k+1}^{2}$};
    \draw[color=black] (6,-2) node {$C_k^{3}$};
    \draw[color=black] (1,-2) node {$C_{k+1}^{3}$};
    \draw[color=black] (3,2) node {$C_{k+1}^{1,3}$};
    \draw[color=black] (3,0) node {$C_{k+1}^{2,3}$};
\end{tikzpicture}
    \caption{Setting of proof of \cref{eq:upmon2}.} 
    \label{fig:proof_up_2}
\end{figure}

\begin{prop} \label{prop:imclosed}
Let $T:\Hh \rightarrow \Kk$ be a densely-defined, closed operator between Hilbert spaces:
\begin{enumerate}
    \item[(a)] $\im(T)$ is closed if and only if $\im(TT^*)$ is closed;
    \item[(b)] If $\im(T)$ is not closed, then $0 \in \sigma_e(TT^*)$;
    \item[(c)] If $\im(d_{k+1}^{2,3})$ is a closed subspace of $C_{k}^2$, then $\dim\big(\ker\big(d_{k+1}^{2,3}\big)^*\big) \ge \dim\big(\ker\big(d_{k+1}^{1,3}\big)^*\big)$.
\end{enumerate}
\end{prop}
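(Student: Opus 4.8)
The plan is to prove the three parts in sequence, deriving (b) at once from (a), while (c) is essentially independent.

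\textbf{Part (a).} I would reduce to a single nonnegative self-adjoint operator via the polar decomposition $T=U|T|$ from \cref{lem:u}, where $|T|=(T^*T)^{1/2}$ satisfies $\dom|T|=\dom T$ and $\||T|f\|=\|Tf\|$, and $U$ restricts to a unitary from $\overline{\im|T|}=\ker(T)^\perp$ onto $\overline{\im T}$. Since $\im T=U(\im|T|)$ and $U$ is a homeomorphism on $\overline{\im|T|}\supseteq\im|T|$, the range $\im T$ is closed if and only if $\im|T|$ is. Next, for any nonnegative self-adjoint operator $A$ one has that $\im A$ is closed if and only if $\inf(\sigma(A)\setminus\{0\})>0$ (with $\inf\varnothing=\infty$): in one direction, the closed graph theorem applied to $A|_{\ker(A)^\perp}$ produces a bounded inverse, hence a spectral gap at $0$; in the other, the spectral theorem lets one split off $\ker A$ and bound $A$ below on its complement. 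Because $\sigma(A^2)=\{\lambda^2:\lambda\in\sigma(A)\}$, the gap condition holds for $A$ iff it holds for $A^2$, so $\im A$ is closed iff $\im(A^2)$ is; applying this with $A=|T|$ and using $|T|^2=T^*T$ shows $\im T$ is closed iff $\im(T^*T)$ is. Finally, \cref{TT^*=T^*T} says $T^*T$ restricted to $(\ker T^*T)^\perp$ is unitarily equivalent to $TT^*$ restricted to $(\ker TT^*)^\perp$; since a self-adjoint operator has the same image as its restriction to the orthogonal complement of its kernel, and unitary equivalence preserves closedness of ranges, $\im(T^*T)$ is closed iff $\im(TT^*)$ is. Chaining these equivalences gives (a).

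\textbf{Parts (b) and (c).} Part (b) is immediate: if $\im T$ is not closed, then by (a) neither is $\im(TT^*)=\im(TT^*-0)$, so $0\in\sigma_e(TT^*)$ by \cref{rem:spectrum}(c)(i). For (c), I would first record how the two persistent boundary maps relate. Identifying $C_k^1$ with its isometric image in $C_k^2$ (via \cref{def:inclusion}), unwinding \cref{def:pers_chain_lap} shows $C_{k+1}^{1,3}\subseteq C_{k+1}^{2,3}$, that $d_{k+1}^{1,3}$ is the restriction of $d_{k+1}^{2,3}$ to $\{c\in\dom(d_{k+1}^{2,3}):d_{k+1}^{2,3}c\in C_k^1\}$, and hence that $R_1:=\im(d_{k+1}^{1,3})=R_2\cap C_k^1$ where $R_2:=\im(d_{k+1}^{2,3})$. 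Under the hypothesis $R_2$ is closed in $C_k^2$, so $R_1$, being an intersection of two closed subspaces, is closed in $C_k^1$. By \cref{lem:imperp}, $\ker((d_{k+1}^{2,3})^*)=R_2^\perp$ (inside $C_k^2$) and $\ker((d_{k+1}^{1,3})^*)=R_1^\perp$ (inside $C_k^1$). Now compose the inclusion $C_k^1\hookrightarrow C_k^2$ with the orthogonal projection $C_k^2\to R_2^\perp$ and restrict to $R_1^\perp$; this yields a contraction $\phi:R_1^\perp\to R_2^\perp$, and $\phi$ is injective, since if $x\in R_1^\perp\subseteq C_k^1$ has $\phi(x)=0$ then $x\in R_2$, so $x\in R_2\cap C_k^1=R_1$, forcing $x\in R_1\cap R_1^\perp=\{0\}$. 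An injective bounded linear map between Hilbert spaces forces the Hilbert-space dimension of its domain to be at most that of its codomain (apply the polar decomposition to $\phi$: it becomes a unitary onto the closure of its image). Hence $\dim(\ker((d_{k+1}^{1,3})^*))\le\dim(\ker((d_{k+1}^{2,3})^*))$, which is (c).

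\textbf{Main obstacle.} The only substantive point is the functional-analytic fact invoked in (a): that a nonnegative self-adjoint operator has closed range exactly when there is a spectral gap at $0$, and therefore exactly when its square does. Keeping this self-contained (rather than merely citing a closed-range theorem) requires some care with the closed graph theorem and the spectral calculus; everything else is bookkeeping with the various inclusions and restrictions of $d_{k+1}^3$ in part (c).
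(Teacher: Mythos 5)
Your proof is correct, but part (a) takes a genuinely different route from the paper. The paper argues directly: by \cref{lem:imperp}, $\overline{\im(T)}=\ker(T^*)^\perp=\ker(TT^*)^\perp=\overline{\im(TT^*)}$, so the inclusion $\im(TT^*)\subset\im(T)$ immediately gives one direction, and the converse is a short computation using the closed range theorem to write $x=T^*z$ for $x\in\ker(T)^\perp$. Your route instead passes through the polar decomposition, the spectral-gap characterization of closed range for nonnegative self-adjoint operators, the spectral mapping $\sigma(|T|^2)=\{\lambda^2:\lambda\in\sigma(|T|)\}$, and finally \cref{TT^*=T^*T} to move from $T^*T$ to $TT^*$. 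Each link in that chain is sound (including the closed-graph-theorem step and the fact that a self-adjoint operator has the same range as its restriction to $\ker^\perp$), but it is considerably heavier machinery for the same conclusion; its one advantage is that it makes the quantitative content (closed range $\Leftrightarrow$ spectral gap at $0$) explicit, which the paper never needs. Part (b) is identical to the paper's. For part (c), your argument --- project $\ker((d_{k+1}^{1,3})^*)=R_1^\perp$ orthogonally onto $R_2^\perp=\ker((d_{k+1}^{2,3})^*)$ and check injectivity using $R_1=R_2\cap C_k^1$ and closedness of $R_2$ --- is in essence the same construction as the paper's (your $\phi(v_i)$ are exactly the components $u_i$ in the paper's decomposition), except that the paper proves the slightly stronger \cref{lem:imdensedone}, which replaces closedness of $R_2$ by a density hypothesis; your use of $(R_2^\perp)^\perp=R_2$ is precisely the step that would break without closedness, so your proof covers (c) as stated but not the generalization.
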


\begin{proof}
($a$) is a known result; the proof given in \citet{closedrange} for bounded operators carries over to our setting: By \cref{lem:imperp}, we know that
$$
\overline{\im(T)}=\ker(T^*)^\perp=\ker\left(T T^*\right)^\perp=\overline{\im\left(T T^*\right)}.
$$
The middle equality $\ker(T^*)^\perp=\ker\left(T T^*\right)^\perp$ follows from an argument similar to \cref{lem:kerlap}. Now from $\im\left(TT^*\right)\subset\im(T)$, we can deduce that if $\im\left(T T^*\right)$ is closed, then
$$
\im(T) \supset \im\left(T T^*\right)=\overline{\im\left(T T^*\right)}=\overline{\im(T)}.
$$
Hence $\im(T)=\overline{\im(T)}$ is closed.

For the converse, it suffices to show that if $\im(T)$ is closed, then $\im\left(T T^*\right)\supset\im(T).$ Assume $y=Tx\in \im (T)$ for some $x\in \dom T$. Let
$$x=k+x' \in \ker(T) \oplus (\ker(T))^\perp.$$
Since $T x=T x'$, we may assume $x\in(\ker(T))^\perp$ without loss of generality. 

Note that $\im(T)$ being closed implies the closedness of $\im(T^*)$ by the closed range theorem \citep[\S5.3]{yosida}. Hence, by \cref{lem:imperp}, $(\ker(T))^\perp=\im(T)^*$. Thus, $x=T^*z$ for some $z\in \dom T^*$. Consequently, $y=T T^*z$ concludes the proof of (a).\\

To show (b): If $\im(T)$ is not closed, by (a), neither is $\im TT^*$. This yields the claim using \cref{rem:spectrum}(c).

To show (c), we will prove the following stronger statement, \cref{lem:imdensedone}.
\end{proof}

\begin{lemma} \label{lem:imdensedone}
    If $\iota_k^{1,2} (\im(d_{k+1}^{1,3}))= \iota_k^{1,2} (C_k^1) \, \cap \, \im(d_{k+1}^{2,3})$ is dense in $\iota_k^{1,2} (C_k^1) \cap \overline{\im(d_{k+1}^{2,3})}$, then $$\dim(\ker(d_{k+1}^{2,3})^*) \ge \dim(\ker(d_{k+1}^{1,3})^*).$$
\end{lemma}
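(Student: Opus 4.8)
The plan is to recast the statement entirely in terms of closed subspaces of the single Hilbert space $C_k^2$ and then extract the dimension inequality from an orthogonal projection. Set $V := \iota_k^{1,2}(C_k^1)$, which is a closed subspace of $C_k^2$ because $\iota_k^{1,2}$ is an isometric embedding, and $W := \overline{\im(d_{k+1}^{2,3})} \subseteq C_k^2$. Applying \cref{lem:imperp} to $d_{k+1}^{2,3}$ gives $\ker\big((d_{k+1}^{2,3})^*\big) = W^\perp$, the orthogonal complement taken in $C_k^2$; applying it to $d_{k+1}^{1,3}$ gives $\ker\big((d_{k+1}^{1,3})^*\big) = \overline{\im(d_{k+1}^{1,3})}^{\,\perp}$, the orthogonal complement taken in $C_k^1$. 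Since $\iota_k^{1,2}$ is an isometry it preserves Hilbert dimensions, so it suffices to show
\[
\dim\big(V \ominus (V \cap W)\big) \le \dim\big(C_k^2 \ominus W\big).
\]

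The first step is to identify $\overline{\im(d_{k+1}^{1,3})}$, transported into $C_k^2$, with $V \cap W$. The equality $\iota_k^{1,2}(\im(d_{k+1}^{1,3})) = V \cap \im(d_{k+1}^{2,3})$ follows from \cref{def:pers_chain_lap} together with $\iota_k^{1,3} = \iota_k^{2,3}\circ\iota_k^{1,2}$, exactly as in the proof of \cref{thm:ker_hom_2}, and the hypothesis says this set is dense in $V \cap W$. Taking closures in $C_k^2$, and using that $\iota_k^{1,2}$, being an isometric embedding with closed image $V$, commutes with taking closures, we obtain $\iota_k^{1,2}\big(\overline{\im(d_{k+1}^{1,3})}\big) = V \cap W$; applying $\iota_k^{1,2}$ to $\ker\big((d_{k+1}^{1,3})^*\big) = \overline{\im(d_{k+1}^{1,3})}^{\,\perp}$ then yields $\iota_k^{1,2}\big(\ker\big((d_{k+1}^{1,3})^*\big)\big) = V \ominus (V\cap W)$. (Note the density is genuinely needed here: without it one only gets $\iota_k^{1,2}(\overline{\im(d_{k+1}^{1,3})}) \subseteq V\cap W$, which would make the kernel of $(d_{k+1}^{1,3})^*$ potentially \emph{too big}.)

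The core step is a projection argument. Let $\pi : C_k^2 \to W^\perp$ be the orthogonal projection. Its restriction to $V \ominus (V\cap W)$ is injective: if $v \in V \ominus (V \cap W)$ with $\pi(v) = 0$, then $v \in W$, so $v \in V \cap W$, but $v \perp V \cap W$, forcing $v = 0$. Thus $\pi$ gives a bounded injective linear map $V \ominus (V\cap W) \to W^\perp = C_k^2 \ominus W$. Finally I would invoke the standard fact that a bounded injective linear operator $S : A \to B$ between Hilbert spaces satisfies $\dim A \le \dim B$: after restricting the codomain of $S$ to $\overline{\im S}$ one has dense range, so $S^*$ is injective with dense range, and the images under $S^*$ of an orthonormal basis of $\overline{\im S}$ then have dense span in $A$, whence $\dim A \le \dim \overline{\im S} \le \dim B$. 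Applying this gives $\dim\big(V \ominus (V\cap W)\big) \le \dim\big(C_k^2\ominus W\big)$, i.e. $\dim\ker\big((d_{k+1}^{1,3})^*\big) \le \dim\ker\big((d_{k+1}^{2,3})^*\big)$.

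The step that requires the most care is this last dimension comparison: injectivity of a bounded operator forces an inequality of Hilbert-space dimensions only after the short argument above — trivial in the separable case but genuinely using the density-character characterization of Hilbert dimension in general — and one must keep in mind that the relevant notion of dimension is the Hilbert dimension governing multiplicities of the essential spectrum (as in \cref{rem:spectrum}), not the Hamel dimension. Everything else is routine bookkeeping with the isometries $\iota_k^{1,2},\iota_k^{2,3}$ and with the commutation of $\iota_k^{1,2}$ with closures.
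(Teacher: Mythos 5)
Your proposal is correct and follows essentially the same route as the paper: both arguments amount to orthogonally projecting $\iota_k^{1,2}\bigl(\ker((d_{k+1}^{1,3})^*)\bigr)$ onto $\bigl(\im(d_{k+1}^{2,3})\bigr)^\perp=\ker((d_{k+1}^{2,3})^*)$ and using the density hypothesis to see that this projection is injective (the paper phrases this as showing the components $u_i$ of finitely many $\iota_k^{1,2}(v_i)$ remain linearly independent, which is the same injectivity statement). Your extra care with the Hilbert-dimension comparison for a bounded injective operator is sound but more than is needed, since injectivity already preserves linear independence of finite families, which is all the paper's applications require.
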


\begin{proof}
    Let $\{v_1,\dots,v_n\} \subset \ker((d_{k+1}^{1,3})^*)$ be linearly independent. We are done if we can find $n$ linearly independent elements 
    in $\ker((d_{k+1}^{2,3})^*)$. By \cref{cor:decompV}, we have
    $$
        C_k^2=\overline{\im(d_{k+1}^{2,3})} \oplus \big(\im(d_{k+1}^{2,3})\big)^\perp.
    $$
    This yields the decompositions
        $\iota_k^{1,2} (v_i)=\lim_{j \rightarrow \infty} d_{k+1}^{2,3}(w_{i,j}) + u_i$ for all $i$,
    where\\ $w_{i,j} \in \dom(d_{k+1}^{2,3})\subset C_{k+1}^{2,3}$ and $u_i \perp \im(d_{k+1}^{2,3})$.
    
    In order to show that the $u_i$ are linearly independent, let us assume we have coefficients $k_i$ such that
    $$
        0=\sum_{i=1}^n k_i \, u_i= \sum_i k_i \iota_k^{1,2} ( v_i) - \sum_i \lim_{j} d_{k+1}^{2,3}(k_i w_{i,j}).
    $$
    Hence, by our assumption, and since isometries preserve closed subspaces (see \cref{closedstaysclosed}), 
    $$
    \iota_k^{1,2} (v):=\sum_{i} k_i \iota_k^{1,2} (v_i) \in \iota_k^{1,2} (C_k^1) \cap \overline{\im(d_{k+1}^{2,3})}=\iota_k^{1,2} \left( \overline{\im(d_{k+1}^{1,3})}\right).
    $$
    Thus $v \in \overline{\im(d_{k+1}^{1,3})}$ and we can express $v=\lim_{j\rightarrow \infty} d_{k+1}^{1,3} w_j$ for some $w_j \in \dom(d_{k+1}^{1,3})$.
    
    By construction, $v\in \ker\big(\big(d_{k+1}^{1,3}\big)^*\big)$, so we obtain
    $$
        \langle v, v\rangle_k^1=\Big\langle v,\, \lim_j {d_{k+1}^{1,3} w_j}\Big\rangle_k^1 = \lim_j \Big\langle v,\, {d_{k+1}^{1,3} w_j}\Big\rangle_k^1=\lim_j \Big\langle (d_{k+1}^{1,3})^* v,\, w_j\Big\rangle_{k+1}^{1,3}=0.
    $$
    The linear independence of the $v_i$ yields $k_i=0$ for all $i$.
    
    Thus the $u_i \in \big(\im(d_{k+1}^{2,3})\big)^\perp = \ker\big(\big(d_{k+1}^{2,3}\big)^*\big)$ (by \cref{lem:imperp}) are linearly independent.
\end{proof}
This has \cref{prop:imclosed}(c) as special case, because if $\im(d_{k+1}^{2,3})$ is closed, we have that\\
$\iota_k^{1,2} (\im(d_{k+1}^{1,3}))= \iota_k^{1,2} (C_k^1) \cap \overline{\im(d_{k+1}^{2,3})}$.

Equipped with \cref{prop:imclosed} we can now complete the proof of up monotonicity, \cref{thm:upmonotonicity}:

\begin{proof}[Proof of \cref{eq:upmon2}]
    By \cref{cor:decompV}, we may decompose
    \begin{align*}
        C_{k+1}^{1,3}&=\ker(d_{k+1}^{1,3})\oplus \overline{\im\big(d_{k+1}^{1,3}\big)^*},\\
        C_{k+1}^{2,3}&=\ker(d_{k+1}^{2,3})\oplus \overline{\im\big(d_{k+1}^{2,3}\big)^*}.
    \end{align*}
    Note that by construction, $\ker(d_{k+1}^{1,3}) \cong \ker(d_{k+1}^{2,3})$. Since replacing $C_{k+1}^{1,3}$ and $C_{k+1}^{2,3}$ with $\overline{\im(d_{k+1}^{1,3})^*}$ and $\overline{\im(d_{k+1}^{2,3})^*}$ respectively does not change $\Delta_{+,k}^{1,3}$ or $\Delta_{+,k}^{2,3}$, we may without loss of generality assume that $\ker(d_{k+1}^{2,3})=\{0\}$. 
    
    If $\im(d_{k+1}^{2,3})$ is not closed, \cref{prop:imclosed}(b) implies that there is nothing further to prove. Thus we may assume the opposite, from which we can deduce that
    $$
    \im(d_{k+1}^{1,3})=(\iota_k^{1,2})^{-1}\left(\im(d_{k+1}^{2,3})\cap \iota_k^{1,2}(C_k^1)\right)
    $$
    and by \cref{prop:imclosed}(a) $\im (\Delta^{1,3}_{+,k})$ are closed as well.
    \begin{claim} \label{claim:ess0}
        $0 \in \sigma_e(\Delta^{1,3}_{+,k}) \Rightarrow 0 \in \sigma_e(\Delta^{2,3}_{+,k}).$
    \end{claim}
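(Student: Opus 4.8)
The plan is to reduce the statement $0 \in \sigma_e(\Delta_{+,k}^{1,3})$ to an infinite-dimensionality assertion about $\ker\big((d_{k+1}^{1,3})^*\big)$, transfer that assertion to $\ker\big((d_{k+1}^{2,3})^*\big)$ via \cref{prop:imclosed}(c), and then read it back as $0 \in \sigma_e(\Delta_{+,k}^{2,3})$. Throughout we work under the standing hypothesis of this claim, namely that $\im(d_{k+1}^{2,3})$ is closed; as noted just before the claim, this already forces $\im(d_{k+1}^{1,3})=(\iota_k^{1,2})^{-1}\big(\im(d_{k+1}^{2,3})\cap \iota_k^{1,2}(C_k^1)\big)$ to be closed (using that $\iota_k^{1,2}$ preserves closed subspaces, \cref{closedstaysclosed}), and hence, by \cref{prop:imclosed}(a) applied to $T=d_{k+1}^{1,3}$ and to $T=d_{k+1}^{2,3}$, that $\im(\Delta_{+,k}^{1,3})$ and $\im(\Delta_{+,k}^{2,3})$ are closed as well.

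First I would record the elementary fact that a nonnegative self-adjoint operator $A$ with closed range has $0$ as an isolated point of its spectrum whenever $0\in\sigma(A)$: the restriction of $A$ to $\dom(A)\cap(\ker A)^\perp$ is injective with range $\im A = \overline{\im A} = (\ker A)^\perp$ (the first equality is the closed-range hypothesis, the second is self-adjointness, cf. \cref{lem:imperp}), and it is closed because $A$ is closed and $(\ker A)^\perp$ is a closed subspace; by the closed graph theorem its inverse is bounded, so $\sigma(A)\cap(0,c)=\varnothing$ for some $c>0$. Applying this to $A=\Delta_{+,k}^{1,3}$ and combining with \cref{rem:spectrum}(c): alternative (i) there is impossible because $\im(\Delta_{+,k}^{1,3})$ is closed, and alternative (ii) (accumulation of eigenvalues at $0$) is impossible because $0$ is isolated in $\sigma(\Delta_{+,k}^{1,3})$. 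Hence $0\in\sigma_e(\Delta_{+,k}^{1,3})$ forces alternative (iii): $0$ is an eigenvalue of infinite multiplicity, i.e. $\dim\ker(\Delta_{+,k}^{1,3})=\infty$.

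Next I would invoke \cref{lem:kerlap}, which gives $\ker(\Delta_{+,k}^{1,3})=\ker\big((d_{k+1}^{1,3})^*\big)$ and $\ker(\Delta_{+,k}^{2,3})=\ker\big((d_{k+1}^{2,3})^*\big)$, so $\dim\ker\big((d_{k+1}^{1,3})^*\big)=\infty$. Since $\im(d_{k+1}^{2,3})$ is closed, \cref{prop:imclosed}(c) applies and yields $\dim\ker\big((d_{k+1}^{2,3})^*\big)\ge \dim\ker\big((d_{k+1}^{1,3})^*\big)=\infty$, whence $\dim\ker(\Delta_{+,k}^{2,3})=\infty$. Thus $0$ is an eigenvalue of $\Delta_{+,k}^{2,3}$ of infinite multiplicity, so $0\in\sigma_e(\Delta_{+,k}^{2,3})$ by \cref{rem:spectrum}(c)(iii), which is exactly the claim.

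The only step that is not pure bookkeeping is the first one: ruling out alternatives (i) and (ii) of \cref{rem:spectrum}(c), equivalently showing that $0$ cannot be an accumulation point of the spectrum of a closed-range nonnegative self-adjoint operator. This is standard — its proof uses only the closed graph theorem and the identity $\overline{\im A}=(\ker A)^\perp$ already available as \cref{lem:imperp} — and everything else is a direct application of results established earlier in the excerpt. I expect the main subtlety to be purely organizational: making sure the closed-range property has genuinely been secured for all four of $d_{k+1}^{1,3}$, $\Delta_{+,k}^{1,3}$, $d_{k+1}^{2,3}$, $\Delta_{+,k}^{2,3}$ before invoking it, which is why I front-load that verification in the first paragraph.
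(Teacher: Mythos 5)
Your proof is correct, and it genuinely streamlines the paper's argument. The paper runs the full three-way case analysis of \cref{rem:spectrum}(c) for $\Delta^{1,3}_{+,k}$: it excludes case (i) by closedness of $\im(\Delta^{1,3}_{+,k})$, handles case (iii) via \cref{prop:imclosed}(c) exactly as you do, and then spends the bulk of the proof on case (ii) --- if $0$ is an accumulation point of eigenvalues of $\Delta^{1,3}_{+,k}$, it passes to $(d^{1,3}_{k+1})^*\,d^{1,3}_{k+1}$ via \cref{lem:changeorder}, applies \cref{thm:downmonotonicity}, and then rules out alternatives on the $(2,3)$ side (using the closed range theorem and the standing reduction $\ker(d^{2,3}_{k+1})=\{0\}$) to produce a sequence of positive eigenvalues of $(d^{2,3}_{k+1})^*\,d^{2,3}_{k+1}$ accumulating at $0$, which transfers back to $\Delta^{2,3}_{+,k}$ by \cref{lem:changeorder}. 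Your observation --- that a nonnegative self-adjoint operator with closed range has $0$ isolated in its spectrum, so case (ii) cannot occur once case (i) is excluded --- shows the paper's case (ii) is vacuous under the standing hypotheses and collapses the whole proof to the single infinite-multiplicity case. The isolation lemma you invoke is standard and your sketch of it is sound, though note it silently uses that $\ker A$ reduces a self-adjoint $A$ (so that the restriction to $\dom A\cap(\ker A)^\perp$ really has range $\im A$) and that the spectrum of the reduced part is closed; neither fact is stated in the paper, so you are importing a small amount of outside machinery. What the paper's longer route buys in exchange is that it only reuses tools (\cref{lem:changeorder}, \cref{thm:downmonotonicity}) already established and needed again in the remainder of the same proof, e.g.\ in the argument for \cref{claim:m_bigeq}, which explicitly points back to the case (ii) reasoning.
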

    
    \textit{Proof of claim:} We know from \cref{rem:spectrum} that there are three possibilities for $0$ to be in $\sigma_e(\Delta^{1,3}_{+,k})$ corresponding to parts (c)(i), (c)(ii), and (c)(iii) i.e. $\im (\Delta^{1,3}_{+,k})$ is not closed, $0$ is an eigenvalue of infinite multiplicity, or $0$ is an accumulation point of eigenvalues. The case $(\Delta^{1,3}_{+,k})$ is not closed is already excluded by our assumption that $\im(d_{k+1}^{2,3})$ is closed.
    
    If $(\Delta^{1,3}_{+,k})$ has infinite-dimensional kernel, \cref{prop:imclosed}(c) implies $0 \in \sigma_e(\Delta^{2,3}_{+,k})$ (note that $\ker(\Delta^{1,3}_{+,k})=\ker\big(d_{k+1}^{1,3}\big)^*$).
    
    Finally, if $0$ is an accumulation point of eigenvalues of $\Delta^{1,3}_{+,k}$, by \cref{lem:changeorder}, $0$ is also an accumulation point of eigenvalues of $ (d_{k+1}^{1,3})^* d_{k+1}^{1,3}$ and therefore it is in the essential spectrum. Hence, \cref{thm:downmonotonicity} applied to $(d_{k+1}^{1,3})^* d_{k+1}^{1,3}$ and $(d_{k+1}^{2,3})^* d_{k+1}^{2,3}$ shows that $0 \in \sigma_e((d_{k+1}^{2,3})^* d_{k+1}^{2,3})$. Again by \cref{rem:spectrum}, there are three cases to consider. Since we have assumed $\ker(d_{k+1}^{2,3})=\{0\}$, the kernel of $(d_{k+1}^{2,3})^* d_{k+1}^{2,3}$ cannot be infinite dimensional. If $\im((d_{k+1}^{2,3})^* d_{k+1}^{2,3})$ were not closed, by \cref{prop:imclosed}(a),  $\im((d_{k+1}^{2,3})^*)$ would also not be closed. By the closed range theorem, this would imply that $\im(d_{k+1}^{2,3})$ would also not be closed, contradicting our assumption. Thus, there must be a sequence of positive eigenvalues of $(d_{k+1}^{2,3})^* d_{k+1}^{2,3}$ converging to zero. By \cref{lem:changeorder}, they are also eigenvalues of $\Delta_{+,k}^{2,3}$ and \cref{claim:ess0} follows.
    
    \begin{claim} \label{claim:m_bigeq}
    The infima of the essential spectra satisfy $m:=m(\Delta^{1,3}_{+,k})\ge m(\Delta^{2,3}_{+,k})$.
    \end{claim}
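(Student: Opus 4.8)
The plan is to compare $m(\Delta^{1,3}_{+,k})=m\!\big(d^{1,3}_{k+1}(d^{1,3}_{k+1})^{*}\big)$ and $m(\Delta^{2,3}_{+,k})=m\!\big(d^{2,3}_{k+1}(d^{2,3}_{k+1})^{*}\big)$ by passing to the ``flipped'' operators $(d^{1,3}_{k+1})^{*}d^{1,3}_{k+1}$ and $(d^{2,3}_{k+1})^{*}d^{2,3}_{k+1}$, for which the desired inequality is exactly the down-Laplacian min--max argument already used in \cref{thm:downmonotonicity}. The bridge is \cref{lem:changeorder}: the essential spectra of $TT^{*}$ and $T^{*}T$ coincide away from $0$, so the only thing to control is the point $0$, which I would handle by a short case split using the standing hypotheses of this part of the argument (that $\im(d^{2,3}_{k+1})$ is closed and $\ker(d^{2,3}_{k+1})=\{0\}$) together with \cref{claim:ess0}.

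First I would dispose of the trivial case $m(\Delta^{2,3}_{+,k})=0$: here there is nothing to prove, since $\sigma(\Delta^{1,3}_{+,k})\subset\R_{\ge 0}$ by \cref{lem:pershodge}(d) forces $m(\Delta^{1,3}_{+,k})=\inf\sigma_{e}(\Delta^{1,3}_{+,k})\ge 0$. So assume $m(\Delta^{2,3}_{+,k})>0$, i.e.\ $0\notin\sigma_{e}(\Delta^{2,3}_{+,k})$; by \cref{claim:ess0} this gives $0\notin\sigma_{e}(\Delta^{1,3}_{+,k})$ as well. Since $d^{1,3}_{k+1}$ is the restriction of $d^{2,3}_{k+1}$ to the closed isometric copy of $C^{1,3}_{k+1}$ inside $C^{2,3}_{k+1}$, we also have $\ker(d^{1,3}_{k+1})=\{0\}$, and $\im(d^{1,3}_{k+1})$ is closed (as noted just before the claim). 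For each $T\in\{d^{1,3}_{k+1},d^{2,3}_{k+1}\}$ the operator $T^{*}T$ is then injective with $\ker(T^{*}T)=\ker T=\{0\}$ (by the argument of \cref{lem:kerlap}) and has closed range (by the closed range theorem and \cref{prop:imclosed}(a)); being self-adjoint with trivial kernel and closed range, it is bijective (using \cref{lem:imperp}), so $0\notin\sigma(T^{*}T)$. Combining $0\notin\sigma_{e}(TT^{*})$ with $0\notin\sigma_{e}(T^{*}T)$ and \cref{lem:changeorder} yields $\sigma_{e}(TT^{*})=\sigma_{e}(T^{*}T)$, hence $m(TT^{*})=m(T^{*}T)$, for both pairs. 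It therefore remains to show $m\!\big((d^{2,3}_{k+1})^{*}d^{2,3}_{k+1}\big)\le m\!\big((d^{1,3}_{k+1})^{*}d^{1,3}_{k+1}\big)$.

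For that last inequality, recall that $(d^{i,3}_{k+1})^{*}d^{i,3}_{k+1}$ is the self-adjoint operator induced (via \cref{lm:equivalence of T^*T}) by the quadratic form $u\mapsto\|d^{i,3}_{k+1}u\|^{2}$ on $\dom(d^{i,3}_{k+1})$. The isometric inclusion $j:C^{1,3}_{k+1}\hookrightarrow C^{2,3}_{k+1}$ (the restriction of $\iota_{k+1}^{2,3}$) maps $\dom(d^{1,3}_{k+1})$ into $\dom(d^{2,3}_{k+1})$ and satisfies $\|d^{2,3}_{k+1}j(u)\|=\|d^{1,3}_{k+1}u\|$, so the form of $(d^{2,3}_{k+1})^{*}d^{2,3}_{k+1}$ restricts along $j$ to that of $(d^{1,3}_{k+1})^{*}d^{1,3}_{k+1}$; since $j$ preserves linear independence, \cref{thm:ari1.28} gives $\lambda_{q}\!\big((d^{2,3}_{k+1})^{*}d^{2,3}_{k+1}\big)\le\lambda_{q}\!\big((d^{1,3}_{k+1})^{*}d^{1,3}_{k+1}\big)$ for every $q$, exactly as in the proof of \cref{eq:downmon2}. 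Taking the supremum over $q$ and using that $m(A)=\sup_{q}\lambda_{q}(A)$ for nonnegative self-adjoint $A$ — which follows from the definition of $\lambda_{\bullet}$ and \cref{rem:spectrum}(c), since otherwise the eigenvalues of $A$ below $m(A)$ would accumulate strictly below $m(A)=\inf\sigma_{e}(A)$ (or one would have infinite multiplicity), a contradiction — gives $m\!\big((d^{2,3}_{k+1})^{*}d^{2,3}_{k+1}\big)\le m\!\big((d^{1,3}_{k+1})^{*}d^{1,3}_{k+1}\big)$, and chaining the equalities from the previous paragraph yields $m(\Delta^{1,3}_{+,k})\ge m(\Delta^{2,3}_{+,k})$, as claimed.

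The only genuinely delicate point is the behaviour at the eigenvalue $0$: $TT^{*}$ and $T^{*}T$ need not share a kernel, so \cref{lem:changeorder} only furnishes agreement of essential spectra away from $0$, and without extra input the reduction in the second paragraph could fail (e.g.\ $0$ could be essential for one operator and not the other). This is precisely where the standing assumptions of this part of the argument — closedness of $\im(d^{2,3}_{k+1})$ and the earlier reduction to $\ker(d^{2,3}_{k+1})=\{0\}$ — together with \cref{claim:ess0} earn their keep: they pin down the status of $0$ in all four spectra involved, after which the remaining argument is routine bookkeeping with the min--max formula.
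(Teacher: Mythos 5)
Your proof is correct and follows essentially the same route as the paper: pass from $\Delta^{i,3}_{+,k}=TT^*$ to $T^*T$ via \cref{lem:changeorder}, apply the down-monotonicity min--max (\cref{thm:ari1.28}) to the flipped operators, and use \cref{claim:ess0} together with the standing reductions ($\ker(d^{2,3}_{k+1})=\{0\}$, $\im(d^{2,3}_{k+1})$ closed) to control the point $0$. The only difference is organizational — you exclude $0$ from all four essential spectra up front and transfer the infimum wholesale via $m(A)=\sup_q\lambda_q(A)$, whereas the paper splits on $m(\Delta^{1,3}_{+,k})$ and transfers a single essential-spectrum point $\lambda\le m$, treating $\lambda=0$ as a sub-case.
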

    We have just shown this in the case that $m(\Delta^{1,3}_{+,k})=0$. Now if $m(\Delta^{1,3}_{+,k})>0$, we know by \cref{lem:changeorder} that $m \in \sigma_e\big(\big(d_{k+1}^{1,3}\big)^*\, d_{k+1}^{1,3}\big)$. Then \cref{thm:downmonotonicity} implies that there is a $0\le \lambda \le m$ with $\lambda \in \sigma_e\big(\big(d_{k+1}^{2,3}\big)^*\, d_{k+1}^{2,3}\big)$.
    
    If $\lambda>0$, we can apply \cref{lem:changeorder} again to obtain $\lambda \in \sigma_e(\Delta^{2,3}_{+,k})$ and hence $m(\Delta^{2,3}_{+,k})\le \lambda \le m$. If $\lambda=0$, this implies $0 \in \sigma_e(\Delta^{2,3}_{+,k})$ as we have seen in the proof of \cref{claim:ess0}. This completes the proof of \cref{claim:m_bigeq}.
    
It remains to show the following:

\begin{claim}
If for $r\ge 0$, $\Delta_{+,k}^{1,3}$ has $n$ eigenvalues $\le r$, then either $\Delta_{+,k}^{2,3}$ has the same, or $m(\Delta_{+,k}^{2,3})\le r$. Here, we count eigenvalues with their multiplicities.
\end{claim}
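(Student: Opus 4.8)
The plan is to deduce this counting statement from the analogous one for the ``down-type'' operators $A_i:=(d^{i,3}_{k+1})^*\,d^{i,3}_{k+1}$, $i=1,2$, for which a min--max comparison in the spirit of \cref{thm:downmonotonicity} applies, combined with the kernel comparison of \cref{prop:imclosed}(c). I continue with the reductions already in force: $\ker(d^{2,3}_{k+1})=\{0\}$; and if $m(\Delta^{2,3}_{+,k})\le r$ there is nothing to prove, so I assume $m(\Delta^{2,3}_{+,k})>r$, under which $\im(d^{2,3}_{k+1})$, $\im(d^{1,3}_{k+1})$, and $\im(\Delta^{1,3}_{+,k})$ are closed and $\ker(d^{1,3}_{k+1})=\{0\}$. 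It then suffices to show $\lambda_n(\Delta^{2,3}_{+,k})\le r$, that is, that $\Delta^{2,3}_{+,k}$ also has at least $n$ eigenvalues $\le r$.

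First I would transfer everything to the $A_i$. By \cref{lem:changeorder} applied to $T=d^{i,3}_{k+1}$ (densely defined and closed by \cref{lem:pershodge}(a)), $\Delta^{i,3}_{+,k}=d^{i,3}_{k+1}(d^{i,3}_{k+1})^*$ and $A_i$ have the same nonzero eigenvalues with the same multiplicities and the same nonzero essential spectrum. Closedness of $\im(d^{i,3}_{k+1})$ gives, by the closed range theorem, closedness of $\im((d^{i,3}_{k+1})^*)$, hence by \cref{prop:imclosed}(a) closedness of $\im(A_i)$; since $\ker A_i=\ker(d^{i,3}_{k+1})=\{0\}$, \cref{rem:spectrum}(c) then rules out $0\in\sigma_e(A_i)$, so that $m(A_i)=m(\Delta^{i,3}_{+,k})$ and in particular $m(A_1),m(A_2)>r$. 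Finally, $\ker(\Delta^{i,3}_{+,k})=\ker((d^{i,3}_{k+1})^*)$, so the number of eigenvalues of $\Delta^{i,3}_{+,k}$ that are $\le r$ (counted with multiplicity; these automatically lie below $m(\Delta^{i,3}_{+,k})$) equals $\dim\ker((d^{i,3}_{k+1})^*)$ plus the number of eigenvalues of $A_i$ in $(0,r]$, and for $i=1$ this total is $n$ by hypothesis.

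Next I would run the min--max argument exactly as in the proof of \eqref{eq:downmon2}: the quadratic form $u\mapsto\|d^{1,3}_{k+1}u\|^2$ of $A_1$, with domain $\dom(d^{1,3}_{k+1})$, is the restriction along the isometric inclusion $C^{1,3}_{k+1}\hookrightarrow C^{2,3}_{k+1}$ of the quadratic form $v\mapsto\|d^{2,3}_{k+1}v\|^2$ of $A_2$, because $d^{2,3}_{k+1}$ restricts to $\iota^{1,2}_k\circ d^{1,3}_{k+1}$ there and $\iota^{1,2}_k$ is an isometry. Hence \cref{thm:ari1.28} gives $\lambda_q(A_2)\le\lambda_q(A_1)$ for all $q$, and since $r<m(A_1),m(A_2)$ this upgrades to the counting statement ``$A_2$ has at least as many eigenvalues $\le r$ as $A_1$''. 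Combining this with $\dim\ker((d^{2,3}_{k+1})^*)\ge\dim\ker((d^{1,3}_{k+1})^*)$ from \cref{prop:imclosed}(c) (valid because $\im(d^{2,3}_{k+1})$ is closed) and the additive decomposition of the previous paragraph shows that $\Delta^{2,3}_{+,k}$ has at least $n$ eigenvalues $\le r$, which is what we wanted; in particular it has at least $q$ whenever $n\ge q$, which is the form of the claim actually used to conclude \eqref{eq:upmon2}.

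The step I expect to be the main obstacle is the spectral bookkeeping around the bottom of the essential spectrum: one must check carefully that the passage from $\Delta^{i,3}_{+,k}$ to $A_i$ preserves not just the nonzero eigenvalues but also the value $m(\cdot)$, so that the $\lambda_q$-inequality supplied by \cref{thm:ari1.28} genuinely converts into a comparison of the numbers of eigenvalues $\le r$, and that a possibly infinite-dimensional kernel does no harm — which is precisely why the case $m(\Delta^{2,3}_{+,k})\le r$ is peeled off first. The remaining pieces are routine combinations of \cref{lem:changeorder}, \cref{prop:imclosed}, and \cref{thm:ari1.28}.
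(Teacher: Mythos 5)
Your proof is correct and follows essentially the same route as the paper's: compare kernel dimensions via \cref{prop:imclosed}(c), transfer the positive eigenvalues to $A_i=(d^{i,3}_{k+1})^*d^{i,3}_{k+1}$ via \cref{lem:changeorder}, and apply the min--max comparison underlying \cref{thm:downmonotonicity}; the only difference is organizational, in that you peel off the alternative $m(\Delta^{2,3}_{+,k})\le r$ at the outset rather than treating it as the second case at the end. The one assertion you leave implicit is $m(A_1)>r$, which requires the previously established \cref{claim:m_bigeq} (or, alternatively, note that if $m(A_1)\le r$ then $\lambda_q(A_2)\le\lambda_q(A_1)\le m(A_1)\le r<m(A_2)$ for all $q$ already forces infinitely many eigenvalues of $A_2$ below $r$), so nothing breaks.
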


Note that \cref{prop:imclosed}(c) yields that $\Delta_{+,k}^{2,3}$ has at least as many zero eigenvalues as $\Delta_{+,k}^{1,3}$. Let $n'$ be the number of positive eigenvalues (with multiplicity) of $\Delta_{+,k}^{1,3}$ that are at most $r$. By \cref{lem:changeorder}, all of these are also positive eigenvalues of $\big(d_{k+1}^{1,3}\big)^*\, d_{k+1}^{1,3}$. \Cref{thm:downmonotonicity} implies that $(d_{k+1}^{2,3})^*\, d_{k+1}^{2,3}$ has at least $n'$ eigenvalues $\le r$ or it has an essential eigenvalue $\le r$. In the first case, since these eigenvalues are by construction positive ($\ker(d_{k+1}^{2,3})=\{0\}$), they are also eigenvalues of $\Delta_{+,k}^{2,3}$ by \cref{lem:changeorder}. In the second case, we find, as in the proof of \cref{claim:m_bigeq}, that $m(\Delta_{+,k}^{2,3})\le r$. This concludes the proof of the claim as well as the proof of up monotonicity.
\end{proof}

Note that in \cref{ex:espec}, monotonicity in the sense of \cref{thm:upmonotonicity} does not hold if we only consider point spectra. Properties such as monotonicity in our sense and \cref{thm:ari1.28} motivate the definition of the spectrum adopted in \cref{subsec:spectra}.

\subsection{Spectral Decomposition of Laplacians} \label{subsec:spectraldecomp}

Full Laplacians consist of two components, the up- and the down-Laplacian; this is true, even if strictly speaking they are not the sum of up- and down-Laplacians, as previously discussed in \cref{rek:all quadratic form induced}.  Nevertheless, there is a strong connection between the full Laplacian and its up and down components, as we shall see. This configuration is special, since composing the up- and down-Laplacians in either order results in the zero map. In this section, we will show that for operators of this type, we can reconstruct the nonzero spectra of the full Laplacian from those of the up- and down-Laplacian.

\begin{theorem} \label{thm:split}
    Consider three Hilbert spaces $U,V,W$ and densely defined, closed linear operators $A:U \rightarrow V$ and $B: V \rightarrow W$ with $\im A\subset \ker(B) \subset \dom B$, and $\dom A^*\cap \dom B$ dense in $V$:

\begin{center}
    \begin{tikzcd}[column sep=+3em]
    U \arrow[r,"A",shift left] & V \arrow[r,"B",shift left] \arrow[l,"A^*",shift left] & W\arrow[l,"B^*",shift left]
\end{tikzcd}
\end{center}    
    
    Then $AA^*+B^*B|_{\ker(AA^*+B^*B)^\perp}$ is unitarily equivalent to 
    $$
    AA^*\oplus B^*B|_{\ker(AA^*\oplus B^*B)^\perp}=AA^*|_{\ker(AA^*)^\perp} \oplus B^*B|_{\ker(B^*B)^\perp}.
    $$
\end{theorem}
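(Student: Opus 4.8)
The plan is to exploit the Hodge decomposition (\cref{thm:magical}) to reduce everything to an orthogonal direct sum of subspaces on which the full Laplacian acts as a single summand, and then identify each piece with the corresponding up- or down-Laplacian. First I would apply \cref{thm:magical} with the given data $A:U\to V$, $B:V\to W$, obtaining the orthogonal decomposition
\[
V=\overline{\im(A^*)}\oplus\ker(AA^*+B^*B)\oplus\overline{\im(B)},
\]
where, by \cref{lem:kerlap} applied to this configuration, the middle term is precisely $\ker(AA^*+B^*B)=\ker(B)\cap\ker(A^*)$. Consequently $\ker(AA^*+B^*B)^\perp=\overline{\im(A^*)}\oplus\overline{\im(B)}$. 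The key structural observation is that $AA^*+B^*B$ (understood as the quadratic-form operator, per \cref{rek:all quadratic form induced}) leaves each of $\overline{\im(A^*)}$ and $\overline{\im(B)}$ invariant: on $\overline{\im(A^*)}$ the form $\delta[u]=\|A^*u\|^2+\|Bu\|^2$ reduces to $\|A^*u\|^2$ because $\im(A^*)\subset\ker(B)$ (indeed $\im(A^*)\perp\overline{\im(B)}$ and $\im(A^*)\subset(\overline{\im(B)})^\perp\subset\ker(B)$ after checking $\ker(B)$ contains $\overline{\im(A^*)}$ via $\im A\subset\ker B$ and adjoints), while on $\overline{\im(B)}$ it reduces to $\|Bu\|^2$ since $\overline{\im(B)}\subset\ker(A^*)$ by \cref{lem:imperp} and the hypothesis $\im A\subset\ker B$.

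Next I would identify the restrictions. On $\overline{\im(A^*)}$, the quadratic form $u\mapsto\|A^*u\|^2$ with domain $\dom(A^*)\cap\overline{\im(A^*)}$ induces exactly $AA^*$ restricted to $\ker(AA^*)^\perp$: this uses that $\ker(AA^*)=\ker(A^*)$ (same argument as \cref{lem:kerlap}) and hence $\ker(AA^*)^\perp=\overline{\im(A)}$... wait, one must be careful here --- $AA^*$ is the operator on $V$ built from the form $\|A^*\cdot\|^2$, and $\ker(AA^*)=\ker(A^*)$, so $\ker(AA^*)^\perp=\overline{\im(A)}$, which need not equal $\overline{\im(A^*)}$. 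The resolution is that the nonzero part of $AA^*$ lives on $\overline{\im(A)}$, whereas the relevant invariant subspace of $AA^*+B^*B$ is $\overline{\im(A^*)}$; but by \cref{TT^*=T^*T}/\cref{lem:changeorder} applied to $T=A^*$ (so $T^*=A$, using $A$ closed and densely defined, and noting $A^{**}=A$ by \cref{T^**=T}), the operator $AA^*$ restricted to $\ker(AA^*)^\perp$ is unitarily equivalent to $A^*A$ restricted to $\ker(A^*A)^\perp$, and $A^*A$ is the form operator for $\|A\cdot\|^2$ on domain contained in $\dom(A)$, with $\ker(A^*A)^\perp=\overline{\im(A^*)}$. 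So the correct statement is: $AA^*+B^*B$ restricted to $\overline{\im(A^*)}$ equals the form operator for $\|A^*\cdot\|^2$ there, which via the polar decomposition unitary $U$ from \cref{lem:u} (for the operator $A^*$) is unitarily equivalent to $AA^*$ on $\ker(AA^*)^\perp$. Symmetrically, $AA^*+B^*B$ restricted to $\overline{\im(B)}$ is the form operator for $\|B\cdot\|^2$, which is literally $B^*B$ restricted to $\ker(B^*B)^\perp=\overline{\im(B^*)}$... again up to a unitary. Assembling the two unitary equivalences into a single unitary $V\supset\ker(AA^*+B^*B)^\perp\to\ker(AA^*)^\perp\oplus\ker(B^*B)^\perp$ gives the claim, and the final equality $AA^*\oplus B^*B|_{\ker(AA^*\oplus B^*B)^\perp}=AA^*|_{\ker(AA^*)^\perp}\oplus B^*B|_{\ker(B^*B)^\perp}$ is immediate from $\ker(AA^*\oplus B^*B)=\ker(AA^*)\oplus\ker(B^*B)$.

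\textbf{Main obstacle.} The delicate point is handling domains: the full Laplacian $AA^*+B^*B$ is defined via its quadratic form on $\dom(A^*)\cap\dom(B)$, which (as emphasized in \cref{remark:not the sum,rek:all quadratic form induced}) is strictly larger than $\dom(AA^*)\cap\dom(B^*B)$ in general, so I cannot simply say the operator splits as a direct sum of operators on the nose. I would need to verify carefully that the form $\delta[u]=\|A^*u\|^2+\|Bu\|^2$ genuinely respects the orthogonal decomposition --- i.e. that if $u=u_1+u_2$ with $u_1\in\overline{\im(A^*)}$, $u_2\in\overline{\im(B)}$ then $u\in d[\delta]$ iff $u_1\in\dom(A^*)$ and $u_2\in\dom(B)$, and $\delta[u]=\|A^*u_1\|^2+\|Bu_2\|^2$ with no cross terms; the cross terms vanish because $A^*$ kills $\overline{\im(B)}\subset\ker(A^*)$ and $B$ kills $\overline{\im(A^*)}\subset\ker(B)$. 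Once the form splits orthogonally, the induced self-adjoint operator splits as an orthogonal direct sum of the induced operators on each summand (a standard fact about quadratic forms, following from the characterization of $\dom$ in \cref{a-induced A}), and then \cref{TT^*=T^*T} together with \cref{lem:allthesame} finishes each summand. I expect roughly a page once the domain bookkeeping is written out in full.
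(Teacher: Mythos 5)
Your overall strategy---Hodge-decompose $V$, show that the quadratic form of the full Laplacian splits orthogonally, and identify the pieces---is a legitimate route and genuinely different from the paper's proof, which instead introduces the auxiliary operator $T:V\to U\oplus W$, $Tu=(A^*u,Bu)$, verifies $T^*T=AA^*+B^*B$ and $TT^*=A^*A\oplus BB^*$ at the level of form domains, and then applies \cref{TT^*=T^*T} three times (to $T$, to $A$, and to $B$). However, your execution contains a concrete error that derails the middle of the argument. In \cref{thm:split} the operators are oriented as $A:U\to V$ and $B:V\to W$, so $\im(A^*)\subset U$ and $\im(B)\subset W$; neither is a subspace of $V$, and the decomposition you write down does not typecheck. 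Applying \cref{thm:magical} with the roles correctly swapped (its ``$B$'' is your $A$, its ``$A$'' is your $B$) gives
$$
V=\overline{\im(A)}\oplus\ker(AA^*+B^*B)\oplus\overline{\im(B^*)},
$$
where on $\overline{\im(A)}\subset\ker(B)$ the form reduces to $\|A^*u\|^2$ and on $\overline{\im(B^*)}=\ker(B)^\perp\subset\ker(A^*)$ it reduces to $\|Bu\|^2$.

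Once the labels are corrected, your entire ``one must be careful here'' digression evaporates: the tension you perceive between $\overline{\im(A^*)}$ and $\ker(AA^*)^\perp=\overline{\im(A)}$ is an artifact of the mislabeling, since the invariant subspaces are exactly $\overline{\im(A)}=\ker(AA^*)^\perp$ and $\overline{\im(B^*)}=\ker(B^*B)^\perp$; the restricted form operators are therefore literally $AA^*|_{\ker(AA^*)^\perp}$ and $B^*B|_{\ker(B^*B)^\perp}$, and no polar-decomposition unitary is needed (only the tautological unitary identifying the internal orthogonal sum inside $V$ with the external direct sum in the statement). Claims such as ``$AA^*+B^*B$ restricted to $\overline{\im(A^*)}$ equals the form operator for $\|A^*\cdot\|^2$ there'' are not repairable as written, because that subspace lives in $U$. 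What genuinely remains---and you correctly flag it as the main obstacle---is the domain bookkeeping: writing $u=u_0+u_1+u_2$ along the three summands, one checks that $u\in\dom(A^*)\cap\dom(B)$ iff each $u_i$ is (this works because $u_0,u_2\in\ker(A^*)$ and $u_0,u_1\in\ker(B)$, so e.g.\ $u_1=u-u_0-u_2\in\dom(A^*)$ and $u_1\in\ker(B)\subset\dom(B)$), that the sesquilinear form has no cross terms, and that an orthogonal sum of closed forms induces the orthogonal sum of the induced operators via the domain characterization of \cref{a-induced A}. With those repairs your argument goes through and is, if anything, shorter than the paper's.
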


\begin{proof}
Consider the operator $T:V\to U\oplus W$ with $\dom(T)=\dom A^*\cap\dom B$ and $Tu=(A^*u,Bu)$. It is densely defined and closed, thus it has an adjoint $T^*:U\oplus W\to V$ defined by $T^*(u,v)=Au+B^*v$, by uniqueness of adjoints.
Let us define the quadratic form $t[u]:=\|Tu\|^2$ for all $u\in \dom A^*\cap\dom B$, which is densely defined, lower semi-bounded, and closed. Denote the induced self-adjoint operator by $T^*T$. Its domain is
$$
    \dom T^*T=\{u\in \dom A^*\cap\dom B:\exists\ f\in V \text{ s.t. }
    \forall\ v\in \dom A^*\cap\dom B, \langle Tu,Tv\rangle_{U\oplus W}=\langle f,v\rangle_{V}\},
$$
which is the same as $\dom (AA^*+B^*B)$. Hence, by the uniqueness of $f$, we have $T^*T=AA^*+B^*B$.

Similarly, we consider the quadratic form $t^*[u]=\|T^*u\|^2$ for all $u\in \dom A\oplus \dom B^*$. It induces the self-adjoint operator $TT^*$ with
\begin{align}\label{TT^*def 1}
    \dom TT^*=&\{u\in \dom A\oplus \dom B^*:\exists\ f\in U\oplus W
    \text{ s.t. }\\&\nonumber\forall\ v\in \dom A\oplus \dom B^*, \langle T^*u,T^*v\rangle_{V}=\langle f,v\rangle_{U\oplus W}\}.
\end{align}

\begin{claim}\label{cl:equivalence of domain}
    The domain $\dom TT^*$ given by \cref{TT^*def 1} is equivalent to
\begin{align}\label{TT^*def 2}
    \dom TT^*=&\{u\in \dom A:\exists\ g\in U \text{ s.t. }\forall\ v\in \dom A,\langle Au,Av\rangle_V=\langle g,v\rangle_U\}\nonumber\\
    \quad\oplus&\{u\in \dom B^*:\exists\ h\in W \text{ s.t. }\forall\ v\in \dom B^*,\langle B^*u,B^*v\rangle_V=\langle h,v\rangle_W\}.
\end{align}
\end{claim} 
\noindent
\textit{Proof of claim:} First, note that $\im A\subset \ker B$ implies $\im B^*\subset \ker(A^*) \subset \dom A^*$. Indeed, if we choose $v=B^*w\in \im B^*$ for some $w\in \dom B^*$, then for all $u\in \dom A$, we have 
$$
\langle v,Au \rangle =\langle B^*w,Au \rangle =\langle w,BAu \rangle=0.
$$
Therefore, we can choose $\xi=0$, so that $\langle \xi,u \rangle =\langle v,Au \rangle$.

Let us consider $u=(u_1,u_2),\, v=(v_1,v_2)$ for some $u_1,v_1\in \dom A$ and $u_2,v_2\in \dom B^*$. Then
\begin{align*}
    \langle T^*u, T^*v\rangle&=\langle Au_1,Av_1\rangle+\langle Au_1,B^*v_2\rangle+\langle B^*u_2,Av_1\rangle+\langle B^*u_2,B^*v_2\rangle\\
    &=\langle Au_1,Av_1\rangle+\langle B^*u_2,B^*v_2\rangle.
\end{align*}
If such an $f\in U\oplus W$ in $(\ref{TT^*def 1})$ exists, it can be written as $f=(g,h)$ for some $g\in U$ and $h\in W$. Then the identity in \cref{TT^*def 1} becomes
$$\langle Au_1,Av_1\rangle_V+\langle B^*u_2,B^*v_2\rangle_V=\langle f,v\rangle_{U\oplus W}=\langle g,v_1\rangle_U +\langle h,v_2\rangle_W.$$
If this holds true for all $v\in \dom A\oplus \dom B^*$, we can choose $v_2=0$. This implies the existence of $g\in U$ such that $\langle Au_1,Av_1\rangle_V=\langle g,v_1\rangle_U$ for all $v_1\in\dom A$. A similar argument for $v_1=0$ yields $(\ref{TT^*def 2})$. The inverse inclusion can be seen from choosing $f=(g,h)$. Hence, the claim is proven.\\

Together with uniqueness in \cref{a-induced A}, \cref{cl:equivalence of domain} implies that $TT^*=A^*A\oplus BB^*$, where $A^*A$ and $BB^*$ are defined via the quadratic forms $a[u]=\|Au\|^2$ and $b^*[u]=\|B^*u\|^2$, respectively. Note that by Theorem $\ref{TT^*=T^*T}$, there exist unitary isomorphisms $M:\ker(AA^*)^\perp \to \ker(A^*A)^\perp$ and $N:\ker(B^*B)^\perp \to \ker(BB^*)^\perp$ such that
\begin{align}
    M\left(\dom(AA^*)\cap \ker(AA^*)^\perp\right)&=\dom(A^*A)\cap\ker(A^*A)^\perp,\label{eq:Mdom}\\
     N\left(\dom(B^*B)\cap \ker(B^*B)^\perp\right)&=\dom(BB^*)\cap\ker(BB^*)^\perp\label{eq:Ndom},
\end{align}
and
\begin{align}
    AA^*|_{\ker(AA^*)^\perp}&=M^*A^*A|_{\ker(A^*A)^\perp}M,\label{eq:Meq}\\
    B^*B|_{\ker(B^*B)^\perp}&=N^*BB^*|_{\ker(BB^*)^\perp}N.\label{eq:Neq}
\end{align}
Define $L:\ker(AA^*)^\perp\oplus\ker(B^*B)^\perp\to\ker(A^*A)^\perp\oplus\ker(BB^*)^\perp$ to be $L:=M\oplus N$. $L$ is again unitary since both $M$ and $N$ are. Consider the operator $AA^*\oplus B^*B$ defined on $V\oplus V$, and restrict it to $\ker(AA^*\oplus B^*B)^\perp$. Note that
$$\ker(AA^*\oplus B^*B)^{\perp}=(\ker AA^*\oplus \ker B^*B)^{\perp_{V\oplus V}}=\ker(AA^*)^\perp\oplus\ker(B^*B)^\perp,$$
similarly we have $\ker(A^*A\oplus BB^*)^\perp=\ker(A^*A)^\perp\oplus\ker(BB^*)^\perp$. Then write
\begin{align*}
    T_L:&=AA^*\oplus B^*B|_{\ker(AA^*)^\perp\oplus\ker(B^*B)^\perp}=AA^*\oplus B^*B|_{\ker(AA^*\oplus B^*B)^\perp},\\
T_R:&=A^*A\oplus BB^*|_{\ker(A^*A)^\perp\oplus\ker(BB^*)^\perp}=A^*A\oplus BB^*|_{\ker(A^*A\oplus BB^*)^\perp}.
\end{align*}
Note that 
\begin{align*}
    \dom T_L&=\left(\dom (AA^*)\cap \ker(AA^*)^\perp\right)\oplus\left(\dom(B^*B)\cap\ker(B^*B)^\perp\right),\\
    \dom T_R&=\left(\dom (A^*A)\cap \ker(A^*A)^\perp\right)\oplus\left(\dom(BB^*)\cap\ker(BB^*)^\perp\right).
\end{align*}
Then applying \eqref{eq:Mdom} and \eqref{eq:Ndom}, we have $L(\dom T_L)=\dom T_R$. Note by \eqref{eq:Meq} and \eqref{eq:Neq}, we also have
$$T_RL(x,y)=(A^*A(Mx),BB^*(Ny))=(MAA^*x,NBB^*y)=LT_L(x,y).$$
Hence, $T_L=L^*T_RL$. Therefore, 
$$T_R=A^*A\oplus BB^*|_{\ker(A^*A\oplus BB^*)^\perp}=TT^*|_{\ker(TT^*)^\perp}$$
is unitarily equivalent to
\begin{equation}\label{last eq}
    T_L=AA^*\oplus B^*B|_{\ker(AA^*\oplus B^*B)^\perp}=AA^*|_{\ker(AA^*)^\perp} \oplus B^*B|_{\ker(B^*B)^\perp},
\end{equation}
where the last equation of \eqref{last eq} follows from
$$
\ker(AA^*)^\perp =\ker(A^*)^\perp \perp \ker(B)^\perp = \ker(B^*B)^\perp.
$$
Again by Theorem $\ref{TT^*=T^*T}$, $TT^*|_{\ker(TT^*)^\perp}$ is unitarily equivalent to $T^*T|_{(\ker{T^*T})^\perp}$. Since unitary equivalence is an equivalence relation, the proof is complete.
\end{proof}

Concretely, \cref{thm:split} implies that the nonzero spectra of full Laplacians and full persistent Laplacians are determined by the nonzero spectra of the up and down parts. More precisely, we have the following result.

\begin{cor}[Splitting of Spectra]\label{cor:split}
    Consider three Hilbert spaces $U,V,W$ and densely defined, closed linear operators $A:U \rightarrow V$ and $B: V \rightarrow W$ with $\im A\subset \ker(B) \subset \dom B$, and $\dom A^*\cap \dom B$ dense in $V$. Then the spectra and eigenvalues (including multiplicity) of $AA^*+B^*B|_{\ker(AA^*+B^*B)^\perp}$ coincide with those of $AA^*|_{\ker(AA^*)^\perp}\oplus B^*B|_{\ker(B^*B)^\perp}$, which in turn are the union of those of $AA^*|_{\overline{\im(A)}}$ and $B^*B|_{\overline{\im(B^*)}}$. Moreover,
    \begin{align}
        0 \in \sigma_e(AA^*+B^*B) \ &\Leftrightarrow 0 \in \sigma_e(AA^*|_{\overline{\im(A)}}) \cup \sigma_e (B^*B|_{\ker(A^*)}) \label{eq:splitse2}\\
        &\Leftrightarrow 0 \in \sigma_e(AA^*|_{\ker(B)}) \cup \sigma_e (B^*B|_{\overline{\im(B^*)}}). \label{eq:splitse1}
    \end{align}
\end{cor}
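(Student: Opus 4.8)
The plan is to deduce everything from Theorem~\ref{thm:split} combined with Lemma~\ref{lem:allthesame}, and then to translate the resulting statements through the orthogonal decompositions of $V$. By Theorem~\ref{thm:split}, $AA^*+B^*B$ restricted to $\ker(AA^*+B^*B)^\perp$ is unitarily equivalent to $AA^*|_{\ker(AA^*)^\perp}\oplus B^*B|_{\ker(B^*B)^\perp}$, and by Lemma~\ref{lem:allthesame} unitarily equivalent self-adjoint operators share their spectrum, their essential spectrum, and their eigenvalues together with multiplicities; this is the first asserted coincidence. To recognise the summands I would note that $\ker(AA^*)=\ker(A^*)$ and $\ker(B^*B)=\ker(B)$ (by the argument used in the proof of \cref{lem:kerlap}), so that \cref{lem:imperp} gives $\ker(AA^*)^\perp=\overline{\im(A)}$ and $\ker(B^*B)^\perp=\overline{\im(B^*)}$. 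For a direct sum of self-adjoint operators $S\oplus T$ one has $\ker((S\oplus T)-\lambda)=\ker(S-\lambda)\oplus\ker(T-\lambda)$, $\sigma(S\oplus T)=\sigma(S)\cup\sigma(T)$ (no closure is needed because each spectrum is closed by \cref{cor:saspecclosed}), and $\sigma_e(S\oplus T)=\sigma_e(S)\cup\sigma_e(T)$ (check the three alternatives of \cref{rem:spectrum}(c) summand-wise); combining these yields the ``union'' clause, with multiplicities adding.

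It remains to handle $\lambda=0$, where I would split on the dimension of $\ker(AA^*+B^*B)=\ker(A^*)\cap\ker(B)$. If this subspace is infinite-dimensional, then $0$ is an eigenvalue of infinite multiplicity of $AA^*+B^*B$; it is also an infinite-dimensional subspace of $\ker(A^*)$ on which $B^*B$ vanishes, so $0\in\sigma_e(B^*B|_{\ker(A^*)})$, and an infinite-dimensional subspace of $\ker(B)$ on which $AA^*$ vanishes, so $0\in\sigma_e(AA^*|_{\ker(B)})$; hence every essential-spectrum term appearing in \eqref{eq:splitse2} and \eqref{eq:splitse1} contains $0$ and both equivalences hold trivially. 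If $\ker(A^*)\cap\ker(B)$ is finite-dimensional, then deleting it changes neither closedness of ranges, nor accumulation points of eigenvalues, nor infinitude of multiplicities, so by \cref{rem:spectrum}(c) $0\in\sigma_e(AA^*+B^*B)$ if and only if $0\in\sigma_e$ of its restriction to $\ker(AA^*+B^*B)^\perp$, which by the first part equals $\sigma_e(AA^*|_{\overline{\im(A)}})\cup\sigma_e(B^*B|_{\overline{\im(B^*)}})$. Finally, since $\overline{\im(B^*)}=\ker(B)^\perp\subset\ker(A^*)$ with complement $\ker(A^*)\ominus\overline{\im(B^*)}=\ker(A^*)\cap\ker(B)$ finite-dimensional, the operators $B^*B|_{\overline{\im(B^*)}}$ and $B^*B|_{\ker(A^*)}$ differ by a finite-dimensional block and therefore have the same essential spectrum; symmetrically $AA^*|_{\overline{\im(A)}}$ and $AA^*|_{\ker(B)}$ have the same essential spectrum because $\ker(B)\ominus\overline{\im(A)}=\ker(B)\cap\ker(A^*)$ is finite-dimensional. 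Substituting these identities into the displayed equality yields \eqref{eq:splitse2} and \eqref{eq:splitse1}.

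The main obstacle I anticipate is precisely the bookkeeping at $\lambda=0$: restricting a self-adjoint operator to the orthogonal complement of its kernel leaves the nonzero spectrum untouched, so the nonzero part of every claim is immediate from Theorem~\ref{thm:split}, but such a restriction can move $0$ into or out of the essential spectrum. One therefore has to track exactly where the (possibly infinite-dimensional) common kernel $\ker(A^*)\cap\ker(B)$ is stored inside each of the four restricted operators, use the nestings $\overline{\im(A)}\subset\ker(B)$ and $\overline{\im(B^*)}\subset\ker(A^*)$ (both forced by $\im A\subset\ker B$) in the exact form the statement presupposes, and verify that $\ker(B)$, $\ker(A^*)$, $\overline{\im(A)}$, $\overline{\im(B^*)}$ really are invariant under the operators restricted to them, which follows from $\im(AA^*)\subset\overline{\im(A)}$ and $\im(B^*B)\subset\overline{\im(B^*)}$.
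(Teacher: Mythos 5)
Your proposal is correct and follows essentially the same route as the paper: \cref{thm:split} plus unitary invariance of spectra for the first claim, the summand-wise direct-sum analysis (eigenvector counting, closedness of the summed ranges, and the three alternatives of \cref{rem:spectrum}(c)) for the union clause, and the Hodge-decomposition identities $\ker(A^*)=\overline{\im(B^*)}\oplus(\ker(A^*)\cap\ker(B))$ and $\ker(B)=\overline{\im(A)}\oplus(\ker(A^*)\cap\ker(B))$ to transfer the statement at zero. The only difference is organizational: you split on whether the common kernel is finite- or infinite-dimensional, whereas the paper runs a single chain of equivalences via the fact that $0\in\sigma_e(T)$ iff $\dim\ker T=\infty$ or $0\in\sigma_e(T|_{\ker(T)^\perp})$; both handle the bookkeeping at $\lambda=0$ equivalently.
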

\begin{proof}
    By \cref{thm:magical} and \cref{lem:imperp}, $\ker(AA^*)^\perp=\ker(A^*)^\perp =\overline{\im(A)}$ and $\ker(B^*B)^\perp = \overline{\im(B^*)}$.
    \cref{thm:split} implies that the spectra of $AA^*+B^*B|_{\ker(AA^*+B^*B)^\perp}$ and $AA^*|_{\ker(AA^*)^\perp}\oplus B^*B|_{\ker(B^*B)^\perp}$ agree.
    
    For the second part, let $\lambda \in \R$ and $n_A, n_B, n_{AB} \in \N \cup \infty$ be the multiplicities of $\lambda$ as eigenvalues of of $AA^*|_{\overline{\im(A)}}$, $B^*B|_{\overline{\im(B^*)}}$, and $AA^*|_{\overline{\im(A)}}\oplus B^*B|_{\overline{\im(B^*)}}$, respectively.
    Every eigenvector $v$ of $AA^*|_{\overline{\im(A)}}$ yields an eigenvector $(v,0)$ of $AA^*|_{\overline{\im(A)}}\oplus B^*B|_{\overline{\im(B^*)}}$. The same holds for $B^*B$. Thus, $n_A+n_B \le n_{AB}$.
    
    Conversely, given $k$ linearly independent eigenvectors $\{(v_i,w_i): 0< i \le k\}$ of $AA^*|_{\overline{\im(A)}}\oplus B^*B|_{\overline{\im(B^*)}}$, all $v_i$ and $w_i$ satisfy $AA^* v_i=\lambda v_i$ and $B^*B w_i= \lambda w_i$. We have
    \begin{align*}
        k&=\dim(\text{span}\{(v_i,w_i): 0< i \le k\}) \\
        &\le \dim(\text{span}\{(v_i,0): 0< i \le k\}) + \dim(\text{span}\{(0,w_i): 0< i \le k\})=n_A+n_B.
    \end{align*}
    Hence, $n_{AB}\le n_A + n_B$ and therefore, $n_{AB}=n_A+n_B.$

    Next we want to show that for $\lambda \in \R \setminus \{0\}$,
    $$
    \lambda \in \sigma_e(AA^*|_{\overline{\im(A)}}\oplus B^*B|_{\overline{\im(B^*)}}) \ \Leftrightarrow \lambda \in \sigma_e(AA^*|_{\overline{\im(A)}}) \cup \sigma_e (B^*B|_{\overline{\im(B^*)}}).
    $$
    By \cref{rem:spectrum}, there are three ways in which $\lambda$ can be an essential eigenvalue. By the preceding discussion, $\lambda$ is eigenvalue of infinite multiplicity respectively an accumulation point of eigenvalues of $AA^*|_{\overline{\im(A)}}\oplus B^*B|_{\overline{\im(B^*)}}$ if and only if it is of $AA^*|_{\overline{\im(A)}}$ or $B^*B|_{\overline{\im(B^*)}}$. Moreover, 
    $$
    \im\left(AA^*|_{\overline{\im(A)}}\oplus B^*B|_{\overline{\im(B^*)}} - \lambda \right)\!=\im\left(AA^*|_{\overline{\im(A)}} - \lambda \right)\oplus\im\left( B^*B|_{\overline{\im(B^*)}} - \lambda \right)\! \subset \overline{\im(A)} \oplus \overline{\im(B*)}
    $$
    is closed if and only if both $\im\left(AA^*|_{\overline{\im(A)}} - \lambda \right)$ and $\im\left( B^*B|_{\overline{\im(B^*)}} - \lambda \right)$ are closed. This concludes the proof concerning the nonzero spectra.

    It remains to show \cref{eq:splitse2,eq:splitse1}. By symmetry of these statements we will only show \cref{eq:splitse2}. Note that by the proof of \cref{lem:kerlap}, 
    $$
    \ker(AA^*+B^*B)=\ker(AA^*)\cap \ker(B^*B) = \ker(B^*B|_{\ker(A^*)}).
    $$ 
    Hence, zero is an eigenvalue of infinite multiplicity of $AA^*+B^*B$ if and only if it is of $B^*B|_{\ker(A^*)}$.
    
    \Cref{rem:spectrum}(c) yields that for all self-adjoint operators $T$,
    $$
    0 \in \sigma_e(T) \Leftrightarrow \dim \ker (T)=\infty \ \text{  or  }\ 0 \in \sigma_e(T|_{\ker(T)^\perp}).
    $$
    This is because the nonzero eigenvalues and the images of $T$ and $T_{\ker(T)^\perp}$ agree.
    
    Thus, by \cref{thm:split}, \cref{rem:spectrum} (c) and the preceding discussion, we have
    \begin{align*}
        &\qquad\ \  0 \in \sigma_e(AA^*+B^*B)\\ &\Leftrightarrow
        \left\{
      \begin{array}{l}
        \dim \ker (AA^*+B^*B)=\infty, \ \text{or}\\
        0 \in \sigma_e(AA^*+B^*B|_{\ker(AA^*+B^*B)^\perp})
      \end{array}
    \right.  \\
        &\Leftrightarrow 
    \left\{
      \begin{array}{l}
        \dim \ker (B^*B|_{\ker(A^*)})=\infty,\  \text{or}\\
        0 \in \sigma_e(AA^*|_{\ker(AA^*)^\perp}\oplus B^*B|_{\ker(B^*B)^\perp})
      \end{array}
    \right. \\
    &\Leftrightarrow 
    \left\{
      \begin{array}{l}
        \dim \ker (B^*B|_{\ker(A^*)})=\infty,\  \text{or}\\
        \im(AA^*|_{\ker(AA^*)^\perp}\oplus B^*B|_{\ker(B^*B)^\perp}) \text{ not closed},\  \text{or}\\
        0 \text{ is accumulation point of eigenvalues of } AA^*|_{\ker(AA^*)^\perp}\oplus B^*B|_{\ker(B^*B)^\perp}
      \end{array}
    \right. \\
    &\Leftrightarrow 
    \left\{
      \begin{array}{l}
        \dim \ker (B^*B|_{\ker(A^*)})=\infty,\  \text{or}\\
        \im(AA^*|_{\ker(AA^*)^\perp}) \text{ or } \im(B^*B|_{\ker(B^*B)^\perp}) \text{ not closed},\  \text{or}\\
        0 \text{ is accumulation point of eigenvalues of } AA^*|_{\ker(AA^*)^\perp} \text{ or } B^*B|_{\ker(B^*B)^\perp}
      \end{array}
    \right. \\
    &\Leftrightarrow
        \left\{
      \begin{array}{l}
        0 \in \sigma_e(B^*B|_{\ker(A^*)}), \ \text{or}\\
        0 \in \sigma_e(AA^*|_{\overline{\im(A^*)}}).\\
      \end{array}
    \right.\tag{$\ast$}\label{eq:star-equiv}
    \end{align*}
    This argument verifies \cref{eq:splitse2}. To obtain \eqref{eq:star-equiv}, we used the fact that images and nonzero eigenvalues of $B^*B|_{\ker(A^*)}$ and $B^*B|_{\ker(B^*B)^\perp}$ agree, because by \cref{thm:magical}, $\ker(A^*)=\ker(B^*B)^\perp \oplus \ker(AA^*)\cap \ker(B^*B)$.
\end{proof}

We are now ready to prove \cref{prop:compact embedding}(b).
\begin{proof}[Proof of \cref{prop:compact embedding}(b)]
    Observe that the nonzero essential spectrum of a self-adjoint operator $T$ does not change if we restrict it to $\ker(T)^\perp$. This can be seen from \cref{rem:spectrum}(c): none of the properties characterizing essential eigenvalues (i.e., (c)(i), (c)(ii), and (c)(iii)) changes under this restriction. Hence, \cref{cor:split} implies that every nonzero essential eigenvalue of $AA^*$ or $B^*B$ would be one of $AA^*+B^*B$.
\end{proof}

In finite dimensional settings, \cref{cor:split} simplifies to the following result.

\begin{cor}[Splitting of Eigenvalues in Finite Dimensions]
    Consider three Hilbert spaces $U,V,W$ and bounded linear operators $A:U \rightarrow V$ and $B: V \rightarrow W$ such that $BA=0$ and let $\dim(V) < \infty.$ Then the nonzero eigenvalues of $AA^*+B^*B$ (with multiplicity) are precisely those of $AA^*$, together with those of $B^*B$.
\end{cor}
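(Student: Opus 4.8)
The plan is to read this off directly from \cref{cor:split}, so the work is entirely in checking that its hypotheses are met in the present bounded, finite-dimensional situation and then in discarding the restrictions to kernel complements. First I would observe that, since $\dim(V)<\infty$ and $A$, $B$ are bounded, \cref{lem:bdopclosed} guarantees that $A$, $B$, $A^*$, $B^*$ are all everywhere defined, bounded, and (in particular) densely defined and closed, with $\dom(A)=U$, $\dom(B)=V$, $\dom(A^*)=V$, $\dom(B^*)=W$. Consequently $\dom(A^*)\cap\dom(B)=V$ is dense in $V$, and $BA=0$ gives $\im(A)\subset\ker(B)\subset V=\dom(B)$. Thus $U,V,W,A,B$ satisfy exactly the standing assumptions of \cref{thm:split} and \cref{cor:split}.

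Next I would invoke \cref{cor:split}: the nonzero eigenvalues, counted with multiplicity, of $AA^*+B^*B$ restricted to $\ker(AA^*+B^*B)^\perp$ coincide with those of $AA^*|_{\ker(AA^*)^\perp}\oplus B^*B|_{\ker(B^*B)^\perp}$, which in turn are the union of those of $AA^*|_{\overline{\im(A)}}$ and of $B^*B|_{\overline{\im(B^*)}}$. It remains to remove the three restrictions. In a finite-dimensional space, any self-adjoint operator $T$ splits orthogonally as $T|_{\ker(T)}\oplus T|_{\ker(T)^\perp}$ with $T|_{\ker(T)}=0$, so the nonzero eigenvalues of $T$ with multiplicity are precisely those of $T|_{\ker(T)^\perp}$. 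Applying this to $T=AA^*+B^*B$, to $T=AA^*$ (using $\ker(AA^*)^\perp=\overline{\im(A)}=\im(A)$ since $V$ is finite-dimensional), and to $T=B^*B$ (using $\ker(B^*B)^\perp=\overline{\im(B^*)}=\im(B^*)$) upgrades the statement above to: the nonzero eigenvalues, with multiplicity, of $AA^*+B^*B$ are exactly those of $AA^*$ together with those of $B^*B$.

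The only point requiring a word of care — and it is not really an obstacle — is that the operator denoted $AA^*+B^*B$ in \cref{cor:split} is, strictly speaking, the self-adjoint operator induced by the quadratic form $u\mapsto\|A^*u\|^2+\|Bu\|^2$, whereas the corollary we are proving speaks of the honest sum of compositions. By \cref{rek:all quadratic form induced} together with \cref{remark:not the sum}, in the bounded, everywhere-defined (here finite-dimensional) setting these two operators coincide, since the relevant domains are all of $V$; hence there is no discrepancy and the argument closes. The same remark identifies $AA^*$ and $B^*B$ with their quadratic-form versions, so every object in the chain of equalities above is the expected one.
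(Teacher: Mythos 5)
Your proposal is correct and follows exactly the route the paper intends: the paper states this corollary as an immediate specialization of \cref{cor:split}, noting only (via \cref{lem:bdopclosed}(b)) that bounded everywhere-defined operators on a finite-dimensional $V$ are densely defined and closed. Your additional details — verifying the hypotheses of \cref{cor:split}, discarding the kernel-complement restrictions, and identifying the quadratic-form operators with the honest compositions via \cref{remark:not the sum} — correctly fill in the steps the paper leaves implicit.
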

Note that by \cref{lem:bdopclosed}(b) and since $\dim(V)<\infty$, considering bounded operators that are everywhere defined is equivalent to considering densely defined closed operators. 

\subsection{Full Monotonicity and Stability} \label{subsec:fullmon}
The up and down monotonicity \cref{thm:downmonotonicity,thm:upmonotonicity} imply the stability of up- and down-Laplacians, which we will now show. This is an important result that guarantees the viability of the eigenvalues as invariants and geometric summaries of input data.

\begin{theorem}[Up and Down Stability] \label{thm:udstab}
    Let $\Pp$ and $\Qq$ be filtrations of Hilbert complexes and let 
    $$
    \lambda_{+,k,q}^\Pp ([s,t]):=\lambda_q(\Delta_{+,k}^{P_s,P_t})
    $$
    for the up eigenvalues of $\Pp$ a similar expression for the same for down eigenvalues and for $\Qq$. Then
    \begin{align}
        d_I(\lambda_{+,k,q}^\Pp,\lambda_{+,k,q}^\Qq) &\le d_I(\Pp,\Qq), \label{eq:upstab} \\
        d_I(\lambda_{-,k,q}^\Pp,\lambda_{-,k,q}^\Qq) &\le d_I(\Pp,\Qq). \label{eq:downstab}
    \end{align}
\end{theorem}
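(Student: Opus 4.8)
The plan is to reduce the interleaving‐distance estimates to the monotonicity results (\cref{thm:downmonotonicity} and \cref{thm:upmonotonicity}) exactly as in the finite‐dimensional argument of \citet{mww}, but being careful that everything is phrased in terms of the generalized spectral counting functions $\lambda_q(\cdot)$ from \cref{eq:specctng} and the min–max formula \cref{thm:ari1.28}. Fix $k$ and $q$. Let $\varepsilon > d_I(\Pp,\Qq)$, so that $P_t \subset Q_{t+\varepsilon}$ and $Q_t \subset P_{t+\varepsilon}$ for all $t$. The goal is to show, for every interval $I = [s,t]$, that $\lambda_{+,k,q}^{\Pp}(I^{\varepsilon}) \ge \lambda_{+,k,q}^{\Qq}(I)$ and symmetrically with $\Pp,\Qq$ swapped; combined with the analogous statements for the down eigenvalues this gives \cref{eq:upstab,eq:downstab} after taking the infimum over such $\varepsilon$.

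First I would set up the chain of inclusions that the interleaving provides: $Q_s \subset P_{s+\varepsilon}$ and $P_{t-\varepsilon} \subset Q_t$, together with the obvious filtration inclusions $P_{s-\varepsilon} \subset P_{s+\varepsilon}$, $Q_s \subset Q_t$, etc. The key point is that for the up eigenvalues I need a three‐complex configuration $P_1 \subset P_2 \subset P_3$ to which \cref{thm:upmonotonicity} applies. Concretely, to compare $\lambda_{+,k,q}^{\Qq}([s,t]) = \lambda_q(\Delta_{+,k}^{Q_s,Q_t})$ with $\lambda_{+,k,q}^{\Pp}([s-\varepsilon,t+\varepsilon]) = \lambda_q(\Delta_{+,k}^{P_{s-\varepsilon},P_{t+\varepsilon}})$, I would interpose the inclusions $P_{s-\varepsilon} \subset Q_s \subset Q_t \subset P_{t+\varepsilon}$ (using that $P_{s-\varepsilon}\subset Q_{s-\varepsilon+\varepsilon}=Q_s$ and $Q_t \subset P_{t+\varepsilon}$) and apply \cref{eq:upmon1} of \cref{thm:upmonotonicity} to the triple $P_{s-\varepsilon} \subset Q_s \subset P_{t+\varepsilon}$ together with \cref{eq:upmon2} to the triple $P_{s-\varepsilon}\subset Q_s\subset Q_t$, or rather the appropriate single application of \cref{thm:upmonotonicity} to the triple whose middle complex is $Q_s$ — the statement $\lambda^{2,3}_{+,k,q}\le\lambda^{1,3}_{+,k,q}$ with $(1,2,3)=(P_{s-\varepsilon},Q_s,P_{t+\varepsilon})$ gives precisely $\lambda_q(\Delta_{+,k}^{Q_s,P_{t+\varepsilon}}) \le \lambda_q(\Delta_{+,k}^{P_{s-\varepsilon},P_{t+\varepsilon}})$, and then $\lambda^{1,2}_{+,k,q}\le\lambda^{1,3}_{+,k,q}$ with $(1,2,3)=(Q_s,Q_t,P_{t+\varepsilon})$ gives $\lambda_q(\Delta_{+,k}^{Q_s,Q_t}) \le \lambda_q(\Delta_{+,k}^{Q_s,P_{t+\varepsilon}})$. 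Chaining these two inequalities yields $\lambda_{+,k,q}^{\Qq}([s,t]) \le \lambda_{+,k,q}^{\Pp}([s-\varepsilon,t+\varepsilon])$, which is one of the two interleaving conditions. Swapping the roles of $\Pp$ and $\Qq$ gives the other, and the down‐eigenvalue case is identical using \cref{thm:downmonotonicity} in place of \cref{thm:upmonotonicity}. Taking the infimum over $\varepsilon > d_I(\Pp,\Qq)$ gives $d_I(\lambda_{\pm,k,q}^{\Pp},\lambda_{\pm,k,q}^{\Qq}) \le d_I(\Pp,\Qq)$.

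The main obstacle I anticipate is bookkeeping rather than conceptual: one must verify that every triple of complexes that appears genuinely satisfies \cref{def:inclusion} and the density hypothesis $\dom(d_k^{P_s})\cap\dom((d_{k+1}^{P_s,P_t})^*)$ dense that is needed for the persistent Laplacians to be defined in the first place — without this, the spectral functions $\lambda_{\pm,k,q}$ need not be well defined at every pair, and the interleaving inequalities would be comparing possibly‐undefined quantities. A clean way to handle this is to restrict attention to the pairs $[s,t]$ for which the relevant Laplacians are defined (as \cref{sec:perslap} already does) and note that the interposed complexes inherit the required containments, so the monotonicity theorems apply verbatim. A secondary subtlety, absent in the finite‐dimensional treatment, is the convention $\lambda_q(T):=m(T)$ for $q>n(T)$ built into \cref{eq:specctng}: one should check the chained inequalities still hold when some of the $\lambda_q$ equal the infimum of the essential spectrum rather than a genuine eigenvalue. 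But since \cref{thm:upmonotonicity,thm:downmonotonicity} are already stated directly for the functions $\lambda_{\pm,k,q}$ with this convention, no extra work is needed — the inequalities propagate through the chain automatically. If $d_I(\Pp,\Qq)=\infty$ there is nothing to prove, so we may assume it is finite throughout.
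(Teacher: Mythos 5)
Your proposal is correct and follows essentially the same route as the paper: both reduce the interleaving inequality to one chained application of \cref{eq:upmon1} and one of \cref{eq:upmon2} (resp.\ \cref{thm:downmonotonicity}) along the inclusions provided by the $\varepsilon$-interleaving, passing through an intermediate mixed persistent Laplacian, and invoke the symmetry of $\Pp$ and $\Qq$ for the second interleaving condition. Your extra remarks on well-definedness and the $\lambda_q(T)=m(T)$ convention are sound but not needed beyond what the monotonicity theorems already encode.
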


\begin{proof}
    This proof is the same as for \citep[Theorem 5.9]{mww}. We will only show \cref{eq:upstab}; the proof of \cref{eq:downstab} is the same. Assume $\Pp$ and $\Qq$ are $\varepsilon$-interleaved and consider an interval $[s,t]$. Using the symmetric roles of $\Pp$ and $\Qq$, \cref{eq:upstab} is true if
    $$
        \lambda_{+,k,q}^\Pp ([s,t]) \le \lambda_{+,k,q}^\Qq ([s-\varepsilon,t+\varepsilon]).
    $$
    This is an immediate consequence of up monotonicity: Since $Q_{s-\varepsilon} \subset P_s \subset P_t \subset Q_{t+\varepsilon}$, we have
    $$
        \lambda_q(\Delta_{+,k}^{P_s,P_t}) \le \lambda_q(\Delta_{+,k}^{Q_{s-\varepsilon},P_t}) \le \lambda_q(\Delta_{+,k}^{Q_{s-\varepsilon},Q_{t+\varepsilon}}).
    $$
\end{proof}

In \cref{rem:fullmon1213}, we saw that the full spectral functions $\lambda_{k,q}$ admit the weaker version of monotonicity, in the sense that they grow when the input interval grows to the right. However, they are not monotonous in general; we now study an example where the full spectral functions do not admit full monotonicity.

\begin{example}\label{ex:nomonotonicity}
\begin{figure}
    \centering
    \begin{tikzpicture}[x=1cm,y=1cm]
	\clip(-1,-1.5) rectangle (7,3.5);
    \draw [-{Stealth[length=2mm]}] (1,2) --  node[above=1pt] {$\begin{pmatrix}
0\\
1
\end{pmatrix}$} (2,2);
    \draw [-{Stealth[length=2mm]}] (4,2) --  node[above=1pt] {$\begin{pmatrix}
1&0
\end{pmatrix}$} (5,2);
    \draw [-{Stealth[length=2mm]}] (4,0.5) --  node[below=1pt] {$\begin{pmatrix}
1&0&-1
\end{pmatrix}$} (5,0.5);
    \draw [-{Stealth[length=2mm]}] (1,0.5) --  node[below=1pt] {$\begin{pmatrix}
    0&1\\
1&0\\
0&1
\end{pmatrix}$} (2,0.5);
    \draw [right hook-{Stealth[length=2mm]}] (0,1.7) -- node[left]{} (0,0.8);
    \draw [right hook-{Stealth[length=2mm]}] (3,1.7) -- node[right]{}(3,0.8);
    \draw [right hook-{Stealth[length=2mm]}] (6,1.7) -- node[right]{}(6,0.8);
    \draw[color=black] (0,2) node {$C_{k+1}^{1,3}=\R$};
    \draw[color=black] (3,2) node {$C_k^{1}=\R^2$};
    \draw[color=black] (6,2) node {$C_{k-1}^{1}=\R$};
    \draw[color=black] (6,.5) node {$C_{k-1}^{2}=\R$};
    \draw[color=black] (3,.5) node {$C_k^{2}=\R^3$};
    \draw[color=black] (0,.5) node {$C_{k+1}^{2,3}=\R^2$};
\end{tikzpicture}
    \caption{Example setting in which the spectral function is not monotonous. All inclusions $\R^n \rightarrow \R^N$ include $\R^n$ as the first $n$ coordinates of $\R^N$.} 
    \label{fig:nofullmon}
\end{figure}
     Consider \cref{fig:nofullmon}. Note that we can find three Hilbert complexes leading to this configuration as previously described in \cref{fig:exspectra}. In this situation we get $\Delta_{k}^{1,3}=I_2$ and $\Delta_{k}^{2,3}=\text{diag}(2,1,2)$. Thus, $\lambda_2(\Delta_{k}^{2,3}) > \lambda_2(\Delta_{k}^{1,3})$, contradicting full monotonicity.
\end{example}

Next, we give an example where even for $q=1$ and for Hilbert complexes arising from simplicial complexes, the setting commonly used in TDA, monotonicity does not generally hold.

\begin{example}\label{ex:nomonotonicity2}
\begin{figure}
    \centering
   \begin{tikzpicture}[x=0.7cm,y=0.7cm]
    \clip (-9,-3.3) rectangle (9,3.2);
    \node[circle, fill=skyblue, minimum size=5pt, inner sep=0pt, label=above left:$c$] (c1) at (-4,0) {};
    \node[circle, fill=skyblue, minimum size=5pt, inner sep=0pt, label=left:$a$] (a1) at (-8,0) {};
    \node[circle, fill=skyblue, minimum size=5pt, inner sep=0pt, label=right:$d$] (d1) at (-6,3) {};            
    \node[circle, fill=skyblue, minimum size=5pt, inner sep=0pt, label=right:$b$] (b1) at (-6,-3) {}; 
    \node[circle, fill=black, minimum size=5pt, inner sep=0pt, label=right:$c$] (c2) at (8,0) {};
    \node[circle, fill=black, minimum size=5pt, inner sep=0pt, label={[xshift=0.4cm, yshift=-0.1cm]$a$}] (a2) at (4,0) {};
    \node[circle, fill=black, minimum size=5pt, inner sep=0pt, label=right:$d$] (d2) at (6,3) {};
    \node[circle, fill=black, minimum size=5pt, inner sep=0pt, label=right:$b$] (b2) at (6,-3) {}; 
    \draw[skyblue,line width=2pt] (a1) -- (c1) -- (b1) -- (a1) -- (d1);
    \node[circle, fill=blue, minimum size=5pt, inner sep=0pt, label={[xshift=-0.4cm, yshift=-0.1cm]$c$}] (c) at (2,0) {};
    \node[circle, fill=blue, minimum size=5pt, inner sep=0pt, label={[xshift=0.4cm, yshift=-0.1cm]$a$}] (a) at (-2,0) {};
    \node[circle, fill=blue, minimum size=5pt, inner sep=0pt, label=right:$d$] (d) at (0,3) {};
    \node[circle, fill=blue, minimum size=5pt, inner sep=0pt, label=right:$b$] (b) at (0,-3) {};   
    \draw[blue,line width=2pt] (a) -- (c) -- (b) -- (a) -- (d) -- (c) ;
    \draw[black,line width=2pt] (a2) -- (c2) -- (b2) -- (a2) -- (d2) -- (c2) ;
    \begin{scope}[on background layer]
    \fill[blue!20] (a.center) -- (b.center) -- (c.center) -- cycle;
    \fill[black!20] (a2.center) -- (b2.center) -- (c2.center) -- cycle;
    \fill[black!20] (a2.center) -- (c2.center) -- (d2.center) -- cycle;
    \end{scope}
    \draw (-3,0) node{\Large $\subset$};
    \draw (3,0) node{\Large $\subset$};
\end{tikzpicture}\\ \vspace{1cm}
\begin{tikzcd}
    C_2^1=\{0\} \arrow[r,"0"] \arrow[d,hook] & C_1^1=\R^4 \arrow[r,"d_1^1"]\arrow[d,hook] & C_0^1=\R^4 \arrow[d,hook] \\
    C_2^2=\R \arrow[r,"d_2^2"] \arrow[d,hook] & C_1^2=\R^5 \arrow[r,"d_1^2"] \arrow[d,hook] & C_0^2=\R^4\arrow[d,hook] \\
    C_2^3=\R^2 \arrow[r,"d_2^3"]  & C_1^3=\R^5 \arrow[r,"d_1^3"]& C_0^3=\R^4
\end{tikzcd}
    \caption{Three simplicial complexes $\textcolor{skyblue}{X_1} \subset \textcolor{blue}{X_2} \subset X_3$ such that the induced spectral function is not monotonous together with their chain complexes.} 
    \label{fig:nofullmon2}
\end{figure}
    Consider the three simplicial complexes $\textcolor{skyblue}{X_1} \subset \textcolor{blue}{X_2} \subset X_3$ and their chain complexes in \cref{fig:nofullmon2}. With respect to the standard bases (in lexicographic order), the maps are
    \begin{align*}
        d_2^2&=\begin{pmatrix}
            1 \\ -1 \\ 0 \\ 1 \\ 0 
        \end{pmatrix} &d_2^3&=\begin{pmatrix}
            1 & 0\\ -1 & 1\\ 0 & -1\\ 1 &0\\ 0& 1 
        \end{pmatrix}\\
        d_1^1&=\begin{pmatrix}
            -1 & -1 & -1 & 0 \\ 1&0&0&-1 \\ 0&1&0&1 \\ 0&0&1&0 
        \end{pmatrix} 
        & d_2^1=d_3^1&=\begin{pmatrix}
            -1 & -1 & -1 & 0 &0\\ 1&0&0&-1&0 \\ 0&1&0&1&-1 \\ 0&0&1&0&1 
        \end{pmatrix}
    \end{align*}
    We find $C_2^{1,3}=C_2^2$ and $C_2^{2,3}=C_2^3$ and therefore $d_{2}^{1,3}=(1,-1,0,1)^\top$ and $d_{2}^{2,3}=d_2^3$. Using standard inner products, we can compute the persistent Laplacians:
    \begin{align*}
        \Delta_1^{1,3}=\begin{pmatrix}
            1&-1&0&1\\
            -1&1&0&-1\\
            0&0&0&0\\
            1&-1&0&1
        \end{pmatrix} + \begin{pmatrix}
            2&1&1&-1\\
            1&2&1&1\\
            1&1&2&0\\
            -1&1&0&2
        \end{pmatrix} &=\begin{pmatrix}
            3&0&1&0\\
            0&3&1&0\\
            1&1&2&0\\
            0&0&0&3
        \end{pmatrix}\\
        \Delta_1^{2,3}=\begin{pmatrix}
            1&-1&0&1&0\\
            -1&2&-1&-1&1\\
            0&-1&1&0&-1\\
            1&-1&0&1&0\\
            0&1&-1&0&1
        \end{pmatrix} + \begin{pmatrix}
            2&1&1&-1&0\\
            1&2&1&1&-1\\
            1&1&2&0&1\\
            -1&1&0&2&-1\\
            0&-1&1&-1&2
        \end{pmatrix}&=
        \begin{pmatrix}
            3&0&1&0&0\\
            0&4&0&0&0\\
            1&0&3&0&0\\
            0&0&0&3&-1\\
            0&0&0&-1&3
        \end{pmatrix}
    \end{align*}
    Their eigenvalues (as ordered lists) are $\sigma(\Delta_1^{1,3})=(1,3,3,4)$ and $\sigma(\Delta_1^{2,3})=(2,2,4,4,4)$.  We see that the smallest (and third smallest) eigenvalues do not satisfy monotonicity.
\end{example}

Since monotonicity and stability are closely related, and, in particular, the former is required for the latter, having no monotonicity implies that there is also no stability for full persistent Laplacians, as we see in the following example.

\begin{example} \label{ex:nofullstab}
    Let $R_1 \subset R_2 \subset R_3$ be the three Hilbert complexes from \cref{ex:nomonotonicity2} that do not satisfy monotonicity. Consider the filtrations $\Pp=\{P_t\}_{t \in \R}$ and $\Qq=\{Q_t\}_{t \in \R}$ with
    $$
        P_t := \begin{cases}
            R_1 & t\le 0\\
            R_3 & 0<t
        \end{cases} \qquad Q_t:= \begin{cases}
            R_1 & t\le 0\\
            R_2 & 0<t\le 1\\
            R_3 & 1<t
        \end{cases}
    $$
    These filtrations are 1-interleaved, so $d_I(\Pp,\Qq)=1$. On the other hand, we have $\lambda_{1,1}^\Qq{[\frac 1 2,\frac 3 2]}=2$, which is the smallest eigenvalue of $\Delta_1^{2,3}$ from \cref{ex:nomonotonicity2}. But $\lambda_{1,1}^\Pp{(I)}\le 1$ for all intervals $I$. Thus, $d_I(\lambda_{1,1}^\Pp,\lambda_{1,1}^\Qq)=\infty$ contradicting stability: We cannot bound the distance of the spectral functions from above by an expression that depends on the interleaving distance of the filtrations. 
\end{example}

A natural takeaway from these examples is that main emphasis should be placed on the up- and down-Laplacians, which are stable, instead of the full Laplacians. By \cref{cor:split}, their spectra contain the same information as those of full Laplacians.

\subsubsection{A Condition for Full Monotonicity}\label{subsub:confitionfm}

Although, in general, we do not have full monotonicity of spectral functions of persistent Laplacians, we will now present a condition under which full monotonicity indeed holds. 

Consider three Hilbert complexes $P_1 \subset P_2 \subset P_3$ like in \cref{subsec:udmon}. We first show that \cref{prop:imclosed}(c) is true for full Laplacians.

    \begin{lemma}\label{closed range for full}
        If $\im d^{2,3}_{k+1}$ is closed, $\dim\ker\Delta^{1,3}_k\leq\dim\ker\Delta^{2,3}_k$.
    \end{lemma}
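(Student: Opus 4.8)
The plan is to exploit the generalized Hodge theorem, \Cref{thm:ker_hom_1}, which identifies $\ker\Delta_k^{P,Q}$ with the reduced homology quotient $\ker(d_k^P)/\overline{\im(d_{k+1}^{P,Q})}$. First I would observe that, by \Cref{lem:kerlap} and the Hodge decomposition (\Cref{thm:magical}) applied with $A=d_k^{P_i}$ and $B=d_{k+1}^{P_i,P_3}$, we have $\ker\Delta_k^{i,3}\cong \ker(d_k^i)/\overline{\im(d_{k+1}^{i,3})}$ for $i=1,2$. Using the identification of $C_k^1$ with $\iota_k^{1,2}(C_k^1)\subset C_k^2$ (and compatibility of the boundary maps, \Cref{def:inclusion}(a)), we can view $\ker(d_k^1)$ as a closed subspace of $\ker(d_k^2)$. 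So the task reduces to comparing the dimensions of $\ker(d_k^1)/\overline{\im(d_{k+1}^{1,3})}$ and $\ker(d_k^2)/\overline{\im(d_{k+1}^{2,3})}$.

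The key step is to set up the correct inclusion between these quotients. Since $\im(d_{k+1}^{2,3})$ is closed by hypothesis, the argument in the proof of \Cref{thm:ker_hom_2} shows that $\iota_k^{1,2}(\overline{\im(d_{k+1}^{1,3})}) = \overline{\im(d_{k+1}^{2,3})}\cap \iota_k^{1,2}(\ker(d_k^1))$; in particular $\overline{\im(d_{k+1}^{1,3})} = \overline{\im(d_{k+1}^{2,3})}\cap \ker(d_k^1)$ under the identification. Therefore the natural map
\[
\ker(d_k^1)\big/\overline{\im(d_{k+1}^{1,3})} \;\longrightarrow\; \ker(d_k^2)\big/\overline{\im(d_{k+1}^{2,3})},\qquad [v]\mapsto[\iota_k^{1,2}(v)],
\]
is well-defined and injective: if $\iota_k^{1,2}(v)\in\overline{\im(d_{k+1}^{2,3})}$ for $v\in\ker(d_k^1)$, then $\iota_k^{1,2}(v)\in \overline{\im(d_{k+1}^{2,3})}\cap\iota_k^{1,2}(\ker(d_k^1)) = \iota_k^{1,2}(\overline{\im(d_{k+1}^{1,3})})$, and injectivity of $\iota_k^{1,2}$ forces $v\in\overline{\im(d_{k+1}^{1,3})}$, i.e. $[v]=0$. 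Injectivity of a linear map between (quotient) vector spaces immediately gives $\dim\ker\Delta^{1,3}_k = \dim(\ker(d_k^1)/\overline{\im(d_{k+1}^{1,3})}) \le \dim(\ker(d_k^2)/\overline{\im(d_{k+1}^{2,3})}) = \dim\ker\Delta^{2,3}_k$.

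The main obstacle is the closure subtlety: without $\im(d_{k+1}^{2,3})$ closed, the identity $\iota_k^{1,2}(\overline{\im(d_{k+1}^{1,3})}) = \overline{\im(d_{k+1}^{2,3})}\cap\iota_k^{1,2}(\ker(d_k^1))$ can fail (this is precisely the phenomenon in \Cref{ex:kernotPH}), and one only gets the inclusion $\iota_k^{1,2}(\overline{\im(d_{k+1}^{1,3})})\subset\overline{\im(d_{k+1}^{2,3})}\cap\iota_k^{1,2}(\ker(d_k^1))$, which would make the comparison map neither obviously well-defined in the direction needed nor injective. So the proof genuinely needs the closedness hypothesis, and the bulk of the work is carefully transcribing the relevant portion of the proof of \Cref{thm:ker_hom_2} (with $\ker(d_k^P)$ in place of $C_k^P$) to establish the exact equality above; everything else is a one-line dimension count for an injection of vector spaces. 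One should also double-check that $\dim$ here means Hilbert-space dimension and that an injective bounded linear map does not decrease it, which is immediate since it sends a linearly independent set to a linearly independent set.
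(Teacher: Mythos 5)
Your argument is correct, but it takes a genuinely different route from the paper's. The paper works directly with harmonic representatives: it takes linearly independent $v_1,\dots,v_n\in\ker(\Delta_k^{1,3})=\ker(d_k^1)\cap\ker((d_{k+1}^{1,3})^*)$, orthogonally projects $\iota_k^{1,2}(v_j)$ onto $(\im(d_{k+1}^{2,3}))^\perp=\ker((d_{k+1}^{2,3})^*)$ to obtain $u_j$, invokes the linear-independence argument already carried out in \Cref{lem:imdensedone}, and then uses closedness of $\im(d_{k+1}^{2,3})$ to write $u_j=\iota_k^{1,2}(v_j)+d_{k+1}^{2,3}w_j$ exactly (not merely as a limit), which places $u_j$ in $\ker(d_k^2)$ and hence, by \Cref{lem:kerlap}, in $\ker(\Delta_k^{2,3})$. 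You instead pass to the reduced-homology quotients via the Hodge decomposition on both sides and build an injective map $[v]\mapsto[\iota_k^{1,2}(v)]$ between them; the closedness hypothesis enters through the identity $\iota_k^{1,2}\bigl(\overline{\im(d_{k+1}^{1,3})}\bigr)=\overline{\im(d_{k+1}^{2,3})}\cap\iota_k^{1,2}(\ker(d_k^1))$, which is exactly what injectivity needs and which is the same closure fact underlying the paper's projection step. Two remarks. First, \Cref{thm:ker_hom_2} is stated for a pair $P\subset Q$ and compares $\im(d_{k+1}^{P,Q})$ with $\im(d_{k+1}^{Q})$, whereas you need the analogous identity comparing $\im(d_{k+1}^{1,3})$ with $\im(d_{k+1}^{2,3})$; the argument does transfer because $\iota_k^{1,2}(\im(d_{k+1}^{1,3}))=\im(d_{k+1}^{2,3})\cap\iota_k^{1,2}(C_k^1)$ holds by the definition of the auxiliary spaces, but you should say this explicitly rather than cite the theorem's proof verbatim. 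Second, your closing caveat about Hilbert versus Hamel dimension is worth being slightly more careful about: an injection gives monotonicity of Hamel dimension, and for the finite/infinite dichotomy actually used downstream (e.g.\ in \Cref{prop:fmdecompose}) this suffices, which is also all the paper's own proof establishes. Your approach buys a cleaner conceptual statement (an injection of persistent homology groups) at the cost of invoking the Hodge isomorphism on the target side; the paper's buys explicit harmonic representatives in $\ker(\Delta_k^{2,3})$, which is closer in spirit to how the lemma is then applied.
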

    
    \begin{proof}
        Assume that there are linearly independent
        $v_1,v_2,\dots,v_n \in \ker(\Delta^{1,3}_k)$. By \cref{lem:kerlap}, $v_j\in \ker((d^{1,3}_{k+1})^*)\cap\ker(d^1_k)$, for $1\leq j\leq n$. By \cref{cor:decompV} we can write 
        $$
        \iota_k^{1,2}(v_j)=x_j + u_j \in \overline{\im(d_{k+1}^{2,3})} \oplus \big(\im(d_{k+1}^{2,3})\big)^\perp \qquad \forall\ j.
        $$
        In the proof of \cref{lem:imdensedone}, we showed that $\{u_1,\dots,u_n\}$ is linearly independent.
        
        Moreover, since $\im \, d_{k+1}^{2,3}$ is closed, we have $u_j=d^{2,3}_{k+1}w_j+\iota_k^{1,2}(v_j)$ for some $w_j\in \dom d^{2,3}_{k+1}$. Hence, $d^2_k\circ d^{2,3}_{k+1}=0$ and $\iota_{k-1}^{1,2} \circ d^1_k= d^2_k \circ \iota_k^{1,2}$ imply that $\{u_1,\dots,u_n\}\subset \ker(d^2_k)$. This shows $\{u_1,\dots,u_n\}\subset \ker \Delta_k^{2,3}$ and completes our proof.
\end{proof}

We now present our sufficient condition for full monotonicity.
\begin{prop}\label{prop:fmdecompose}
    If three Hilbert complexes $P_1 \subset P_2 \subset P_3$ satisfy 
    $$
        \iota_k^{1,2} \left(\overline{\im((d_k^1)^*)}\right) \subset \ker(d_{k+1}^{2,3})^*
    $$
    for some $k$, then they satisfy full monotonicity for this $k$:
    $$
    \lambda_{k,q}^{2,3} \le \lambda^{1,3}_{k,q} \qquad \forall\ q \in \N.
    $$
\end{prop}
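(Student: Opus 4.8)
The strategy is to use the variational characterization of eigenvalues from \cref{thm:ari1.28} together with the fact that, under the stated hypothesis, the down-Laplacian quadratic forms dominate the up-Laplacian contributions to $\Delta_k^{2,3}$ in a controlled way, so that $\Delta_k^{2,3}$ effectively behaves as a ``sum'' $\Delta_{-,k}^{2,3} \oplus \Delta_{+,k}^{2,3}$ on orthogonal pieces. Concretely, I would first use the Hodge decomposition \cref{thm:magical} applied at level $k$ to the maps $d_k^1$ and $d_{k+1}^{1,3}$ (resp. $d_k^2$ and $d_{k+1}^{2,3}$) to decompose $C_k^1$ and $C_k^2$ orthogonally into $\overline{\im((d_k^1)^*)} \oplus \ker(\Delta_k^{1,3}) \oplus \overline{\im(d_{k+1}^{1,3})}$ and the analogous pieces for level $2$. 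The hypothesis $\iota_k^{1,2}(\overline{\im((d_k^1)^*)}) \subset \ker(d_{k+1}^{2,3})^*$ says that the ``down part'' of $C_k^1$ lands inside a subspace on which $\Delta_{+,k}^{2,3}$ vanishes; dually, $\im(d_{k+1}^{1,3})$ already lives in $\ker(d_k^1)$, hence maps into $\ker(d_k^2)$, so its image is annihilated by $\Delta_{-,k}^{2,3}$. The upshot is that on $\iota_k^{1,2}(\overline{\im((d_k^1)^*)})$ the quadratic form $\delta_k^{2,3}$ reduces to $\delta_{-,k}^{2,3}$, while on $\iota_k^{1,2}(\overline{\im(d_{k+1}^{1,3})})$ it reduces to $\delta_{+,k}^{2,3}$.

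Next I would run the min–max argument. Fix $q$ and take linearly independent $u_1,\dots,u_q \in d[\delta_k^{1,3}] = \dom(d_k^1) \cap \dom((d_{k+1}^{1,3})^*)$. Using the orthogonal decomposition of $C_k^1$, write each $u_i$ as $u_i = a_i + b_i + c_i$ with $a_i \in \overline{\im((d_k^1)^*)}$, $b_i \in \ker(\Delta_k^{1,3})$, $c_i \in \overline{\im(d_{k+1}^{1,3})}$, and observe that on the kernel piece $\delta_k^{1,3}$ vanishes and that $\delta_k^{1,3}$ splits orthogonally as $\delta_{-,k}^{1,3}$ on the $a$-piece plus $\delta_{+,k}^{1,3}$ on the $c$-piece (again by \cref{thm:magical} and \cref{lem:kerlap}). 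Then push the $u_i$ forward by $\iota_k^{1,2}$ into $C_k^2$; by the hypothesis and the dual observation above, the same type of orthogonal split holds, and crucially $\delta_{-,k}^{2,3}[\iota_k^{1,2}(a_i)] = \|d_k^2 \iota_k^{1,2} a_i\|^2 = \|d_k^1 a_i\|^2 = \delta_{-,k}^{1,3}[a_i]$ (using \cref{def:inclusion}(a),(b)), while $\delta_{+,k}^{2,3}[\iota_k^{1,2}(c_i)] \le \delta_{+,k}^{1,3}[c_i]$ follows from \cref{claim:norms} in the proof of \cref{thm:upmonotonicity} (since $d_{k+1}^{1,3}$ is a restriction of $d_{k+1}^{2,3}$ via the isometry $i$, its adjoint is a compression). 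Combining, $\delta_k^{2,3}[\iota_k^{1,2}(u)] \le \delta_k^{1,3}[u]$ for all $u \in \operatorname{span}\{u_1,\dots,u_q\}$, and since $\iota_k^{1,2}$ is an injective isometry it preserves linear independence and norms. Therefore the sup over the image span of the Rayleigh quotient for $\Delta_k^{2,3}$ is at most that for $\Delta_k^{1,3}$, and taking the infimum over admissible $q$-tuples in \cref{thm:ari1.28} gives $\lambda_{k,q}^{2,3} \le \lambda_{k,q}^{1,3}$. One technical point to handle carefully is that the infimum in \cref{thm:ari1.28} for $\Delta_k^{2,3}$ ranges over tuples in the larger domain $d[\delta_k^{2,3}]$, but that only makes the infimum smaller, so the inequality is in the right direction; I also need \cref{closed range for full} (or rather the kernel-dimension comparison) to know that the convention $\lambda_q = m(\Delta_k^{2,3})$ for $q$ beyond the eigenvalue count is compatible, but actually the min–max formula handles this uniformly.

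The main obstacle I anticipate is verifying cleanly that under the hypothesis the quadratic form $\delta_k^{2,3}$ genuinely splits orthogonally when restricted to $\iota_k^{1,2}(d[\delta_k^{1,3}])$ — i.e., that there is no cross term between the $a$-part and the $c$-part after pushing forward. The $a$-part lies in $\ker(d_{k+1}^{2,3})^*$ by hypothesis, so its up-contribution is zero; the $c$-part lies in $\iota_k^{1,2}(\overline{\im(d_{k+1}^{1,3})}) \subset \overline{\im(d_{k+1}^{2,3})} \subset \ker(d_k^2)$, so its down-contribution is zero; and one must check the mixed inner products $\langle d_k^2 \iota_k^{1,2} a_i, d_k^2 \iota_k^{1,2} c_j\rangle$ and $\langle (d_{k+1}^{2,3})^* \iota_k^{1,2} a_i, (d_{k+1}^{2,3})^* \iota_k^{1,2} c_j\rangle$ vanish — the former because $c_j$ is in the kernel of $d_k^2$, the latter because $a_i$ is in the kernel of $(d_{k+1}^{2,3})^*$. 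So in fact the cross terms vanish termwise, and the ``obstacle'' dissolves once the bookkeeping is set up; the remaining care is purely in tracking closures and domains, which \cref{thm:magical}, \cref{lem:kerlap}, and the density hypotheses in \cref{def:pers_chain_lap} are designed to supply.
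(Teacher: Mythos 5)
Your overall decomposition is the right starting point and matches the paper's: split $C_k^1$ and $C_k^2$ by the Hodge decomposition, use the hypothesis to kill the up-contribution of $\iota_k^{1,2}\bigl(\overline{\im((d_k^1)^*)}\bigr)$, and use $\im(d_{k+1}^{1,3})\subset\ker(d_k^1)$ to kill the down-contribution of the other block. However, the central step of your min--max argument fails: the inequality $\delta_{+,k}^{2,3}[\iota_k^{1,2}(c_i)]\le\delta_{+,k}^{1,3}[c_i]$ is in general false, and \cref{claim:norms} does not give it. That claim compares $(d_{k+1}^{1,2})^*$ with $(d_{k+1}^{1,3})^*$, i.e.\ the pairs $(1,2)$ versus $(1,3)$ where the source $P_1$ is fixed and the ambient complex grows; there the smaller adjoint is the compression and the form increases, which is exactly why $\lambda^{1,2}_{k,q}\le\lambda^{1,3}_{k,q}$ admits the direct quadratic-form proof of \cref{rem:fullmon1213}. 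The comparison you need is $(1,3)$ versus $(2,3)$, where the source grows: for $u\in C_k^1$ one checks that $(d_{k+1}^{1,3})^*u$ is the orthogonal projection of $(d_{k+1}^{2,3})^*\iota_k^{1,2}(u)$ onto $C_{k+1}^{1,3}$, so $\|(d_{k+1}^{1,3})^*u\|\le\|(d_{k+1}^{2,3})^*\iota_k^{1,2}(u)\|$ --- the opposite direction. \Cref{ex:espec,ex:espec2} make this concrete: there $\Delta_{+,k}^{1,3}=0$ while $\Delta_{+,k}^{2,3}$ has strictly positive spectrum, so no pointwise domination $\delta_{+,k}^{2,3}[\iota(u)]\le\delta_{+,k}^{1,3}[u]$ can hold on the up-block (nor on $\ker\Delta_k^{1,3}$, which the hypothesis does not constrain either). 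This is precisely the obstruction that forced the proof of \cref{eq:upmon2} to go through the unitary equivalence of $TT^*$ and $T^*T$ (\cref{TT^*=T^*T,lem:changeorder}) and a case analysis on the essential spectrum, rather than a Rayleigh-quotient pushforward.

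The paper's proof therefore does not compare quadratic forms at all after the decomposition. It shows (its \cref{claim:decomp}) that $\iota_k^{1,2}(\ker\Delta_k^{1,3})$ lands in the ``up'' block $\overline{\im(d_{k+1}^{2,3})}\oplus U$ of $C_k^2$, applies \cref{thm:upmonotonicity} and \cref{thm:downmonotonicity} as black boxes to the two orthogonal blocks of \cref{fig:updownsplit}, and then reassembles the spectrum of the full Laplacian from the spectra of the restricted up- and down-Laplacians via \cref{cor:split}, with separate bookkeeping for kernel dimensions (\cref{closed range for full}) and for the infimum of the essential spectrum. To repair your argument you would need to replace the pointwise form inequality on the up-block by an invocation of \cref{eq:upmon2} at the level of the counting functions $\lambda_q$, and then add the spectral splitting and the kernel/essential-spectrum comparisons that you currently wave away.
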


Intuitively, this condition expresses that the up and down persistent Laplacian ``fully decompose.'' Hence, up and down monotonicity yield full monotonicity. 

\begin{proof}
    Let $p_1:C^2_k\to \ker(\Delta^{2,3}_k)$ be the orthogonal projection and $P:=p_1\left(\iota_k^{1,2} \left(\overline{\im((d_k^1)^*)}\right)\right)$. Note that $P$ is a subspace of $\ker(\Delta_k^{2,3})$. Let $V:=\overline{P}$ and $U$ be the orthogonal complement of $V$ in $\ker \Delta_k^{2,3}$. 
        By \cref{thm:magical}, we have 
        \begin{align*}
             C_k^1  = & \overbrace{\red{\overline{\im(d_{k+1}^{1,3})}\oplus \ker \Delta_k^{1,3}}}^{\ker(d_k^1)} \oplus \ \textcolor{blue}{\overline{\im((d_k^1)^*)}}\\
             & \, C_k^2 = \underbrace{\red{\overline{\im(d_{k+1}^{2,3})}\oplus U} \oplus \textcolor{blue}{V}}_{\ker(d_k^2)} \textcolor{blue}{\oplus \ \overline{\im((d_k^2)^*)}}.
        \end{align*}
        The gist of this proof is to apply up monotonicity to the red part of this decomposition and down monotonicity to the blue part; see \cref{fig:updownsplit}. To do so, we need to verify that $\iota_k^{1,2}$ of the red and blue components of $C_k^1$ are contained in the corresponding components of $C_k^2$. By construction, we know that $\iota_k^{1,2} \left(\overline{\im((d_k^1)^*)}\right) \subset \overline{\im((d_k^2)^*)} \oplus V$ and that $\iota_k^{1,2}\left(\overline{\im(d_{k+1}^{1,3})}\right) \subset \overline{\im(d_{k+1}^{2,3})}$ and we are left to show the following claim.
        
        \begin{claim} \label{claim:decomp}
            $\iota_k^{1,2} (\ker \Delta_k^{1,3}) \subset \overline{\im(d_{k+1}^{2,3})}\oplus U$.
        \end{claim}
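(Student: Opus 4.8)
The plan is to establish two facts which together give the claim: (i) $\iota_k^{1,2}(\ker\Delta_k^{1,3})\subset\ker(d_k^2)$, and (ii) $\iota_k^{1,2}(\ker\Delta_k^{1,3})\perp V$. The displayed Hodge decomposition of $C_k^2$ writes $\ker(d_k^2)=\overline{\im(d_{k+1}^{2,3})}\oplus U\oplus V$ as an \emph{orthogonal} direct sum: the three summands are mutually orthogonal because $U\perp V$ by construction, and $\ker\Delta_k^{2,3}=U\oplus V$ is contained in $\ker((d_{k+1}^{2,3})^*)=\overline{\im(d_{k+1}^{2,3})}^\perp$ by \cref{lem:kerlap} and \cref{lem:imperp}. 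Hence (i) and (ii) force $\iota_k^{1,2}(\ker\Delta_k^{1,3})\subset\overline{\im(d_{k+1}^{2,3})}\oplus U$. For (i): $\ker\Delta_k^{1,3}\subset\ker(d_k^1)$ by \cref{lem:kerlap}, and since $\iota_{k-1}^{1,2}\circ d_k^1=d_k^2\circ\iota_k^{1,2}$ by \cref{def:inclusion}(a) with $\iota_{k-1}^{1,2}$ injective, we get $\iota_k^{1,2}(\ker(d_k^1))\subset\ker(d_k^2)$.

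For (ii): since $V=\overline{P}$ with $P=p_1\big(\iota_k^{1,2}(\overline{\im((d_k^1)^*)})\big)$, continuity of the inner product reduces the task to showing $\langle\iota_k^{1,2}(v),p_1(\iota_k^{1,2}(w))\rangle_k^2=0$ for all $v\in\ker\Delta_k^{1,3}$ and $w\in\overline{\im((d_k^1)^*)}$. Decompose $\iota_k^{1,2}(v)=y+z$ along $\ker(d_k^2)=\overline{\im(d_{k+1}^{2,3})}\oplus\ker\Delta_k^{2,3}$ (legitimate by (i)); then $p_1(\iota_k^{1,2}(v))=z$ because $y\in\overline{\im(d_{k+1}^{2,3})}\perp\ker\Delta_k^{2,3}$ and $p_1$ is the orthogonal projection onto $\ker\Delta_k^{2,3}$. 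Using self-adjointness of $p_1$ we then obtain $\langle\iota_k^{1,2}(v),p_1(\iota_k^{1,2}(w))\rangle=\langle z,\iota_k^{1,2}(w)\rangle=\langle v,w\rangle_k^1-\langle y,\iota_k^{1,2}(w)\rangle$. The first term vanishes by \cref{def:inclusion}(b), since $v\in\ker(d_k^1)$ and $w\in\overline{\im((d_k^1)^*)}=\ker(d_k^1)^\perp$ (by \cref{lem:imperp}); the second term vanishes because the hypothesis of the proposition gives $\iota_k^{1,2}(w)\in\ker((d_{k+1}^{2,3})^*)=\overline{\im(d_{k+1}^{2,3})}^\perp$, while $y\in\overline{\im(d_{k+1}^{2,3})}$. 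Thus $\iota_k^{1,2}(v)\perp P$, hence $\perp V$, which finishes (ii) and the claim.

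I do not anticipate a genuine obstacle here: the standing hypothesis $\iota_k^{1,2}(\overline{\im((d_k^1)^*)})\subset\ker((d_{k+1}^{2,3})^*)$ is precisely what annihilates the only nontrivial pairing $\langle y,\iota_k^{1,2}(w)\rangle$, and the rest is the Hodge decomposition already set up in the proof. The only points requiring care are the bookkeeping ones: keeping $\iota_{k-1}^{1,2}$ straight when transporting $d_k^1$ to $C^2$, and making explicit that the three summands $\overline{\im(d_{k+1}^{2,3})}$, $U$, $V$ are pairwise orthogonal, so that "$\iota_k^{1,2}(v)$ is orthogonal to $V$'' genuinely means "$\iota_k^{1,2}(v)\in\overline{\im(d_{k+1}^{2,3})}\oplus U$''.
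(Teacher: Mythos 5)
Your proof is correct and follows the same overall strategy as the paper's: both reduce \cref{claim:decomp} to showing that $\iota_k^{1,2}(\ker\Delta_k^{1,3})$ lies in $\ker(d_k^2)$ and is orthogonal to the dense subset $P$ of $V$, and both invoke the hypothesis of \cref{prop:fmdecompose} at exactly the point where a pairing against $\overline{\im(d_{k+1}^{2,3})}$ must vanish. The difference is in how the vanishing of $\langle \iota_k^{1,2}v,\,p_1(\iota_k^{1,2}w)\rangle$ is organized. The paper uses the hypothesis in its ``decomposition'' form---writing $\iota_k^{1,2}u$ as the sum of its $p_1$- and $p_2$-components, with no component in $\overline{\im(d_{k+1}^{2,3})}$---and then approximates $u$ and $p_2(\iota_k^{1,2}u)$ by sequences in $\im((d_k^1)^*)$ and $\im((d_k^2)^*)$, transferring the boundary maps onto $w\in\ker(d_k^1)$ via the adjoint relations. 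You instead decompose the \emph{other} element, $\iota_k^{1,2}v=y+z$ along $\ker(d_k^2)=\overline{\im(d_{k+1}^{2,3})}\oplus\ker\Delta_k^{2,3}$, use self-adjointness of the orthogonal projection $p_1$ to move it across the inner product, and kill the two resulting terms by the isometry property of \cref{def:inclusion}(b) (reducing to $\ker\Delta_k^{1,3}\perp\overline{\im((d_k^1)^*)}$ inside $C_k^1$) and by the hypothesis in its ``orthogonality'' form $\iota_k^{1,2}(w)\perp\overline{\im(d_{k+1}^{2,3})}$. The two readings of the hypothesis are equivalent via \cref{lem:imperp} and \cref{thm:magical}, so nothing is lost; your version avoids the approximating sequences entirely and is, if anything, slightly cleaner. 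Your explicit check that the three summands $\overline{\im(d_{k+1}^{2,3})}$, $U$, $V$ are pairwise orthogonal is also a worthwhile point that the paper leaves implicit.
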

        \textit{Proof of claim:} Since $\overline{\im(d_{k+1}^{2,3})}\oplus U$ is the orthogonal complement of $V \oplus \overline{\im((d_k^2)^*)}$, it suffices to show that $$\iota_k^{1,2} (\ker \Delta_k^{1,3}) \perp V \oplus \overline{\im((d_k^2)^*)}.$$
        We know that $\iota_k^{1,2} (\ker d_k^1) \subset \ker(d_k^2)$ and therefore $\iota_k^{1,2} (\ker \Delta_k^{1,3}) \perp \overline{\im((d_k^2)^*)}=\ker(d_k^2)^\perp$.
        
        Now take any $w \in \ker \Delta_k^{1,3}$ and $v \in P$. Then there exists some $u\in \overline{\im((d_k^1)^*)}$ such that $p_1(\iota^{1,2}_ku)=v$. Let $p_2:C^2_k\to \overline{\im((d_k^2)^*)}$ be another orthogonal projection and denote $x:=p_2(\iota^{1,2}_ku)\in \overline{\im((d_k^2)^*)}$. Since
        $$\iota_k^{1,2} \left(\overline{\im((d_k^1)^*)}\right) \subset \overline{\im((d_k^2)^*)} \oplus \ker(\Delta_k^{2,3}),$$
         we have $\iota_k^{1,2} u=v+x$. Choose sequences $(u_n)_{n \in \N} \subset \dom(d_k^1)^*$ and $(x_n)_{n \in \N} \subset \dom(d_k^2)^*$ such that $(d^1_k)^*u_n\to u$, and $(d^2_k)^*x_n\to x$. Then we have
        \begin{align*}
            \langle \iota_k^{1,2} w,v \rangle_k^2 &= \langle \iota_k^{1,2} w,\iota_k^{1,2}u-x \rangle_k^2 = \lim_{n \rightarrow \infty} (\langle \iota_k^{1,2} w,\iota_k^{1,2} (d_k^1)^*u_n \rangle_k^2 - \langle \iota_k^{1,2} w,(d_k^2)^*x_n \rangle_k^2)\\
            &=\lim_{n \rightarrow \infty} (\langle w,(d_k^1)^*u_n \rangle_k^1 - \langle d_k^2 \, \iota_k^{1,2} w,x_n \rangle_{k-1}^2)=\lim_{n \rightarrow \infty} (\langle d_k^1 w,u_n \rangle_{k-1}^1 - \langle \iota_{k-1}^{1,2} \, d_k^1 \, w,x_n \rangle_{k-1}^2)\\
            &=0+0.
        \end{align*}
        Since $P$ is dense in $V$, by continuity of the inner product, we have $\iota_k^{1,2} (\ker \Delta_k^{1,3}) \perp V$ and thus \cref{claim:decomp} is  verified.\\
        
        \begin{figure}
            \centering
                \begin{subfigure}[t]{0.4\textwidth}
                    \begin{tikzcd}[row sep = +4em, column sep = +7em]
                        C_{k+1}^{1,3} \arrow[r,"d_{k+1}^{1,3}",shift left] \arrow[d,hook] & \red{\ker(d_k^1)} \arrow[l,shift left,"(d_{k+1}^{1,3})^* \, |_{\ker(d_k^1)}"] \arrow[d,hook] \\
                        C_{k+1}^{2,3} \arrow[r,"d_{k+1}^{2,3}",shift left]  & \red{\overline{\im(d_{k+1}^{2,3})} \oplus U} \arrow[l,shift left,"(d_{k+1}^{2,3})^* \, |_{\overline{\im (d_{k+1}^{2,3})} \oplus U}"]
                    \end{tikzcd}
                \end{subfigure}
                \hspace{1cm}
                \begin{subfigure}[t]{0.4\textwidth}
                    \begin{tikzcd}[row sep = +4em, column sep = +7em]
                        \textcolor{blue}{\overline{\im((d_k^1)^*)}} \arrow[r,"d_{k}^{1}|_{\overline{\im((d_k^1)^*)}}",shift left] \arrow[d,hook] & C_{k-1}^1 \arrow[d,hook] \arrow[l,shift left, "(d_k^1)^*"]\\
                        \textcolor{blue}{V \oplus \overline{ \im((d_k^2)^*)}} \arrow[r,"d_{k}^{2}|_{V \oplus \overline{\im((d_k^2)^*)}}",shift left] & C_{k-1}^2 \arrow[l,shift left, "(d_k^2)^*"]
                    \end{tikzcd}
                \end{subfigure}
            \caption{In the proof of \cref{prop:fmdecompose} we apply up monotonicity to the left and down monotonicity to the right diagram.}
            \label{fig:updownsplit}
        \end{figure}
        
         To proceed, we assume without loss of generality that $\im(d_{k+1}^{2,3})$ is closed (as in the proof of \cref{eq:upmon2}); otherwise, \cref{prop:imclosed}(b) yields $0 \in \sigma_e \Big( \Delta_{+,k}^{2,3}\,|_{\overline{\im(d_{k+1}^{2,3})}} \Big)$ and thus by \cref{eq:splitse2}, $0 \in \sigma_e(\Delta_k^{2,3})$.
         
         Hence, \cref{closed range for full} applies and we may, again without loss of generality, assume that $$k_1:= \dim(\ker(\Delta_k^{1,3})) \le k_2:=\dim(\ker(\Delta_k^{2,3})) < \infty.$$         
        Thus, the spectra of $\Delta_k^{2,3}$ and $\Delta_k^{2,3}\, |_{(\ker(\Delta_k^{2,3}))^\perp}$ differ precisely by the multiplicity of the zero eigenvalue: it has multiplicity $k_2$ for $\Delta_k^{2,3}$ and zero for $\Delta_k^{2,3}\, |_{(\ker(\Delta_k^{2,3}))^\perp}$. In particular, the essential spectra are the same. Note that $0$ can still be an essential eigenvalue of $\Delta_k^{2,3}$. The same applies to $\Delta_k^{1,3}$. This observation allows us to conclude by proving the following two claims.
        \begin{claim} \label{claim:split1}
            The infima of the essential spectra satisfy $m:=m(\Delta_k^{1,3}) \ge m(\Delta_k^{2,3}).$
        \end{claim}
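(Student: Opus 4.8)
\textbf{Proof proposal for \cref{claim:split1}.}

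The plan is to run the same argument as in the proof of \cref{claim:m_bigeq} (inside the proof of \cref{eq:upmon2}) but now for the \emph{full} Laplacians, leveraging the decomposition established in \cref{claim:decomp} together with the spectral splitting of \cref{cor:split}. First I would use \cref{cor:split} applied to the two pairs: the nonzero spectra (and in particular the nonzero essential spectra) of $\Delta_k^{1,3}$ split as the union of those of $\Delta_{+,k}^{1,3}|_{\overline{\im(d_{k+1}^{1,3})}}$ and $\Delta_{-,k}^{1,3}|_{\overline{\im((d_k^1)^*)}}$, and analogously for $\Delta_k^{2,3}$. So it suffices to control $m$ by the essential spectra of the up and down parts separately. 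Since $m = m(\Delta_k^{1,3})$, by \cref{eq:splitse2} either $0 \in \sigma_e(\Delta_{+,k}^{1,3}|_{\overline{\im(d_{k+1}^{1,3})}})$ or $0 \in \sigma_e(\Delta_{-,k}^{1,3}|_{\ker((d_{k}^1))^{\perp}})$ — but we have already reduced to the case where all relevant kernels are finite-dimensional and $\im(d_{k+1}^{2,3})$ is closed, so in fact $m>0$ unless both sides have finite-dimensional kernels and no accumulation at zero; I would treat $m = 0$ and $m>0$ exactly as in \cref{claim:m_bigeq}.

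The core step is the $m>0$ case. Here $m \in \sigma_e(\Delta_k^{1,3})$, so by the splitting, $m$ lies in the essential spectrum of $\Delta_{+,k}^{1,3}|_{\overline{\im(d_{k+1}^{1,3})}}$ or of $\Delta_{-,k}^{1,3}|_{\overline{\im((d_k^1)^*)}}$. In the first case, I invoke up monotonicity in the form established in the proof of \cref{eq:upmon2} — more precisely, the essential-spectrum transfer via \cref{thm:downmonotonicity}, \cref{lem:changeorder}, and \cref{prop:imclosed} — to produce a point $\lambda \le m$ in $\sigma_e(\Delta_{+,k}^{2,3}|_{\overline{\im(d_{k+1}^{2,3})}})$; since the up components are compatible with the red part of the decomposition in \cref{claim:decomp}, this point also lies in $\sigma_e(\Delta_k^{2,3})$ via \cref{eq:splitse2}, giving $m(\Delta_k^{2,3}) \le \lambda \le m$. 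In the second case, I apply \cref{thm:downmonotonicity} directly to the down components restricted to the blue part of the decomposition: the quadratic form of $\Delta_{-,k}^{1,3}$ on $\iota_k^{1,2}(\overline{\im((d_k^1)^*)})$ agrees with that of $\Delta_{-,k}^{2,3}$ on $V \oplus \overline{\im((d_k^2)^*)}$ — this is exactly what \cref{claim:decomp} together with the hypothesis $\iota_k^{1,2}(\overline{\im((d_k^1)^*)}) \subset \ker(d_{k+1}^{2,3})^*$ gives us — so the down monotonicity inequality \cref{eq:downmon2} applies verbatim and yields $m(\Delta_k^{2,3}) \le m$. Wrapping the cases gives \cref{claim:split1}.

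The main obstacle I anticipate is bookkeeping rather than a genuine difficulty: I must check carefully that restricting the up and down Laplacians to the subspaces appearing in the Hodge decomposition of \cref{claim:decomp} does not change them in a way that breaks the hypotheses of \cref{cor:split} or of \cref{thm:downmonotonicity}/\cref{thm:upmonotonicity} — e.g.\ that $\ker(d_{k+1}^{1,3}) \cong \ker(d_{k+1}^{2,3})$ still holds after the reduction, that the composition condition $\im A \subset \ker B$ survives restriction, and that the ``$0 \in \sigma_e$'' case for the full Laplacian genuinely reduces to the two component cases without a hidden cross term. The condition $\iota_k^{1,2}(\overline{\im((d_k^1)^*)}) \subset \ker(d_{k+1}^{2,3})^*$ is precisely what eliminates such cross terms, so the proof should go through once the decomposition of \cref{claim:decomp} is in hand; the remaining claim \cref{claim:split2} (counting eigenvalues $\le r$) will then follow by the same case split, combining \cref{thm:upmonotonicity} on the red part and \cref{thm:downmonotonicity} on the blue part with \cref{closed range for full} controlling the zero eigenvalue multiplicities.
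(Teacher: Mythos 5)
Your proposal is correct and follows essentially the same route as the paper: the case split on $m=0$ versus $m>0$, the reduction via \cref{cor:split} to the up and down components, the application of up and down monotonicity to the red and blue parts of the decomposition from \cref{claim:decomp} (as in \cref{fig:updownsplit}), and the transfer back to $\sigma_e(\Delta_k^{2,3})$ via \cref{cor:split} all match the paper's argument. The only discrepancies are cosmetic (e.g.\ the paper invokes \cref{eq:splitse1} rather than \cref{eq:splitse2} in the $m=0$ case, and uses $k_1\le k_2<\infty$ where you cite \cref{closed range for full}).
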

        \textit{Proof of claim:} If $m=0$, \cref{eq:splitse1} yields that $0 \in \sigma_e\Big(\Delta_{+,k}^{1,3}|_{\ker(d_k^1)}\Big) \cup \sigma_e\Big(\Delta_{-,k}^{1,3}|_{\overline{\im((d_k^1)^*)}}\Big).$ By up and down monotonicity applied as in \cref{fig:updownsplit}, we get 
        \begin{align}
             0\in \sigma_e \Big( \! \Delta_{+,k}^{2,3}|_{\overline{\im(d_{k+1}^{2,3})}\oplus U}\Big) &\cup \sigma_e \! \left(\! \Delta_{-,k}^{2,3}|_{V \oplus \overline{\im((d_k^2)^*)}}\right) \nonumber \\
             =\sigma_e\! \left(\! \Delta_{+,k}^{2,3}|_{\ker(d_k^2)}\right) &\cup \sigma_e\! \left(\! \Delta_{-,k}^{2,3}|_{\overline{\im((d_k^2)^*)}}\right). \label{eq:pfofclaim}
        \end{align} For \cref{eq:pfofclaim}, we used that changing the domain of the operators by the finite dimensional $V$ does not change the essential spectra. Consequently, $m(\Delta_k^{2,3})=0$.
        
        If $m>0$, our observation yields that $m$ is also an essential eigenvalue of $\Delta_k^{1,3}\, |_{(\ker(\Delta_k^{1,3}))^\perp}$ and, by \cref{cor:split}, also of $\Delta_{+,k}^{1,3}|_{\overline{\im(d_{k+1}^{1,3})}}$ or $\Delta_{-,k}^{1,3}|_{\overline{\im((d_k^1)^*)}}$. Up and down monotonicity and the fact that $k_1\le k_2 <\infty$ imply that there is an essential eigenvalue $m'\le m$ of $\Delta_{+,k}^{2,3}|_{\overline{\im(d_{k+1}^{2,3})}}$ or $\Delta_{-,k}^{2,3}|_{\overline{\im((d_k^2)^*)}}$. By \cref{cor:split}, $m'$ is essential eigenvalue of $\Delta_k^{2,3}$, proving \cref{claim:split1}.
        
        \begin{claim}
            If for $r \ge 0$, $\Delta_k^{1,3}$ has $n$ eigenvalues $\le r$ (counted with multiplicity), then either $\Delta_k^{2,3}$ also has $n$ eigenvalues $\le r$, or $m(\Delta_k^{2,3}) \le r$.
        \end{claim}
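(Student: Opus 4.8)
The plan is to decompose the eigenvalue counts of the full persistent Laplacians into their up and down parts via \cref{cor:split}, and then invoke up and down monotonicity separately on the two diagrams of \cref{fig:updownsplit}, keeping careful track of how the zero eigenvalue redistributes. First I would dispose of the cases where an essential spectrum intervenes. If $r \ge m(\Delta_k^{1,3})$, then \cref{claim:split1} already gives $m(\Delta_k^{2,3}) \le m(\Delta_k^{1,3}) \le r$, and we are done; so assume $r < m(\Delta_k^{1,3})$. Since $\sigma_e(\Delta_k^{1,3}) = \sigma_e(\Delta_{+,k}^{1,3}) \cup \sigma_e(\Delta_{-,k}^{1,3})$ by the essential-spectrum part of \cref{cor:split}, this forces $r$ strictly below $m(\Delta_{+,k}^{1,3})$ and $m(\Delta_{-,k}^{1,3})$ as well, and in particular $n$ is finite. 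Likewise, if $r \ge m(\Delta_k^{2,3})$ there is nothing to prove, so I also assume $r < m(\Delta_k^{2,3})$, hence $r$ is below $m(\Delta_{+,k}^{2,3})$ and $m(\Delta_{-,k}^{2,3})$ too. As in the surrounding proof I may assume $\im(d_{k+1}^{2,3})$ closed (else $0 \in \sigma_e(\Delta_k^{2,3})$), so \cref{closed range for full} gives $z_1 := \dim\ker(\Delta_k^{1,3}) \le z_2 := \dim\ker(\Delta_k^{2,3}) < \infty$; moreover $U \oplus V = \ker(\Delta_k^{2,3})$ by construction, so $\dim U$ and $\dim V$ are finite with $\dim U + \dim V = z_2$.

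Next I would turn the claim into an inequality between explicit counts. Set $p_\pm^{(i)} := \#\{q \ge 1 : 0 < \lambda_q(\Delta_{\pm,k}^{i,3}) \le r\}$. Under the reductions above, \cref{cor:split} identifies the eigenvalues of $\Delta_k^{i,3}$ that are $\le r$ (equivalently, those among $\lambda_1(\Delta_k^{i,3}), \lambda_2(\Delta_k^{i,3}),\dots$ that are $\le r$) with the zero eigenvalue of multiplicity $z_i$ together with the positive eigenvalues $\le r$ of $\Delta_{+,k}^{i,3}$ and of $\Delta_{-,k}^{i,3}$; thus $n = z_1 + p_+^{(1)} + p_-^{(1)}$, and what must be shown is $z_2 + p_+^{(2)} + p_-^{(2)} \ge n$. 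The restricted operators appearing in \cref{fig:updownsplit} then have transparent spectra: $\Delta_{+,k}^{1,3}|_{\ker(d_k^1)}$ has kernel $\ker(\Delta_k^{1,3})$ (by \cref{lem:kerlap}) and otherwise the spectrum of $\Delta_{+,k}^{1,3}$, hence $z_1 + p_+^{(1)}$ eigenvalues $\le r$; $\Delta_{+,k}^{2,3}|_{\overline{\im(d_{k+1}^{2,3})}\oplus U}$ has kernel $U$ and otherwise the spectrum of $\Delta_{+,k}^{2,3}$, hence $\dim U + p_+^{(2)}$ eigenvalues $\le r$; $\Delta_{-,k}^{1,3}|_{\overline{\im((d_k^1)^*)}}$ has trivial kernel and $p_-^{(1)}$ eigenvalues $\le r$; and $\Delta_{-,k}^{2,3}|_{V\oplus\overline{\im((d_k^2)^*)}}$ has kernel $V$ and $\dim V + p_-^{(2)}$ eigenvalues $\le r$.

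The final step is to feed the two diagrams of \cref{fig:updownsplit} into up and down monotonicity. By \cref{claim:decomp} together with the two inclusions $\iota_k^{1,2}\big(\overline{\im(d_{k+1}^{1,3})}\big) \subset \overline{\im(d_{k+1}^{2,3})}$ and $\iota_k^{1,2}\big(\overline{\im((d_k^1)^*)}\big) \subset \overline{\im((d_k^2)^*)} \oplus V$ already established in the proof (the latter using the hypothesis of \cref{prop:fmdecompose}), $\iota_k^{1,2}$ restricts to isometric inclusions $\ker(d_k^1) \hookrightarrow \overline{\im(d_{k+1}^{2,3})}\oplus U$ and $\overline{\im((d_k^1)^*)} \hookrightarrow V \oplus \overline{\im((d_k^2)^*)}$ that intertwine the boundary maps, so the proofs of \cref{eq:upmon2} of \cref{thm:upmonotonicity} and of \cref{eq:downmon2} of \cref{thm:downmonotonicity} apply verbatim to these restricted pairs (they use only an isometric inclusion respecting the boundary maps, \cref{thm:ari1.28}, \cref{lem:changeorder}, \cref{lem:imdensedone}, and recursion on down monotonicity, none of which is affected by shrinking the codomains). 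This gives $\lambda_q\big(\Delta_{+,k}^{2,3}|_{\overline{\im(d_{k+1}^{2,3})}\oplus U}\big) \le \lambda_q\big(\Delta_{+,k}^{1,3}|_{\ker(d_k^1)}\big)$ and $\lambda_q\big(\Delta_{-,k}^{2,3}|_{V\oplus\overline{\im((d_k^2)^*)}}\big) \le \lambda_q\big(\Delta_{-,k}^{1,3}|_{\overline{\im((d_k^1)^*)}}\big)$ for all $q$; counting eigenvalues $\le r$ on each side (using that these restrictions leave the essential spectra unchanged, as they only touch a finite part of the kernel) yields $\dim U + p_+^{(2)} \ge z_1 + p_+^{(1)}$ and $\dim V + p_-^{(2)} \ge p_-^{(1)}$. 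Adding and using $\dim U + \dim V = z_2$ gives $z_2 + p_+^{(2)} + p_-^{(2)} \ge z_1 + p_+^{(1)} + p_-^{(1)} = n$, which is exactly the assertion that $\Delta_k^{2,3}$ has at least $n$ eigenvalues $\le r$, completing the claim and hence, with \cref{claim:split1}, the proof of \cref{prop:fmdecompose}.

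I expect the main obstacle to be this transfer of up and down monotonicity to the restricted configurations of \cref{fig:updownsplit}, coupled with the zero-eigenvalue bookkeeping. One must verify carefully that restricting $\Delta_{+,k}^{i,3}$ and $\Delta_{-,k}^{i,3}$ to the indicated subspaces alters only the multiplicity of the eigenvalue $0$ (by the finite amounts $\dim U$, $\dim V$, resp.\ removes a finite-dimensional piece of the kernel) and changes neither the nonzero spectrum nor the essential spectrum; that the hypotheses of \cref{thm:ari1.28}, \cref{lem:imdensedone} and \cref{lem:changeorder} are genuinely insensitive to passing to subspace codomains; and that it is precisely the identity $\dim U + \dim V = z_2$ — which is what the hypothesis of \cref{prop:fmdecompose} buys us via \cref{claim:decomp} — that makes the two component inequalities add up to full monotonicity, the point that fails in \cref{ex:nomonotonicity2}.
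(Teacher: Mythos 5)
Your proposal is correct and follows essentially the same route as the paper: decompose the count of eigenvalues $\le r$ of $\Delta_k^{1,3}$ via \cref{cor:split} into the kernel plus the up and down contributions, transfer each piece to the $P_2$ side by applying up and down monotonicity to the two restricted diagrams of \cref{fig:updownsplit} (handling the case where an essential eigenvalue $\le r$ appears), and reassemble with \cref{cor:split} using $\dim U + \dim V = \dim\ker(\Delta_k^{2,3})$. Your version merely makes explicit the zero-eigenvalue bookkeeping and the verification that the monotonicity proofs survive restriction to the subspace codomains, which the paper leaves implicit.
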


       \textit{Proof of claim:} We know that $\Delta_k^{1,3}\, |_{(\ker(\Delta_k^{1,3}))^\perp}$ has $n-k_1$ eigenvalues $\le r$. By \cref{cor:split}, $\Delta_{+,k}^{1,3}|_{\overline{\im(d_{k+1}^{1,3})}}$ and $\Delta_{-,k}^{1,3}|_{\overline{\im((d_k^1)^*)}}$ together also have $n-k_1$ eigenvalues $\le r$. Hence, $\Delta_{+,k}^{1,3}|_{\ker(d_k^1)}$ and $\Delta_{-,k}^{1,3}|_{\overline{\im((d_k^1)^*)}}$ together have $n$ eigenvalues $\le r$. By up and down monotonicity, the same is true for $\Delta_{+,k}^{2,3}|_{\overline{\im(d_{k+1}^{2,3})}\oplus U}$ and $\Delta_{-,k}^{2,3}|_{V \oplus \overline{\im((d_k^2)^*)}}$ (or one of them has an essential eigenvalue $\le r$ which would result in $m(\Delta_k^{2,3})\le r$, as in the proof of \cref{claim:split1}). Applying \cref{cor:split} once again yields the claim and therefore completes our proof.\\
\end{proof}

We now give a condition that is equivalent to the condition for full monotonicity in \cref{prop:fmdecompose}, and easier to verify in practice.

\begin{lemma}
    Let $p':C^2_k\to \iota^{1,2}_k(C^1_k)$ be the orthogonal projection and $p=(\iota_k^{1,2})^{-1}\circ p'$. Then 
    $$
        \iota_k^{1,2} \left(\overline{\im((d_k^1)^*)}\right) \subset \ker(d_{k+1}^{2,3})^* \quad
        \Leftrightarrow \quad p(\im d^{2,3}_{k+1})\subset \ker d^1_{k}.
    $$
\end{lemma}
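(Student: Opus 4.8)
The plan is to translate both conditions into orthogonality relations inside $C_k^2$ and then move freely between $C_k^1$ and $C_k^2$ using the isometry $\iota_k^{1,2}$. First I would rewrite the two subspaces appearing in the statement. Since $d_{k+1}^{2,3}$ is densely defined and closed (\cref{lem:pershodge}(a)), \cref{cor:decompV} gives $C_k^2=\ker\big((d_{k+1}^{2,3})^*\big)\oplus\overline{\im(d_{k+1}^{2,3})}$, hence $\ker\big((d_{k+1}^{2,3})^*\big)=\big(\im d_{k+1}^{2,3}\big)^\perp$; likewise, applying \cref{cor:decompV} to $d_k^1$ gives $\overline{\im((d_k^1)^*)}=\ker(d_k^1)^\perp$ inside $C_k^1$, and since $d_k^1$ is closed, $\ker(d_k^1)$ is a closed subspace, so $\big(\ker(d_k^1)^\perp\big)^\perp=\ker(d_k^1)$.

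Next I would record the elementary ``adjunction'' between $\iota_k^{1,2}$ and $p$: for every $x\in C_k^2$ and $v\in C_k^1$,
\[
\langle x,\iota_k^{1,2}v\rangle_k^2=\langle p'x,\iota_k^{1,2}v\rangle_k^2=\langle \iota_k^{1,2}(px),\iota_k^{1,2}v\rangle_k^2=\langle px,v\rangle_k^1,
\]
where the first equality holds because $p'$ is the orthogonal projection onto $\iota_k^{1,2}(C_k^1)$ and $\iota_k^{1,2}v$ already lies in that (closed) subspace, the second is the definition $p'=\iota_k^{1,2}\circ p$, and the third is \cref{def:inclusion}(b). Consequently, for any subset $S\subset C_k^1$ and any $x\in C_k^2$, one has $x\perp\iota_k^{1,2}(S)$ in $C_k^2$ if and only if $px\perp S$ in $C_k^1$.

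With these preliminaries the equivalence is a short chain of rewrites. The left-hand condition $\iota_k^{1,2}\big(\overline{\im((d_k^1)^*)}\big)\subset\ker\big((d_{k+1}^{2,3})^*\big)$ becomes $\iota_k^{1,2}\big(\ker(d_k^1)^\perp\big)\subset\big(\im d_{k+1}^{2,3}\big)^\perp$, i.e. $\langle w,\iota_k^{1,2}u\rangle_k^2=0$ for all $w\in\im d_{k+1}^{2,3}$ and all $u\in\ker(d_k^1)^\perp$. By the adjunction this is equivalent to $pw\perp\ker(d_k^1)^\perp$ for all $w\in\im d_{k+1}^{2,3}$, i.e. $pw\in\big(\ker(d_k^1)^\perp\big)^\perp=\ker(d_k^1)$ for all such $w$, which is exactly $p(\im d_{k+1}^{2,3})\subset\ker d_k^1$. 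Reading the chain in either direction yields both implications.

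I do not anticipate a genuine obstacle: the argument is purely Hilbert-space bookkeeping. The only points needing a moment of care are that $\iota_k^{1,2}(C_k^1)$ is closed, so that $p'$ and hence $p$ are well defined (immediate since $\iota_k^{1,2}$ is an isometry), and that $\ker(d_k^1)$ is closed, so that the double orthogonal complement returns it; both follow at once from the standing hypotheses.
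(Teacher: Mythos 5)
Your proof is correct and follows essentially the same route as the paper's: both arguments reduce the two containments to orthogonality statements via $\ker\big((d_{k+1}^{2,3})^*\big)=\big(\im d_{k+1}^{2,3}\big)^\perp$ and $\big(\im (d_k^1)^*\big)^\perp=\ker d_k^1$, and both hinge on the identity $p^*=\iota_k^{1,2}$, which you verify explicitly (the paper asserts it without proof) through the projection/isometry computation. The only cosmetic difference is that you pass through $\overline{\im((d_k^1)^*)}=\ker(d_k^1)^\perp$ and a double orthogonal complement, whereas the paper works directly with $\im((d_k^1)^*)$; these are interchangeable since orthogonality to a set and to its closure coincide.
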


    \begin{proof}
        By \cref{lem:imperp}, $\ker(d_{k+1}^{2,3})^*=\im(d_{k+1}^{2,3})^\perp$. Moreover, $p^*=\iota_k^{1,2}$. Therefore we have
        \begin{align*}
            \qquad\quad\iota_k^{1,2} \left(\overline{\im((d_k^1)^*)}\right) \subset \ker(d_k^{2,3})^*
            &\Leftrightarrow \ \im d^{2,3}_{k+1}\perp \iota^{1,2}_k(\im (d^1_k)^*),\\
            &\Leftrightarrow \ p(\im d^{2,3}_{k+1})\perp\im (d^1_k)^*,\\
            &\Leftrightarrow \ p(\im d^{2,3}_{k+1})\subset (\im (d^1_k)^*)^\perp=\ker d^1_k.
        \end{align*}
    \end{proof}
Having found a sufficient condition for full monotonicity in \cref{prop:fmdecompose}, a natural follow-on question is whether this condition is also necessary. The following example shows that it is not.
\begin{example}
    In \cref{fig:fmnoimker}, we have 
    $$
    \iota_k^{1,2} \left(\overline{\im((d_k^1)^*)}\right) = \{(a,0): a \in \R\}\not \subset \{(a,a):a \in \R\}= \ker(d_{k+1}^{2,3})^*.
    $$
    However, $\Delta_k^{1,3}=1$ and $\Delta_k^{2,3}=\renewcommand*{\arraystretch}{1.2}\begin{pmatrix}
        \frac 5 4 & \frac 3 4 \\ \frac 3 4 & \frac 5 4  
    \end{pmatrix}$ with eigenvalues $\frac 1 2 < 1 $ and 2.
\end{example}
\begin{figure}[H]
    \centering
    \begin{tikzpicture}[x=1cm,y=1cm]
	\clip(-1,-0.9) rectangle (7,2.5);
    \draw [-{Stealth[length=2mm]}] (1,2) --  node[above=1pt] {} (2,2);
    \draw [-{Stealth[length=2mm]}] (4,2) --  node[above=1pt] {$1$} (5,2);
    \draw [-{Stealth[length=2mm]}] (4,0.5) --  node[below=1pt] {$\begin{pmatrix}
1&1
\end{pmatrix}$} (5,0.5);
    \draw [-{Stealth[length=2mm]}] (1,0.5) --  node[below=1pt] {$\begin{pmatrix}
    \frac 1 2 \\ -\frac 1 2 
\end{pmatrix}$} (2,0.5);
    \draw [right hook-{Stealth[length=2mm]}] (0,1.7) -- node[left]{} (0,0.8);
    \draw [right hook-{Stealth[length=2mm]}] (3,1.7) -- node[right]{}(3,0.8);
    \draw [right hook-{Stealth[length=2mm]}] (6,1.7) -- node[right]{}(6,0.8);
    \draw[color=black] (0,2) node {$C_{k+1}^{1,3}=\{0\}$};
    \draw[color=black] (3,2) node {$C_k^{1}=\R$};
    \draw[color=black] (6,2) node {$C_{k-1}^{1}=\R$};
    \draw[color=black] (6,.5) node {$C_{k-1}^{2}=\R$};
    \draw[color=black] (3,.5) node {$C_k^{2}=\R^2$};
    \draw[color=black] (0,.5) node {$C_{k+1}^{2,3}=\R$};
\end{tikzpicture}
\caption{A configuration satisfying full monotonicity even though $\iota_k^{1,2} \left(\overline{\im((d_k^1)^*)}\right) \not \subset \ker(d_{k+1}^{2,3})^*$. All inclusions $\R^n \to \R^N$ include $\R^n$ as the first $n$ coordinates of $\R^N$.}
\label{fig:fmnoimker}
    \end{figure}

\section{Discussion}
\label{sec:discussion}

In this paper, we defined and presented the first systematic treatment of generalized chain Laplacians and persistent chain Laplacians for Hilbert complexes, providing a unified operator-theoretic framework that encompasses both Laplacians derived from de Rham complexes and their discrete, finite-dimensional counterparts. Under mild assumptions, we proved that the kernel of the persistent chain Laplacian is isomorphic to the persistent homology of a pair of Hilbert complexes, thus generalizing the classical combinatorial Hodge theorem to our broader setting. We then established monotonicity and stability for the up- and down-persistent Laplacians and showed that their spectra fully determine the spectra of the full persistent Laplacian. This result not only simplifies the analysis of persistent Laplacians but also provides a principled justification to focus on the up- and down-components. We further showed that, in general, full persistent Laplacians fail to satisfy these properties, and we gave a sufficient condition under which monotonicity and stability are recovered. Our results position persistent Laplacians as natural topological invariants that extend and enrich persistent homology while capturing additional geometric information through their spectra and lays the groundwork for a potential paradigm shift towards the use of Laplacians as fundamental topological and geometric descriptors.

\subsection{Directions for Future Research}

Our work provides a solid foundation for future investigations, which we now outline. Stability is a crucial property, however, it only provides a basic understanding of the spectra of up, down and full persistent Laplacians, both in the discrete setting that served as our initial motivation as well as in the manifold setting that we defined in \cref{subsub:derham}. 

A natural first direction for further research is to develop a geometric understanding of the spectra beyond the kernel corresponding to persistent homology. A possible approach to studying this question is via the persistent Cheeger inequality \citep[Theorem 4.20]{mww}, which relates the second smallest eigenvalue of persistent 0-Laplacians to the persistent Cheeger constant: a measure of ``how close a subgraph is to being disconnected inside the larger graph.'' It seems plausible that analogous Cheeger-type inequalities also hold for higher-order persistent Laplacians; see, for example, Corollary 2 in \citet{cheeger_sc}, which establishes a Cheeger inequality for simplicial complexes.  Similarly, we expect that the largest persistent eigenvalue will be related to how ``dense" or ``curved" the simplicial complex or manifold gets. In the case of Laplacians of graphs, \citet[\S4]{graph_1} presents some results relating the largest eigenvalue with the maximal degree.  Through our manifold example from \cref{subsub:derham}, we expect that further results from spectral graph theory can be carried over towards a better understanding or alternative perspective on the geometry of manifolds.

Another possible direction for future studies is the exploration of whether there exist further conditions for stability of full persistent Laplacians than those presented in \cref{subsub:confitionfm}. A stronger result to achieve would be a condition which is necessary as well as sufficient. However, an important difficulty in finding such a condition is that there are parametric configurations, which, depending on the parameter, are either stable or not: Replacing the $\pm \frac 1 2$ in \cref{fig:fmnoimker} by $\pm r$, we obtain $\Delta_k^{1,3}=1$ and $\Delta_k^{2,3} = \begin{pmatrix}
    1+r^2 & 1-r^2\\ 1-r^2 & 1+r^2
\end{pmatrix}$ with the eigenvalues $2$ and $2r^2$. Hence, this configuration is stable if and only if $|r| \le \frac 1 {\sqrt 2}$.

Studying the stability of persistent eigenvectors (as opposed to eigenvalues) is another question that is related to and an extension of our work. In general, eigenvectors are stable as long as the eigenvalues are isolated; see \citet[\S2.4 and 2.5]{numla}. By looking at spectral measures, which in the finite dimensional setting corresponds to projecting onto the direct sum of several different eigenspaces, it should be possible to develop stability results for these projections, for example, following the approach of \citet{powerspectrum}.

Beyond stability, there are also possible extensions and considerations for our work.  One direction is to explore the relaxation of the notion of inclusions in \cref{def:inclusion}, for example, following the approach of \citet{pl_simpmaps}, where the only requirement is for the maps between the chain complexes to be weight-preserving simplicial maps (instead of isometries).
Another direction is computational: Bridging the inherently finite-dimensional setting with the continuous one will require discretization together with approximation guarantees, for instance along the lines of \S5 of \citet{finderham}.

Our work also suggests interesting problems to study from the perspective of operator theory.  Specifically, we have seen that  \cref{prop:fmdecompose} does not give a sufficient and necessary condition for the monotonicity to hold. We may reinterpret the problem from a pure operator theory framework: Let $U,V,W$ be (possibly infinite dimensional) Hilbert spaces with densely defined, closed operators $A:U\to V$ and $B:V\to W$ such that $\im A\subset \ker (B)$ and $\dom A^*\cap \dom B$ is dense in $V$. Now choose some closed subspace $\tilde{V}\subset V$ such that we can define the following:
\begin{enumerate}
    \item[(i)] $\tilde{U}:=\overline{\{u\in U:Au\in \tilde{V}\}},$
    \item[(ii)] $\tilde{A}:=A|_{\tilde{U}}\quad \tilde{U}\to \tilde{V}$,
    \item[(iii)] $\tilde{W}$ is a closed subspace of $W$ containing $B(\tilde{V}\cap \dom B)$ such that if we define $$\tilde{B}:=B|_{\tilde{V}}\quad \tilde{V}\to \tilde{W},$$ then $\dom \tilde{A}^*\cap\dom \tilde{B}$ is dense in $\tilde{V}$.
\end{enumerate}
Then we define $AA^*+B^*B$ and $\tilde{A}\tilde{A}^*+\tilde{B}^*\tilde{B}$ via quadratic forms. An interesting question is whether there exists a sufficient and necessary condition for the choice of $\tilde{V}$, such that the choice of $\tilde{W}$ can be any closed subspace of $W$ containing $B(\tilde{V}\cap \dom B)$ and satisfies
$\lambda_n(AA^*+B^*B)\leq\lambda_n(\tilde{A}\tilde{A}^*+\tilde{B}^*\tilde{B}),$
where $\lambda_n$ is understood as in \eqref{eq:specctng}.  Effectively, this setup interprets a problem from computational topology in operator theory form.

\section*{Acknowledgments}

We are especially grateful to Vin de Silva, In\'{e}s Garc\'{i}a-Redondo, Ari Laptev,  Daniel Platt, and Daniel Ruiz Cifuentes for many helpful conversations. We also wish to thank Benjamin Briggs and Travis Schedler for input and feedback on our work. 

A.W.~is funded by a London School of Geometry and Number Theory--Imperial College London PhD studentship, which is supported by the Engineering and Physical Sciences Research Council [EP/S021590/1]. A.M.~is supported by the EPSRC AI Hub on Mathematical Foundations of Intelligence: An ``Erlangen Programme'' for AI No. EP/Y028872/1.

\begin{appendices} \crefalias{section}{appendix}
\section{Supplementary Material on Functional Analysis}
\label{appendix:operator_theory}

This appendix provides additional background material and auxiliary results on functional analysis that support the main developments of the paper.

We begin with an important but straightforward result used in our main construction, followed by a collection of frequently-used results from functional analysis.

\begin{lemma} \label{closedstaysclosed}
    Given two Hilbert spaces $\mathcal{H},\mathcal{K}$, let $D$ be a closed subspace of $\mathcal{H}$ and isometry $f:\mathcal{H}\to\mathcal{K}$ an isometry defined on $D$. Then $f(D)$ is closed in $\mathcal{K}$.
\end{lemma}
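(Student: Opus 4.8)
The statement is elementary: an isometry defined on a closed subspace has closed image. The plan is to argue via completeness. First I would recall that a closed subspace $D$ of the Hilbert space $\mathcal{H}$ is itself complete, being a closed subset of a complete metric space. Since $f\colon D \to \mathcal{K}$ is an isometry, it preserves Cauchy sequences and distances exactly, so the image $f(D)$ inherits completeness: given any Cauchy sequence $(y_n) \subset f(D)$, write $y_n = f(x_n)$ with $x_n \in D$; then $\|x_n - x_m\|_{\mathcal{H}} = \|y_n - y_m\|_{\mathcal{K}} \to 0$, so $(x_n)$ is Cauchy in $D$, hence converges to some $x \in D$, and by the isometry property $y_n = f(x_n) \to f(x) \in f(D)$.

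Then I would invoke the standard fact that a complete subset of a metric space is closed: if $(y_n) \subset f(D)$ converges to some $y \in \mathcal{K}$, then $(y_n)$ is Cauchy, so by the completeness just established it converges to a point of $f(D)$, and by uniqueness of limits in a metric space $y \in f(D)$. Hence $f(D)$ is closed in $\mathcal{K}$. This entire argument is routine; there is no real obstacle, only the need to be careful that "isometry" here means a linear norm-preserving (equivalently distance-preserving) map, which is exactly the sense in which the paper uses the term elsewhere (e.g. in \cref{def:inclusion}).

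One could alternatively observe that $f$ restricted to $D$ is a bounded injective operator with bounded inverse on its image (the inverse is again an isometry), so $f|_D$ is a topological embedding onto $f(D)$; completeness of $D$ then transfers to $f(D)$ verbatim. I would present the sequential version since it is the most transparent and needs no appeal to open-mapping-type machinery. The only point worth stating explicitly is why $(x_n)$ has a limit \emph{in $D$} — this is precisely where closedness of $D$ (hence its completeness) is used, and it is the one hypothesis that cannot be dropped.
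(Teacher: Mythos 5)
Your proof is correct and follows essentially the same route as the paper's: use the isometry to transfer Cauchy/convergence back to $D$, invoke closedness (hence completeness) of $D$, and push the limit forward. If anything, your sequential completeness phrasing is slightly more careful than the paper's, which names the limit of the sequence $f(x)$ before establishing that it has that form.
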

\begin{proof}
        Take any convergent sequence $(f(x_n))\subset f(D)$ such that $f(x_n)\to y$ for some $y\in \mathcal{K}$. Since $f$ is an isometry, 
        \begin{align*}
            \|x_n-x_m\|^2=\|f(x_n)-f(x_m)\|^2\to 0.
        \end{align*}
        Thus, $(x_n)$ is Cauchy. Given $D$ is closed, hence complete, there exists some $x\in D$ with $x_n\to x$. Given $f$ is an isometry, hence continuous, we have $f(x_n)\to f(x)$. Thus $y=f(x)\in f(D)$.
    \end{proof}

Next we present some decompositions of Hilbert spaces. The following is a standard result from functional analysis. 

\begin{lemma}\label{lem:orthogonal_decomposition}
    Let $V$ be a Hilbert space, $U\subset V$ a closed linear subspace and let $$U^\perp:=\{v \in V: \ \langle v,u \rangle=0 \ \forall\ u \in U \}$$ be the orthogonal complement of $U$. Then 
    $
        V=U \oplus U^\perp.
    $
\end{lemma}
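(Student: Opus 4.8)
The statement to prove is the orthogonal decomposition $V = U \oplus U^\perp$ for a closed subspace $U$ of a Hilbert space $V$. This is a classical result (the projection theorem), so the plan is to reproduce the standard argument. The two things to establish are: (1) every $v \in V$ can be written as $v = u + w$ with $u \in U$ and $w \in U^\perp$; and (2) this decomposition is unique, equivalently $U \cap U^\perp = \{0\}$. Part (2) is immediate: if $x \in U \cap U^\perp$ then $\langle x, x\rangle = 0$, so $x = 0$; and uniqueness of the splitting follows since if $u_1 + w_1 = u_2 + w_2$ then $u_1 - u_2 = w_2 - w_1 \in U \cap U^\perp = \{0\}$.

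The substance is in part (1), the existence of the decomposition. The plan is to fix $v \in V$ and set $d := \inf_{u \in U} \|v - u\|$. First I would choose a minimizing sequence $(u_n) \subset U$ with $\|v - u_n\| \to d$, and show it is Cauchy using the parallelogram law: applying the identity to the vectors $v - u_n$ and $v - u_m$ gives
\[
\|u_n - u_m\|^2 = 2\|v - u_n\|^2 + 2\|v - u_m\|^2 - 4\left\|v - \tfrac{u_n + u_m}{2}\right\|^2 \le 2\|v - u_n\|^2 + 2\|v - u_m\|^2 - 4d^2,
\]
where the inequality uses that $\tfrac{u_n + u_m}{2} \in U$ since $U$ is a subspace, so its distance to $v$ is at least $d$. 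The right-hand side tends to $0$, so $(u_n)$ is Cauchy; by completeness of $V$ it converges to some $u$, and since $U$ is closed, $u \in U$, with $\|v - u\| = d$.

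It then remains to verify that $w := v - u$ lies in $U^\perp$. The plan is the usual variational argument: for any $y \in U$ and scalar $t$, the point $u + ty \in U$, so $\|v - u - ty\|^2 \ge d^2 = \|v - u\|^2$. Expanding, $-2\,\mathrm{Re}\,\overline{t}\langle w, y\rangle + |t|^2 \|y\|^2 \ge 0$ for all $t$; choosing $t = s\langle w, y\rangle$ with $s > 0$ small and letting $s \to 0^+$ forces $\langle w, y\rangle = 0$ (handling the real and complex cases via the appropriate choice of phase for $t$). Hence $w \in U^\perp$, completing the existence part. The main obstacle here is purely expository — making the variational step clean, especially getting the phase right over $\mathbb{C}$ — but there is no genuine difficulty, since this is a textbook result. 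I would keep the write-up brief and cite it as standard.

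\begin{proof}
    Uniqueness: if $x \in U \cap U^\perp$, then $\langle x, x\rangle = 0$, so $x = 0$; thus if $u_1 + w_1 = u_2 + w_2$ with $u_i \in U$, $w_i \in U^\perp$, then $u_1 - u_2 = w_2 - w_1 \in U \cap U^\perp = \{0\}$, giving $u_1 = u_2$ and $w_1 = w_2$.

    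Existence: fix $v \in V$ and set $d := \inf_{u \in U}\|v - u\|$. Pick $(u_n) \subset U$ with $\|v - u_n\| \to d$. By the parallelogram law applied to $v - u_n$ and $v - u_m$,
    \[
        \|u_n - u_m\|^2 = 2\|v - u_n\|^2 + 2\|v - u_m\|^2 - 4\left\|v - \tfrac{u_n + u_m}{2}\right\|^2 \le 2\|v - u_n\|^2 + 2\|v - u_m\|^2 - 4d^2,
    \]
    since $\tfrac{u_n + u_m}{2} \in U$. The right-hand side tends to $0$, so $(u_n)$ is Cauchy. As $V$ is complete and $U$ is closed, $u_n \to u$ for some $u \in U$, with $\|v - u\| = d$. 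Put $w := v - u$. For any $y \in U$ and any scalar $t$, we have $u + ty \in U$, hence $\|w - ty\|^2 \ge d^2 = \|w\|^2$, i.e.
    \[
        -2\,\mathrm{Re}\big(\overline{t}\,\langle w, y\rangle\big) + |t|^2\|y\|^2 \ge 0.
    \]
    Taking $t = s\,\langle w, y\rangle$ with $s > 0$ gives $-2s|\langle w, y\rangle|^2 + s^2|\langle w, y\rangle|^2\|y\|^2 \ge 0$; dividing by $s$ and letting $s \to 0^+$ yields $|\langle w, y\rangle|^2 \le 0$, so $\langle w, y\rangle = 0$. Thus $w \in U^\perp$ and $v = u + w$, proving $V = U \oplus U^\perp$.
\end{proof}
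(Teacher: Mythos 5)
Your proof is correct and is the standard projection-theorem argument (minimizing sequence via the parallelogram law, then the variational step to get orthogonality), which is exactly the result the paper invokes: the paper's proof simply cites the existence of the orthogonal projection $P_U$ from a functional analysis text and writes $v = P_U v + (v - P_U v)$. You have filled in the proof of that cited fact in full, so the two arguments are substantively the same, differing only in that you prove the projection theorem from scratch rather than citing it.
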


\begin{proof}
     By \citet[Corollary 5.4]{funcana}, we have a projection map $P_U:V \rightarrow U$ with the property that $\langle v - P_U \, v,u \rangle =0$ for all $v \in V$ and $u \in U$. Thus, we can write any $v \in V$ as $$v=P_U \, v + (v-P_U\, v) \in U \oplus U^\perp.$$ \vspace{-1.35cm}\\
\end{proof}

\begin{lemma}\label{lem:imperp} 
Let $A:U \rightarrow V$ be a densely defined linear map between Hilbert spaces, then
    $$
    \Big(\overline{\im(A)}\Big)^\perp=(\im(A))^\perp = \ker(A^*).
    $$ 
\end{lemma}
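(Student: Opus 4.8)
The statement is the classical identity relating the orthogonal complement of the range of an operator to the kernel of its adjoint, and the plan is to establish the two displayed equalities separately. The first equality, $\bigl(\overline{\im(A)}\bigr)^\perp = (\im(A))^\perp$, is purely a fact about closures: for any subset $S$ of a Hilbert space, $S^\perp = (\overline{S})^\perp$. I would prove this by noting that $S \subset \overline{S}$ immediately gives $(\overline{S})^\perp \subset S^\perp$, and conversely, if $v \perp S$, then for any $w \in \overline{S}$ we pick a sequence $w_n \in S$ with $w_n \to w$ and use continuity of the inner product to get $\langle v, w\rangle = \lim_n \langle v, w_n\rangle = 0$, so $v \in (\overline{S})^\perp$. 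Applying this with $S = \im(A)$ gives the first equality.

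For the second equality, $(\im(A))^\perp = \ker(A^*)$, I would argue by double inclusion using the definition of the adjoint. Suppose $v \in (\im(A))^\perp$, i.e.\ $\langle v, Au\rangle_V = 0$ for all $u \in \dom(A)$. Then the constant $g = 0 \in U$ witnesses that $\langle 0, u\rangle_U = 0 = \langle v, Au\rangle_V$ for all $u \in \dom(A)$, so by the definition of $\dom(A^*)$ we have $v \in \dom(A^*)$ and $A^* v = 0$, hence $v \in \ker(A^*)$. Conversely, if $v \in \ker(A^*)$, then $v \in \dom(A^*)$ and $A^* v = 0$, so by the defining property of the adjoint, $\langle 0, u\rangle_U = \langle A^* v, u\rangle_U = \langle v, Au\rangle_V$ for all $u \in \dom(A)$; the left side is zero, so $v \perp Au$ for every $u \in \dom(A)$, i.e.\ $v \in (\im(A))^\perp$. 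This uses that $A$ is densely defined only insofar as it is needed for $A^*$ to be well-defined (already established in the text); no further density is required here.

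Neither step presents a genuine obstacle — this is a routine functional-analysis lemma — but the one subtlety worth being careful about is making sure the image $\im(A)$ means $A(\dom(A))$ and that the quantifiers in the definition of $\dom(A^*)$ are handled correctly (in particular, that exhibiting $g=0$ as the witness element is legitimate, which it is). I would present the proof in the order: (1) the closure identity, stated and proved in one or two lines; (2) the adjoint identity via the two inclusions above. The whole argument is short and self-contained given the definition of the adjoint recalled earlier in the paper.

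\begin{proof}
    We first show $\bigl(\overline{\im(A)}\bigr)^\perp=(\im(A))^\perp$. Since $\im(A)\subset\overline{\im(A)}$, any vector orthogonal to $\overline{\im(A)}$ is orthogonal to $\im(A)$, giving $\bigl(\overline{\im(A)}\bigr)^\perp\subset(\im(A))^\perp$. Conversely, let $v\in(\im(A))^\perp$ and let $w\in\overline{\im(A)}$. Choose a sequence $(w_n)\subset\im(A)$ with $w_n\to w$. By continuity of the inner product,
    $$
    \langle v,w\rangle=\lim_{n\to\infty}\langle v,w_n\rangle=0,
    $$
    so $v\in\bigl(\overline{\im(A)}\bigr)^\perp$. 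This proves the first equality.

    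Next we show $(\im(A))^\perp=\ker(A^*)$. Suppose $v\in(\im(A))^\perp$, that is, $\langle v,Au\rangle_V=0$ for all $u\in\dom(A)$. Then $g:=0\in U$ satisfies $\langle g,u\rangle_U=0=\langle v,Au\rangle_V$ for all $u\in\dom(A)$, so by the definition of the adjoint, $v\in\dom(A^*)$ and $A^*v=g=0$; hence $v\in\ker(A^*)$. Conversely, suppose $v\in\ker(A^*)$. Then $v\in\dom(A^*)$ and $A^*v=0$, so by the defining property of $A^*$,
    $$
    \langle v,Au\rangle_V=\langle A^*v,u\rangle_U=\langle 0,u\rangle_U=0\qquad\text{for all }u\in\dom(A).
    $$
    Thus $v$ is orthogonal to every element of $\im(A)$, i.e.\ $v\in(\im(A))^\perp$. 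This establishes the second equality and completes the proof.
\end{proof}
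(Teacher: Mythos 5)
Your proof is correct and follows essentially the same route as the paper's: the first equality via continuity of the inner product, and the second via exhibiting $g=0$ as the witness in the definition of $\dom(A^*)$ for one inclusion and the defining property of the adjoint for the other. Your write-up is just a more detailed version of the paper's argument.
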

\begin{proof}
    For the first equality, note that for any subset $M \subset V$, we have $M^\perp=\overline M^\perp$ by continuity of inner products.
    
    $(\im(A))^\perp \supset \ker(A^*)$ follows from the definition of adjoints. Conversely, let $v\in(\im (A))^\perp$. Then $\langle v,Au \rangle=0$ for all $u\in\dom A$. Thus, $v\in\dom A^*$ since we can choose $g=0$ and have $\langle g,u \rangle=\langle v,Au \rangle$ for all $u \in \dom(A)$. Therefore, by the uniqueness of $g$, we have $A^*v=g=0$.
\end{proof}

Composing \cref{lem:orthogonal_decomposition} and \cref{lem:imperp} above yields the following result.
\begin{cor}\label{cor:decompV}
    Let $A:U \rightarrow V$ be a densely defined linear map between Hilbert spaces, then
    $$V=\overline{\im(A)} \oplus \ker(A^*).$$
\end{cor}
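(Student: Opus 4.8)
\textbf{Proof proposal for Corollary \ref{cor:decompV}.}

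The plan is to simply compose the two preceding lemmas. First I would invoke Lemma \ref{lem:orthogonal_decomposition} applied to the closed linear subspace $\overline{\im(A)} \subset V$ (its closedness is automatic, being the closure of a subspace), which yields the orthogonal decomposition
$$
V = \overline{\im(A)} \oplus \big(\overline{\im(A)}\big)^\perp.
$$
Next I would apply Lemma \ref{lem:imperp} to the densely defined map $A$, which identifies the orthogonal complement appearing above: $\big(\overline{\im(A)}\big)^\perp = \ker(A^*)$. Substituting this identification into the decomposition gives $V = \overline{\im(A)} \oplus \ker(A^*)$, as claimed.

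There is no real obstacle here — the statement is a direct corollary, and both ingredients are already in place in \Cref{appendix:operator_theory}. The only point worth a word of care is ensuring the hypotheses of the two lemmas are met: Lemma \ref{lem:orthogonal_decomposition} needs a \emph{closed} subspace, which is why we take the closure $\overline{\im(A)}$ rather than $\im(A)$ itself, and Lemma \ref{lem:imperp} requires $A$ to be densely defined, which is exactly the standing hypothesis on $A$. So the proof is a two-line composition with no calculation required.

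\begin{proof}
    Since $\overline{\im(A)}$ is a closed linear subspace of $V$, \cref{lem:orthogonal_decomposition} gives the orthogonal decomposition $V = \overline{\im(A)} \oplus \big(\overline{\im(A)}\big)^\perp$. By \cref{lem:imperp}, $\big(\overline{\im(A)}\big)^\perp = \ker(A^*)$. Combining these two identities yields $V = \overline{\im(A)} \oplus \ker(A^*)$.
\end{proof}
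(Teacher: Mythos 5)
Your proof is correct and follows exactly the route the paper intends: the paper gives no separate proof, simply noting that the corollary follows by composing \cref{lem:orthogonal_decomposition} (applied to the closed subspace $\overline{\im(A)}$) with \cref{lem:imperp}, which is precisely your argument.
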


\section{Defining Homology Using Category Theory}
\label{appendix:category}

Here we provide additional background on defining the homology of a Hilbert complex from the perspective of category theory. The purpose is to provide further context on how reduced homology is a natural concept to consider, perhaps even more so than standard homology. The details of the definitions here can be found in \citet{maclane}. 

We are interested in the category \textbf{Hilb} of Hilbert spaces with continuous linear maps as morphisms; we require them to be everywhere defined in order to allow for their composition. \textbf{Hilb} is an example of an \textit{additive category}, which in particular has a zero object $0$, i.e., an object such that there is precisely one morphism $\Hh \to 0$ for each $\Hh \in \textbf{Hilb}$.  \textbf{Hilb} has many additional interesting and desirable properties \citep{cathilb}, but we are particularly interested in the existence of $0$, which allows us to define the concepts of kernel, cokernel, and image in category theoretic terms. In general, these will always be objects from \textbf{Hilb}. However, the image of continuous maps $f:\Hh \to \Kk$ between Hilbert spaces is not necessarily closed, and therefore may not be a Hilbert space itself. We now overview how $\overline{\im(f)}$ takes on the role of the image of $f$ in \textbf{Hilb}.

\begin{definition}\textnormal{\cite[Kernel,][p.~191]{maclane} }
    The \emph{kernel} of a morphism $f: A \to B$ in a category with zero is an object $K$ together with a morphism $k: K \to A$ such that $f \circ k = 0$ and for every object $K'$ and morphism $k': K' \to A$ that satisfies $f \circ k' = 0$, there is a unique morphism $g: K' \to K$ such that $k \circ g = k'$. 
\end{definition}
A diagram illustrating the kernel is shown in \cref{fig:kernel}.

\begin{figure}[h]
\begin{center}
\begin{tikzcd}
A \arrow[rrrr, "f"] &  &  &  & B \\
                    &  &  &  &   \\
                    &  & K \arrow[lluu, "k"] \arrow[rruu, "0"']  &  &   \\
                    &  &  &  &   \\
                    &  & K' \arrow[lluuuu, "k'", bend left] \arrow[uu, "\exists \text{!} g"] \arrow[rruuuu, "0"', bend right] &  &  
\end{tikzcd}
\caption{The kernel.} \label{fig:kernel}
\end{center}
\end{figure}

In \textbf{Hilb}, the kernel of a morphism agrees precisely with the usual notion of kernel.
However, this agreement is not true for the cokernel. In general, the cokernel is illustrated in \cref{im:defcoker}) and defined as follows.
\begin{definition}\textnormal{\citep[Cokernel,][p.~64]{maclane}}
    The \emph{cokernel} of a morphism $f: A \to B$ in a category $\mathcal{C}$ is an object $C$ together with a morphism $q: B \to C$ such that $q \circ f = 0$ and for every $X \in \mathcal{C}$ and morphism $q': B \to X$ with $q' \circ f = 0$, there is a unique morphism $g: C \to X$ such that $g \circ q = q'$.
\end{definition}

\begin{figure}[H]
\begin{center}
\begin{tikzcd}
A \arrow[rrrr, "f"] \arrow[rrdd, "0"'] \arrow[rrdddd, "0"', bend right] & & & & B \arrow[lldd, "q"] \arrow[lldddd, "q'", bend left] \\
  &  &   &  &  \\&  & C \arrow[dd, "\exists \text{!} g"] &  &   \\&  &  &  &  \\&  & X                                                              &  &                                                    
\end{tikzcd}
\caption{The cokernel.} \label{im:defcoker}
\end{center}
\end{figure}

Thus, in our category \textbf{Hilb}, the cokernel needs to be a Hilbert space. In general, the usual notion of cokernel ``$B/\im(f)$'' is not a Hilbert space and therefore not the cokernel of $f$. It turns out that $B/\overline{\im(f)}$ is a Hilbert space and indeed $\text{coker}_{\textbf{Hilb}}(f)=B/\overline{\im(f)}$.

For additive categories such as \textbf{Hilb}, $\im_\textbf{Hilb}$, is defined as the kernel of this cokernel map $q$ \citep[see][p.~8]{tohoku}. Consequently, the image of $f$ in the category \textbf{Hilb} is $\im_\textbf{Hilb}(f)=\overline{\im(f)}$. Thus, for a Hilbert complex $P$, the framework of category theory yields the following notion of homology:
$$
     H_{k,\textbf{Hilb}}(P) := \ker_{\textnormal{\textbf{Hilb}}}(d_k) / \im_{\textnormal{\textbf{Hilb}}}(d_{k+1}) = \ker(d_k)/\overline{\im(d_{k+1})},
$$
which is precisely the reduced homology we defined in \cref{def:redhom}.

\end{appendices}


\vfill\eject

\bibliographystyle{authordate3}

\bibliography{Laplacian_ref}


\end{document}